\DeclareMathOperator{\rank}{rank}
\DeclareMathOperator*{\argmin}{arg\,min}
\providecommand{\abs}[1]{\lvert#1\rvert}
\providecommand{\bigabs}[1]{\bigl\lvert#1\bigr\rvert}
\providecommand{\Bigabs}[1]{\Bigl\lvert#1\Bigr\rvert}
\providecommand{\biggabs}[1]{\biggl\lvert#1\biggr\rvert}
\providecommand{\lrabs}[1]{\left\lvert#1\right\rvert}
\providecommand{\norm}[1]{\lVert#1\rVert}
\providecommand{\bignorm}[1]{\bigl\lVert#1\bigr\rVert}
\providecommand{\Bignorm}[1]{\Bigl\lVert#1\Bigr\rVert}
\providecommand{\lrnorm}[1]{\left\lVert#1\right\rVert}
\providecommand{\inp}[2]{\langle#1,#2\rangle}
\providecommand{\ceil}[1]{\lceil#1\rceil}
\newcommand{\compl}[1]{{#1}^{\mathsf{c}}}
\newtheorem{theorem}{Theorem}
\newtheorem{lemma}[theorem]{Lemma}
\newtheorem{prop}[theorem]{Proposition}
\newtheorem{cor}[theorem]{Corollary}
\newtheorem{ass}{Assumptions}
\theoremstyle{definition}
\newtheorem{definition}[theorem]{Definition}
\theoremstyle{remark}
\newtheorem{remark}[theorem]{Remark}
\newcommand{\Real}{\operatorname{Re}}
\newcommand{\Imag}{\operatorname{Im}}
\newcommand{\rt}{\mathrm{t}}
\newcommand{\rx}{\mathrm{x}}
\newcommand{\ad}{\mathrm{ad}}
\newcommand{\cA}{{\mathcal{A}}}
\newcommand{\cL}{{\mathcal{L}}}
\newcommand{\cF}{{\mathcal{F}}}
\newcommand{\cH}{{\mathcal{H}}}
\newcommand{\cR}{\mathcal{R}}
\newcommand{\cX}{\mathcal{X}}
\newcommand{\cY}{\mathcal{Y}}
\newcommand{\sfr}{\mathsf{r}}
\newcommand{\sfn}{\mathsf{n}}
\newcommand{\sfm}{\mathsf{m}}
\newcommand{\sfM}{\mathsf{M}}
\newcommand{\sfK}{\mathsf{K}}
\newcommand{\bA}{\mathbf{A}}
\newcommand{\bC}{\mathbf{C}}
\newcommand{\bM}{\mathbf{M}}
\newcommand{\bG}{\mathbf{G}}
\newcommand{\bI}{\mathbf{I}}
\newcommand{\bT}{\mathbf{T}}
\newcommand{\bu}{\mathbf{u}}
\newcommand{\bfs}{\mathbf{f}}
\newcommand{\bU}{\mathbf{U}}
\newcommand{\bv}{\mathbf{v}}
\newcommand{\bw}{\mathbf{w}}
\newcommand{\bz}{\mathbf{z}}
\newcommand{\bwkj}{\bw_{k,j}}
\newcommand{\bzkj}{\bz_{k,j}}
\newcommand{\bp}{\mathbf{p}}
\newcommand{\bbf}{\mathbf{f}}
\newcommand{\bB}{\mathbf{B}}
\newcommand{\bD}{\mathbf{D}}
\newcommand{\br}{\mathbf{r}}
\newcommand{\bDX}{\mathbf{D}_{\cX}}
\newcommand{\bDY}{\mathbf{D}_{\cY}}
\newcommand{\bDXtildeNN}{\mathbf{\tilde D}_{\cX,\sfn_1}}
\newcommand{\bDXnut}{\mathbf{D}_{\cX,\nu_\rt}}
\newcommand{\bAtil}{\mathbf{\tilde A}}
\newcommand{\bBtil}{\mathbf{\tilde B}}
\newcommand{\bTtil}{\mathbf{\tilde T}}
\newcommand{\mcX}{\sfm_{\cX}}
\newcommand{\mcXnu}{\sfm_{\cX,\nu_\rt}}
\newcommand{\mcY}{\sfm_{\cY}}
\newcommand{\mcYnu}{\sfm_{\cY,\nu_\rt}}
\newcommand{\hmcX}{\hat{\sfm}_{\cX}}
\newcommand{\hmcXnu}{\hat{\sfm}_{\cX,\nu_\rt}}
\newcommand{\hmcY}{\hat{\sfm}_{\cY}}
\newcommand{\hmcYnu}{\hat{\sfm}_{\cY,\nu_\rt}}
\newcommand{\nut}{\nu_\rt}
\newcommand{\veet}{\vee_\rt}
\newcommand{\dd}{\mathrm{d}} 
\newcommand{\sdd}{\,\mathrm{d}} 
\DeclareMathOperator{\supp}{supp}
\DeclareMathOperator{\range}{range}
\newcommand{\MX}{\sfM_{\cX}}
\newcommand{\MXnu}{M_{\cX,\nu_\rt}}
\newcommand{\MXnuZe}{M_{\cX,0,\nu_\rt}}
\newcommand{\MXZe}{\sfM_{\cX,0}}
\newcommand{\MY}{\sfM_{\cY}}
\newcommand{\MYZe}{\sfM_{\cY,0}}
\newcommand{\mY}{M_{\cY}}
\newcommand{\mYZe}{M_{\cY,0}}
\newcommand{\Chi}{\raise .3ex
\hbox{\large $\chi$}}
\newcommand{\R}{\mathbb{R}}
\newcommand{\N}{\mathbb{N}}
\newcommand{\Z}{\mathbb{Z}}
\newcommand{\C}{\mathbb{C}}
\newcommand{\Pro}{\mathrm{P}}
\newcommand{\Res}{\mathrm{R}}
\newcommand{\Cor}{\mathrm{C}}
\newcommand{\erfcinv}{\operatorname{erfc}^{-1}}
\newcommand{\erf}{\operatorname{erf}}
\newcommand{\recompress}{\textsc{recompress}}
\newcommand{\coarsen}{\textsc{coarsen}}
\newcommand{\apply}{\textsc{apply}}
\newcommand{\rhs}{\textsc{rhs}}
\newcommand{\pluseq}{\mathrel{+}=}
\newcommand{\cAs}{\cA^s}
\newcommand{\piti}{\pi^{(\rt,i)}}
\newcommand{\pitiOne}{\pi^{(\rt,1)}}
\newcommand{\pit}{\pi^{(\rt)}}
\newcommand{\KdR}{\mathrm{K}_d(R)}
\newcommand{\KdOneTwo}{\mathrm{K}_d(1,2,\dots,2)}
\newcommand{\constsvd}{\kappa_{\rm P}}
\newcommand{\constcrs}{\kappa_{\rm C}}
\numberwithin{equation}{section}
\numberwithin{theorem}{section}
\theoremstyle{plain}
\title[A Space-Time Adaptive Low-Rank Method for Parabolic PDEs]{A Space-Time Adaptive Low-Rank Method for High-Dimensional Parabolic Partial Differential Equations}
\author{Markus Bachmayr$^1$} 
\email{bachmayr@igpm.rwth-aachen.de}
\author{Manfred Faldum$^1$}
\email{faldum@igpm.rwth-aachen.de}
\address{$^1$ Institut f\"ur Geometrie und Praktische Mathematik, RWTH Aachen University, Templergraben 55, 52056 Aachen, Germany}
\date{\today}
\thanks{M.B.\ and M.F.\ acknowledge funding by Deutsche Forschungsgemeinschaft (DFG, German Research Foundation) -- project numbers 233630050, 320021702, 442047500 -- TRR 146 ``Multiscale Simulation Methods for Soft Matter Systems'', GRK2326 ``Energy, Entropy, and Dissipative Dynamics (EDDy)'' and SFB 1481 ``Sparsity and Singular Structures''.}
\begin{document}

\maketitle

 \begin{abstract}
   An adaptive method for parabolic partial differential equations that combines sparse wavelet expansions in time with adaptive low-rank approximations in the spatial variables is constructed and analyzed. The method is shown to converge and satisfy similar complexity bounds as existing adaptive low-rank methods for elliptic problems, establishing its suitability for parabolic problems on high-dimensional spatial domains. The construction also yields computable rigorous a posteriori error bounds for such problems. The results are illustrated by numerical experiments.
 \end{abstract}

\section{Introduction}
\label{sec:Intro}

The numerical approximation of parabolic partial differential equations (PDEs) on high-dimensional spaces is of interest in a wide range of applications. In particular, problems of this type arise as Kolmogorov equations associated to stochastic processes, where they provide a deterministic description of the time evolution of densities and expectations under the stochastic dynamics. In the example of many-particle systems, the dimensionality of the problem is then proportional to the number of particles. Problems with similar characteristics, but typically involving additional nonlinearities, arise in mathematical finance and in optimal control.

A variety of specialized methods has been proposed for solving such problems numerically, based, for instance, on sparse expansions \cite{SpaceTimeAdaptive,Dijkema:09,SS:13}, on low-rank tensor approximations \cite{Dolgov:12,Andreev:15,Cho:16,Boiveau:19}, or on deep neural networks. In particular in the case of neural networks, such methods for high-dimensional problems typically exploit connections to stochastic differential equations to approximate point values of solutions by Monte Carlo averaging of sample paths, which can be used to construct approximate solutions by regression, see for example \cite{MR4293960}.

In this work, we aim at methods based on sparse and low-rank representations that offer scalability to high dimensions, but at the same time allow for reliable \emph{deterministic a posteriori control} of numerical errors with respect to the exact solution of the differential equation in the relevant norms. The main concern here, both in the computation of approximations and of corresponding error bounds, is to avoid the curse of dimensionality, that is, to achieve computational costs that ideally have low-order polynomial (rather than exponential) scaling with respect to the the dimensionality.

A typical model problem that we focus on here is the following instationary diffusion equation for the time-dependent function $u$ on the $d$-dimensional unit cube $\Omega = (0,1)^d$ and time interval $[0,T]$ for a $T>0$ with initial data $u_0$,
\begin{equation}\label{eq:modeldiffusion}
   \partial_t u - \nabla_x \cdot ( a \nabla_x u) = f\;\text{ in $(0,T]\times \Omega$}, \quad u|_{t=0} = u_0\;\text{ on $\Omega$},
\end{equation}
where $a$ and $f$ are the given diffusion coefficient and source term, respectively.
For simplicity we assume homogeneous Dirichlet boundary conditions on $\partial \Omega$.

\subsection{Relation to existing results}

 For corresponding stationary elliptic problems of the above type, methods using near-sparsity of solutions in suitable tensor product bases -- such as sparse grids or adaptive variants based on wavelets -- have been shown to be applicable to problems of moderate dimensionality up to $d\approx 20$. As demonstrated for high-dimensional Poisson problems in \cite{Dijkema:09}, however, the approximability of solutions with respect to such bases itself may in general deteriorate exponentially with $d$. 
 This restriction to moderate dimensions is also visible in the case of a non-adaptive treatment of parabolic problems combining time stepping with a spatial discretization by sparse grids in \cite{vPS:04}.

In contrast to methods relying on sparsity with respect to a given basis, low-rank approaches can make problems in higher dimensions computationally accessible by exploiting further structural features. In such methods, the expansion coefficients of solutions with respect to a product basis are represented in suitable low-rank tensor formats. The adaptive solvers of this type developed in \cite{BachmayrNearOptimal,bachmayr2014adaptive} for high-dimensional elliptic problems offer systematic error reduction with near-optimal asymptotic computational complexity and explicitly computable bounds of the $H^1$-error with respect to $u$. They have been shown to avoid the curse of dimensionality in elliptic problems with suitable low-rank approximability, including the test cases of \cite{Dijkema:09}, where in the numerical tests for adaptive low-rank solvers in \cite{bachmayr2014adaptive,BD16}, dimensions up to $d=256$ are treated.

In principle, for extending such adaptive low-rank concepts to time-dependent problems, a variety of basic constructions is possible. One can, for instance, directly rely on a low-rank solver for elliptic problems to implement a time stepping scheme. Such approaches have been proposed, for instance, with fixed spatial discretization in \cite{Dolgov:12,Cho:16}. In this case, however, the evolution of tensor ranks and the computational complexity are difficult to control. 

Dynamical low-rank approximation \cite{KochLubich2007,LubichRohwedderSchneiderVandereycken:13} offers a different strategy for obtaining approximate evolutions on manifolds of fixed-rank tensors. Its application to parabolic equations has been considered in the case $d=2$ in \cite{Conte:20,BEKU:21} and for parametric problems in \cite{Kazashietal:21}.
No methods of this type are known, however, that would allow us to ensure a given solution error tolerance for problems such as \eqref{eq:modeldiffusion}.

Another alternative approach are methods based on space-time variational formulations. Solvers using sparse expansions in terms of Riesz bases in space and time were obtained, for example, based on adaptive wavelet methods in \cite{SpaceTimeAdaptive,CheginiStevenson:11,KSU:16,RS:19} and with sparse polynomial approximation in \cite{SS:13}.
There is a vast literature on finite element-based methods using various different variational formulations, see, for example, \cite{GO:07,Andreev:13,LM:17,MR4320081,SW:21,FK:21,GS:21}. An approach combining wavelets in time and finite elements in space was considered in  \cite{Andreev:16} for preconditioning and in \cite{SvVW:22} in an adaptive solver.

A first natural way of using space-time formulations to obtain low-rank approximations is to treat time as an additional mode in the tensor approximation, which amounts to a low-rank separation between temporal and spatial degrees of freedom. This approach is followed in \cite{Andreev:15,Boiveau:19} with fixed discretizations. A disadvantage is that in general it is not clear whether the sought solutions actually have efficient low-rank approximations in such a format with separated temporal and spatial variables.
This can be an issue, for instance, in problems dominated by convection or with time-dependent sources that move inside the spatial domain. Such features of the problem may lead to structures in the solution that can be resolved only with large ranks in the separation between spatial and temporal variables. In addition, this separation also leads to subtle issues in the interaction of tensor structures with the relevant function spaces, which in \cite{Andreev:15} prevent a fully rigorous treatment of preconditioning in low-rank format. We comment on these difficulties and on how we avoid them in our setting in the following section and in Remark \ref{rem:fullsep}.

\subsection{Novel contributions}

We follow a new approach that combines a \emph{sparse} wavelet expansion in the time variable with a \emph{low-rank} hierarchical tensor approximation in the spatial variables. This means that for each temporal wavelet index in the approximation, we use an independent low-rank tensor representation for the spatial approximation coefficients.
For exploiting the sparsity of solutions as far as possible, the spatial discretization space for each time index is also adapted independently.

Let us consider the form that these approximations take in the case $d=2$, corresponding to two spatial variables, for the example \eqref{eq:modeldiffusion}.
We use the classical space-time variational formulation considered in \cite{SpaceTimeAdaptive}, which we discuss in further detail in Section \ref{sec:Prelim}. In this formulation, we treat  \eqref{eq:modeldiffusion} in weak form on the spatial domain $\Omega = (0,1)^2$, where solutions are sought in the space 
\begin{equation}\label{eq:Xnorm2d}
	{\mathcal{X}} = L_2\bigl(0,T; H^1_0(\Omega)\bigr) \cap H^1\bigl(0,T; H^{-1}(\Omega)\bigr) ,
\end{equation}
and where the initial condition is explicitly enforced in $L_2(\Omega)$.
We assume a suitable Riesz basis $\{ \Phi_{\nu_\rt,\nu_1,\nu_2} \}_{\nu_\rt \in \vee_\rt, \nu_1,\nu_2 \in \vee_1}$ of ${\mathcal{X}}$ with countable index sets $\vee_\rt$ and $\vee_1$ and seek approximations of the form
\begin{equation}\label{eq:approx}
   u(t,x_1,x_2) \approx \sum_{\nu_\rt \in \Lambda_\rt} \sum_{\nu_1 \in \Lambda_{1,\nu_\rt}}\sum_{\nu_2 \in \Lambda_{2,\nu_\rt}}	\bu_{\nu_\rt,\nu_1,\nu_2} \Phi_{\nu_\rt,\nu_1,\nu_2}(t, x_1, x_2) 	,
\end{equation}
with finite index sets $\Lambda_\rt \subset \vee_\rt$ and $\Lambda_{\nu_\rt,1}, \Lambda_{\nu_\rt,2} \subset \vee_1$ that need to be determined for the given approximation error tolerance.
Moreover, for each active time basis index $\nu_\rt \in \Lambda_\rt$, we need to find a rank parameter $r_{\nu_\rt} \in \N$ and vectors $\mathbf{U}^{(i)}_{\nu_\rt,k} \in \R^{\Lambda_{\nu_\rt,i}}$ for $k=1,\ldots, r_{\nu_\rt}$ and $i=1,2$ such that we have a sufficiently accurate low-rank approximation
\begin{equation}\label{eq:temporallowrank2d}
	\bu_{\nu_\rt,\nu_1,\nu_2} = \sum_{k=1}^{r_{\nu_\rt}} \mathbf{U}^{(1)}_{\nu_\rt, k, \nu_1}  \mathbf{U}^{(2)}_{\nu_\rt, k, \nu_2}\qquad \text{for $(\nu_1,\nu_2) \in \Lambda_{\nu_\rt,1} \times \Lambda_{\nu_\rt,2}$.}
\end{equation}

For computing such combined sparse and low-rank approximation, we construct a space-time adaptive solver with properties very similar to the corresponding existing method for elliptic problems from \cite{bachmayr2014adaptive}. In particular, we obtain guaranteed error reduction in $\mathcal{X}$-norm with computable space-time error bounds. At the same time, under natural low-rank approximability assumptions, we again obtain near-optimal asymptotic computational costs that approach the convergence rates of the corresponding underlying one-dimensional approximations. For large $d$, the computational complexity is guaranteed to not grow exponentially in $d$, and thus the curse of dimensionality is avoided. This result requires that the approximability of problem data and solution does not deteriorate too strongly with increasing $d$. However, note that our adaptive solver itself does not use any explicit knowledge on the low-rank approximability of the solution (a property that is also referred to as \emph{universality}), and the $d$-dependence in our numerical tests is in fact substantially more favorable than ensured by our estimates.

For achieving the desired computational complexity by a reduction to lower-dimensional operations, it is crucial that the basis functions have product structure. Specifically, with suitable orthonormal spline wavelet-type bases $\{ \theta_{\nu_\rt} \}_{\nu_\rt \in\vee_\rt}$ and $\{ \psi_{\nu} \}_{\nu \in \vee_1}$ of $L_2(0,T)$ and $L_2(0,1)$, respectively, we take
\[
    \Phi_{\nu_\rt,\nu_1,\nu_2} (t,x_1,x_2) = 
	  \norm{  \theta_{\nu_\rt} \otimes \psi_{\nu_1} \otimes \psi_{\nu_2} }_{\cX}^{-1}\, \theta_{\nu_\rt}(t)\, \psi_{\nu_1}(x_1) \, \psi_{\nu_2}(x_2)\,.
\]
Such $L_2$-orthonormal bases are provided by Donovan-Geronimo-Hardin multiwavelets \cite{multiwavelets}.
Here, the normalization in $\cX$-norm ensures the Riesz basis property, which means that for the coefficient sequence $\bu$ in \eqref{eq:approx}, we have 
 $c\norm{\bu}_{\ell_2} \leq \norm{ u }_{\cX}  \leq C \norm{\bu}_{\ell_2}$ with uniform constants. Since low-rank compressions within the method are computed with respect to the $\ell_2$-norm, this property is crucial for ensuring a total error bound in $\cX$-norm for the computed approximations. However, similarly to the case of elliptic problems in \cite{bachmayr2014adaptive}, one faces the issue that the scaling factor $\norm{  \theta_{\nu_\rt} \otimes \psi_{\nu_1} \otimes \psi_{\nu_2} }_{\cX}^{-1}$ does not have an explicit low-rank form. In order to circumvent this problem, we devise new discretization-dependent low-rank approximations for this diagonal scaling that are adapted to the present space-time setting.

Note that our basic construction is different from the one of \cite{SvVW:22}, where a wavelet discretization in time is combined with a potentially completely different discretization (for example, by finite elements) in space. The method that we consider here is based on a standard product wavelet discretization in space and time as in \cite{SpaceTimeAdaptive}, but rather uses a particular nonlinear parameterization of basis coefficients for this discretization, where time also plays a special role.

\subsection{Conceptual overview and outline}

The outline of this paper is as follows: In Section \ref{sec:Prelim}, we discuss the underlying space-time variational formulation of \cite{SpaceTimeAdaptive} and its wavelets Riesz basis representation, as well as particular requirements on the wavelet bases in the high-dimensional setting.

In Section \ref{sec:ALRbasic}, we turn to basic aspects of the combination of adaptive sparse approximation in time with low-rank approximation in the spatial variables. 
We show that the general framework for low-rank approximations in function spaces via basis representations developed in \cite{BachmayrNearOptimal,bachmayr2014adaptive} can be adapted to the setting of separate low-rank representations for each temporal basis index as in \eqref{eq:temporallowrank2d}. In particular, we obtain analogous results for basis coarsening and rank reduction procedures as in the case of a single low-rank representation in \cite{BachmayrNearOptimal}.

In Section \ref{sec:low-rank-preconditioning}, we analyze new low-rank space-time diagonal preconditioners based on exponential sum approximations. By these low-rank approximations, we account for the lack of separability of the scaling factors arising in the multidimensional Riesz bases. This is also a central issue in low-rank solvers for elliptic problems \cite{bachmayr2014adaptive}, but for space-time formulations we need a new construction. We make crucial use of the structure of separate tensor representations for each temporal basis index,  allowing us to approximate the diagonal entries of the preconditioner independently for each time index. These approximations are then realized by exponential sums based on the inverse Laplace transform of $s\mapsto \sqrt{s}/(s + a)$, where $a > 0$ depends on the corresponding time index.

In Section \ref{sec:Apply}, we use the new low-rank approximations of diagonal scalings  in a scheme for constructing sparse and low-rank approximations of the basis representations of the operators in the space-time formulation. These adaptive operator approximations are subsequently the main constituents in obtaining residual approximations in our adaptive scheme. We devise new techniques for the basis representations of temporal derivatives, which here involve interaction between different low-rank representations, and for the representation of the trace at the initial time. In particular the latter causes new difficulties compared to the elliptic case, since it leads to an additional coupling between operator representation ranks and maximum wavelet levels of the activated basis functions.

In Section \ref{sec:Solver}, the resulting residual approximation scheme is used as the central component of an adaptive method based upon an approximate Richardson iteration in sequence space applied to the least-squares form of the space-time variational formulation. As a first step, we show convergence of the method in the natural norm \eqref{eq:Xnorm2d} to the exact solution of the parabolic PDE. We then analyze the complexity of the method concerning the total number of required elementary operations under typical assumptions on the approximability of solutions. The first crucial ingredient in such complexity bounds are near-optimal estimates for discretization index set sizes and low-rank representation ranks of intermediate results that are provided by the low-rank recompression and basis coarsening procedures. The second are bounds on the complexity of operator approximations. Crucially, to estimate the cumulative effect of several steps in the iterative scheme, we need to deal with the interactions between the sets of active basis indices and the low-rank representations of operator approximations.

Finally, in Section \ref{sec:Numexp} we give some first numerical illustrations of the new method for large $d$, and in Section \ref{sec:Conclusion} summarize our conclusions and give an outlook on further open questions.

\subsection{Notation}

In the remainder of this work, to simplify notation we denote by $\norm{\cdot}$ the $\ell_2$-norm on the respective index set and by $\langle \cdot, \cdot \rangle$ the corresponding inner product. By $A \lesssim B$, we denote $A \leq C B$ with a constant $C>0$; $A \gtrsim B$ is defined as $B \lesssim A$, and $A \eqsim B$ as $A\lesssim B$ and $A \gtrsim B$.

\section{Preliminaries}
\label{sec:Prelim}

\subsection{Problem formulation}

Let $V, H$ be separable Hilbert spaces such that $V$ is densely embedded in $H$. Identifying $H$ with its dual $H^\prime$, we obtain the Gelfand triple $V \hookrightarrow H \hookrightarrow V^\prime$.

Let $0 < T < \infty$ and $I = [0,T]$. We denote for a.e. $t \in I$ by $a(t;\cdot,\cdot)$ a bilinear form on $V \times V$ such that for all $v,\hat{v} \in V$ the function $t \mapsto a(t;v,\hat{v})$ is measurable on $I$. Furthermore, for a.e.\ $t \in I$ we assume
\begin{subequations}
\begin{align}
	\lvert &a(t;v,\hat{v}) \rvert \leq a_{\max}\lVert v \rVert_V \lVert \hat{v} \rVert_V \quad \text{for all } v,\hat{v} \in V \qquad (\text{boundedness}), \\ 
	&a(t;v,v) + \lambda_0 \norm{v}_H^2 \geq a_{\min} \norm{v}_V^2 \quad \text{for all } v \in V \qquad (\text{coercivity}) \label{eq:coercivity} 
\end{align}
\end{subequations}
for some constants $0 < a_{\min} \leq a_{\max} < \infty$ and $\lambda_0 \in \mathbb{R}$. For a.e.\ $t \in I$, we can thus define $A(t) \in \cL(V,V^\prime)$ by
\begin{align*}
	\inp{A(t)v}{\hat{v}}_{V^\prime \times V} = a(t;v,\hat{v})  .
\end{align*}

We consider linear parabolic problems of the form
\begin{equation}\label{eq:parabolic-def}
	\begin{aligned}
		\partial_t u(t) + A(t) u(t) &= g(t) \quad & \text{in } V^{\prime}, \\
		u(0) &= h  \quad & \text{in } H	,
	\end{aligned}
\end{equation}
for given $g \in L_2(I;V^\prime)$ and $h \in H$.

We use the classical space-time weak formulation of the parabolic problem \eqref{eq:parabolic-def} analyzed in the context of adaptive methods in \cite{SpaceTimeAdaptive}; see also \cite[Ch.~XVIII]{MR1156075}.
The trial space for this formulation reads
\begin{align}\label{eq:ansatzX}
	\cX = L_2(I;V) \cap H^1(I;V^{\prime})
	= \left\{ v \in L_2(I;V) :  \partial_t v \in L_2(I;V^\prime) \right\},
\end{align}
the test space is
\begin{align}\label{eq:testY}
	\cY &= L_2(I;V) \times H . 
\end{align}
For $v \in \cX$ and $(w_1,w_2) \in \cY$, the corresponding norms are given by
\begin{equation}\label{eq:normdefs}
\begin{aligned}
	\norm{v}_{\cX} &= \left( \norm{v}_{L^2(I;V)}^2 +\lrnorm{\partial_t v}_{L_2(I;V^\prime)}^2 \right)^{\frac{1}{2}} ,  \\
	\norm{(w_1,w_2)}_{\cY} &= \left( \norm{w_1}_{L^2(I;V)}^2 + \norm{w_2}_H^2 \right)^{\frac{1}{2}}  .
\end{aligned}
\end{equation}
The space-time weak formulation of \eqref{eq:parabolic-def} for $u \in \cX$ now reads
\begin{equation}\label{eq:varform}
	 b(u,v) = f(v) \quad \text{for all } v \in \cY,
\end{equation}
with the bilinear form $b \colon \cX \times \cY \to \R$ given by
\begin{align}\label{eq:bilinear-parabolic}
	b(v,w) = \int\limits_I \inp{\partial_t v(t)}{w_1(t)}_{V^\prime \times V} + a\bigl(t;v(t), w_1(t)\bigr) \, dt + \inp{v(0)}{w_2}_H
\end{align}
and the functional $f \colon \cY \to \R$ given by
\begin{align*}
	f(w) = \int\limits_I \inp{g(t)}{w_1(t)}_H  \, dt + \inp{h}{w_2}_H . 
\end{align*}

\begin{theorem}
	The operator $B \in \cL(\cX,\cY')$ defined by $(Bv)(w) = b(v,w)$ with $b$ as in \eqref{eq:bilinear-parabolic}, $\cX$ as in \eqref{eq:ansatzX} and $\cY$ as in \eqref{eq:testY} is boundedly invertible.
\end{theorem}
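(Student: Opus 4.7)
The plan is to invoke the Banach--Ne\v cas--Babu\v ska theorem: $B\in\cL(\cX,\cY')$ is boundedly invertible iff (i) $b$ is continuous on $\cX\times\cY$, (ii) $b$ satisfies an inf--sup condition with some $\beta>0$, namely $\inf_{v\in\cX}\sup_{w\in\cY}b(v,w)/(\norm{v}_\cX\norm{w}_\cY)\geq \beta$, and (iii) the only $w\in\cY$ with $b(v,w)=0$ for all $v\in\cX$ is $w=0$. Item (i) is routine: Cauchy--Schwarz bounds $\int_I\inp{\partial_t v}{w_1}_{V'\times V}\sdd t$ and $\int_I a(t;v,w_1)\sdd t$ using $a_{\max}$, and the initial term is controlled via the continuous embedding $\cX\hookrightarrow C(I;H)$, which yields $\norm{v(0)}_H\lesssim\norm{v}_\cX$.

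The main obstacle is (ii). I would follow the construction of Schwab--Stevenson~\cite{SpaceTimeAdaptive}. Without loss of generality assume $\lambda_0\le 0$ (otherwise shift by $e^{-\lambda_0 t}$; this does not affect invertibility in these spaces). Introduce the time-independent Riesz isomorphism $\mathcal{A}\colon V\to V'$ associated to the symmetric, coercive form $\tilde a(v,\hat v)=\tfrac12(a(t;v,\hat v)+a(t;\hat v,v))+\lambda_0\inp{v}{\hat v}_H$ (or any spectrally equivalent surrogate that is $t$-independent). For given $v\in\cX$, set $w_2=v(0)\in H$ and
\[
    w_1(t) \;=\; \eta_1\, v(t)\;+\;\eta_2\,\mathcal{A}^{-1}\partial_t v(t)\;\in\;L_2(I;V),
\]
with parameters $\eta_1,\eta_2>0$ to be chosen. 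Expanding $b(v,(w_1,w_2))$ one gets four contributions: $\eta_1\int_I a(t;v,v)\sdd t$, which is bounded below by $\eta_1 a_{\min}\norm{v}_{L_2(I;V)}^2-\eta_1|\lambda_0|\norm{v}_{L_2(I;H)}^2$; $\eta_2\int_I\inp{\partial_t v}{\mathcal{A}^{-1}\partial_t v}_{V'\times V}\sdd t=\eta_2\norm{\partial_t v}_{L_2(I;V')}^2$ up to constants from the spectral equivalence; the cross term $\eta_1\int_I\inp{\partial_t v}{v}=\tfrac{\eta_1}{2}(\norm{v(T)}_H^2-\norm{v(0)}_H^2)$; the remaining term $\eta_2\int_I a(t;v,\mathcal{A}^{-1}\partial_t v)\sdd t$ which is controlled by Young's inequality against the first two. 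Together with the initial contribution $\inp{v(0)}{w_2}_H=\norm{v(0)}_H^2$ (which absorbs the negative boundary term from the cross term when $\eta_1\le 2$), choosing $\eta_1,\eta_2$ small enough yields
\[
    b(v,(w_1,w_2)) \;\gtrsim\; \norm{v}_{L_2(I;V)}^2+\norm{\partial_t v}_{L_2(I;V')}^2\;=\;\norm{v}_\cX^2,
\]
while the same parameter choice and boundedness of $\mathcal{A}^{-1}$ give $\norm{(w_1,w_2)}_\cY\lesssim\norm{v}_\cX$, establishing (ii).

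For (iii), suppose $w=(w_1,w_2)\in\cY$ satisfies $b(v,w)=0$ for all $v\in\cX$. Testing first against $v\in C_c^\infty((0,T);V)\subset\cX$, the initial term drops and one reads off that $-\partial_t w_1+A(t)^*w_1=0$ in the sense of $\cD'((0,T);V')$, so $w_1$ is a weak solution of a backward parabolic equation; by a standard regularity argument $w_1\in C(I;H)$. A further density argument with test functions $v\in\cX$ vanishing at $t=0$ but with arbitrary $v(T)\in V$ forces $w_1(T)=0$. Backward uniqueness for parabolic equations then yields $w_1\equiv 0$. Substituting back and varying $v(0)\in V$ (which is dense in $H$) gives $w_2=0$. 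Thus (iii) holds and the BNB theorem delivers the bounded invertibility of $B$. I expect step (ii) to be the main technical obstacle; the existence of a time-independent surrogate $\mathcal{A}$ and the balancing of $\eta_1,\eta_2$ require the coercivity shift $\lambda_0$ and the embedding $\cX\hookrightarrow C(I;H)$ to be handled carefully.
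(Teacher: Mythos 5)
The paper does not prove this theorem itself: it states it and refers to \cite{SpaceTimeAdaptive} for the proof and for explicit bounds on $\norm{B}$ and $\norm{B^{-1}}$. Your Banach--Ne\v cas--Babu\v ska argument is essentially the standard proof underlying that reference (going back to Dautray--Lions): continuity via the embedding $\cX\hookrightarrow C(I;H)$, the inf--sup condition via the test pair $w_1=\eta_1 v+\eta_2\mathcal{A}^{-1}\partial_t v$, $w_2=v(0)$, and injectivity of the adjoint; in substance it is correct and follows the cited route. Three points deserve cleaner handling. First, in step (iii) you do not need, and should not invoke, ``backward uniqueness for parabolic equations'', which is a genuinely delicate property: testing gives you both the adjoint equation $\partial_t w_1=A(t)^{*}w_1$ in $L_2(I;V')$ (so $w_1\in\cX$, hence $w_1\in C(I;H)$) and the final condition $w_1(T)=0$, and after the time reversal $s\mapsto T-s$ this is an ordinary forward parabolic Cauchy problem with zero data and zero source, whose uniqueness is an elementary energy estimate. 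Second, the surrogate $\mathcal{A}$ built from the symmetric part of $a(t;\cdot,\cdot)$ is in general time-dependent; simply take the Riesz isomorphism of $V$ (or any fixed self-adjoint coercive operator on $V$), which is all your computation uses. Third, be consistent about the G\aa rding shift: after the substitution $u\mapsto e^{-\lambda_0 t}u$ you may assume the form is coercive, and then $\eta_1\int_I a(t;v,v)\,dt\geq \eta_1 a_{\min}\norm{v}_{L_2(I;V)}^2$ with no negative $L_2(I;H)$ contribution, whereas as written the leftover term $-\eta_1\abs{\lambda_0}\norm{v}_{L_2(I;H)}^2$ has nothing available to absorb it. With these repairs the argument is complete, and tracking $\eta_1,\eta_2$ would even reproduce the explicit norm bounds that the paper quotes from \cite{SpaceTimeAdaptive}.
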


For a proof of this theorem and explicit bounds of the norms $\norm{B}$ and $\norm{B^{-1}}$ we refer to \cite{SpaceTimeAdaptive}. Our work specifically addresses problems with coercive spatial part, such as $A(t) = -\Delta$, where \eqref{eq:coercivity} holds with $\lambda_0 = 0$. Note that for this class of problems, the bounds for both $B$ and $B^{-1}$ are in particular independent of the final time $T$, as can be seen from \cite[Thm.~5.1]{SpaceTimeAdaptive}. With a modified norm on $\cX$ that incorporates an initial trace at $t=0$, as shown in \cite{L2ProjectionParabolic} this $T$-independence also holds when $\lambda_0 > 0$, but in what follows we work with the standard norms as in \eqref{eq:normdefs}.

Our construction of adaptive methods is based on an equivalent infinite matrix representation of the problem \eqref{eq:varform} in terms of suitable wavelet Riesz bases. 
Let $\{ \theta_\nu \}_{\nu \in \vee_\rt}$ be a Riesz basis of $L_2(I)$ such that $\{ \norm{\theta_\nu}_{H^1(I)}^{-1} \theta_\nu \}_{\nu \in \vee_\rt}$ is a Riesz basis of $H^1(I)$, and let $\{ \Psi_\nu \}_{\nu \in \vee_\rx}$ be a Riesz basis of $H$ such that $\{ \norm{\Psi_\nu}_V^{-1} \Psi_\nu \}_{\nu \in \vee_\rx}$ is a Riesz basis of $V$ and $\{  \norm{\Psi_\nu}_{V'}^{-1}  \Psi_\nu \}_{\nu \in \vee_\rx}$ is a Riesz basis of $V'$. 

Let $\bar S^\cX$ and $\bar S^\cY$ be real sequences with positive entries on $\vee = \vee_\rt\times \vee_\rx$ and $\vee_\rx$, respectively, that satisfy
\begin{equation}\label{eq:scalinggeneral}
  \bar S^{\cX}_{\nu_\rt, \nu_\rx} \eqsim \Bigl( \norm{ \Psi_{\nu_\rx} }_V^2 + \norm{\theta_{\nu_\rt}}_{H^1(I)}^2 \norm{\Psi_{\nu_\rx}}_{V'}^2  \Bigr)^{-\frac12}, \qquad
  \bar S^{\cY}_{\nu_\rx} \eqsim  \norm{\Psi_{\nu_\rx}}_V^{-1}\,
\end{equation}
uniformly for all $\nu_\rt\in\vee_\rt$, $\nu_\rx \in \vee_\rx$.
Then, as noted in \cite{SpaceTimeAdaptive}, 
\[
  \Sigma_\cX = \Bigl\{  (t,x) \mapsto \bar S^\cX_{\nu_\rt,\nu_\rx}  \theta_{\nu_\rt} (t)\,\Psi_{\nu_\rx}(x)  \colon \nu_\rt \in \vee_\rt, \nu_\rx \in \vee_\rx \Bigr\}
\] 
is a Riesz basis of $\cX$ and
\begin{multline*}
 \Sigma_\cY =  \left\{ (t,x) \mapsto  \left( \bar S^\cY_{\nu_\rx} \theta_{\nu_\rt} (t)\,\Psi_{\nu_\rx}(x) , 0 \right) \colon \nu_\rt \in \vee_\rt, \nu_\rx \in \vee_\rx  \right\}
  \cup \Bigl\{  x\mapsto \bigl( 0, \Psi_{\nu_\rx}(x)\bigr) \colon \nu_\rx \in \vee_\rx  \Bigr\}
\end{multline*}
is a Riesz basis of $\cY$. With $\vee' = \vee \cup \vee_\rx$, we introduce the notation $\Sigma_\cX = \{  X_\nu \}_{\nu \in \vee}$ and $\Sigma_\cY = \{ Y_\mu \}_{\mu \in \vee'}$ for the elements of these collections.
As a consequence, defining
\[
   \mathbf{\bar B} = \begin{bmatrix} \mathbf{\bar B}_1 \\ \mathbf{\bar B}_2  \end{bmatrix}, \quad \mathbf{\bar B}_1 =  \bigl(  b(X_\nu, Y_{\nu'})  \bigr)_{\nu \in \vee, \nu' \in \vee }, \;\mathbf{\bar B}_2 =  \bigl(  b(X_\nu, Y_{\nu_\rx})  \bigr)_{\nu \in \vee, \nu_\rx \in \vee_\rx },
\]
and identifying $\ell_2(\vee')$ with $\ell_2(\vee) \times \ell_2(\vee_\rx)$,
one has that $\mathbf{\bar B}$ defines an isomorphism from $\ell_2(\vee)$ to $\ell_2(\vee')$.
Moreover, with
\[
   \mathbf{\bar f} = \begin{bmatrix} \mathbf{\bar f}_1 \\ \mathbf{\bar f}_2  \end{bmatrix} , \quad \mathbf{\bar f}_1 = \bigl( f (Y_{\nu'})  \bigr)_{\nu' \in \vee}, \;  \mathbf{\bar f}_2 = \bigl( f (Y_{\nu_\rx})  \bigr)_{\nu_\rx \in \vee_\rx}
\]
we have $\mathbf{\bar f} \in \ell_2(\vee')$.
The infinite linear system of equations $\mathbf{\bar B} \mathbf{\bar u} = \mathbf{\bar f}$ thus has a unique solution $\mathbf{\bar u}  \in \ell_2(\vee)$, and $u$ solving \eqref{eq:varform} can be represented as
$ u = \sum_{\nu \in \vee} \mathbf{\bar u}_\nu X_\nu$. 

Let us now consider the structure of $\mathbf{\bar B}$. To this end, we introduce the diagonal scaling matrices
\begin{equation}\label{eq:scaling-matrices}
	\begin{aligned}
	\mathbf{\bar D}_{\cX} &= \bigl(  \bar S^{\cX}_{\nu_\rt, \nu_\rx} \delta_{(\nu_\rt,\nu_\rx),(\nu_\rt',\nu_\rx')}  \bigr)_{ (\nu_\rt,\nu_\rx), (\nu_\rt',\nu_\rx') \in \vee }, \\ 
	\mathbf{\bar D}_{\cY} &= \bigl(  \bar S^{\cY}_{\nu_\rx} \delta_{(\nu_\rt,\nu_\rx,\tilde\nu_\rx),(\nu_\rt',\nu_\rx',\tilde\nu_\rx')}   \bigr)_{(\nu_\rt,\nu_\rx, \tilde\nu_\rx), (\nu_\rt',\nu_\rx',\tilde\nu_\rx) \in \vee'}\,.
	\end{aligned}
\end{equation}
Using the bilinearity of $b$, as well as the definition of the bases of $\mathcal{X}$ and $\mathcal{Y}$, we can rewrite $\mathbf{\bar B}$ in the form
\begin{align}\label{eq:operator-Bhat}
	\mathbf{\bar B} =   \begin{bmatrix}
		\mathbf{\bar D}_\cY & 0 \\
		0 & \bI_\rx
	\end{bmatrix}
	\begin{bmatrix}
		\bT \\ \bT_0
	\end{bmatrix}
	\mathbf{\bar D}_\cX ,
\end{align}
where $\bT$ and $\bT_0$ are given by
\begin{align}\label{eq:opertor-T1T2-def}
	\begin{split}
		\bT &= \Bigl( b\big(\theta_{\nu_\rt}\otimes \Psi_{\nu_\rx}, (\theta_{\nu^\prime_\rt} \otimes \Psi_{\nu^\prime_\rx},0) \big) \Bigr)_{(\nu_\rt,\nu_\rx),(\nu^\prime_\rt,\nu^\prime_\rx) \in \vee}, \\
		\bT_0 &= \Bigl( b\big(\theta_{\nu_\rt} \otimes \Psi_{\nu_\rx}, (0,\Psi_{\nu^\prime_\rx}) \big) \Bigr)_{(\nu_\rt,\nu_\rx) \in \vee, \nu^\prime_\rx \in \vee_\rx} ,
	\end{split}
\end{align}
and where $\mathbf{I}_\rx = (\delta_{\nu,\nu'})_{\nu,\nu' \in \vee_\rx}$.

\begin{remark}\label{rem:alternativeform}
 Alternative weak formulations of \eqref{eq:parabolic-def} with choices of trial and test spaces different from the ones in \eqref{eq:varform} are possible, such as the following one (see for example \cite{CheginiStevenson:11}): find $u \in L_2(I; V)$ such that
\[
	 \int\limits_I -\inp{u(t)}{\partial_t v(t)} + a\bigl(t;u(t), v(t)\bigr)  \, dt 
      =
	 \int\limits_I \inp{g(t)}{v(t)}_H  \, dt + \inp{h}{v(0)}_H  
\]
for all $v \in L_2(I;V) \cap \{ w \in H^1(I;V') \colon w(T) = 0 \}$.
This formulation can be treated by a straightforward adaptation of the techniques developed in this paper, and in fact alleviates some of the technical difficulties associated to the treatment of initial values (see Section \ref{sec:apply-initial-value}). However, since \eqref{eq:varform} yields stronger convergence of approximate solutions -- in the norm of $\cX$ rather than $L_2(I;V)$ -- we use the weak formulation \eqref{eq:varform}.
\end{remark}

\subsection{Second-order problems}\label{sec:scaling-matrix}

In this work, we focus on the case where $A(t)$ is a second-order elliptic operator on $\Omega = (0,1)^d$, where $V = H^1_0(\Omega)$, $H = L_2(\Omega)$, $V' = H^{-1}(\Omega)$, and $A(t) \colon V \to V'$ is given by
\begin{align}\label{eq:second-order-elliptic}
	A(t) v = - \nabla_x \cdot M(t) \nabla_x v + q(t) \cdot \nabla_x v + c(t) v, \quad v \in V,
\end{align}
with suitable $M(t)\in \R^{d\times d}$, $q(t) \in \R^d$, $c(t) \in \R$. 

For this problem posed on a product domain, we use a particular construction of spatial wavelet Riesz bases of $V$ and $V'$ with tensor product structure, based on the following assumptions. 
With a countable index set $\vee_1$, let $\{ \psi_\nu \}_{\nu \in \vee_1}$ be an \emph{orthonormal basis} of $L_2(0,1)$ that is at the same time a Riesz basis of $H^1_0(0,1)$ and of $H^{-1}(0,1)$ with the respective normalizations. For each $\nu \in \vee_1$, we denote by $\abs{\nu}$ the wavelet level of the basis function $\psi_\nu$.
We then set $\vee_\rx = \bigtimes_{i=1}^d \vee_1$ as well as $\Psi_\nu = \bigotimes_{i=1}^d \psi_{\nu_i}$ so that
\[
 \norm{\Psi_\nu}_V = \Bigl(\sum\limits_{i=1}^d \norm{\psi_{\nu_i}}_{H^1_0(0,1)}^2\Bigr)^{\frac{1}{2}}, \quad \nu \in \vee_\rx\,.
\]
We then easily verify the following observations made in similar form in \cite[Sec.~2]{Dijkema:09} and \cite[Sec.~8]{SpaceTimeAdaptive}.

\begin{prop}\label{prop:riesz}
   For $\{ \Psi_\nu\}_{\nu \in \vee_\rx}$ on $\Omega = (0,1)^d$ with $V = H^1_0(\Omega)$, $H = L_2(\Omega)$ as above, which in particular is an orthonormal basis of $H$, we have the following:
   \begin{enumerate}[{\rm(i)}]
	\item $\{\norm{\Psi_\nu}_V^{-1} \Psi_\nu\}_{\nu \in \vee_\rx}$ is a Riesz basis of $V$, where with constants independent of $d$,
	\[  \norm{ \bv } \eqsim \Bignorm{ \sum_{\nu \in \vee_\rx} v_\nu \norm{\Psi_\nu}_V^{-1} \Psi_\nu }_V \quad\text{for all $\bv = (v_{\nu})_{\nu \in \vee_\rx} \in \ell_2(\vee_\rx)$.} \]
	\item\label{it:Vprime-norm} With constants independent of $d$,
	\[
		\norm{ \Psi_{\nu_\rx}}_{V'} \eqsim  \norm{ \Psi_{\nu_\rx} }^{-1}_{V} \quad\text{for $\nu_\rx \in \vee_\rx$.}
	\]
	\item $\{\norm{\Psi_\nu}_V \Psi_\nu\}_{\nu \in \vee_\rx}$ is a Riesz basis of $V'$, where with constants independent of $d$,
	\[  \norm{ \bv } \eqsim \Bignorm{ \sum_{\nu \in \vee_\rx} v_\nu \norm{\Psi_\nu}_V \Psi_\nu }_{V'} \quad\text{for all $\bv = (v_{\nu})_{\nu \in \vee_\rx} \in \ell_2(\vee_\rx)$.} \]
   \end{enumerate}
\end{prop}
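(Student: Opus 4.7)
The plan is to reduce all three claims to one-dimensional Riesz basis estimates by exploiting the tensor product structure of $\Psi_\nu = \bigotimes_i \psi_{\nu_i}$ together with the $L_2(0,1)$-orthonormality of $\{\psi_\nu\}_{\nu\in\vee_1}$. The starting observation is that $\|\partial_i\Psi_\nu\|_{L_2(\Omega)}^2 = \|\psi_{\nu_i}'\|_{L_2(0,1)}^2 \cdot \prod_{j\neq i}\|\psi_{\nu_j}\|_{L_2(0,1)}^2 = \|\psi_{\nu_i}\|_{H^1_0(0,1)}^2$, which both justifies the formula for $\|\Psi_\nu\|_V$ given in the statement (using the $H^1$-seminorm convention, under which no dimension-dependent Poincar\'e constants appear) and is the engine for the entire proof.

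For \textrm{(i)}, set $v = \sum_{\nu} v_\nu \|\Psi_\nu\|_V^{-1}\Psi_\nu$ for $\bv \in \ell_2(\vee_\rx)$ and compute, for each direction $i$,
\[
\|\partial_i v\|_{L_2(\Omega)}^2 = \sum_{\hat\nu_i} \int_0^1 \Bigl| \sum_{\nu_i} v_{(\hat\nu_i,\nu_i)} \|\Psi_{(\hat\nu_i,\nu_i)}\|_V^{-1}\, \psi_{\nu_i}'(x_i)\Bigr|^2 \dd x_i,
\]
where $\hat\nu_i$ collects the remaining indices; this uses Fubini together with $L_2(0,1)$-orthonormality of the factors $\psi_{\nu_j}$, $j\neq i$, with no loss of constant. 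The assumed one-dimensional equivalence $\|\sum_{\nu_i} c_{\nu_i}\psi_{\nu_i}'\|_{L_2(0,1)}^2 \eqsim \sum_{\nu_i}\|\psi_{\nu_i}\|_{H^1_0(0,1)}^2 |c_{\nu_i}|^2$ (with $d$-free constants from the 1D Riesz basis of $H^1_0(0,1)$) then yields $\|\partial_i v\|_{L_2}^2 \eqsim \sum_\nu \|\psi_{\nu_i}\|_{H^1_0(0,1)}^2\|\Psi_\nu\|_V^{-2}|v_\nu|^2$. Summing over $i=1,\dots,d$ and using the exact identity $\sum_i\|\psi_{\nu_i}\|_{H^1_0(0,1)}^2 = \|\Psi_\nu\|_V^2$ produces the crucial cancellation and gives $\|v\|_V^2 \eqsim \|\bv\|^2$ with constants independent of $d$.

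Part \textrm{(ii)} is pure duality via the Gelfand triple: since $\{\Psi_\nu\}$ is $H$-orthonormal, $\langle\Psi_\nu,v\rangle_{V'\times V} = \langle\Psi_\nu,v\rangle_H = v_\nu\|\Psi_\nu\|_V^{-1}$ when $v$ is expanded as in \textrm{(i)}. Hence
\[
\|\Psi_\nu\|_{V'} = \sup_{0\neq v\in V}\frac{|\langle\Psi_\nu,v\rangle_H|}{\|v\|_V} \eqsim \sup_{0\neq \bv}\frac{|v_\nu|\,\|\Psi_\nu\|_V^{-1}}{\|\bv\|} = \|\Psi_\nu\|_V^{-1},
\]
again with $d$-independent constants inherited from \textrm{(i)}. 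For \textrm{(iii)}, the natural strategy is to recognize $\{\|\Psi_\nu\|_V\Psi_\nu\}_{\nu\in\vee_\rx}$ as the biorthogonal system dual to the basis from \textrm{(i)} with respect to the $V'\times V$ pairing (the biorthogonality $\langle\|\Psi_\nu\|_V\Psi_\nu,\|\Psi_\mu\|_V^{-1}\Psi_\mu\rangle_H = \delta_{\nu,\mu}$ is immediate). Concretely, for $\bv\in\ell_2$ and any $w=\sum_\mu w_\mu\|\Psi_\mu\|_V^{-1}\Psi_\mu \in V$ one has $\langle\sum_\nu v_\nu\|\Psi_\nu\|_V\Psi_\nu,w\rangle_H = \langle\bv,\bw\rangle$; taking the supremum over $w$ with $\|w\|_V\le 1$ and using \textrm{(i)} yields the asserted equivalence in $V'$.

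The only point requiring real care is the $d$-independence of constants, and I expect this to be the main conceptual obstacle rather than any analytic difficulty. It rests on two structural facts that are preserved by the tensor product construction: first, Fubini together with $L_2$-orthonormality introduces no dimensional factor; second, the 1D Riesz equivalence is applied in only one direction at a time, after which the weights $\|\psi_{\nu_i}\|_{H^1_0}^2$ telescope exactly against $\|\Psi_\nu\|_V^{-2}$. This is also why the $V$-norm must be taken as the $H^1$-seminorm: a full $H^1$-norm would require Poincar\'e with a $d$-dependent constant, breaking the scale invariance that makes the cancellation work.
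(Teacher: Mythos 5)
Your proof is correct and is essentially the standard argument that the paper itself does not spell out but delegates to \cite[Sec.~2]{Dijkema:09} and \cite[Sec.~8]{SpaceTimeAdaptive}: $L_2$-orthonormality in the cross directions to reduce to one-dimensional slices, the one-dimensional Riesz estimate applied in one direction at a time, the exact telescoping $\sum_i \norm{\psi_{\nu_i}}_{H^1_0(0,1)}^2 = \norm{\Psi_\nu}_V^2$, and biorthogonal duality for (ii) and (iii). One minor caveat: your closing justification that a full $H^1$-norm would require a $d$-dependent Poincar\'e constant is not accurate (the Dirichlet Poincar\'e constant on $(0,1)^d$ is bounded uniformly in $d$, in fact it improves with $d$); the seminorm convention is needed only so that the paper's formula $\norm{\Psi_\nu}_V^2 = \sum_{i=1}^d \norm{\psi_{\nu_i}}_{H^1_0(0,1)}^2$ holds with equality, which is exactly what your cancellation step uses.
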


Note that for the $d$-independence of the constants in Proposition \ref{prop:riesz}, it is crucial that we start from $L_2(0,1)$-\emph{orthonormal} univariate wavelets $\{ \psi_\nu \}_{\nu \in \vee_1}$, since (as observed in \cite{Dijkema:09}) otherwise the constants would depend exponentially on $d$, eventually leading to exponential scaling of the computational costs of our method. The same restriction in the choice of univariate wavelets applies to the adaptive low-rank methods for elliptic problems treated in \cite{bachmayr2014adaptive}.

Although our adaptive solver can be formulated for quite general choices of basis functions satisfying the above requirements, our complexity analysis requires a further restriction. In what follows, we assume that both $\{ \theta_{\nu_\rt}\}_{\nu_\rt\in\vee_\rt}$ and $\{ \psi_\nu \}_{\nu\in \vee_1}$ are spline wavelet-type bases that are orthonormal in $L_2(0,T)$ and $L_2(0,1)$, respectively, with sufficiently many vanishing moments and such that
\[
   \operatorname{diam} \supp \theta_{\nu_\rt} \lesssim 2^{-\abs{\nu_\rt}}, \quad \operatorname{diam} \supp \psi_{\nu} \lesssim 2^{-\abs{\nu}}
\]
uniformly in $\nu_\rt\in \vee_\rt$ and $\nu \in \vee_1$. These conditions are satisfied by Donovan-Geronimo-Hardin multiwavelets \cite{multiwavelets}. 

 \begin{remark}\label{rem:otherwavelets}  Note that other types of wavelets could be used. For the spatial basis, the most crucial property is $L_2$-orthonormality and sufficient regularity of the univariate wavelets, which is also provided, for example, by standard Daubechies wavelets with boundary adaptation. Our analysis can be applied in this case, but better compressibility of operators and easier computation of matrix entries are achieved with wavelets that are in addition piecewise polynomial. 
 Concerning the temporal basis functions, our analysis can be adapted to other wavelets that provide a sufficiently sparse representation of the identity and of the time derivative operator. In particular, in combination with the alternative variational form discussed in Remark \ref{rem:alternativeform}, the wavelets constructed specifically for this purpose in \cite{CheginiStevenson:11} could also be used; these have the disadvantage, however, of vanishing at $t=0$. This means that the initial condition needs to be resolved by strong adaptive refinement for small times, which in the present setting would lead to unfavorable quantitative performance. \end{remark}

We now introduce sequences $\bar S^\cX$ and $\bar S^\cY$ as in \eqref{eq:scalinggeneral} that are suitable for our purposes.
First, we take $\bar S^\cY_{\nu} = \norm{\Psi_{\nu}}_V^{-1}$ for $\nu \in \vee_\rx$, so that the second relation in \eqref{eq:scalinggeneral} holds with equality.
Thus with 
\begin{equation}
	\mathbf{\bar D} = \bigl(\norm{\Psi_\nu}_V^{-1} \delta_{\nu,\mu} \bigr)_{\nu,\mu \in \vee_\rx} \label{eq:S-def}
\end{equation}
and $\mathbf{I}_\rt = (\delta_{\nu,\nu'})_{\nu,\nu' \in \vee_\rt}$, we have 
\begin{equation}
	\mathbf{\bar D}_\mathcal{Y} = \mathbf{I}_\rt \otimes \mathbf{\bar D}\ . \label{eq:def-DY}
\end{equation}   
Concerning $\bar S^\cX$, for $\theta_{\nu_\rt} \otimes \Psi_{\nu_\rx} \in \cX$, with Proposition \ref{prop:riesz}\eqref{it:Vprime-norm} we obtain
\begin{align*}
	\left( \norm{\Psi_{\nu_\rx}}_{H_0^1(\Omega)}^2 + \norm{\theta_{\nu_\rt}}_{H^1(I)}^2 \norm{\Psi_{\nu_\rx}}_{H^{-1}(\Omega)}^2 \right)^{\frac{1}{2}}   
	& \eqsim \left( \norm{\Psi_{\nu_\rx}}_{H_0^1(\Omega)}^2 + \norm{\theta_{\nu_\rt}}_{H^1(I)}^2 \norm{\Psi_{\nu_\rx}}_{H_0^1(\Omega)}^{-2} \right)^{\frac{1}{2}} \\
	&\eqsim \frac{\norm{\Psi_{\nu_\rx}}_{H_0^1(\Omega)}^2 + \norm{\theta_{\nu_\rt}}_{H^1(I)}}{\norm{\Psi_{\nu_\rx}}_{H_0^1(\Omega)}}
\end{align*}
with constants independent of $d$. Thus for
\begin{equation}\label{eq:omega-t-x}
	\bar S^\cX_{\nu_\rt,\nu_\rx} =  \frac{\norm{\Psi_{\nu_\rx}}_{H_0^1(\Omega)}}{\norm{\Psi_{\nu_\rx}}_{H_0^1(\Omega)}^2 + \norm{\theta_{\nu_\rt}}_{H^1(I)}} ,
\end{equation}
the first relation \eqref{eq:scalinggeneral} also holds true with constants independent of $d$.

In what follows, we assume $A(t)$ to be time-independent, that is, $A(t)=A$ for some $A\colon V\to V'$; under this assumption, by $L_2$-orthonormality of the wavelet bases, the operator $\bT$ has the form
\begin{equation}\label{eq:opstruct}
	\bT = \bI_\rt \otimes \bT_\rx + \bT_\rt \otimes \bI_\rx .
\end{equation}
We define
\begin{align}
	\mathbf{\bar B}_{\rt} &= \mathbf{\bar D}_\cY (\bT_\rt \otimes \bI_\rx) \mathbf{\bar D}_\cX \label{eq:bAct}, \\
	\mathbf{\bar B}_{\rx} &= \mathbf{\bar D}_\cY  (\bI_\rt \otimes \bT_\rx) \mathbf{\bar D}_\cX \label{eq:bAcx}. 
\end{align} 

To avoid technicalities, we restrict ourselves to the case where $A(t) = A$ is a second-order elliptic operator, especially on the model case of the heat equation, where $A(t) = -\Delta$. The extension to the case of general second-order operators with constant coefficients $M$, $q$ and $c$ in \eqref{eq:second-order-elliptic} is then immediate from the results of \cite{bachmayr2014adaptive}, see Remark \ref{rem:generalellipt} for further details. 
Coefficients with spatial and temporal variability can be treated with our approach, where a convergent method can be obtained by a suitably adapted operator compression.
However, depending on the particular assumptions on the coefficients, the complexity analysis of the method can then become substantially more difficult.
To give a concrete example, let us consider replacing coefficients depending smoothly on time by a polynomial approximation in time. This leads again to an operator representation that is a  sum of Kronecker products as in \eqref{eq:opstruct}, but with a number of terms depending on the error tolerance, and the additional terms require a more involved compression procedure. This will be considered in more detail in future work.

\section{Adaptive Low-Rank Approximations}\label{sec:ALRbasic}

\subsection{Hierarchical tensors}
The low-rank approximations considered here rely on \emph{hierarchical tensors} \cite{HackbuschKuehn:09,Hackbusch:12tensorspaces} as a particular format for low-rank representations of higher-order tensors. Since we will be exclusively interested in tensors on the index set $\vee_\rx = \vee_1\times\cdots\times\vee_1$, we state the following basic results for tensors indexed by $\vee_\rx$. For a detailed treatment of hierarchical tensor representations, we refer to \cite[Ch.~11]{Hackbusch:12tensorspaces} and \cite{B:23}.

The starting point for defining the hierarchical format is a \emph{binary dimension tree} $\mathbb{T}_d$, which is a hierarchy of subsets of $\alpha^* = \{1,\ldots,d\}$, which is the root element of $\mathbb{T}_d$. 
For $\alpha \subset \alpha^*$, we write $\compl{\alpha} = \alpha^* \setminus \alpha$.
Starting with the root element $\alpha^* \in \mathbb{T}_d$, for each $\alpha \in \mathbb{T}_d$ with $\#\alpha >1$, there exist precisely two disjoint children $\alpha_1, \alpha_2 \in \mathbb{T}_d$ such that $\alpha = \alpha_1 \cup \alpha_2$. Consequently, $\{1\},\ldots,\{d\} \in \mathbb{T}_d$; these elements are referred to as \emph{leaves}. Moreover, $\mathbb{T}_d$ has tree structure in the sense that for each $\alpha,\beta \in \mathbb{T}_d$ with $\alpha \neq \beta$, either $\alpha \subset \beta$ or $\beta\subset \alpha$ or $\alpha\cap\beta = \emptyset$. 

Hierarchical tensor formats are based on low-rank representations of \emph{matricizations} of a given tensor, which result from arranging its entries in matrix form.
For $\nu \in \vee_\rx$, let $\nu_\alpha = ( \nu_i )_{i \in \alpha}$.
The \emph{$\alpha$-matricization} of $\bv \in \ell_2(\vee_\rx)$ is given by
\[
  \operatorname{mat}_\alpha(\bv) =  \bigl( \bv_{\nu} \bigr)_{\nu_\alpha \in \vee_1^{\#\alpha}, \nu_{\compl{\alpha}} \in \vee_1^{d-\#\alpha}} \,,
\]
and we define $\rank_\alpha(\bv) = \rank \operatorname{mat}_\alpha(\bv)$ with the abbreviation $\rank_i(\bv) = \rank_{\{i\}}(\bv)$ for $i=1,\ldots,d$.
\begin{figure}
	\begin{subfigure}{0.45\textwidth}
		\begin{tikzpicture}[level distance=1.5cm,
			level 1/.style={sibling distance=3cm},
			level 2/.style={sibling distance=1.25cm}]
			\tikzstyle{every node}=[]
			\node[draw=none] (RootShift) {};
			
			\node (Root) at ([xshift=3cm]RootShift)  {$\alpha^* = \{1,2,3,4\}$}
			child {
				node {$\alpha_1^* = \{1,2\}$} 
				child { node {$\{1\}$} }
				child { node {$\{2\}$} }
			}
			child {
				node {$\alpha_2^* = \{3,4\}$}
				child { node {$\{3\}$} }
				child { node {$\{4\}$} }
			};
		\end{tikzpicture}
		\caption*{Dimension tree $\mathbb{T}_d$}
	\end{subfigure}
	\hfill
	\begin{subfigure}{0.45\textwidth}
		\begin{tikzpicture}[level distance=1.5cm,
			level 1/.style={sibling distance=1.25cm},
			level 2/.style={sibling distance=1.25cm}]
			\tikzstyle{every node}=[draw,circle]
			
			\node (Root) {}
			child { node {} edge from parent node[left,draw=none] {$\big[\{1\}\big]$} }
			child { node {} edge from parent node[right,draw=none] {$\big[\{2\}\big]$} };
			
			\node (Root2) at ([xshift=3cm]Root) {}
			child { node {} edge from parent node[left,draw=none] {$\big[\{3\}\big]$} }
			child { node {} edge from parent node[right,draw=none] {$\big[\{4\}\big]$} };
			
			\node[rectangle,draw=none] (RootOriginal) at ($(Root)+(1.5,1.6)$) {$\big[\alpha_1^*\big] = \big[\alpha_2^*\big]$};
			\draw  (Root.north) to [out=67.5,in=180] ($(RootOriginal) + (0,-0.35)$) to [out=0, in=112.5] (Root2.north);
		\end{tikzpicture}
		\caption*{Effective edges $\mathbb{E}_d$}
	\end{subfigure}
	\caption{Example of a binary dimension tree $\mathbb{T}_d$ and its corresponding effective edges $\mathbb{E}_d$ in dimension $d=4$.}
	\label{fig:dimension-tree-edges}
\end{figure}
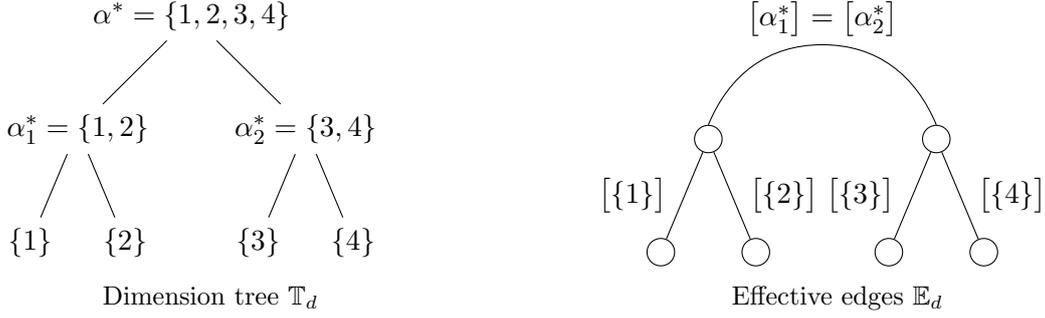

\begin{definition}\label{def:tensorformat}
Let a fixed binary dimension tree $\mathbb{T}_d$ be given.
\begin{enumerate}[{\rm(a)}]
\item The set of \emph{effective edges} of $\mathbb{T}_d$ are the pairs
\[
  \mathbb{E}_d = \bigl\{  \{ \alpha, \compl{\alpha} \} \colon \alpha \in \mathbb{T}_d \setminus \{ \alpha^*\} \bigr\} ,
\]
For each $e \in \mathbb{E}_d$, we define the \emph{representer} $[e]$ as the $\alpha \in e$ such that $\alpha \in \mathbb{T}_d$; if this element is not unique, we make an arbitrary choice of $[e]$.
\item The set of tensors of hierarchical rank at most $\mathsf{r} = (r_e)_{e \in \mathbb{E}_d}$ with $r_e \in \N_0 \cup \{ \infty\}$ for $e \in \mathbb{E}_d$ is then defined as
\[
  \mathcal{H}(\mathsf{r}) = \bigl\{  \bv \in \ell_2(\vee_\rx) \colon  \text{$\rank_{[e]}(\bv) \leq r_e$ for all $e \in \mathbb{E}_d$} \bigr\} .
\]
\item For $\bv \in \ell_2(\vee_\rx)$, we define a notation for the hierarchical rank vector in terms of the matrix ranks of matricizations by
\[
	\rank(\bv) = \bigl(  \rank_{[e]}(\bv) \bigr)_{e \in \mathbb{E}_d}.
\]
\end{enumerate}
\end{definition}

The effective edges $\mathbb{E}_d$ in the above definition correspond to the matricizations that define the hierarchical tensor format associated to $\mathbb{T}_d$. Here matricizations that are identical up to transposition (which correspond to the same rank constraint) are treated as a single effective edge. An illustration of a binary dimension tree $\mathbb{T}_d$ and its corresponding effective edges $\mathbb{E}_d$ for dimension $d=4$ is presented in Figure \ref{fig:dimension-tree-edges}. For the number of effective edges associated to an arbitrary binary dimension tree, one readily verifies $\#\mathbb{E}_d = 2d-3$.

For each $\alpha \in \mathbb{T}_d \setminus \{ \alpha^*\} $, there exists an orthonormal system $\{ \mathbf{U}^\alpha_{k} \}_{k = 1,\ldots, \rank_{\alpha}(\bv)}$ in $\ell_2(\vee_1^{\#\alpha})$, a so-called \emph{mode frame}, that is an orthonormal basis of $\overline{\operatorname{range} \operatorname{mat}_\alpha(\bv)}$.
For $\alpha \in \mathbb{T}_d \setminus \{ \alpha^*\} $ with $\#\alpha >1$, for the children $\alpha_1,\alpha_2 \in \mathbb{T}_d$ of $\alpha$, one has
\begin{equation}\label{eq:nestedness}
  \mathbf{U}^{\alpha}_k = \sum_{\ell_1 = 1}^{\rank_{\alpha_1}\!(\bv)}\;\sum_{\ell_2 = 1}^{\rank_{\alpha_2}\!(\bv)} \mathbf{b}^\alpha_{k, \ell_1, \ell_2} \mathbf{U}^{\alpha_1}_{\ell_1} \otimes \mathbf{U}^{\alpha_2}_{\ell_2}  	 , \quad k = 1,\ldots, \rank_{\alpha}(\bv),	
\end{equation}
with the \emph{transfer tensors} $\mathbf{b}^\alpha$ given by $\mathbf{b}^\alpha_{k, \ell_1, \ell_2} = \langle \mathbf{U}^{\alpha}_k , \mathbf{U}^{\alpha_1}_{\ell_1} \otimes \mathbf{U}^{\alpha_2}_{\ell_2}\rangle$. The nestedness property \eqref{eq:nestedness} implies the restriction $\rank_\alpha(\bv) \leq \rank_{\alpha_1}(\bv)\, \rank_{\alpha_2}(\bv)$ on the possible ranks. With respect to the fixed dimension tree $\mathbb{T}_d$, we denote the set of feasible rank vectors in $(\N_0 \cup \{ \infty\} )^{\mathbb{E}_d}$ of hierarchical tensors by $\mathcal{R}$.

For $\bv \in \ell_2(\vee_\rx)$, in a first step we have the decomposition
\[
    	 \bv =  \sum_{\ell_1 = 1}^{\rank_{\alpha_1^*}(\bv)}\; \sum_{\ell_2 = 1}^{\rank_{\alpha_2^*}(\bv)} \mathbf{b}^{\alpha^*}_{\ell_1, \ell_2} \mathbf{U}^{\alpha_1^*}_{\ell_1}\otimes \mathbf{U}^{\alpha_2^*}_{\ell_2} 
\]
where $\alpha_1^*, \alpha_2^*$ are the children of the root element $\alpha^*$ and where $\mathbf{b}^{\alpha^*} = \bigl( \langle \bv , \mathbf{U}^{\alpha^*_1}_{\ell_1}\otimes  \mathbf{U}^{\alpha^*_1}_{\ell_1} \rangle \bigr)_{\ell_1, \ell_2}$; note that in this particular case, $\rank_{\alpha_1^*}(\bv) = \rank_{\alpha_2^*}(\bv)$, since the children of the root share (up to transposition) the same matricization.
Applying \eqref{eq:nestedness} recursively, we obtain a representation of $\bv$ by $\mathbf{b}^{\alpha^*}$, by the transfer tensors $\mathbf{b}^{\alpha}$ for all $\alpha \in \mathbb{T}_d \setminus \{ \alpha^*\}$ with $\#\alpha >1$, and by the mode frames $\{ \mathbf{U}_k^{\{i \} } \}_{k=1,\ldots, \rank_i(\bv)}$ for $i =1,\ldots, d$. All operations on hierarchical tensors are then performed exclusively on these representation components. 
Note that hierarchical tensors that arise as intermediate results in computations can also be given in terms of non-orthogonal mode frames and transfer tensors with rank parameters larger than the actual hierarchical rank; in this case, we speak of \emph{representation ranks}.

\subsection{Adaptive methods for parabolic problems}\label{sec:recompress-coarsen}

In low-rank methods based on space-time variational formulations, the time variable can be treated as a separate tensor mode in a low-rank decomposition as in \cite{Andreev:15,Boiveau:19}. However, as noted above, this has disadvantages both concerning the basic approximability of solutions and concerning some algorithmic aspects that we comment on in detail in Remark \ref{rem:fullsep}.
Instead, we combine sparse approximation in the time variable with adaptive low-rank approximation in the spatial variables.

The main idea is to use low-rank representations in the spatial variables independently for each time basis index, which leads to approximations with the following structure: 
Denoting by $\mathbf{e}_{\nu_\rt} \in \ell_2(\vee_\rt)$ the Kronecker vector with $\mathbf{e}_{\nu_\rt} = (\delta_{\nu_\rt,\mu_\rt})_{\mu_\rt \in \vee_\rt} $, any $\bu \in \ell_2(\vee) = \ell_2(\vee_\rt \times \vee_\rx)$ can be written uniquely in the form 
\begin{equation}\label{eq:parabolic-lowrank-format}
	\bu = \sum\limits_{\nu_\rt \in \vee_\rt} \mathbf{e}_{\nu_\rt} \otimes \bu_{\nu_\rt} ,
\end{equation}
where $\bu_{\nu_\rt} \in \ell_2(\vee_\rx)$ are to be represented in hierarchical tensor format independently for each $\nu_\rt \in \vee_\rt$, with a shared fixed dimension tree $\mathbb{T}_d$. We say that $\bu$ has \emph{spatial components} $(\bu_{\nut})_{\nu_\rt\in\veet}$ 
and define
\[
	\rank_{\nu_\rt} (\bu) = \rank(\bu_{\nu_\rt}) \,.
\]
To simplify notation, we write
\begin{align*}
	\rank_\infty(\bu) = \lrnorm{\left(	\abs{\rank_{\nu_\rt}(\bu)}_{\infty}\right)_{\nu_\rt}}_{\ell_\infty},
\end{align*}
in other words, $\rank_\infty(\bu)$ denotes the maximum of the maximum rank of the hierarchical tensors associated to the temporal basis indices. Similarly, for $(\sfr_{\nu_\rt})_{\nu_\rt \in \vee_\rt} \in \mathcal{R}^{\vee_\rt}$, we write
\[
	\rank_\infty\bigl(  (\sfr_{\nu_\rt})_{\nu_\rt \in \vee_\rt} \bigr) = \lrnorm{\left(	\abs{\sfr_{\nu_\rt}}_{\infty}\right)_{\nu_\rt}}_{\ell_\infty} \,.
\]

\subsubsection{Near-optimal low-rank recompression}
For the hierarchical tensor format, best approximations for given rank bounds always exist, and truncation of \textit{hierarchical singular value decompositions} (HSVD) yields near-best approximations. In the case of a single hierarchical tensor representation, in the approach of \cite{BachmayrNearOptimal}, the low-rank approximation error is quantified in terms of the maximum entry of the hierarchical rank tuple. In present case of a sequence of tensor representations, we adapt this concept and quantify approximation errors in terms of the $\ell_\infty$-norm of the maximum ranks of the hierarchical tensor representations of each temporal basis index.

Any $\bv \in \ell_2( \vee_\rx )$ has an HSVD representation with mode frames $\{ \mathbf{U}^\alpha_{k} \}_{k = 1,\ldots, \rank_{\alpha}(\bv)}$ that are left singular vectors of $\operatorname{mat}_{\alpha}(\bv)$ for each $\alpha \in \mathbb{T}_d\setminus \{\alpha^*\}$.
The corresponding singular values of these matricizations are denoted by $\sigma^\alpha_k(\bv)$, $k = 1,\ldots, \rank_{\alpha}(\bv)$.
We next consider low-rank approximations in the form \eqref{eq:parabolic-lowrank-format} by termwise truncation of HSVD representations.
To this end, we first introduce notions of minimal ranks for a given target accuracy $\eta$.
Note that in specifying the arising matricizations, we employ Definition \ref{def:tensorformat}(a).

\begin{definition}
	For $\bv \in \ell_2(\vee)$ with spatial components $(\bv_{\nut})_{\nu_\rt \in \vee_\rt}$ and $(\sfr_{\nu_\rt})_{\nu_\rt \in \vee_\rt} \in \mathcal{R}^{\vee_\rt}$ with $\sfr_{\nu_\rt} = ( r_{\nu_\rt,e})_{e \in \mathbb{E}_d}$, we define
	\[
		\lambda\bigl((\sfr_{\nu_\rt})_{\nu_\rt \in \vee_\rt}  ; \bv\bigr) = \bigg( \sum\limits_{\nu_\rt \in \vee_\rt} \sum_{e \in \mathbb{E}_d} \sum_{k > r_{\nu_\rt,e}} \bigabs{\sigma^{[e]}_k(\bv_{\nu_\rt}) }^2 \bigg)^{\frac12} \,.
	\]
	For any $\eta > 0$, we choose $\sfr(\bv,\eta) \in \mathcal{R}^{\vee_\rt}$ with minimal $\rank_\infty\bigl( \sfr(\bv,\eta) \bigr)$ such that 
	\[ \lambda(\sfr(\bv,\eta); \bv) \leq \eta ;\]  
	that is, we choose hierarchical ranks to satisfy this bound such that the maximum of the maximum hierarchical ranks is minimized.
\end{definition}
For $\mathsf{r}\in\mathcal{R}$ and $\hat \bv \in \ell_2(\vee_\rx)$, we denote by $\mathrm{P}_{\hat\bv, \mathsf{r}}$ the linear mapping which applied to $\bv$ yields the HSVD truncation to hierarchical rank $\mathsf{r}$.
With the definition of minimal ranks $\sfr(\bv,\eta)$ for a given target accuracy $\eta$, writing $\sfr(\mathbf{v},\eta) = (\sfr_{\nu_\rt})_{\nu_\rt \in \vee_\rt}$ we define
\begin{equation}\label{eq:HSVDrecompressdef}
	\hat{\mathrm{P}}_{\eta} (\mathbf{v})  =  \sum\limits_{\nu_\rt \in \vee_\rt} \mathbf{e}_{\nu_\rt} \otimes \mathrm{P}_{\mathbf{v}_{\nu_\rt},\mathsf{r}_{\nu_\rt}}\mathbf{v}_{\nu_\rt}
\end{equation}
where $\mathbf{v}$ is in the form \eqref{eq:parabolic-lowrank-format}. Then we have by definition
\begin{align*}
	\norm{\mathbf{v} - \hat{\Pro}_\eta (\mathbf{v})} \leq \lambda(\sfr(\mathbf{v},\eta); \mathbf{v}) \leq \eta, \quad \rank_{\nut}\bigl(\hat{\Pro}_\eta (\mathbf{v})\bigr) = \sfr_{\nut}, \quad \nut \in \veet .
\end{align*}

Based on the representation \eqref{eq:parabolic-lowrank-format}, for $(\sfr_{\nu_\rt})_{\nu_\rt \in \vee_\rt} \in \cR^{\vee_\rt}$, we introduce the class of representations with bounded ranks
\begin{equation*}
	\cF\bigl((\sfr_{\nu_\rt})_{\nu_\rt \in \vee_\rt} \bigr) = \Big\{ \sum\limits_{\nu_\rt \in \vee_\rt} \mathbf{e}_{\nu_\rt} \otimes \bv_{\nu_\rt} : \bv_{\nu_\rt} \in \mathcal{H}(\sfr_{\nu_\rt}) \text{ for all } \nu_\rt \in \vee_\rt \Big\} .
\end{equation*}
As the following proposition shows, the quasi-optimality result for approximation by HSVD truncation carries over to this class of approximations.
\begin{prop}\label{prop:recompress-upper-bound}
	Let $\bv \in \ell_2(\vee)$ and $\kappa_{\Pro} = \sqrt{2d-3}$. Then for $(\sfr_{\nu_\rt})_{\nu_\rt \in \vee_\rt} \in \cR^{\vee_\rt}$, one has
	\begin{align}\label{eq:truncate-upper-bound}
		\lambda\bigl( (\sfr_{\nu_\rt})_{\nu_\rt \in \vee_\rt}; \bv\bigr) \leq \kappa_{\Pro} \inf\limits_{\bw \in \cF((\sfr_{\nu_\rt})_{\nu_\rt \in \vee_\rt})} \norm{\bw - \bv}.
	\end{align}
\end{prop}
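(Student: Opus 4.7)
The plan is to reduce the bound to the well-known quasi-optimality property of HSVD truncation applied separately to each spatial component $\bv_{\nu_\rt}$, and then exploit the Hilbert space structure induced by the orthonormal Kronecker vectors $\mathbf{e}_{\nu_\rt}$ to sum the resulting estimates. The constant $\kappa_{\Pro}=\sqrt{2d-3}$ matches $\#\mathbb{E}_d$, which points directly at summing over effective edges.

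First I would fix $\nu_\rt \in \vee_\rt$ and a single effective edge $e \in \mathbb{E}_d$. For any $\bw_{\nu_\rt} \in \mathcal{H}(\sfr_{\nu_\rt})$ with $\rank_{[e]}(\bw_{\nu_\rt}) \leq r_{\nu_\rt,e}$, the Eckart--Young theorem applied to the $[e]$-matricization gives
\[
 \sum_{k > r_{\nu_\rt,e}} \bigabs{\sigma^{[e]}_k(\bv_{\nu_\rt})}^2 \leq \bignorm{\operatorname{mat}_{[e]}(\bv_{\nu_\rt}) - \operatorname{mat}_{[e]}(\bw_{\nu_\rt})}_{F}^2 = \norm{\bv_{\nu_\rt} - \bw_{\nu_\rt}}^2,
\]
since the Frobenius norm of any matricization equals the $\ell_2$-norm of the tensor. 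Taking the infimum over $\bw_{\nu_\rt} \in \mathcal{H}(\sfr_{\nu_\rt})$ and summing over the $2d-3$ effective edges yields
\[
 \sum_{e \in \mathbb{E}_d} \sum_{k > r_{\nu_\rt,e}} \bigabs{\sigma^{[e]}_k(\bv_{\nu_\rt})}^2 \leq (2d-3) \inf_{\bw_{\nu_\rt} \in \mathcal{H}(\sfr_{\nu_\rt})} \norm{\bv_{\nu_\rt} - \bw_{\nu_\rt}}^2.
\]

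Next I would sum over $\nu_\rt \in \vee_\rt$. By the definition of $\lambda$,
\[
 \lambda\bigl((\sfr_{\nu_\rt})_{\nu_\rt};\bv\bigr)^2 \leq (2d-3) \sum_{\nu_\rt \in \vee_\rt} \inf_{\bw_{\nu_\rt} \in \mathcal{H}(\sfr_{\nu_\rt})} \norm{\bv_{\nu_\rt} - \bw_{\nu_\rt}}^2.
\]
Since the $\mathbf{e}_{\nu_\rt}$ are orthonormal, any $\bw \in \cF((\sfr_{\nu_\rt})_{\nu_\rt})$ written as $\bw = \sum_{\nu_\rt} \mathbf{e}_{\nu_\rt} \otimes \bw_{\nu_\rt}$ with $\bw_{\nu_\rt} \in \mathcal{H}(\sfr_{\nu_\rt})$ satisfies $\norm{\bw - \bv}^2 = \sum_{\nu_\rt} \norm{\bw_{\nu_\rt} - \bv_{\nu_\rt}}^2$, and conversely any choice of componentwise $\bw_{\nu_\rt}$ assembles into an admissible $\bw$. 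Hence the sum of componentwise infima coincides with the joint infimum over $\cF((\sfr_{\nu_\rt})_{\nu_\rt})$, and taking square roots produces \eqref{eq:truncate-upper-bound}.

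There is no serious obstacle; the only subtlety is to notice that the naive application of the Eckart--Young bound for a single matricization already yields a per-edge control of the singular-value tail by the best hierarchical approximation error, so no nested truncation argument is needed. The factor $\sqrt{2d-3}$ arises entirely from summing $2d-3$ identical upper bounds across effective edges, and the passage from spatial-component infima to the global infimum over $\cF$ is automatic from the direct-sum structure of the representation \eqref{eq:parabolic-lowrank-format}.
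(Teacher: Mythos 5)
Your proof is correct and takes essentially the same route as the paper: a per-time-index bound with constant $2d-3$, followed by summation over $\nu_\rt$ using that the terms $\mathbf{e}_{\nu_\rt}\otimes\bv_{\nu_\rt}$ have disjoint supports, so the componentwise infima assemble into the infimum over $\cF\bigl((\sfr_{\nu_\rt})_{\nu_\rt}\bigr)$. The only difference is that you re-derive the single-tensor tail bound edge by edge via Eckart--Young applied to each matricization, whereas the paper simply cites this as the known HSVD quasi-optimality estimate -- substantively the same argument.
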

\begin{proof}
	For the approximation of $\hat{\bv} \in \ell_2(\vee_\rx)$ by elements of $\cH(\sfr)$ for any $\sfr = (r_e)_{e\in\mathbb{E}_d} \in \cR$, we have
	\begin{align*}
		\sum_{e\in\mathbb{E}_d} \sum_{k > r_e} \bigabs{\sigma_k^{[e]}(\hat\bv)}^2 \leq \constsvd \inf\limits_{\bw \in \mathcal{H}(\sfr)} \norm{\bw - \hat{\bv}}^2 .
	\end{align*}
	Writing $\bv \in \ell_2(\vee)$ in the form \eqref{eq:parabolic-lowrank-format}, we obtain
	\begin{align*}
		\lambda^2\bigl((\sfr_{\nu_\rt})_{\nu_\rt \in \vee_\rt}; \bv\bigr) 
		&\leq \sum\limits_{\nu_\rt \in \vee_\rt} \kappa_{\Pro}^2 \inf\limits_{\bw_{\nu_\rt} \in \cH(\sfr_{\nu_\rt})} \norm{\bw_{\nu_\rt} - \bv_{\nu_\rt}}^2 \\
		&=\kappa_{\Pro}^2 \sum\limits_{\nu_\rt \in \vee_\rt}  \inf\limits_{\bw_{\nu_\rt} \in \cH(\sfr_{\nu_\rt})} \norm{\mathbf{e}_{\nu_\rt} \otimes \bw_{\nu_\rt} - \mathbf{e}_{\nu_\rt} \otimes \bv_{\nu_\rt}}^2 \\
		&= \kappa_{\Pro}^2 \inf\limits_{\bw \in \cF((\sfr_{\nu_\rt})_{\nu_\rt \in \vee_\rt})} \norm{\bw - \bv}^2,
	\end{align*}
	for any choice of $(\sfr_{\nu_\rt})_{\nu_\rt \in \vee_\rt}$, where we used that the entries on each sum are disjoint.
\end{proof}

For $r \in \N_0$, we define the best approximation errors with maximum rank $r$ by
\begin{multline*}
	\sigma_r(\bv) = \inf \Big\{ \norm{\bv - \bw} : \bw \in \cF\bigl((r_{\nu_\rt})_{\nu_\rt}\bigr)  \text{ with } (r_{\nu_\rt})_{\nu_\rt \in\vee_\rt} \in \cR^{\vee_\rt},  \rank_\infty\bigl( (r_{\nu_\rt})_{\nu_\rt} \bigr) \leq r  \Big\}.
\end{multline*}
For each $\eta > 0$, we introduce corresponding best approximation ranks by choosing $\bar{\sfr}(\bv,\eta) \in \mathcal{R}^{\vee_\rt}$ such that 
\[
	\rank_\infty\bigl( \bar{\sfr}(\bv,\eta) \bigr) = \min\{ r \in \N_0 \colon \sigma_r(\bv) \leq \eta \}\,.
\]
The following lemma provides an analogue of \cite[Lemma 2]{BachmayrNearOptimal} and is proved in the same manner using the Proposition \ref{prop:recompress-upper-bound}. For the reader's convenience, the proof can be found in Appendix \ref{sec:aux}.

\begin{lemma}\label{lem:recompress}
	Fix any $\alpha > 0$. For any $\bu,\bv, \eta$ satisfying $\norm{\bu - \bv} \leq \eta$, one has
	\begin{align}\label{eq:recompress-norm}
		\norm{\bu - \hat{\Pro}_{\kappa_{\Pro}(1+\alpha)\eta} (\bv)} \leq (1 + \kappa_{\Pro} (1+\alpha)) \eta
	\end{align}
	while
	\begin{equation}\label{eq:recompress-rank-upper-bound}
		\rank_\infty\bigl(\hat{\Pro}_{\kappa_{\Pro}(1+\alpha)\eta} (\bv)\bigr) \leq \rank_\infty \bigl( \bar{\sfr}(\bu,\alpha \eta) \bigr).
	\end{equation}
\end{lemma}

We now consider corresponding approximation classes in the same way as for a single low-rank representation in \cite[Definition 4]{BachmayrNearOptimal}. For simplicity, we adopt the same notation.

\begin{definition}
	We call a positive, strictly increasing sequence $\gamma = \big(\gamma(n)\big)_{n \in \N_0}$ with $\gamma(0) = 1$ and $\gamma(n) \to \infty$ as $n \to \infty$ a \textit{growth sequence}. For a given growth sequence $\gamma$, we define
	\begin{equation*}
		\norm{\bv}_{\cA(\gamma)} = \sup\limits_{r \in \N_0} \gamma(r) \sigma_r (\bv) , \qquad
		\cA (\gamma) = \left\{ \bv \in \ell_2(\vee) :\norm{\bv}_{\cA(\gamma)} < \infty \right\} .
	\end{equation*}
	We call the growth sequence $\gamma$ \textit{admissible} if
	\begin{equation*}
		\rho_{\gamma} = \sup\limits_{n \in \N} \frac{\gamma(n)}{\gamma(n-1)} < \infty,
	\end{equation*}
	which corresponds to a restriction of at most exponential growth. By $\gamma^{-1}\colon \R^+ \to \N_0$, we denote the left-continuous inverse of $\gamma$.
\end{definition}

Note that $\bv \in \cA(\gamma)$ means that a target accuracy $\varepsilon$ can be realized with maximum ranks of the size $\gamma^{-1}(\norm{\bv}_{\cA(\gamma)} / \varepsilon)$, so that a rank bound of the form $\gamma^{-1}(C \norm{\bv}_{\cA(\gamma)} / \varepsilon)$, where $C$ is any constant, is near-optimal.

We close this section with a final result about the HSVD recompression operator defined in \eqref{eq:HSVDrecompressdef}, where we assume that the approximand $\bu$ is an element of an approximation class $\cA(\gamma)$. 

\begin{theorem}\label{thm:recompress}
	Let $\kappa_{\Pro} = \sqrt{2d-3}$ and $\alpha > 0$. Assume that $\bu \in \cA(\gamma)$ with an admissible growth sequence $\gamma$ and that $\bv \in \ell_2(\vee)$ satisfies $\norm{\bu - \bv} \leq \eta$ for $\eta>0$. Then defining $\bw_{\eta} = \hat{\Pro}_{\kappa_{\Pro}(1+\alpha)\eta} (\bv)$, one has
	\begin{equation*}
		\norm{\bu - \bw_{\eta}} \leq (1 + \kappa_{\Pro}(1+\alpha))\eta, \qquad
		\rank_\infty(\bw_{\eta}) \leq \gamma^{-1} \big(\rho_{\gamma} \norm{\bu}_{\cA(\gamma)} / (\alpha \eta)\big), 
	\end{equation*}
	and
	\[
		\norm{\bw_{\eta}}_{\cA(\gamma)} \leq \bigl(\alpha^{-1}(1+\kappa_{\Pro}(1+\alpha)) + 1\bigr) \norm{\bu}_{\cA(\gamma)} .
	\]
\end{theorem}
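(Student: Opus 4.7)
\medskip

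\textbf{Plan.} The first inequality and the rank bound will essentially be read off from Lemma \ref{lem:recompress}, with a bit of extra work for the rank bound to convert an abstract best-approximation rank into an explicit quantity in terms of $\gamma^{-1}$. The approximation-class bound on $\bw_\eta$ will then follow from a triangle inequality combined with a careful use of the rank bound. I expect the main subtlety to be in the third estimate, where one has to avoid picking up an extra factor of $\rho_\gamma$.

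\medskip

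For the first inequality, I would just invoke \eqref{eq:recompress-norm} from Lemma \ref{lem:recompress}, which under the hypothesis $\norm{\bu-\bv}\le\eta$ gives exactly $\norm{\bu-\bw_\eta}\le(1+\kappa_\Pro(1+\alpha))\eta$. For the rank bound, Lemma \ref{lem:recompress} yields $R:=\rank_\infty(\bw_\eta)\le \rank_\infty(\bar\sfr(\bu,\alpha\eta))=\min\{r\in\N_0\colon \sigma_r(\bu)\le \alpha\eta\}$. If $R=0$ the claim is trivial; otherwise, by minimality $\sigma_{R-1}(\bu)>\alpha\eta$, and since $\bu\in\cA(\gamma)$ this gives $\gamma(R-1)\alpha\eta<\gamma(R-1)\sigma_{R-1}(\bu)\le\norm{\bu}_{\cA(\gamma)}$, i.e.\ $\gamma(R-1)<\norm{\bu}_{\cA(\gamma)}/(\alpha\eta)$. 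Admissibility then gives $\gamma(R)\le \rho_\gamma\gamma(R-1)<\rho_\gamma\norm{\bu}_{\cA(\gamma)}/(\alpha\eta)$, so applying $\gamma^{-1}$ yields the second claim.

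\medskip

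For the third inequality, I would show that $\gamma(r)\sigma_r(\bw_\eta)\le(\alpha^{-1}(1+\kappa_\Pro(1+\alpha))+1)\norm{\bu}_{\cA(\gamma)}$ for every $r\in\N_0$. Since $\sigma_r(\bw_\eta)=0$ for $r\ge R$, only $r<R$ requires work (and $R=0$ is trivial). For such $r$, picking any best rank-$r$ approximation $\bz_r$ of $\bu$ in the sense of the $\sigma_r(\bu)$-infimum, the triangle inequality combined with the first bound gives
\[
  \sigma_r(\bw_\eta)\le \norm{\bw_\eta-\bz_r}\le \norm{\bw_\eta-\bu}+\sigma_r(\bu)\le(1+\kappa_\Pro(1+\alpha))\eta+\sigma_r(\bu).
\]
Multiplying by $\gamma(r)$ and using $\gamma(r)\sigma_r(\bu)\le\norm{\bu}_{\cA(\gamma)}$ leaves the term $\gamma(r)(1+\kappa_\Pro(1+\alpha))\eta$ to control.

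\medskip

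The key point, which is exactly where the missing $\rho_\gamma$ factor is avoided, is that since $r\le R-1$ and $\gamma$ is strictly increasing, $\gamma(r)\le \gamma(R-1)$, and the bound $\gamma(R-1)<\norm{\bu}_{\cA(\gamma)}/(\alpha\eta)$ established in the rank argument applies \emph{without} admissibility. This gives $\gamma(r)(1+\kappa_\Pro(1+\alpha))\eta\le \alpha^{-1}(1+\kappa_\Pro(1+\alpha))\norm{\bu}_{\cA(\gamma)}$, and adding the $\sigma_r(\bu)$ contribution produces the stated constant. The main obstacle is really this bookkeeping step: using $\gamma(R)$ would introduce a spurious $\rho_\gamma$, whereas using $\gamma(R-1)$ gives the clean constant claimed in the theorem.
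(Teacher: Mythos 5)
Your proof is correct and follows essentially the same route as the paper, which simply invokes Lemma \ref{lem:recompress} and refers to the argument of \cite[Theorem 6]{BachmayrNearOptimal}: error bound directly from the lemma, rank bound via $\sigma_{R-1}(\bu)>\alpha\eta$ together with admissibility, and the $\cA(\gamma)$-bound via the triangle inequality with $\gamma(r)\le\gamma(R-1)<\norm{\bu}_{\cA(\gamma)}/(\alpha\eta)$ for $r<R$. Your observation that using $\gamma(R-1)$ rather than $\gamma(R)$ avoids a spurious $\rho_\gamma$ in the last constant is exactly the right bookkeeping.
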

Using Lemma \ref{lem:recompress}, the statement can be proved exactly in the same way as for a single low-rank representation, see \cite[Theorem 6]{BachmayrNearOptimal}. Note that here, the restriction to admissible $\gamma$ is not essential for obtaining rank bounds using Lemma \ref{lem:recompress}, but for $\gamma$ of faster growth (corresponding to faster than exponential decay of matricization singular values) one obtains only weaker information on quasi-optimality of ranks than provided by Theorem~\ref{thm:recompress}.

\subsubsection{Contractions and coarsening} 

In addition to the near-optimal low-rank recompression operator, we need a way to select finitely many basis indices for approximation in each mode frame. For this mechanism we adapt the concept of lower-dimensional \emph{contractions} used in \cite{BachmayrNearOptimal,bachmayr2014adaptive}.

The basic idea of the coarsening operator is to reduce the complexity of given coefficient sequences in tensor representations by discarding basis indices of all coefficients of sufficiently small absolute value. At the same time, in our present setting we also need to preserve the Cartesian product structure of spatial index sets, separately for each time index. We use the following standard notions of best $N$-term approximation.

\begin{definition}
	For any countable index set $\hat\vee$, $\Lambda \subset \hat{\vee}$, and $\bv \in \ell_2(\hat{\vee})$, we define the \emph{restriction} $\Res_{\Lambda} \bv$ to be equal to $\bv$ on $\Lambda$, and zero on $\hat\vee\setminus\Lambda$. For $s > 0$, we define 
	\begin{equation}\label{eq:Asnormdef}
		\norm{\bv}_{\cAs(\hat{\vee})} = \sup\limits_{N \in \N_0} (N+1)^s \inf\limits_{\substack{\Lambda \subset \hat{\vee} \\ \# \Lambda \leq N}} \norm{\bv - \Res_{\Lambda} \bv}
	\end{equation}
	and the approximation class
	\begin{align*}
		\cAs(\hat{\vee}) = \left\{ \bv \in \ell_2(\hat{\vee}) : \norm{\bv}_{\cAs(\hat{\vee})} < \infty \right\} .
	\end{align*}
\end{definition}
It can be shown that $\cAs(\hat{\vee})$ is a quasi-Banach space with the quasinorm defined in \eqref{eq:Asnormdef}; that is, $\norm{\cdot}_{\cAs(\hat{\vee})}$ satisfies the properties of a norm except for the triangle inequality. Where no confusion can arise, we write $\cAs = \cAs(\hat{\vee})$.
The following statements are direct consequences of the definition of the $\cAs$-quasinorm.
\begin{prop}\label{prop:cAs-properties}
	For $\bv,\bw \in \cAs(\hat \vee)$, the $\cAs$-quasinorm has the following properties:
	\begin{enumerate}[{\rm(i)}]
		\item\label{prop:cAs-triangle} $\norm{\bv + \bw}_{\cAs} \leq 2^s (\norm{\bv}_{\cAs} + \norm{\bw}_{\cAs})$.
		\item If $\# \supp(\bv) \leq N$, we have $\norm{\bv}_{\cAs} \leq N^s \norm{\bv}$.
		\item If $\bv_N$ is a best $N$-term approximation of $\bv$, then
		$\norm{\bv}_{\cAs} \geq (N+1)^s \norm{\bv - \bv_N}$.
		\item\label{prop:restr-ineq} $\norm{\Res_{\Lambda} \bv}_{\cAs} \leq \norm{\bv}_{\cAs}$ for any $\Lambda \subset \hat\vee$.
	\end{enumerate}
\end{prop}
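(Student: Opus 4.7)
The plan is to verify the four items in turn, since each reduces to a short manipulation of the best $N$-term error, and only (i) requires a minor trick. I would not expect any serious obstacle; the main thing to get right is the bookkeeping with floors and with the $(N+1)^s$ normalization.

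For (i) (quasi-triangle inequality), the standard argument is to split the index budget in half. Given $N \in \N_0$, set $M = \lfloor N/2 \rfloor$, choose $\Lambda_v, \Lambda_w \subset \hat\vee$ with $\#\Lambda_v, \#\Lambda_w \leq M$ realizing the infima in \eqref{eq:Asnormdef} for $\bv$ and $\bw$, and use $\Lambda = \Lambda_v \cup \Lambda_w$, which satisfies $\#\Lambda \leq 2M \leq N$. Then
\[
\norm{(\bv+\bw) - \Res_{\Lambda}(\bv+\bw)} \leq \norm{\bv - \Res_{\Lambda_v}\bv} + \norm{\bw - \Res_{\Lambda_w}\bw} \leq (M+1)^{-s}\bigl(\norm{\bv}_{\cAs} + \norm{\bw}_{\cAs}\bigr),
\]
and since $M+1 \geq (N+1)/2$, multiplying by $(N+1)^s$ yields the factor $2^s$. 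Taking the supremum over $N$ gives \eqref{prop:cAs-triangle}.

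For (ii), if $\#\supp(\bv) \leq N$, then for any $M \geq N$ we can take $\Lambda = \supp(\bv)$ and the restriction error vanishes, while for $M < N$ we trivially have $\inf_{\#\Lambda\le M}\norm{\bv - \Res_\Lambda \bv} \leq \norm{\bv}$; combining, $(M+1)^s$ times this infimum is bounded by $N^s \norm{\bv}$ in both cases. Statement (iii) is a direct reading of the definition: picking out the single index $N$ in the supremum \eqref{eq:Asnormdef}. For (iv), the key observation is that for any $\Lambda_N \subset \hat\vee$ with $\#\Lambda_N \leq N$, the set $\Lambda_N \cap \Lambda$ has cardinality at most $N$ and gives
\[
\norm{\Res_\Lambda \bv - \Res_{\Lambda_N \cap \Lambda} \bv} = \norm{\Res_{\Lambda \setminus \Lambda_N} \bv} \leq \norm{\bv - \Res_{\Lambda_N} \bv},
\]
so every approximation for $\bv$ yields an at least as good approximation for $\Res_\Lambda \bv$ with the same index budget; taking the infimum over $\Lambda_N$ and then the supremum over $N$ gives the inequality.

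The hardest part, if anything, is only the constant in (i): one must be careful that the halving $M = \lfloor N/2\rfloor$ still satisfies $M+1 \geq (N+1)/2$ in the edge cases $N = 0, 1$, which is readily checked. All remaining steps are routine.
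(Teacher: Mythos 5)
Your proposal is correct: all four items are verified by exactly the standard manipulations of the best $N$-term error that the paper has in mind when it states the proposition without proof as a ``direct consequence of the definition'', including the index-budget halving $M=\lfloor N/2\rfloor$ with $M+1\geq (N+1)/2$ for the quasi-triangle inequality and the observation $\norm{\Res_{\Lambda\setminus\Lambda_N}\bv}\leq\norm{\bv-\Res_{\Lambda_N}\bv}$ for item \eqref{prop:restr-ineq}. No gaps; nothing further is needed.
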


In our present setting, we apply this notion of approximation classes separately to the lower-dimensional sets of basis indices in each tensor mode. As in case of single tensor representations used for elliptic problems in \cite{BachmayrNearOptimal,bachmayr2014adaptive}, we use the concept of \textit{near best} $N$-term approximations based on \emph{tensor contractions}. These contractions as defined below can be interpreted as lower-dimensional densities of coefficient tensors.
Here we extend the concept of contractions to our combined spatial low-rank and temporal sparse approximation by using the set of contractions for each low-rank approximation of each time basis index.
\begin{definition}\label{def:contractions}
	For $\bv \in \ell_2(\vee_\rx)$, where $\vee_\rx = \bigtimes_{i=1}^d \vee_{1}$, and $i=1,\ldots,d$, we define the (spatial) contractions $\pi^{(i)}(\bv) = \bigl( \pi^{(i)}_{\mu}(\bv) \bigr)_{\mu\in\vee_1} \in \ell_2(\vee_1)$ by their entries
	\[
		\pi^{(i)}_{\mu} (\bv)
		=  \biggl( \sum_{\nu_1 \in \vee_1} \cdots \sum_{\nu_{i-1}\in\vee_1} \sum_{\nu_{i+1}\in\vee_1} \cdots \sum_{\nu_d\in\vee_1} \abs{\bv_{\nu_1,\ldots,\nu_{i-1},\mu,\nu_{i+1},\ldots,\nu_d}}^2 \biggr)^{\frac{1}{2}}, \quad \mu \in \vee_1.
	\]
	Let $\bw \in \ell_2(\vee)$ with spatial components $(\bw_{\nut})_{\nut\in\veet}$, where $\vee = \vee_\rt \times \vee_\rx$. For $i =1,\ldots,d$, we define the spatio-temporal contractions
	\begin{align*}
		\piti(\bw) = \bigl( \pi^{(i)}_{\mu} (\bw_{\nu_\rt}) \bigr)_{(\nu_\rt,\mu) \in \vee_\rt \times \vee_{1}}
	\end{align*}
	as well as the temporal contractions
	\begin{align*}
		\pit(\bw) = \Bigl(\pit_{\nu_\rt}(\bw)\Bigr)_{\nu_\rt \in \vee_\rt} = \bigl(\norm{\bw_{\nu_\rt}}\bigr)_{\nu_\rt \in \vee_\rt} .
	\end{align*}
\end{definition}
The direct computation of these quantities would involve high-dimensional summations. The observations from \cite{BachmayrNearOptimal} show that this can be avoided due to orthogonality properties of the tensor formats. These also apply in the present setting, because the summations are evaluated independently for each time index.
In our case, we have the following analogous basic properties of the contractions.
\begin{prop}\label{prop:error-restrict-contr}
	Let $\bv \in \ell_2(\vee)$ with spatial components $(\bv_{\nut})_{\nut\in\veet}$.
	\begin{enumerate}[{\rm(i)}]
		\item We have $\norm{\bv} = \norm{\piti(\bv)}, i=1,\dots,d$.
		\item\label{it:restr-norm-contr} Let $\Lambda^{(i)}_{\nu_\rt} \subseteq \vee_1$ for each $i=1,\dots,d$ and $\nu_\rt \in \vee_\rt$ and let
		\begin{align*}
			\Lambda = \bigcup\limits_{\nu_\rt \in \vee_\rt} \{\nu_\rt\} \times \Lambda^{(1)}_{\nu_\rt} \times \cdots \times \Lambda^{(d)}_{\nu_\rt} ,
		\end{align*}
		then we have
		\begin{align}\label{eq:bound-restriction-contractions}
			\norm{\bv - \Res_{\Lambda} \bv} \leq \biggl( \sum\limits_{\nu_\rt \in \vee_\rt} \sum\limits_{i=1}^d \sum\limits_{\nu_\rx \in \vee_1 \setminus \Lambda^{(i)}_{\nu_\rt}} \bigabs{\piti_{\nu_\rt,\nu_\rx}(\bv)}^2 \biggr)^{\frac{1}{2}} .
		\end{align}
	\item\label{prop:contr-hsvd} For $i=1,\ldots,d$, let in addition $\bU^{(i)}_{\nu_\rt}$ be the mode frames and $\sigma_k^{(i,\nu_\rt)}$ be the sequence of mode-$i$ singular values of an HSVD of $\bv_{\nu_\rt}$. Then
	\begin{align*}
		\piti_{\nu_\rt,\nu_\rx}(\bv) = \biggl( \sum\limits_k \bigabs{\bigl(\bU^{(i)}_{\nu_\rt}\bigr)_{\nu_\rx,k}}^2 \bigabs{\sigma_k^{(i,\nu_\rt)}}^2 \biggr)^{\frac{1}{2}} .
	\end{align*}
	\end{enumerate}
\end{prop}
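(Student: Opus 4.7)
The proof breaks naturally into the three stated items; none of them requires a novel idea beyond carefully adapting the corresponding observations for a single hierarchical tensor (as in \cite{BachmayrNearOptimal}) to the direct-sum structure \eqref{eq:parabolic-lowrank-format}.

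For item (i), I would simply unfold the definitions: by definition of $\pit i$ and Parseval,
\[
  \norm{\piti(\bv)}^2 = \sum_{\nu_\rt\in\vee_\rt}\sum_{\mu\in\vee_1} \bigabs{\pi^{(i)}_\mu(\bv_{\nu_\rt})}^2 = \sum_{\nu_\rt}\sum_{\mu}\sum_{\nu_1,\ldots,\widehat{\nu_i},\ldots,\nu_d} \bigabs{(\bv_{\nu_\rt})_{\nu_1,\ldots,\mu,\ldots,\nu_d}}^2 = \norm{\bv}^2,
\]
since summing over $\mu$ together with the remaining $\nu_j$'s recovers a full sum over $\vee_\rx$, and summing over $\nu_\rt$ then gives the full $\ell_2(\vee)$ norm via \eqref{eq:parabolic-lowrank-format} (whose summands are pairwise orthogonal because the $\mathbf{e}_{\nu_\rt}$ are).

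For item (ii), the plan is to first reduce to a purely spatial statement for each fixed $\nu_\rt$ by orthogonality of the $\mathbf{e}_{\nu_\rt}$: the set $\Lambda$ has the form $\bigcup_{\nu_\rt} \{\nu_\rt\}\times (\Lambda^{(1)}_{\nu_\rt}\times\cdots\times\Lambda^{(d)}_{\nu_\rt})$, so
\[
  \norm{\bv - \Res_{\Lambda}\bv}^2 = \sum_{\nu_\rt\in\vee_\rt}\bignorm{\bv_{\nu_\rt} - \Res_{\Lambda^{(1)}_{\nu_\rt}\times\cdots\times\Lambda^{(d)}_{\nu_\rt}} \bv_{\nu_\rt}}^2.
\]
For each fixed $\nu_\rt$, the complement of the Cartesian product $\Lambda^{(1)}_{\nu_\rt}\times\cdots\times\Lambda^{(d)}_{\nu_\rt}$ in $\vee_\rx$ equals $\bigcup_{i=1}^d \{\nu\in\vee_\rx : \nu_i\notin \Lambda^{(i)}_{\nu_\rt}\}$, so a union bound yields
\[
  \bignorm{\bv_{\nu_\rt} - \Res_{\Lambda^{(1)}_{\nu_\rt}\times\cdots\times\Lambda^{(d)}_{\nu_\rt}} \bv_{\nu_\rt}}^2 \le \sum_{i=1}^d \sum_{\mu\in\vee_1\setminus\Lambda^{(i)}_{\nu_\rt}} \bigabs{\pi^{(i)}_\mu(\bv_{\nu_\rt})}^2,
\]
where in the last step the inner sum over the remaining modes is precisely $|\pi^{(i)}_\mu(\bv_{\nu_\rt})|^2$. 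Summing over $\nu_\rt$ and using $\pit i_{\nu_\rt,\mu}(\bv)=\pi^{(i)}_\mu(\bv_{\nu_\rt})$ gives \eqref{eq:bound-restriction-contractions}.

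For item (iii), I would work with a single spatial component $\bv_{\nu_\rt}$ and its $\{i\}$-matricization. The left singular vectors of $\operatorname{mat}_{\{i\}}(\bv_{\nu_\rt})$ are (up to the chosen sign convention) the columns of $\bU^{(i)}_{\nu_\rt}$, with singular values $\sigma_k^{(i,\nu_\rt)}$ and orthonormal right singular vectors $\mathbf{V}^{(i)}_{\nu_\rt,k}\in\ell_2(\vee_1^{d-1})$. Writing $\operatorname{mat}_{\{i\}}(\bv_{\nu_\rt}) = \sum_k \sigma_k^{(i,\nu_\rt)} (\bU^{(i)}_{\nu_\rt})_{\cdot,k}(\mathbf{V}^{(i)}_{\nu_\rt,k})^{\top}$ and taking the squared row norm at row $\nu_\rx$,
\[
  \bigabs{\pi^{(i)}_{\nu_\rx}(\bv_{\nu_\rt})}^2 = \sum_{\mu\in\vee_1^{d-1}}\bigabs{(\operatorname{mat}_{\{i\}}\bv_{\nu_\rt})_{\nu_\rx,\mu}}^2 = \sum_k \bigabs{(\bU^{(i)}_{\nu_\rt})_{\nu_\rx,k}}^2\bigabs{\sigma_k^{(i,\nu_\rt)}}^2,
\]
using orthonormality of the $\mathbf{V}^{(i)}_{\nu_\rt,k}$ to kill all cross-terms. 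Taking square roots and restoring the notation $\pit i_{\nu_\rt,\nu_\rx}(\bv)$ finishes (iii).

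The only place where any care is needed is the union-bound step in (ii), where one must recognize that the complement of a product set is a union of slabs and that summing the squared absolute values over such a slab exactly reproduces $|\pi^{(i)}_\mu(\bv_{\nu_\rt})|^2$; everything else is direct computation using Parseval and the SVD.
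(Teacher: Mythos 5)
Your proof is correct, and it follows exactly the route the paper intends: the paper states this proposition without proof as the componentwise analogue of the single-tensor contraction properties from \cite{BachmayrNearOptimal}, relying on the orthogonality of the summands $\mathbf{e}_{\nu_\rt}\otimes\bv_{\nu_\rt}$ to reduce to one spatial component per time index, which is precisely your reduction. Your fleshed-out details — Parseval for (i), the complement-of-a-product-set-as-union-of-slabs bound for (ii), and the SVD row-norm computation with orthonormal right singular vectors for (iii) — are all sound.
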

At certain points we need a slightly modified notation of Proposition \ref{prop:error-restrict-contr}\eqref{it:restr-norm-contr}. We define 
\begin{align*}
	\Lambda^{(i)} = \bigcup_{\nu_\rt \in \vee_\rt} \bigl\{ (\nu_\rt, \nu_i) : \nu_i \in \Lambda^{(i)}_{\nu_\rt} \bigr\}
\end{align*}
and from \eqref{eq:bound-restriction-contractions} obtain
\begin{equation}\label{eq:bound-restriction-contraction-mod}
		\norm{\bv - \Res_{\Lambda} \bv } \leq  \biggl(\sum\limits_{i=1}^d \norm{\piti(\bv) - \Res_{\Lambda^{(i)}} \piti(\bv)}^2\biggr)^{\frac{1}{2}} 
		\leq  \sum\limits_{i=1}^d \norm{\piti(\bv) - \Res_{\Lambda^{(i)}} \piti(\bv)}.
\end{equation}

Additionally we have the following subaddivity property, which is an immediate consequence of the triangle inequality.
\begin{prop}
	Let $N \in \N$ and $\bv_n \in \ell_2(\vee), n=1,\dots,N$. Then for $i=1,\ldots,d$ and $\nu_\rx \in \vee_1$, $\nu_\rt \in \vee_\rt$, we have
	\begin{align*}
		\piti_{\nu_\rt,\nu_\rx} \Bigl( \sum\limits_{n=1}^N \bv_n \Bigr) \leq \sum\limits_{n=1}^N \piti_{\nu_\rt,\nu_\rx} (\bv_n) .
	\end{align*}
\end{prop}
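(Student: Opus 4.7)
The plan is to reduce the claim to the triangle inequality for the $\ell_2$-norm by recognizing that each contraction $\pi^{(i)}_{\mu}$ is itself an $\ell_2$-norm taken over all spatial modes except the $i$-th one.

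First I would unfold the definitions. By Definition \ref{def:contractions}, $\piti_{\nu_\rt,\nu_\rx}(\bw) = \pi^{(i)}_{\nu_\rx}(\bw_{\nu_\rt})$, where $\bw_{\nu_\rt}$ is the spatial component of $\bw$ indexed by $\nu_\rt$. Since the decomposition \eqref{eq:parabolic-lowrank-format} is linear in $\bw$, the spatial component of $\sum_{n=1}^N \bv_n$ at time index $\nu_\rt$ is $\sum_{n=1}^N (\bv_n)_{\nu_\rt}$. Thus it suffices to show the subadditivity of $\pi^{(i)}_{\mu}$, namely
\[
	\pi^{(i)}_{\mu}\Bigl(\sum_{n=1}^N \hat\bv_n\Bigr) \leq \sum_{n=1}^N \pi^{(i)}_{\mu}(\hat\bv_n)
\]
for any $\hat\bv_1,\dots,\hat\bv_N \in \ell_2(\vee_\rx)$ and $\mu \in \vee_1$.

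For fixed $\mu$, the definition in Definition \ref{def:contractions} shows that $\hat\bv \mapsto \pi^{(i)}_\mu(\hat\bv)$ is precisely the $\ell_2$-norm of the slice $\bigl(\hat\bv_{\nu_1,\dots,\nu_{i-1},\mu,\nu_{i+1},\dots,\nu_d}\bigr)_{\nu_j \in \vee_1,\, j\neq i}$, viewed as an element of $\ell_2(\vee_1^{d-1})$. The map taking $\hat\bv$ to this slice is linear, so the triangle inequality for the $\ell_2$-norm gives the desired subadditivity immediately. Applying this to $\hat\bv_n = (\bv_n)_{\nu_\rt}$ yields the claim.

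There is no real obstacle here; the result is essentially a restatement of the triangle inequality, and the only thing to verify carefully is the linearity in $\bv$ of the association $\bv \mapsto \bv_{\nu_\rt}$ of spatial components, which is immediate from the uniqueness of the representation \eqref{eq:parabolic-lowrank-format}.
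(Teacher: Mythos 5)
Your proof is correct and matches the paper's reasoning: the paper gives no detailed proof, stating only that the proposition is an immediate consequence of the triangle inequality, which is exactly the reduction you carry out (slice-wise $\ell_2$-norm plus linearity of the spatial components).
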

As in the case of a single tensor representation, we perform coarsening of $\bv$ with spatial components in low-rank representation by selecting index sets by best $N$-term approximations of the contractions $\piti(\bv)$. The resulting spatial basis index sets have Cartesian product structure separately for each temporal basis index. We determine these index sets by rearranging the entire set of all contractions of all mode frames of all temporal indices indices, given by
\begin{align*}
	\bigl\{\piti_{\nu_\rt,\nu_\rx} (\bv) : \nu_\rx \in \vee_1, \nu_\rt \in \vee_\rt, i=1,\dots,d\bigr\},
\end{align*}
to a non-increasing sequence
\begin{align*}
	\pi^{(\rt,i(1))}_{\nu_\rt^*(1),\nu_\rx^*(1)}(\bv) \geq \pi^{(\rt,i(2))}_{\nu_\rt^*(2),\nu_\rx^*(2)}(\bv) \geq \ldots \geq \pi^{(\rt,i(j))}_{\nu_\rt^*(j),\nu_\rx^*(j)}(\bv) \geq \ldots, 
\end{align*}
where $i(j) \in \{1,\dots,d\}, \nu_\rt^*(j) \in \vee_\rt$ and $\nu_\rx^*(j) \in \vee_1$ for each $j \in \N$.
We retain only the $N$ largest from this ordering and redirect them to the respective dimension bins and time indices,
\begin{align*}
	\Lambda^{(i)}_{\nu_\rt} (\bv,N) = \bigl\{ \nu_\rx^*(j) : i(j) = i, \nu_\rt^*(j) = \nu_\rt, j=1,\dots,N \bigr\}\,.
\end{align*}
Now for each time index $\nu_\rt \in \vee_\rt$, we consider the Cartesian product index set
\begin{align*}
	\Lambda_{\nu_\rt}(\bv,N) = \bigtimes_{i=1}^d \Lambda^{(i)}_{\nu_\rt}(\bv,N)
\end{align*}
and denote by $\Lambda(\bv,N)$ the union of these product sets combined with their respective time indices, that is,
\begin{align}\label{eq:contraction-set-defs}
	\Lambda(\bv,N) = \bigcup\limits_{\nu_\rt \in \vee_\rt} \bigl( \{\nu_\rt\} \times \Lambda_{\nu_\rt}(\bv,N) \bigr) \,\subset\,\vee.
\end{align}
By construction, 
\begin{align*}
	\sum\limits_{\nu_\rt \in \vee_\rt} \sum\limits_{i=1}^d \# \Lambda^{(i)}_{\nu_\rt} (\bv,N) \leq N 
\end{align*}
and
\begin{align}\label{eq:contractions-minimum}
	\sum\limits_{\nu_\rt \in \vee_\rt} \sum\limits_{i=1}^d \sum\limits_{\nu_\rx \in (\vee_1 \setminus \Lambda^{(i)}_{\nu_\rt}(\bv,N))} \abs{\piti_{\nu_\rt,\nu_\rx}(\bv)}^2 
	= \min\limits_{\hat{\Lambda}} \biggl\{  \sum\limits_{\nu_\rt \in \vee_\rt} \sum\limits_{i=1}^d \sum\limits_{\nu_\rx \in (\vee_1 \setminus \hat{\Lambda}^{(i)}_{\nu_\rt})} \abs{\piti_{\nu_\rt,\nu_\rx}(\bv)}^2 \biggr\},
\end{align}
where $\hat{\Lambda}$ ranges over all sets that can be written in the form
\begin{align}\label{eq:parabolic-indexset-struct}
	\hat{\Lambda} = \bigcup\limits_{\nu_\rt \in \vee_\rt} \{\nu_\rt\} \times \bigtimes\limits_{i=1}^d \hat{\Lambda}^{(i)}_{\nu_\rt}, 
	\quad \text{where}\quad  \sum\limits_{\nu_\rt \in \vee_\rt} \sum\limits_{i=1}^d \# \hat{\Lambda}^{(i)}_{\nu_\rt} \leq N\,.
\end{align}
\begin{prop}
	For any $\bv \in \ell_2(\vee)$ with spatial components $(\bv_{\nut})_{\nut\in\veet}$ we have
	\begin{equation}\label{eq:mu-N}
		\norm{\bv - \Res_{\Lambda(\bv,N)} \bv} \leq \mu_N(\bv) ,
	\end{equation}
	where
	\begin{equation}\label{eq:mudef}
		\mu_N(\bv)  = \biggl( \sum\limits_{\nu_\rt \in \vee_\rt} \sum\limits_{i=1}^d \sum\limits_{\nu_\rx \in (\vee_1 \setminus \Lambda^{(i)}_{\nu_\rt}(\bv,N))} \abs{\piti_{\nu_\rt,\nu_\rx}(\bv)}^2  \biggr)^{\frac{1}{2}} ,
	\end{equation}
	and for any $\hat{\Lambda}$ satisfying \eqref{eq:parabolic-indexset-struct}, we have
	\begin{align}\label{eq:coarsen-inequality-other-indexset}
		\norm{\bv - \Res_{\Lambda(\bv,N)} \bv} \leq \mu_N(\bv) \leq \sqrt{d} \norm{\bv - \Res_{\hat{\Lambda}} \bv}.
	\end{align}
\end{prop}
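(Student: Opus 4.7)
The plan is to treat the two inequalities in \eqref{eq:coarsen-inequality-other-indexset} separately. The first one, $\norm{\bv - \Res_{\Lambda(\bv,N)} \bv} \leq \mu_N(\bv)$, follows immediately by applying Proposition~\ref{prop:error-restrict-contr}\eqref{it:restr-norm-contr} with the particular choice $\Lambda^{(i)}_{\nu_\rt} = \Lambda^{(i)}_{\nu_\rt}(\bv,N)$: the set $\Lambda(\bv,N)$ constructed in \eqref{eq:contraction-set-defs} is exactly of the form required there, and the right-hand side of \eqref{eq:bound-restriction-contractions} coincides with $\mu_N(\bv)$ by the definition \eqref{eq:mudef}.

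For the second inequality $\mu_N(\bv) \leq \sqrt{d}\,\norm{\bv - \Res_{\hat\Lambda}\bv}$, I would first invoke the minimality identity \eqref{eq:contractions-minimum}: for any admissible $\hat\Lambda$ of the product form \eqref{eq:parabolic-indexset-struct}, the index sets $\Lambda^{(i)}_{\nu_\rt}(\bv,N)$ chosen by the greedy rearrangement procedure achieve the infimum, so
\[
\mu_N(\bv)^2 \;\leq\; \sum_{\nu_\rt \in \vee_\rt}\sum_{i=1}^d \sum_{\nu_\rx \in \vee_1 \setminus \hat\Lambda^{(i)}_{\nu_\rt}} \bigabs{\piti_{\nu_\rt,\nu_\rx}(\bv)}^2 .
\]
The remaining step is a combinatorial reorganization. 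By Definition~\ref{def:contractions}, $\bigabs{\piti_{\nu_\rt,\nu_\rx}(\bv)}^2 = \sum_{\mu \in \vee_\rx,\ \mu_i = \nu_\rx} \abs{\bv_{\nu_\rt,\mu}}^2$, so after exchanging the sums over $i$ and $\mu$,
\[
\sum_{i=1}^d \sum_{\nu_\rx \in \vee_1 \setminus \hat\Lambda^{(i)}_{\nu_\rt}} \bigabs{\piti_{\nu_\rt,\nu_\rx}(\bv)}^2 = \sum_{\mu \in \vee_\rx} c_{\nu_\rt}(\mu)\, \abs{\bv_{\nu_\rt,\mu}}^2 ,
\]
where $c_{\nu_\rt}(\mu) = \#\{i \in \{1,\dots,d\} : \mu_i \notin \hat\Lambda^{(i)}_{\nu_\rt}\}$.

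The key observation is now that $c_{\nu_\rt}(\mu) = 0$ precisely when $\mu \in \hat\Lambda_{\nu_\rt} := \bigtimes_{i=1}^d \hat\Lambda^{(i)}_{\nu_\rt}$, while trivially $c_{\nu_\rt}(\mu) \leq d$. Therefore the above expression is bounded by $d \sum_{\mu \in \vee_\rx \setminus \hat\Lambda_{\nu_\rt}} \abs{\bv_{\nu_\rt,\mu}}^2 = d\,\norm{\bv_{\nu_\rt} - \Res_{\hat\Lambda_{\nu_\rt}}\bv_{\nu_\rt}}^2$. Summing over $\nu_\rt \in \vee_\rt$ and using the product-slice description of $\hat\Lambda$ gives $\mu_N(\bv)^2 \leq d\,\norm{\bv - \Res_{\hat\Lambda}\bv}^2$, and taking square roots completes the proof. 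No step is genuinely hard; the only point requiring a little care is the double counting across coordinates of those $\mu$ that miss the product set $\hat\Lambda_{\nu_\rt}$ in more than one component, which is what produces the factor $\sqrt{d}$ and is the source of the mild dimensional dependence of the coarsening bound.
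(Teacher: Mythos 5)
Your proposal is correct and follows essentially the same route as the paper: the first inequality via Proposition \ref{prop:error-restrict-contr}\eqref{it:restr-norm-contr}, the second via the minimality property \eqref{eq:contractions-minimum} followed by bounding the $d$ coordinate-wise ("slab") tail sums by $d$ times the error of the full product restriction, and finally summing over $\nu_\rt$ using the disjoint supports. Your counting function $c_{\nu_\rt}(\mu)\le d$ is just an explicit reformulation of the paper's step bounding each of the $d$ slab norms by $\norm{\bv_{\nu_\rt}-\Res_{\hat\Lambda_{\nu_\rt}}\bv_{\nu_\rt}}$, so the arguments coincide in substance.
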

\begin{proof}
	The bound \eqref{eq:mu-N} is an immediate consequence of \eqref{eq:bound-restriction-contractions}. Let now be $\hat{\Lambda}$ be as in the assumption, then by using \eqref{eq:mu-N} and \eqref{eq:contractions-minimum} we obtain
	\begin{align*}
		\norm{\bv - \Res_{\Lambda(\bv,N)} \bv}^2 &\leq \sum\limits_{\nu_\rt \in \vee_\rt} \sum\limits_{i=1}^d \sum\limits_{\nu_\rx \in (\vee_1 \setminus \hat{\Lambda}^{(i)}_{\nu_\rt})} \abs{\piti_{\nu_\rt,\nu_\rx}(\bv)}^2 \\
		&= \sum\limits_{\nu_\rt \in \vee_\rt} \norm{\bv_{\nu_\rt} - \Res_{\hat{\Lambda}^{(1)}_{\nu_\rt} \times \vee_1 \times \cdots  \times \vee_1} \bv_{\nu_\rt}}^2 + \ldots  + 
		\norm{\bv_{\nu_\rt} - \Res_{\vee_1 \times \cdots \times \vee_1 \times \hat{\Lambda}^{(d)}_{\nu_\rt}} \bv_{\nu_\rt}}^2 \\
		&\leq \sum\limits_{\nu_\rt \in \vee_\rt} d \norm{\bv_{\nu_\rt} - \Res_{\hat{\Lambda}_{\nu_\rt}} \bv_{\nu_\rt}}^2 \\
		&= d \norm{\bv - \Res_{\hat{\Lambda}} \bv }^2 ,
	\end{align*}
	where we used in the last line that $\mathbf{e}_{\nu_\rt} \otimes \bv_{\nu_\rt}$ have disjoint support for each $\nu_\rt \in \vee_\rt$.
\end{proof}

The sorting of the set of all contractions can be replaced by a \emph{quasi-sorting} by binary binning, which corresponds to the one-dimensional case of the coarsening operator, see \cite{Metselaar:02,Barinka:05,Dijkema:09}. For each $N \in \N$, we can compare $\Lambda(\bv,N)$ to the index set $\bar{\Lambda}(\bv,N)$ that satisfies the conditions in \eqref{eq:parabolic-indexset-struct} as well as
\begin{align}\label{eq:coarsen-best-optimality}
	\norm{\bv - \Res_{\bar\Lambda(\bv,N)} \bv} = \min\limits_{\sum_i \# \supp(\piti(\bw)) \leq N} \norm{\bv - \bw}.
\end{align}
As a consequence of \eqref{eq:coarsen-inequality-other-indexset}, 
\begin{align}\label{eq:coarsen-best-ineq}
	\norm{\bv - \Res_{\Lambda(\bv,N)} \bv} \leq \mu_N(\bv) \leq \kappa_{\Cor} \norm{\bv - \Res_{\bar\Lambda(\bv,N)}  \bv}, \quad \kappa_{\Cor} = \sqrt{d} ,
\end{align}
with $\mu_N(\bv)$ from \eqref{eq:mudef}.

In our adaptive scheme we need a corresponding procedure that yields the smallest index set such that the computable error bound $\mu_N(\bv)$ is below a given threshold $\eta$.
To this end, we define $N(\bv,\eta) = \min\{N : \mu_N(\bv) \leq \eta\}$. The corresponding thresholding procedure is then given by
\begin{align}\label{eq:coarsen-threshold}
	\hat{\Cor}_{\eta} (\bv) = \Res_{ \Lambda(\bv, N(\bv,\eta) ) } \bv .
\end{align}

\subsubsection{Combination of Tensor Recompression and Coarsening}

We complete this section by the following main result on the combination of low-rank recompression and basis coarsening. It is an extension of Theorem \ref{thm:recompress} and shows that both reduction techniques combined are optimal up to uniform constants and stable with respect to $\norm{\cdot}_{\cAs}$ and $\norm{\cdot}_{\cA(\gamma)}$ for the the mode frames and the low-rank approximability, respectively.
\begin{theorem}\label{thm:combinedcoarsen}
	Let $\bu, \bv \in \ell_2(\vee)$ with $\bu \in \cA(\gamma)$, where $\gamma$ is an admissible growth sequence, $\piti \in \cAs$ for $i=1,\dots,d$ and $\norm{\bu - \bv} \leq \eta$. As before let $\kappa_{\Pro} = \sqrt{2d-3}$ and $\kappa_{\Cor} = \sqrt{d}$. Then for
	\begin{align*}
		\bw_{\eta} = \hat{\Cor}_{\kappa_{\Cor}(\kappa_{\Pro}+1)(1+\alpha)\eta} \left( \hat{\Pro}_{\kappa_{\Pro}(1+\alpha)\eta} (\bv) \right),
	\end{align*}
	we have
	\begin{align}\label{eq:recompress-coarsen-norm}
		\norm{\bu - \bw_{\eta}} \leq (1 + \kappa_{\Pro}(1+\alpha) + \kappa_{\Cor}(\kappa_{\Pro}+1)(1+\alpha))\eta
	\end{align}
	as well as
	\begin{align}\label{eq:recompress-coarsen-rank}
		\begin{split}
			\rank_\infty(\bw_{\eta})  &\leq \gamma^{-1} \big(\rho_{\gamma} \norm{\bu}_{\cA(\gamma)} / (\alpha \eta)\big), \\
			\norm{\bw_{\eta}}_{\cA(\gamma)} &\leq C_1 \norm{\bu}_{\cA(\gamma)},
		\end{split}
	\end{align}
	where $C_1 = \alpha^{-1}(1+\kappa_{\Pro}(1+\alpha)) + 1$ and
	\begin{align}\label{eq:recompress-coarsen-supp}
		\begin{split}
			\sum\limits_{i=1}^d \# \supp(\piti(\bw_{\eta})) &\leq 2 \eta^{-\frac{1}{s}} d \alpha^{-\frac{1}{s}} \left(\sum\limits_{i=1}^d \norm{\piti(\bu)}_{\cAs} \right)^{\frac{1}{s}}, \\
			\sum\limits_{i=1}^d \norm{\piti(\bw_{\eta})}_{\cAs} &\leq C_2 \sum\limits_{i=1}^d \norm{\piti(\bu)}_{\cAs}	
		\end{split}
	\end{align}
	with $C_2 = 2^s(1+2^s) + 2^{3s} \alpha^{-1} (1+\kappa_{\Pro}(1+\alpha) + \kappa_{\Cor}(\kappa_{\Pro}+1)(1+\alpha))d^{\max\{1,s\}}$.
\end{theorem}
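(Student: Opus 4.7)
My plan is to decompose the argument according to the two operators in the definition of $\bw_\eta$: first I would analyze the intermediate tensor $\bz = \hat{\Pro}_{\kappa_{\Pro}(1+\alpha)\eta}(\bv)$ using Theorem \ref{thm:recompress}, then control the effect of the subsequent coarsening $\bw_\eta = \hat{\Cor}_\delta(\bz)$ with $\delta = \kappa_{\Cor}(\kappa_{\Pro}+1)(1+\alpha)\eta$. This mirrors the structure of the corresponding proof in \cite{BachmayrNearOptimal}; much of the work is to verify that the combined sparse-in-time/low-rank-in-space setting preserves the required estimates.

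For the error bound \eqref{eq:recompress-coarsen-norm}, Theorem \ref{thm:recompress} yields $\norm{\bu-\bz} \leq (1+\kappa_{\Pro}(1+\alpha))\eta$, while the construction of $\hat{\Cor}_\delta$ together with \eqref{eq:mu-N} gives $\norm{\bz - \bw_\eta} \leq \mu_{N(\bz,\delta)}(\bz) \leq \delta$, so the triangle inequality finishes this part. For the rank and $\cA(\gamma)$-norm estimates \eqref{eq:recompress-coarsen-rank}, the key observation is that the coarsening operator restricts each spatial component $\bz_{\nu_\rt}$ to a Cartesian product index set, and such restrictions never increase any hierarchical rank. Moreover, given any $\bw' \in \cF((\sfr_{\nu_\rt}))$ with $\rank_\infty((\sfr_{\nu_\rt})) \leq r$, the restriction $\Res_{\Lambda(\bz,N(\bz,\delta))} \bw'$ also lies in $\cF((\sfr_{\nu_\rt}))$ and satisfies $\norm{\bw_\eta - \Res_\Lambda \bw'} = \norm{\Res_\Lambda(\bz-\bw')} \leq \norm{\bz-\bw'}$, hence $\sigma_r(\bw_\eta) \leq \sigma_r(\bz)$ and in turn $\norm{\bw_\eta}_{\cA(\gamma)} \leq \norm{\bz}_{\cA(\gamma)}$; then Theorem \ref{thm:recompress} supplies the bounds \eqref{eq:recompress-coarsen-rank}.

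The main work goes into \eqref{eq:recompress-coarsen-supp}. For the support bound, $\sum_i \#\supp(\piti(\bw_\eta)) \leq N(\bz,\delta)$ by construction, so it is enough to exhibit some $N$ with $\mu_N(\bz) \leq \delta$. I plan to compare with a near-best $N$-term approximation of $\bu$ in the contraction sense: for any $\hat{\Lambda}$ of the product form \eqref{eq:parabolic-indexset-struct} with total size at most $N$, splitting $\bz - \Res_{\hat\Lambda}\bz = \Res_{\vee\setminus\hat{\Lambda}}(\bz-\bu) + \Res_{\vee\setminus\hat{\Lambda}}\bu$ and invoking \eqref{eq:coarsen-inequality-other-indexset} yields $\mu_N(\bz) \leq \sqrt{d}\bigl(\norm{\bz-\bu} + \norm{\bu - \Res_{\hat\Lambda}\bu}\bigr)$. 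Choosing $\hat\Lambda$ optimally for $\bu$ and using Proposition \ref{prop:cAs-properties} together with the quasi-sorting of contractions controls the second term by a constant multiple of $N^{-s}\sum_i \norm{\piti(\bu)}_{\cAs}$. Since the recompression error already consumes a known fraction of $\delta$, only a margin proportional to $\alpha\eta$ remains for the coarsening, and balancing this against $N^{-s}\sum_i\norm{\piti(\bu)}_{\cAs}$ produces the claimed bound on $N$. For the $\cAs$-norm stability, I would combine the subadditivity of contractions with the quasi-triangle inequality from Proposition \ref{prop:cAs-properties}\eqref{prop:cAs-triangle}, writing $\norm{\piti(\bw_\eta)}_{\cAs} \leq 2^s\bigl(\norm{\piti(\bu)}_{\cAs} + \norm{\piti(\bw_\eta-\bu)}_{\cAs}\bigr)$ and controlling the second summand via the support bound together with Proposition \ref{prop:cAs-properties}(ii).

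The hardest step will be the careful accounting of the three independent sources of error in the support and $\cAs$-bounds (recompression, coarsening threshold, and best $N$-term truncation of $\bu$) together with the explicit tracking of the dimensional constants. The factor $d^{\max\{1,s\}}$ in $C_2$ in particular reflects the interplay between the quasi-sorting of $d$ contractions for each temporal index and the quasi-triangle inequality for $\cAs$, which behaves differently for $s \leq 1$ versus $s > 1$; this is the only place where the argument genuinely departs from a formal repetition of the single-tensor case in \cite{BachmayrNearOptimal}.
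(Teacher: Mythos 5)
Your route for \eqref{eq:recompress-coarsen-norm}, for the rank bounds and for the support bound coincides with the paper's proof: recompression error from Theorem \ref{thm:recompress}, coarsening error from the definition of $\hat{\Cor}$, and a comparison with a near-best product-structured index set for $\bu$ whose budget is governed by the remaining margin $\alpha\eta$ (indeed $\kappa_{\Cor}(\kappa_{\Pro}+1)(1+\alpha)\eta - \kappa_{\Cor}\norm{\bu-\bz} \geq \kappa_{\Cor}\alpha\eta$), which gives exactly the stated bound on $N$. Your argument that restricting to the sets $\Lambda(\bz,N(\bz,\delta))$, being Cartesian products for each $\nu_\rt$, cannot increase any matricization rank, so that $\sigma_r(\bw_\eta)\le\sigma_r(\bz)$ and hence $\norm{\bw_\eta}_{\cA(\gamma)}\le\norm{\bz}_{\cA(\gamma)}$, is a correct and slightly more explicit justification of the paper's terse remark that coarsening does not affect \eqref{eq:recompress-coarsen-rank}.

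The gap is in the $\cAs$-stability part of \eqref{eq:recompress-coarsen-supp}. You propose to bound $\norm{\piti(\bw_\eta)}_{\cAs}\le 2^s\bigl(\norm{\piti(\bu)}_{\cAs}+\norm{\piti(\bw_\eta-\bu)}_{\cAs}\bigr)$ and to control the second term by Proposition \ref{prop:cAs-properties}(ii) together with the support bound. This step fails as stated: $\bu$ is a general element with $\piti(\bu)\in\cAs$, so neither $\bw_\eta-\bu$ nor $\piti(\bw_\eta)-\piti(\bu)$ is finitely supported, and Proposition \ref{prop:cAs-properties}(ii) does not apply to them (also note that $\piti(\bw_\eta)\neq\piti(\bu)+\piti(\bw_\eta-\bu)$; only a componentwise subadditivity holds, so even the quasi-triangle step needs an extra argument). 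The repair used in the paper is to set $\hat N_i=\#\supp(\piti(\bw_\eta))$, $\hat N=\sum_i\hat N_i$, and to compare $\piti(\bw_\eta)$ with the \emph{best $\hat N_i$-term approximation} $\hat\bu_i$ of $\piti(\bu)$: then $\hat\bu_i-\piti(\bw_\eta)$ has support at most $2\hat N_i$, so (ii) applies, and $\norm{\hat\bu_i-\piti(\bw_\eta)}\le\norm{\hat\bu_i-\piti(\bu)}+\norm{\piti(\bu)-\piti(\bw_\eta)}$, where the last term is bounded by $\norm{\bu-\bw_\eta}$ via the componentwise Cauchy--Schwarz estimate $\abs{\piti_{\nu_\rt,\nu_\rx}(\bu)-\piti_{\nu_\rt,\nu_\rx}(\bw_\eta)}\le\piti_{\nu_\rt,\nu_\rx}(\bu-\bw_\eta)$. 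Moreover, your sketch leaves implicit the step that actually produces $C_2$: one must invert the support bound, i.e.\ resolve $\hat N\le N\le 2d(\alpha\eta)^{-1/s}\bigl(\sum_k\norm{\pi^{(\rt,k)}(\bu)}_{\cAs}\bigr)^{1/s}$ for $\eta$ and insert this into \eqref{eq:recompress-coarsen-norm}, giving $\norm{\bu-\bw_\eta}\le 2^s\alpha^{-1}\bigl(1+\kappa_{\Pro}(1+\alpha)+\kappa_{\Cor}(\kappa_{\Pro}+1)(1+\alpha)\bigr)d^s\hat N^{-s}\sum_k\norm{\pi^{(\rt,k)}(\bu)}_{\cAs}$; multiplying by $(2\hat N_i)^s$, summing over $i$ and using $\hat N^{-s}\sum_i(2\hat N_i)^s\le 2^s d^{\max\{0,1-s\}}$ then yields the factor $d^{\max\{1,s\}}$. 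Without the truncation device and this inversion, the argument you outline cannot reach the second estimate in \eqref{eq:recompress-coarsen-supp}.
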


To prove this theorem we follow the lines of \cite[Theorem 7]{BachmayrNearOptimal}. For the convenience of the reader, the proof is given in Appendix \ref{sec:aux}.

\begin{remark}\label{rem:hsvd-complexity}
	The coarsening as well as the recompression routine require a HSVD of their inputs. For each finitely supported $\bv \in \ell_2(\vee)$, using the bound for the costs of each component $\bv_{\nu_\rt} \in \ell_2(\vee_\rx)$, $\nu_\rt \in \vee_\rt$, given in hierarchical format, the total number of operations required for computing these decompositions for $\bv$ is bounded by a fixed multiple of
	\begin{multline*}
		 \sum\limits_{\nu_\rt \in \vee_\rt} \Bigl(d \abs{\rank_{\nu_\rt}(\bv)}_{\infty}^4 + \abs{\rank_{\nu_\rt}(\bv)}_{\infty}^2 \sum\limits_{i=1}^d \# \supp(\pi^{(i)}(\bv_{\nu_\rt}))\Bigr) \\
		\leq d \rank^4_\infty(\bv) \# \supp(\pi^{(\rt)}(\bv)) + \rank^2_\infty(\bv) \sum\limits_{i=1}^d \# \supp(\piti(\bv)) ,
	\end{multline*}
	and the right-hand side can be estimated by \[  2 \rank^4_\infty(\bv) \sum\limits_{i=1}^d \# \supp(\piti(\bv)) . \] 
	When the HSVD needs to be computed for redundant hierarchical tensor representations with representation ranks greater than $\rank_{\nu_\rt}(\bv)$ for $\nu_\rt\in \vee_\rt$, the latter need to be replaced by the respective representation ranks in the above bound.
\end{remark}

\section{Low-Rank Preconditioning}\label{sec:low-rank-preconditioning}
The scaling matrices $\mathbf{\bar D}_\cX$ and $\mathbf{\bar D}_\cY$ from \eqref{eq:scaling-matrices} and \eqref{eq:def-DY} play an important role, similarly to the elliptic case treated in \cite{bachmayr2014adaptive}, in the sense that the operator $\mathbf{\bar B}$ defined in \eqref{eq:operator-Bhat} is boundedly invertible with condition number independent of the spatial dimension $d$. But even though the operators $\bT$ and $\bT_0$ from \eqref{eq:opertor-T1T2-def} have an explicit low-rank representation, since the scaling matrices do not have finite-rank representations, the operator $\mathbf{\bar B}$ generally still has unbounded ranks. 
To solve this problem we define scaling operators $\bDX$ and $\bDY$ that are equivalent to $\mathbf{\bar D}_\cX$ and $\mathbf{\bar D}_\cY$ in the sense that
\begin{align}\label{eq:norm-equivalence-scaling-matrices}
	\norm{\bDX \mathbf{\bar D}_\cX^{-1}} , \norm{\bDX^{-1} \mathbf{\bar D}_\cX}<\infty, \quad \norm{\bDY \mathbf{\bar D}_\cY^{-1} } ,  \norm{\bDY^{-1} \mathbf{\bar D}_{\cY} } < \infty,
\end{align}
but that at the same time can be approximated by separable operators in an efficient and quantifiable way.
For practical purposes, the corresponding bounds in \eqref{eq:norm-equivalence-scaling-matrices} should not be too large.

With such equivalent substitute scaling matrices $\bDX$ and $\bDY$, and with \[ \mathbf{g}_1 = ( f(\theta_{\nut} \otimes \Psi_{\nu_\rx},0))_{(\nut,\nu_\rx)\in\vee}, \quad \mathbf{g}_2 = ( f(0, \Psi_\nu))_{\nu \in \vee_\rx},\] 
we obtain a modified system
\begin{equation}\label{eq:Bfdef}
	\bB \bu = \bfs, \quad \bB = \begin{bmatrix} 
		\bB_1 \\ \bB_2
	\end{bmatrix}, \quad
	\bfs = \begin{bmatrix}
		\bDY \mathbf{g}_1 \\ \mathbf{g}_2
	\end{bmatrix} ,
\end{equation}
where
\begin{equation}\label{eq:Bpartsdef}
  \bB_1 = \bDY \bT \bDX  , \quad \bB_2 = 	\bT_0 \bDX\,.
\end{equation}
Here we gain low-rank approximability based on the fact that the scaling matrices $\bDX$ and $\bDY$ can be efficiently approximated by separable operators. Furthermore, based on \eqref{eq:norm-equivalence-scaling-matrices}, the bi-infinite matrix $\bB$ is still boundedly invertible.

In the remainder of this section, we give a construction of appropriate $\bDX$ and $\bDY$. For $\bDY$, we can make use of the fact that $\mathbf{\bar D}$ is well understood in the sense of low-rank approximability \cite[Section 4.1]{bachmayr2014adaptive}. For the matrix $\bDX$, we develop a new type of low-rank approximation.

\subsection{Scaling matrix $\bDX$}

With $\mathbf{E}_{\nu_\rt} = \mathbf{e}_{\nu_\rt}  \mathbf{e}_{\nu_\rt}^\intercal$ for $\nut \in \veet$, we construct $\bDX$ in the form
\begin{align*}
	\bDX = \sum\limits_{\nu_\rt \in \vee_\rt} \mathbf{E}_{\nu_\rt} \otimes \bDXnut \,.
\end{align*}
The construction of each $\bDXnut$ follows a similar idea as \cite{bachmayr2014adaptive} based on exponential sum approximations with bounds on the relative error. Here we obtain substitutes for the expressions in \eqref{eq:omega-t-x} for each fixed $\nu_\rt$ that have efficient low-rank approximations. In the following result of this type, an important role is played by the \emph{Dawson function} $F$, which is defined as
\begin{equation}\label{eq:dawsondef}
	F(x) = e^{-x^2} \int\limits_0^x e^{s^2} ds.
\end{equation}

\begin{theorem}\label{thm:approx-scaling}
	For each $a > 0$, let
	\[
	\alpha_a(x) = a^{-1} e^x, \qquad w_a(x) = \pi^{-1/2} a^{-1/2} \big(1-2e^{x/2} F(e^{x/2})\big) e^{x/2} \,.
	\]
	For an arbitrary but fixed $\delta \in (0,1)$, let $h_a$ be chosen such that
	\begin{align}\label{eq:h-epxonential-sum}
		0 < h_a \leq \frac{2 \pi^2}{3 \ln\left(1 + 10 \frac{1 + a}{\delta}\right)}
	\end{align}
	and set
	\begin{align}\label{eq:nplus-eponential-sum}
		n^+_a = n^+_a(\delta) = \left\lceil 2 h_a^{-1} \ln\bigl(\sqrt{a} \erfcinv(\delta/2)\bigr) \right\rceil.
	\end{align}
	Then, defining
	\begin{align*}
		\Phi_{a,n} (s) = \sum\limits_{k=-n}^{n^+} h_a w_a(kh_a) e^{-\alpha_a(kh_a)s}, \quad \Phi_{a,\infty} (s) = \lim\limits_{n \to \infty} \Phi_{a,n} (s),
	\end{align*}
	one has
	\begin{align}\label{eq:scaling-diff-DX-DXtilde}
		\lrabs{\frac{\sqrt{s}}{s + a} - \Phi_{a,\infty}(s)} \leq \delta \frac{\sqrt{s}}{s+a} \quad \text{for all } s \in [1,\infty).
	\end{align}
	For any $\eta > 0$ and $K > 1$, provided that 
	\begin{align*}
		n \geq \min \Bigl\{ n \in \N_0 : \max\bigl\{f_a(1,h_a n),f_a(K,h_a n)\bigr\} \leq \min\{\delta/2,\eta\}  \Bigr\}
	\end{align*}
	where $f_a$ is defined by
	\begin{align*}
		f_a(s,y) = 2 \sqrt{\tfrac{a}{s\pi}} e^{-\frac{s}{a} e^{-y}} F\left(e^{-\frac{y}{2}}\right) + \erf\left(\sqrt{\tfrac{s}{a}} e^{-\frac{y}{2}}\right),
	\end{align*}
	one has in addition 
	\begin{align}
		\lrabs{\frac{\sqrt{s}}{s + a} - \Phi_{a,n}(s)} &\leq \delta \frac{\sqrt{s}}{s+a} \quad \text{for all } s \in [1,K], \label{eq:scaling-diff-DX}\\
		\lrabs{\Phi_{a,\infty}(s) - \Phi_{a,n}(s)} &\leq \eta \frac{\sqrt{s}}{s+a} \quad \text{for all } s \in [1,K]. \label{eq:scaling-diff-DXhat}
	\end{align}
\end{theorem}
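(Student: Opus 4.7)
The plan is to realise $\Phi_{a,\infty}$ and $\Phi_{a,n}$ as trapezoidal-rule approximations of a Laplace-type integral representation of $\frac{\sqrt{s}}{s+\sqrt{a}}$, and then apply a Stenger-type error estimate in a horizontal strip together with closed-form tail bounds.

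First I would derive the integral representation
\[
  \frac{\sqrt{s}}{s+\sqrt{a}} \;=\; \int_0^\infty g(t)\,e^{-st}\,dt, \qquad g(t) \;=\; \frac{1}{\sqrt{\pi t}} \,-\, \frac{2\sqrt[4]{a}}{\sqrt{\pi}}\,F\!\bigl(\sqrt[4]{a}\sqrt{t}\bigr),
\]
by computing $\mathcal{L}[F(\sqrt[4]{a}\sqrt{\,\cdot\,})](s)$ via Fubini and a Gaussian integral, which yields $\frac{\sqrt{\pi}\sqrt[4]{a}}{2\sqrt{s}(s+\sqrt{a})}$, and combining with $\mathcal{L}[1/\sqrt{\pi t}]=1/\sqrt{s}$. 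The substitution $t=a^{-1/2}e^x$ then gives $\sqrt[4]{a}\sqrt{t}=e^{x/2}$ and $g(t)\,dt = w_a(x)\,dx$, so that
\[
  \frac{\sqrt{s}}{s+\sqrt{a}} \;=\; \int_{-\infty}^\infty w_a(x)\,e^{-\alpha_a(x)s}\,dx,
\]
with $w_a$, $\alpha_a$ exactly as in the theorem. Thus $\Phi_{a,\infty}(s)$ is the infinite trapezoidal rule with step $h_a$, and $\Phi_{a,n}(s)$ its truncation.

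Next I would verify that the integrand extends analytically to the strip $|\Im x|<\pi/2$: the Dawson function is entire, and $\Re\alpha_a(x)=a^{-1/2}e^{\Re x}\cos(\Im x)>0$ on that strip, so $|e^{-\alpha_a(x)s}|\le 1$ and in fact decays doubly exponentially as $\Re x\to+\infty$; on the other side $|w_a(\xi+i\eta)|\lesssim e^{\xi/2}$ ensures $L^1$-integrability of the boundary values. The classical Stenger/sinc-quadrature estimate on $\R$ then yields
\[
  \Bigl| \Phi_{a,\infty}(s) - \tfrac{\sqrt{s}}{s+\sqrt{a}}\Bigr| \;\lesssim\; \frac{e^{-\pi^2/h_a}}{1-e^{-\pi^2/h_a}} \, M(s),
\]
with $M(s)$ controlling the integrand on $\Im x=\pm d$ for $d\uparrow\pi/2$. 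Estimating $M(s)$ by a constant multiple of the integral itself (using the positivity of $\Re\alpha_a$ to write both as explicit integrals over $\R$) then produces \eqref{eq:scaling-diff-DX-DXtilde} and pins down the precise condition \eqref{eq:h-epxonential-sum}: choosing $h_a$ so that $e^{-\pi^2/h_a}\bigl(1+\tfrac{1+\sqrt{a}}{\delta}\bigr)$ is small forces the relative error below $\delta$.

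For the finite sums I would bound the truncation errors $\Phi_{a,\infty}-\Phi_{a,n}$ by the tails of the integrand, which after reversing the substitution $t=a^{-1/2}e^x$ become $\int_0^{T'}|g(t)|e^{-st}\,dt$ with $T'=a^{-1/2}e^{-nh_a}$ and $\int_T^\infty|g(t)|e^{-st}\,dt$ with $T=a^{-1/2}e^{n^+h_a}$. The $(\pi t)^{-1/2}$-part of $g$ integrates in closed form to $s^{-1/2}\erf(\sqrt{sT'})$ and $s^{-1/2}\operatorname{erfc}(\sqrt{sT})$ respectively, giving the $\erf$-term in $f_a$; the Dawson contribution, after swapping orders of integration as in Step 1, evaluates to the $F$-term in $f_a$. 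The choice \eqref{eq:nplus-eponential-sum} of $n^+$ ensures $\sqrt{sT}\ge\erfcinv(\delta/2)$ uniformly for $s\ge 1$, so the positive tail is harmlessly small; the negative tail is controlled at the two endpoints $s=1$ and $s=K$ by the hypothesis on $n$, and monotonicity/convexity of $y\mapsto f_a(s,y)$ in $s$ on $[1,K]$ delivers the uniform bounds \eqref{eq:scaling-diff-DX}–\eqref{eq:scaling-diff-DXhat}.

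The main obstacle will be Step 2: obtaining the quadrature error as a \emph{relative} error $\delta\,\frac{\sqrt{s}}{s+\sqrt{a}}$ rather than an absolute one requires careful control of the strip norm of $w_a(x)e^{-\alpha_a(x)s}$ versus $\int_\R w_a(x)e^{-\alpha_a(x)s}\,dx$ uniformly in $s\ge 1$, and it is this $s$-uniformity — together with the particular logarithmic dependence on $1+\sqrt{a}$ in \eqref{eq:h-epxonential-sum} — that dictates the sharp constants. The closed-form identification of the tail integrals with $f_a$ in Step 3 is routine but requires bookkeeping to match all constants exactly.
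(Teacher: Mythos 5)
Your overall route is the same as the paper's (the Laplace-type representation with density $\frac{1}{\sqrt{\pi t}}\bigl(1-2\sqrt[4]{a}\sqrt{t}\,F(\sqrt[4]{a}\sqrt{t})\bigr)$, the substitution $t=a^{-1/2}e^x$, a Stenger sinc-quadrature estimate in a horizontal strip, and closed-form tail bounds), but your Step 2 has a genuine gap. The integrand is entire, so the strip width is limited not by analyticity but by the boundary norm $N_1(g,D_\zeta)$, and this diverges as $\zeta\uparrow\pi/2$: on the line $\Im x=\pi/2$ one has $\arg\bigl(e^{x/2}\bigr)=\pi/4$, so $\Re\bigl((e^{x/2})^2\bigr)=0$, the integral $\int_0^{e^{x/2}}e^{u^2}\,du$ tends to the finite Fresnel constant, and hence $\bigl|1-2e^{x/2}F(e^{x/2})\bigr|\sim\sqrt{\pi}\,e^{\Re x/2}$ while $\bigl|e^{-\alpha_a(x)s}\bigr|=e^{-sa^{-1/2}e^{\Re x}\cos(\pi/2)}=1$. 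Thus your claimed bound $|w_a(\xi+i\eta)|\lesssim e^{\xi/2}$ fails near the boundary, the boundary integrand is not even bounded, and the quadrature factor $e^{-\pi^2/h_a}/(1-e^{-\pi^2/h_a})$ (which corresponds to $\zeta=\pi/2$) is multiplied by an infinite $M(s)$. You must fix a strictly smaller strip and then quantify $\sup\bigl|1-2zF(z)\bigr|$ on the corresponding rays; this is exactly what the paper's Lemma \ref{lem:dawson-deriv-complex-bound} does for $\arg z=\pm\pi/6$ (strip half-width $\pi/3$), giving $N_1\leq 2c_{\mathrm{daw}}/\sqrt{s\cos\zeta}$ and, via $2\sqrt{2}\,c_{\mathrm{daw}}\leq 10$ and $s\geq 1$, precisely the constants $2\pi^2/3$ and $10(1+\sqrt{a})/\delta$ in \eqref{eq:h-epxonential-sum}. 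Without such an explicit boundary bound you cannot turn the absolute quadrature error into the stated relative error with the stated admissible $h_a$.

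There is also a quantitative problem in your Step 3: bounding the tails by $\int|g(t)|e^{-st}\,dt$ and evaluating the $(\pi t)^{-1/2}$-part and the Dawson part separately yields prefactors of size $s^{-1/2}$ (you state $s^{-1/2}\operatorname{erfc}(\sqrt{sT})$ and $s^{-1/2}\erf(\sqrt{sT'})$), whereas the theorem needs the tails bounded by $\frac{\sqrt{s}}{s+\sqrt{a}}$ times $f_a$ (respectively times $\delta/2$). Since $\frac{1}{\sqrt{s}}=\frac{s+\sqrt{a}}{s}\cdot\frac{\sqrt{s}}{s+\sqrt{a}}$, your bounds are off by the factor $\frac{s+\sqrt{a}}{s}$, which is large when $\sqrt{a}\gg s$ (the relevant regime here, since $a_{\nu_\rt}$ grows with the temporal level), so the hypothesis $\max\{f_a(1,h_an),f_a(K,h_an)\}\leq\min\{\delta/2,\eta\}$ no longer implies \eqref{eq:scaling-diff-DX}--\eqref{eq:scaling-diff-DXhat}. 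The paper avoids this by integrating the \emph{combined} integrand exactly: the antiderivative of $\bigl(1-2e^{x/2}F(e^{x/2})\bigr)e^{x/2-ce^{x}}$ carries the factor $\frac{1}{c+1}$ with $c=s/\sqrt{a}$, which is exactly the weight $\frac{\sqrt{a}}{s+\sqrt{a}}$ that converts the erf/erfc and Dawson terms into the relative form $\frac{\sqrt{s}}{s+\sqrt{a}}f_a(s,\cdot)$. Your final endpoint argument in $s$ on $[1,K]$ matches the paper's, but both tail estimates and the strip argument need the repairs above before the constants in the statement are actually reproduced.
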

The proof of this theorem is given in Appendix \ref{app:expsum}.
We define
$S_{\min} = \bigl( \min\limits_{\nu \in \vee_\rx} \norm{\Psi_\nu}_V \bigr)^{-1}$
and set
\begin{align}\label{eq:scaling-time-a}
	a_{\nu_{\rt}} = S_{\min}^2 \norm{ \theta_{\nu_\rt}}_{H^1(I)}, \quad \nu_\rt \in \vee_\rt.
\end{align}
In addition, let the values of $\delta$, $h_{a_{\nu_\rt}}$ and $n_{a_{\nu_\rt}}^+$ for each $\nu_\rt \in \vee_\rt$ be fixed according to Theorem \ref{thm:approx-scaling}, where $h_{a_{\nu_\rt}}$ are chosen as the upper bound in \eqref{eq:h-epxonential-sum}.
Then, for any  $\nu_\rt \in \vee_\rt$ and $\sfn = (n_{\nu_\rt})_{\nu_\rt \in \vee_\rt} \subset \N_0$, we define
\[
	 S^\cX_{n,\nu_\rt,\nu_\rx} = S_{\min} \Phi_{a_{\nut},n} \bigl(  S_{\min}^2 \norm{\Psi_{\nu_\rx}}_V^2  \bigr) 
\]
as well as
\[
	\mathbf{D}_{\cX,\nut,\sfn} \bv_{\nut} = (\mathbf{D}_{\cX,\sfn} \bv)_{\nu_\rt} = \left( S^\cX_{n_{\nu_\rt},\nu_\rt,\nu_\rx} v_{\nu_\rt,\nu_\rx} \right)_{\nu_\rx \in \vee_\rx}.
\]
As a consequence of this definition, $\bDXnut$ can be approximated as a sum of $1 + n_{a_{\nut}}(\delta) + n_{a_{\nut}}$ separable terms  for each time index $\nu_\rt$.
In the limit $n \to \infty$, we obtain the reference scaling
\begin{align*}
	\bD_{\cX,\nut} \bv_{\nut} = (\bDX \bv)_{\nu_\rt} = \left( S^\cX_{\nu_\rt,\nu_\rx} v_{\nu_\rt,\nu_\rx} \right)_{\nu_\rx \in \vee_\rx}
\end{align*}
where
\begin{align*}
	{S}^\cX_{\nu_\rt,\nu_\rx} = \lim\limits_{n \to \infty} S^\cX_{n,\nu_\rt,\nu_\rx} =  S_{\min} \Phi_{a_{\nut},\infty} \bigl(  S_{\min}^2 \norm{\Psi_{\nu_\rx}}_V^2  \bigr)  .
\end{align*}

In the next step we rephrase the statements \eqref{eq:scaling-diff-DX} and \eqref{eq:scaling-diff-DXhat} in terms of the scaling matrix $\mathbf{D}_{\cX,\sfn}$. Note that for any $K>1$, the bounds \eqref{eq:scaling-diff-DX} and \eqref{eq:scaling-diff-DXhat} hold true for spatial indices in the subset 
\begin{equation}\label{eq:LambdaKdef}
	\Lambda_K = \bigl\{ \nu_\rx \in  \vee_\rx : S_{\min}^2 \norm{\Psi_{\nu_\rx}}_V^2 \leq  K\bigr\} .
\end{equation}
In addition, we define
\begin{align*}
	\MXnu(\eta;K) &= \min \Bigl\{ n \in \N_0 : \max\bigl\{f_{a_{\nut}}(1,h_{a_{\nut}} n),f_{a_{\nut}}(K,h_{a_{\nut}} n)\bigr\} \leq \min\{\delta/2,\eta\}  \Bigr\}, \\
	\MXnuZe(K) &= \MXnu(\delta/2,K),
\end{align*}
where $a_{\nut}$ is given by \eqref{eq:scaling-time-a}. Then for all $\nu_\rt \in \vee_\rt$, $\eta > 0$ and $K > 1$, we obtain for $n \geq M_{\nu_\rt}(\eta;K)$ the estimates
\begin{align*}
	\lrabs{\bigl(\bar S^\cX_{\nu_\rt,\nu_\rx}\bigr)^{-1} \bigl(\bar S^\cX_{\nu_\rt,\nu_\rx} -  S^\cX_{n,\nu_\rt,\nu_\rx}\bigr)} \leq \delta, \quad \lrabs{\bigl(\bar S^\cX_{\nu_\rt,\nu_\rx}\bigr)^{-1} \bigl( S^\cX_{\nu_\rt,\nu_\rx} -  S^\cX_{n,\nu_\rt,\nu_\rx}\bigr)} \leq \eta.
\end{align*}
These estimates are a direct consequence of \eqref{eq:scaling-diff-DX} and \eqref{eq:scaling-diff-DXhat} as well as \eqref{eq:scaling-time-a} and the definition of $\bar S^\cX$ in \eqref{eq:omega-t-x}.

In our present setting, we generally have different spatial index sets for each time index. With this in mind, we define for $\sfK = (K_{\nu_\rt})_{\nu_\rt \in \vee_\rt}$ the index set
\begin{equation}\label{eq:LambdaKKdef}
	\Lambda_{\sfK} = \bigcup\limits_{\nu_\rt \in \vee_\rt} \{\nu_\rt\} \times \Lambda_{K_{\nu_\rt}}
\end{equation}
and set
\begin{equation*}
	\MX(\eta;\sfK) = \left(\MXnu(\eta;K_{\nu_\rt})\right)_{\nu_\rt \in \vee_\rt}, \qquad
	\MXZe(\sfK) = (\MXnuZe(K_{\nu_\rt}))_{\nu_\rt \in \vee_\rt}.
\end{equation*}
Then for any $\eta > 0$ and $\sfK > 1$, provided that $\sfn \geq \MX(\eta;\sfK)$ (with these inequalities to be understood componentwise), we have
\begin{align}\label{eq:scaling-X-error}
	\norm{\mathbf{\bar D}_\cX^{-1}(\mathbf{\bar D}_\cX - \mathbf{D}_{\cX,\sfn}) \Res_{\Lambda_\sfK}} \leq \delta, \quad \norm{\mathbf{\bar D}_\cX^{-1}(\bDX - \mathbf{D}_{\cX,\sfn}) \Res_{\Lambda_\sfK}} \leq \eta\,.
\end{align}
Furthermore, as a consequence of \eqref{eq:scaling-diff-DX-DXtilde} we have
\begin{equation}\label{eq:scaling-etry-normal-to-equiv}
	1 - \delta \leq \bigl(\bar S^\cX_{\nut,\nu_\rx}\bigr)^{-1}   S^\cX_{n,\nut,\nu_\rx} \leq 1 + \delta, \quad (\nut,\nu_\rx) \in \vee .
\end{equation}
The function $w_{a_{\nut}}$ is positive on all of its domain. However, we can use 
\[
	\tfrac{dF}{dx}(e^{\frac{x}{2}})= 1-2e^{x/2} F(e^{x/2})
\]
together with the fact that the Dawson function is strictly increasing in the interval $[0,\frac{3}{4}]$ \cite{oeis} to conclude that $w_{a_{\nut}}(-kh_{a_{\nut}}) \geq 0$ for all $k > \frac{2}{h_{a_{\nut}}} \ln(\frac{4}{3})$. Therefore, for $n_{\nut} \geq \frac{2}{h_{a_{\nut}}} \ln(\frac{4}{3})$ we have by definition
\[
	{S}^\cX_{n,\nu_\rt,\nu_\rx} \leq {S}^\cX_{\nu_\rt,\nu_\rx}
\]
and by \eqref{eq:scaling-etry-normal-to-equiv}, $\bigl({S}^\cX_{\nu_\rt,\nu_\rx}\bigr)^{-1} {S}^\cX_{n_{\nu_\rt},\nu_\rt,\nu_\rx}\leq 1 + \delta$. Under the additional restrictions $\nu_\rx \in \Lambda_{K_{\nu_\rt}}$ and $n \geq M_{0,\nu_\rt}(K_{\nu_\rt})$, we have the corresponding lower bound $\bigl({S}^\cX_{\nu_\rt,\nu_\rx}\bigr)^{-1} {S}^\cX_{n_{\nu_\rt},\nu_\rt,\nu_\rx} \geq 1 - \delta$.
We summarize the above observation as follows.
\begin{remark}\label{rem:scaling-DX-delta}
	For the diagonal operators $\mathbf{\bar D}_\cX, \bDX, \mathbf{D}_{\cX,\sfn}$ we have
	\begin{equation}\label{eq:scaling-X-operator-change}
		\norm{\mathbf{\bar D}_\cX^{-1} \bDX} \leq 1 + \delta, \quad \norm{\bDX^{-1} \mathbf{\bar D}_{\cX}} \leq (1-\delta)^{-1} ,
	\end{equation}
	which means that the spectral condition number of $\bDX^{-1} \mathbf{\bar D}_{\cX}$ is bounded by $(1+\delta)/(1-\delta)$. Moreover, for $\sfn = (n_{\nu_\rt})_{\nu_\rt \in \vee_\rt}$ with $n_{\nu_\rt} \geq \frac{2}{h_{a_{\nut}}} \ln(\frac{4}{3})$, we have
	\begin{align*}
		\norm{\mathbf{\bar D}_\cX^{-1} \mathbf{D}_{\cX,\sfn}} \leq 1 + \delta
	\end{align*}
	and for any $\sfK = (K_{\nu_\rt})_{\nu_\rt \in \vee_\rt} > 1$ and $\eta > 0$,
	\begin{align*}
		(1-\delta) \norm{\mathbf{\bar D}_{\cX} \bv} \leq \norm{\mathbf{D}_{\cX,\sfn} \bv} \leq (1+\delta) \norm{\mathbf{\bar D}_{\cX} \bv} \quad \text{when} \quad \supp (\bv) \subset \Lambda_{\sfK}.
	\end{align*}
\end{remark}

To bound the ranks in the iteration, we will need an upper bound for $\MXnu(\eta;K)$ for given $\eta > 0, K \in \R$ and $\nut \in \veet$.
\begin{lemma}
	Let $0 < \eta < 1, K \in \R$ and $\nut \in \veet$. Then we have
	\begin{multline}\label{eq:MX-bound}
		\MXnu(\eta;K) \leq 1+ 2 h_{a_{\nut}}^{-1} \Bigl(C + \abs{\ln(\min\{\delta/2,\eta\})}  \\ + \tfrac{1}{2}\ln(a_{\nut}) + \max\{0,\tfrac{1}{2}\ln(K)-\ln(a_{\nut})\} \Bigr) ,
	\end{multline}
	where $C = \ln(\tfrac{4}{\sqrt{\pi}})$.
\end{lemma}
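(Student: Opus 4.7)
Writing $\eta' = \min\{\delta/2,\eta\}$ and $a = a_{\nut}$, $h = h_{a_{\nut}}$, the definition of $\MXnu(\eta;K)$ asks for the smallest $n \in \N_0$ such that $f_a(1,hn) \le \eta'$ and $f_a(K,hn) \le \eta'$. The plan is to find a value $y \ge 0$ for which both inequalities hold with $y$ in place of $hn$, expressed in the form claimed in \eqref{eq:MX-bound}, and then set $n = \lceil y/h \rceil \le 1 + y/h$.

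First, I would establish the two elementary pointwise bounds
\begin{equation*}
  F(x) \le x \quad\text{and}\quad \erf(x) \le \tfrac{2x}{\sqrt{\pi}} \quad\text{for all } x \ge 0,
\end{equation*}
both of which follow directly from the respective integral representations \eqref{eq:dawsondef} and of $\erf$ by bounding the integrand by $1$ on $[0,x]$. Combining these with the trivial estimate $\exp\bigl(-\tfrac{s}{\sqrt{a}} e^{-y}\bigr) \le 1$ and the definition of $f_a$, I obtain
\begin{equation*}
  f_a(s,y) \;\le\; \frac{2\, e^{-y/2}}{\sqrt{\pi}} \left( \frac{a^{1/4}}{\sqrt{s}} + \frac{\sqrt{s}}{a^{1/4}} \right), \qquad s > 0,\ y \ge 0.
\end{equation*}

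Next, I would specialise this to $s \in \{1,K\}$. Since each summand $\tfrac{a^{1/4}}{\sqrt{s}} + \tfrac{\sqrt{s}}{a^{1/4}}$ is bounded by twice its larger term, a short case distinction using $K \ge 1$ yields
\begin{equation*}
  \max\{f_a(1,y),\, f_a(K,y)\} \;\le\; \frac{4\, e^{-y/2}}{\sqrt{\pi}} \max\bigl\{ a^{1/4},\ \sqrt{K}/a^{1/4} \bigr\}.
\end{equation*}
(For $s=1$ the sum is dominated by $a^{1/4}$ when $a \ge 1$ and by $a^{-1/4} \le \sqrt{K}/a^{1/4}$ when $a < 1$; for $s=K$ one compares $a^{1/4}/\sqrt{K}$ and $\sqrt{K}/a^{1/4}$, both of which are $\le \max\{a^{1/4},\sqrt{K}/a^{1/4}\}$.)

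Requiring the right-hand side to be $\le \eta'$ and taking logarithms then gives the sufficient condition
\begin{equation*}
  \tfrac{y}{2} \;\ge\; C + |\ln \eta'| + \max\bigl\{ \tfrac{1}{4}\ln a,\ \tfrac{1}{2}\ln K - \tfrac{1}{4}\ln a \bigr\},
\end{equation*}
where $C = \ln(4/\sqrt{\pi})$. Finally, I would use the algebraic identity
\begin{equation*}
  \max\bigl\{ \tfrac{1}{4}\ln a,\ \tfrac{1}{2}\ln K - \tfrac{1}{4}\ln a \bigr\} \;=\; \tfrac{1}{4}\ln a + \tfrac{1}{2}\max\{0,\ln K - \ln a\},
\end{equation*}
which follows by splitting into the cases $\ln K \le \ln a$ and $\ln K > \ln a$, and set $n = \lceil y/h \rceil$ for $y$ equal to twice the right-hand side; the claimed bound \eqref{eq:MX-bound} follows at once. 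The routine nature of the estimates means there is no substantial obstacle; the only point requiring a little care is the case analysis leading to the uniform bound $\max\{a^{1/4}, \sqrt{K}/a^{1/4}\}$, which must cover both $a \ge 1$ and $a < 1$ in order for the resulting $\tfrac{1}{4}\ln a$-term (with sign) to appear exactly as in the statement.
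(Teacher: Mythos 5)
Your proposal is correct and takes essentially the same route as the paper's proof: the same elementary bounds $F(x)\le x$ and $\erf(x)\le \tfrac{2}{\sqrt{\pi}}x$ yield the same majorant $\tfrac{4}{\sqrt{\pi}}e^{-y/2}\max\bigl\{a^{1/4},\sqrt{K}/a^{1/4}\bigr\}$, which the paper writes as $g_{a_{\nut}}(y)=\tfrac{4}{\sqrt{\pi}}e^{-y/2}\sqrt[4]{a_{\nut}}\max\{1,\sqrt{K}/\sqrt{a_{\nut}}\}$ and obtains uniformly for $s\in[1,K]$ via $c+c^{-1}\le 2\max\{c,c^{-1}\}$, while you only need $s\in\{1,K\}$ and argue by a short case distinction. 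Taking logarithms and the smallest admissible $n$ (your ceiling) then gives \eqref{eq:MX-bound} exactly as in the paper.
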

\begin{proof}
	We define
	\[
		g_{a_{\nut}}(y) = \tfrac{4}{\sqrt{\pi}} e^{-\frac{y}{2}} \sqrt{a_{\nut}} \max\left\{1,\tfrac{\sqrt{K}}{a_{\nut}}\right\} .
	\]
	Using that $\frac{dF}{dx}(x) = 1-2xF(x) \leq 1$ and $F(0) = 0$, one obtains $F(x) \leq x$ for all $x \geq 0$. Analogously, we have $\erf(x) \leq \frac{2}{\sqrt{\pi}} x$ for all $x > 0$. Combining this with $c + c^{-1} \leq 2 \max\{c,c^{-1}\}$ yields $f_{a_{\nut}}(s,y) \leq g_{a_{\nut}}(y)$ for all $y \in \R, s \in [1,K]$.
	Choosing the smallest $n \in \N$ such that $g_{a_{\nut}}(nh_{a_{\nut}}) \leq \eta$ yields
	\[
		n-1 \leq 2 h_{a_{\nut}}^{-1} \left(C + \abs{\ln(\min\{\delta/2,\eta\})} + \tfrac{1}{2}\ln(a_{\nut}) + \max\{0,\tfrac{1}{2}\ln(K)-\ln(a_{\nut})\} \right)
	\]
	with $C = \ln(\tfrac{4}{\sqrt{\pi}})$.
	Due to $f_{a_{\nut}}(s,nh_{a_{\nut}}) \leq g_{a_{\nut}}(nh_{a_{\nut}}) \leq \eta$ for all $s \in [1,K]$, we have $\MXnu(\eta;K) \leq n$, from which \eqref{eq:MX-bound} follows.
\end{proof}
Note that the upper bound does not only depend on the time index $\nut$ via $a_{\nut}$ but also on the spatial support at this given time index via $K_{\nut}$.

\begin{remark}
	The step size $h_{a_{\nut}}$ is not fixed, but depends by $a_{\nut}$ on the time index $\nut$. By definition, one has $a_{\nut} \eqsim 2^{\abs{\nu_\rt}}$. If we set $h_{a_{\nut}}$ to the upper bound in \eqref{eq:h-epxonential-sum}, we get $h_{a_{\nut}} \eqsim \abs{\nu_\rt}^{-1}$.
	Based on the result of $h_{a_{\nut}}$, one can easily see that $n^+_{a_{\nut}}$ grows quadratically in $\abs{\nu_\rt}$.
	For the lower bound for $n$, we obtain $n \geq \frac{2}{h_{a_{\nut}}} \ln(\frac{4}{3})$.
\end{remark}

\begin{remark}\label{rem:fullsep}
	In the present approach, $\bDXnut$ is approximated by a finite sum of separable terms independently for each $\nu_\rt \in \veet$, and the approximation of each $\bDXnut$ acts on precisely one low-rank representation associated to the time index $\nut$. 
	This depends on our particular approximation format with separate low-rank representations for each each $\nut$.
	When instead aiming for low-rank approximations treating the time-dependence as an additional tensor mode, one would instead need a direct low-rank approximation of $\mathbf{\bar D}_\cX$, which appears to be a substantially more difficult problem, and we are not aware of suitable error-controlled constructions of this type; see \cite{Andreev:15}, however, for heuristic approaches for obtaining low-rank approximations of a rescaling equivalent to $\mathbf{\bar D}_\cX$.
\end{remark}

\subsection{Scaling matrix $\bDY$}
The scaling operator $\mathbf{\bar D}_{\cY}$ has the form $\mathbf{\bar D}_{\cY} = \bI_\rt \otimes \mathbf{\bar D}$ with $\mathbf{\bar D}$ as in \eqref{eq:S-def}.
An equivalent scaling operator $\bD$ for $\mathbf{\bar D}$, which can be approximated by a finite sum of separable operators, is given in \cite[Section 4.1]{bachmayr2014adaptive}. In this chapter we will recapitulate the relevant results. Subsequently, we adapt these results to obtain $\bDY$. Therefore we use that we have the identity in the time and can approximate the scaling matrix $\mathbf{\bar D}$ independently for each time index corresponding to its associated spatial support.

We first recall the construction of $\bD$ and $\bD_n$. Let $\delta \in (0,1)$ and
\begin{align*}
	h \in \Big(0, \tfrac{\pi^2}{5(\abs{\ln \frac{\delta}{2}} + 4)} \Big], \quad n^+ &= \ceil{h^{-1} \max\{4\pi^{-\frac{1}{2}}, \sqrt{\abs{\ln(\delta/2)}}\}}, \\
	\alpha(x) = \ln^2(1+e^x), \quad w(x) &= 2\pi^{-\frac{1}{2}} (1+e^x)^{-1} .
\end{align*}
With the exponential sums
\begin{align*}
	\varphi_{h,n}(t) = \sum\limits_{k=-n}^{n^+} h w(kh) e^{-\alpha(kh)t}, \quad \varphi_{h,\infty}(t) = \lim\limits_{n \to \infty} \varphi_{h,n}(t)
\end{align*}
we define $\bD$ and $\bD_n$ by
\begin{align*}
	\bD_n \bv &= \bigl(S^V_{n,\nu}  \bv_{\nu}\bigr)_{\nu \in \vee_\rx}, \quad \text{where} \quad {S}^V_{n,\nu} = S_{\min}  \varphi_{h,n}\bigl( S_{\min}^2 \norm{\Psi_\nu}_V^2 \bigr), \\
	\bD \bv &= \bigl(S^V_{\nu} \bv_{\nu}\bigr)_{\nu \in \vee_\rx}, \quad \text{where} \quad \tilde{S}^V_{\nu} = S_{\min} \varphi_{h,\infty}\bigl(S_{\min}^2\norm{\Psi_\nu}_V^2 \bigr).
\end{align*}
Next, we state the relevant results on the equivalent scaling matrix, where we refer to \cite[Section 4.1]{bachmayr2014adaptive} for more details.
For $\eta > 0$, $h \in \R$, $0<\delta<1$ and $K \in \R$, let
\begin{equation}\label{eq:MY-bound}
	\begin{aligned}
	\mY(\eta;K) &= \lceil h^{-1} (\ln(2\pi^{-\frac{1}{2}}) + \abs{\ln(\min\{\delta/2,\eta\})} + \tfrac{1}{2} \ln(K)) \rceil ,\\
	\mYZe(K) &= \mY(\delta/2,K) .
	\end{aligned}
\end{equation}
Provided that $n \geq \mY(\eta;K)$, we have
\begin{align}\label{eq:scaling-S}
	\norm{\mathbf{\bar D}^{-1}(\mathbf{\bar D} -\bD_n) \Res_{\Lambda_K}} \leq \delta, \quad \norm{\mathbf{\bar D}^{-1}(\bD-\bD_n) \Res_{\Lambda_K}} \leq \eta .
\end{align}

\begin{remark}[{see \cite[Remark 11]{bachmayr2014adaptive}}]\label{rem:scaling-S}
	For the diagonal operators $\mathbf{\bar D}, \bD$ and $\bD_n$, we have
	\begin{align*}
		\norm{\mathbf{\bar D}^{-1} \bD_n}, \norm{\mathbf{\bar D}^{-1} \bD} \leq 1+\delta, \quad n \in \N_0, \quad \norm{\bD^{-1}\mathbf{\bar D}} \leq (1-\delta)^{-1} ,
	\end{align*}
	and in particular, the spectral condition number of $\bD^{-1}\mathbf{\bar D}$ is bounded by $(1+\delta)/(1-\delta)$. Moreover, for any $K > 1$ and $n \geq \mYZe(K)$,
	\begin{align*}
		(1-\delta) \norm{\mathbf{\bar D} \bv} \leq \norm{\bD_n \bv} \leq (1+\delta) \norm{\mathbf{\bar D} \bv} \quad \text{when} \quad \supp(\bv) \subseteq \Lambda_K .
	\end{align*}
\end{remark}
Based on this knowledge, we are able to define an equivalent scaling operator for $\mathbf{\bar D}_\cY$. We set $\bDY = \bI_\rt \otimes \bD$, which still has unbounded ranks. For the rank-truncated version we allow a different rank for each temporal index: for $\sfn \in \N_0^{\vee_\rt}$, we define
\begin{align*}
	\bD_{\cY,\sfn} = \sum\limits_{\nu _\rt \in \vee_\rt} \mathbf{E}_{\nu_\rt} \otimes \bD_{n_{\nu_\rt}} .
\end{align*}
For $\sfK = (K_{\nu_\rt})_{\nu_\rt \in \vee_\rt}$, we define
\begin{equation*}
	\MY(\eta;\sfK) = (\mY(\eta;K_{\nu_\rt}))_{\nu_\rt \in \vee_\rt}, \qquad
	\MYZe(\sfK) = (\mYZe(K_{\nu_\rt}))_{\nu_\rt \in \vee_\rt} .
\end{equation*}
Then if $\sfn \geq \MY(\eta;\sfK)$, by \eqref{eq:scaling-S}, we have
\begin{equation*}
	\norm{\mathbf{\bar D}_{\cY}^{-1} (\mathbf{\bar D}_{\cY} - \mathbf{D}_{\cY,\sfn}) \Res_{\Lambda_\sfK}} \leq \delta, \quad \norm{\mathbf{\bar D}_\cY^{-1}(\bDY - \bD_{\cY,\sfn}) \Res_{\Lambda_\sfK}} \leq \eta.
\end{equation*}

\begin{remark}\label{rem:scaling-DY-delta}
	For the diagonal operators $\mathbf{\bar D}_{\cY}$, $\bD_\cY$ and $\mathbf{D}_{\cY,\sfn}$, as a direct consequence of Remark \ref{rem:scaling-S}, we have
	\begin{align}\label{eq:scaling-Y-operator-change}
		\norm{\mathbf{\bar D}_\cY^{-1} \mathbf{D}_{\cY,\sfn}}, \norm{\mathbf{\bar D}_\cY^{-1} \bDY} \leq 1+\delta, \quad \sfn \in \N_0^{\vee_\rt}, \quad \norm{\bDY^{-1} \mathbf{\bar D}_\cY} \leq (1-\delta)^{-1},
	\end{align}
	and for any $\sfK > 1$ and $\sfn \geq \MYZe(\sfK)$,
	\begin{align*}
		(1-\delta) \norm{\mathbf{\bar D}_\cY \bv} \leq \norm{\mathbf{D}_{\cY,\sfn} \bv} \leq (1+\delta) \norm{\mathbf{\bar D}_\cY \bv} \quad \text{when} \quad \supp(\bv) \subseteq \Lambda_\sfK .
	\end{align*}
\end{remark}

\section{Adaptive approximation of rescaled low-rank operators}\label{sec:Apply}
Given the equivalent scaling operators and their approximations constructed in the previous section, we now turn to the adaptive application of operators in low-rank representation. To achieve this for $\bB_1$, we need to consider operators of the form
\begin{align}\label{eq:operator-sep-form}
	\bT = \bT_\rt \otimes \bI_\rx + \bI_\rt \otimes \bT_\rx ,
\end{align} 
where with a certain rank parameter $R$ and $\KdR = \bigtimes_{i=1}^d \{1,\dots,R\}$, $\bT_\rx$ has the form
\begin{align}\label{eq:operator-low-rank-form}
	\bT_\rx = \sum\limits_{\sfn \in \KdR} c_\sfn \bigotimes_{i} \bT^{(i)}_{n_i},
\end{align}
with the component tensor $( c_{\sfn} )_{\sfn \in \KdR}$ in hierarchical format with ranks at most $R$. 

\begin{remark}\label{rem:laplaceT}
For the heat equation, where $\mathbf{T}_\rx$ corresponds to the representation of the Laplacian, we obtain the one-dimensional operators
\begin{align*}
	\bT^{(i)}_1 = \left( \inp{\psi_{\nu}}{\psi_{\mu}}\right)_{\nu,\mu \in \vee_{1}} = \bI_i , 
	\quad 
	\bT^{(i)}_2 = \left( \inp{\psi_{\nu}^\prime}{\psi_{\mu}^\prime}\right)_{\nu,\mu \in \vee_{1}}, \quad \bT_\rt = \left( \inp{\psi_{\nu}^\prime}{\psi_{\mu}}\right)_{\nu,\mu \in \vee_\rt}.
\end{align*}
In this case, we have $R=2$, and $c_{\sfn} = 1$ if $(n_1,\dots,n_d)$ is a permutation of $(2,1,\dots,1)$ and $c_{\sfn} = 0$ otherwise. It is natural to choose the same wavelet basis for each spatial dimension, which allows us to write $\bT_2 = \bT_2^{(i)}$. Hence in this case the operator $\bT_{\rx}$ takes the form
\begin{align}\label{eq:laplace-operator-representation}
	\bT_{\rx} = \bT_2 \otimes \bI_2 \otimes \cdots \otimes \bI_d + \cdots + \bI_1 \otimes \cdots \otimes \bI_{d-1} \otimes \bT_2 .
\end{align}
\end{remark}

In the first parts of this section we construct for a given $\bv \in \ell_2(\vee)$ of finite hierarchical ranks and any given tolerance $\eta > 0$ an approximation $\bw_{\eta} \in \ell_2(\vee)$, which satisfies the estimate $\norm{\bB_1 \bv - \bw_{\eta}} \leq \eta$ and satisfies bounds on hierarchical ranks and lower-dimensional support sizes $\# \supp (\piti(\bw_{\eta}))$ that are quasi-optimal in relation to $\eta$. In the last part of the section we turn to analogous results for $\bB_2$.

The approach for $\bB_1$ is to examine the spatial and temporal operator separately and subsequently combine the results. For both operators we make use of the properties of the scaling operators from Section \ref{sec:low-rank-preconditioning} and transfer the problem to operators with explicit low-rank format. The spatial operator can then be treated similarly to the elliptic case as in \cite{BachmayrNearOptimal,bachmayr2014adaptive}, whereas for the temporal operator some new concepts are needed due to interaction between the different hierarchical tensor representations associated to different temporal basis functions.

Let us first recall the following standard notion from \cite{CDD1} for the adaptive compressibility of operators. 

\begin{definition}\label{def-compressible}
	Let $\hat\vee$ be a countable index set, and let $s^* > 0$. An operator $\bM \colon \ell_2(\hat\vee) \to \ell_2(\hat\vee)$ is called \emph{$s^*$-compressible} if for any $0 < s < s^*$ there exist summable positive sequences $(\alpha_j)_j, (\beta_j)_j$ such that for each $j \in \N_0$ there exists $\bM_j$ with at most $\alpha_j 2^j$ nonzero entries per row and column satisfying $\norm{\bM- \bM_j} \leq \beta_j 2^{-sj}$. We denote the arising sequences for a given $s^*$-compressible operator by $\alpha(\bM)$ and $\beta(\bM)$. 
\end{definition}

In the following the compressibility is always dependent on the combination with the scaling matrices. First, we define the lower-dimensional scaling operators $\mathbf{\hat D}^{\tau} \colon \R^{\vee_1} \to \R^{\vee_1}$ and $\mathbf{\hat D}_\rt^{\tau} \colon \R^{\vee_\rt} \to \R^{\vee_\rt}$ by
\begin{align*}
	\mathbf{\hat D}^\tau \bv = \left( \norm{\psi_{\nu}}_{H^1_0(0,1)}^{-\tau} \bv_{\nu} \right)_{\nu \in \vee_1}, \quad  &\mathbf{\hat D} = \mathbf{\hat D}^1, \\
	\mathbf{\hat D}_\rt^\tau \bv = \left( \norm{ \theta_{\nu_\rt} }_{H^1(I)}^{-\tau} \bv_{\nu_\rt} \right)_{\nu_\rt \in \vee_\rt},  \quad  & \mathbf{\hat D}_\rt = \mathbf{\hat D}_\rt^1.
\end{align*}
The operators
\begin{align}\label{eq:operator-onedim-scaling}
	\bC_2 = \mathbf{\hat D} \bT_2 \mathbf{\hat D}, \quad \bC_{\rt} = \bT_\rt \mathbf{\hat D}_\rt 
\end{align}
are bounded for sufficiently regular time and spatial basis functions. Additionally, they are $s^*$-compressible: for each $s < s^*$, there exist sequences of approximations $(\bT_{2,j})_{j \in \N_0}$, $(\bT_{\rt,j})_{j \in \N_0}$ with
\begin{align}\label{eq:compressible-one-dim-scaling}
	\begin{split}
		\norm{\mathbf{\hat D} (\bT_2 - \bT_{2,j}) \mathbf{\hat D}} &\leq \beta_j(\bC_2) 2^{-sj}, \\
		\norm{(\bT_\rt - \bT_{\rt,j}) \mathbf{\hat D}_\rt} &\leq \beta_j(\bC_{\rt}) 2^{-sj}.
	\end{split}
\end{align}

\subsection{Spatial operator}\label{sec:Apply-spatial}
In the following we consider as in \eqref{eq:bAcx} an operator of the form
\begin{equation}\label{eq:Bxdef}
	\mathbf{B}_\rx = \bDY (\bI_\rt \otimes \bT_\rx) \bDX,
\end{equation}
where $\bT_\rx$ is of the form \eqref{eq:operator-low-rank-form}. 
We assume that the coefficients $\bT^{(i)}_{n_i}$ are $s^*$-compressible in the sense of \eqref{eq:compressible-one-dim-scaling} and $\bT^{(i)}_{n_i,j}$ are the corresponding approximations. For the adaptive application of the operator $\mathbf{B}_\rx$ on a given $\bv \in \ell_2(\vee)$ we want to combine, analogously to the one-dimensional and the high-dimensional elliptic case, the available a priori information on $\mathbf{B}_\rx$ ($s^*$-compressibility) with a posteriori information on $\bv$. We describe how to construct approximations $\bw_J$ to $\mathbf{B}_\rx \bv$ for refinement parameters $J \in \N_0$ and then express these results in terms of error tolerances. Our approach follows the lines of the high-dimensional elliptic case in \cite{bachmayr2014adaptive}, but uses the spatio-temporal contractions $\piti(\bv)$ from Definition \ref{def:contractions} to control the approximations. 

Let $\bar{\Lambda}^{(i)}_j(\bv)$ be the support of the best $2^j$-term approximation of the contractions $\piti(\bv)$. From this we extract the support for each time index $\nu_\rt$ and each space dimension $i$ by
\begin{align}\label{eq:best2jpitinu}
	\bar{\Lambda}^{(i)}_{\nu_\rt,j}(\bv) = \bigl\{\nu_\rx : (\nu_\rt,\nu_\rx) \in \bar{\Lambda}^{(i)}_j(\bv) \bigr\}, \quad j=0,\ldots,J.
\end{align}
With the indices for each time index we can proceed exactly as in the elliptic case \cite{BachmayrNearOptimal}. If $\bT^{(i)}_{n_i} = \bI_i$, we need no approximation and simply set $\bTtil^{(i)}_{n_i} = \bI_i$. Otherwise, for each $\nu_\rt \in \vee_\rt$, let $\bar{\Lambda}^{(i)}_{\nu_\rt,-1}(\bv) = \emptyset$ and
\begin{align}\label{eq:operator-space-restriction-sets}
	\Lambda^{(i)}_{\nu_\rt,[p]}(\bv) = \begin{cases}
		\bar{\Lambda}^{(i)}_{\nu_\rt,p}(\bv) \setminus \bar{\Lambda}^{(i)}_{\nu_\rt,p-1}(\bv), \quad &p = 0,\dots,J, \\
		\vee_{1} \setminus \bar{\Lambda}^{(i)}_{\nu_\rt,J}(\bv), \quad &p = J+1, \\
		\emptyset, \quad &p > J+1.
	\end{cases}
\end{align}
Correspondingly, we define the set including all respective time indices by
\begin{align}\label{eq:lambda-all-time-indices}
	\Lambda^{(\rt,i)}_{[p]}(\bv) = \bigl\{ (\nu_\rt,\nu_i) : \nu_\rt \in \vee_\rt, \nu_i \in \Lambda^{(i)}_{\nu_\rt,[p]}(\bv) \bigr\} .
\end{align}
Furthermore, let
\begin{align}\label{eq:operator-space-A}
	\bTtil^{(i)}_{n_i,[p]} = \begin{cases}
		\bT^{(i)}_{n_i,J-p}, \quad &p=0,\dots,J, \\
		0, \quad &p > J.
	\end{cases}
\end{align}

We now define the approximate application of the operator to a $\bw \in \ell_2(\vee)$ given in the form \eqref{eq:parabolic-lowrank-format} by
\begin{align*}
	\begin{split}
		 \bTtil_{\rx,J}[\bv] \bw &=  \sum\limits_{\nu_\rt \in \vee_\rt} \mathbf{e}_{\nu_\rt} \otimes \left(\bTtil_{\rx,\nu_\rt,J}[\bv]\bw_{\nu_\rt}\right) \\
		 &= \sum\limits_{\nu_\rt \in \vee_\rt} \mathbf{e}_{\nu_\rt} \otimes \Bigg(\sum\limits_{\sfn \in \KdR} c_\sfn \bigotimes_{i=1}^d \bTtil^{(i)}_{\nu_\rt,n_i}[\bv] \Bigg)\bw_{\nu_\rt} ,
	\end{split}
\end{align*}
where
\begin{align*}
	\bTtil^{(i)}_{\nu_\rt,n_i}[\bv] = \sum\limits_{p \in \N_0} \bTtil^{(i)}_{n_i,[p]} \Res_{\Lambda^{(i)}_{\nu_\rt,[p]}(\bv)} .
\end{align*}
Thus we approximate the operator $\bT_{\rx}$ in a different way for each time index, depending on the corresponding contractions of $\bv$. 
To avoid technicalities, we give the proof of the operator approximation error estimates for the Laplacian, that is, for $\bT_\rx$ of the form \eqref{eq:laplace-operator-representation}.

\begin{remark}\label{rem:generalellipt}
The following estimates can be adapted to the case of more general second-order elliptic operators with constant coefficients as in \cite[Section 6.2]{bachmayr2014adaptive}. 
Since in $\bB_\rx$, the spatial part of the operator can be treated independently for each temporal index, one can immediately apply the bounds on terms containing mixed derivatives in \cite[Thm.~34, Lemmas 35 and 36]{bachmayr2014adaptive} to obtain a result analogous to Lemma \ref{lem:operator-apply-J-space-can-scale} below also for second-order operators with constant diffusion tensors, where the ranks of the hierarchical tensor representations of the diffusion tensors enter in the corresponding error bounds. For example, in the case of constant tridiagonal diffusion tensors, these ranks are bounded by five, see \cite[Example 5]{bachmayr2014adaptive}.
\end{remark}

We define the approximation
\begin{align}\label{eq:operator-approx-canoical-rescale-space}
	\bBtil_{\rx,J}(\bv) = \bDY  \bTtil_{\rx,J}[\bv] \bDX \bv ,
\end{align}
where $\bTtil_{\rx,J}$ depends on $\bv$ via the partitions \eqref{eq:operator-space-restriction-sets}.
To simplify notation, according to \eqref{eq:operator-onedim-scaling} we define 
\begin{align*}
	\mathbf{\tilde C}_2 = \mathbf{\hat D} \bTtil_2 \mathbf{\hat D}. 
\end{align*}
Before we can analyze the approximation we need a relation between the effects of high-dimensional and one-dimensional scaling matrices.
\begin{lemma}\label{lem:scaling-to-one-dim}
	For $\bM_\rx \in \R^{\vee_{1} \times \vee_{1}}, \bM_\rt \in  \R^{\vee_\rt \times \vee_\rt}$ and $\nu_\rt \in \vee_\rt$, one has
	\begin{align}
		\lrnorm{ \bD [\bI_1 \otimes \cdots \otimes \bI_{i-1} \otimes \bM_\rx \otimes \bI_{i+1} \otimes \cdots \otimes \bI_d] \bDXnut} &\leq C_\delta \norm{\mathbf{\hat D} \bM_\rx \mathbf{\hat D}}, \label{eq:scaling-to-onedim-x}\\
		\lrnorm{\bDY [\bM_\rt \otimes \bI_\rx] \bDX} &\leq C_\delta \lrnorm{\bM_\rt \mathbf{\hat D}_\rt} , \label{eq:scaling-to-onedim-t}
	\end{align}
	with the canonical interpretation for $i=1$ and $i=d$, and where $C_\delta = (1+\delta)^2$.
\end{lemma}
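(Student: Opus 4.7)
\medskip
\noindent\textbf{Proof plan.}
The overall strategy is to first pass from the computable scalings $\bD$, $\bDXnut$, $\bDY$, $\bDX$ to their ``ideal'' counterparts $\mathbf{\bar D}$, $\mathbf{\bar D}_{\cX,\nu_\rt}$, $\mathbf{\bar D}_\cY$, $\mathbf{\bar D}_\cX$, paying a constant factor $(1+\delta)^2$, and then carry out the actual work at the level of the exact scalings, where the entries are explicit. Since $\bD$ and $\mathbf{\bar D}$ are both diagonal on $\ell_2(\vee_\rx)$, Remark~\ref{rem:scaling-S} gives $\norm{\bD \mathbf{\bar D}^{-1}} = \norm{\mathbf{\bar D}^{-1}\bD}\leq 1+\delta$; analogously, by Remark~\ref{rem:scaling-DX-delta}, $\norm{\bDXnut \mathbf{\bar D}_{\cX,\nu_\rt}^{-1}} \leq 1+\delta$ uniformly in $\nu_\rt$ (since $\mathbf{\bar D}_\cX^{-1}\bDX$ is block-diagonal in the temporal index). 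Inserting $\mathbf{\bar D}^{-1}\mathbf{\bar D}$ and $\mathbf{\bar D}_{\cX,\nu_\rt}\mathbf{\bar D}_{\cX,\nu_\rt}^{-1}$ on the appropriate sides and using submultiplicativity reduces \eqref{eq:scaling-to-onedim-x} to
\[
  \lrnorm{\mathbf{\bar D}\,[\bI_1\otimes\cdots\otimes \bM_\rx \otimes\cdots\otimes \bI_d]\,\mathbf{\bar D}_{\cX,\nu_\rt}} \leq \norm{\mathbf{\hat D} \bM_\rx \mathbf{\hat D}},
\]
and an entirely analogous reduction applies to \eqref{eq:scaling-to-onedim-t}.

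For \eqref{eq:scaling-to-onedim-x}, I would exploit the explicit product $\mathbf{\bar D}\,\mathbf{\bar D}_{\cX,\nu_\rt}$: writing $A_\nu = \norm{\Psi_\nu}_V^2 = \sum_j \norm{\psi_{\nu_j}}_{H^1_0(0,1)}^2$ and $b_{\nu_\rt} = \norm{\theta_{\nu_\rt}}_{H^1(I)}$, a direct computation from \eqref{eq:omega-t-x} shows that this product is the diagonal operator with entries $(A_\nu + b_{\nu_\rt})^{-1}$. Since $b_{\nu_\rt}\geq 0$, this is pointwise dominated by $A_\nu^{-1}$, so the squared image norm on a test vector $\bv$ is bounded by
\[
  \sum_{\nu_1,\ldots,\nu_d} \frac{1}{A_{\nu}} \biggl|\sum_{\mu_i} (\bM_\rx)_{\nu_i,\mu_i}\,\frac{1}{\sqrt{A_{(\nu_1,\ldots,\mu_i,\ldots,\nu_d)}}}\,\bv_{\nu_1,\ldots,\mu_i,\ldots,\nu_d}\biggr|^2.
\]
Using the elementary inequalities $A_\nu^{-1}\leq \norm{\psi_{\nu_i}}_{H^1_0(0,1)}^{-2}$ and $A_{(\nu_1,\ldots,\mu_i,\ldots,\nu_d)}^{-1/2} \leq \norm{\psi_{\mu_i}}_{H^1_0(0,1)}^{-1}$, all remaining cross-couplings between the $j\neq i$ directions are eliminated. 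Treating the summations over $(\nu_j)_{j\neq i}$ as a parameter and summing first over $\nu_i,\mu_i$ yields exactly $\norm{\mathbf{\hat D}\bM_\rx \mathbf{\hat D}\,\tilde\bv}^2$ applied to the $\nu_i$-slice $\tilde\bv_{\mu_i} = \bv_{(\nu_1,\ldots,\mu_i,\ldots,\nu_d)}$, and Parseval in the remaining coordinates then gives the bound.

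For \eqref{eq:scaling-to-onedim-t}, the analogous unfolding is the main step. Writing $\bv = \sum_{\nu_\rt} \mathbf{e}_{\nu_\rt}\otimes \bv_{\nu_\rt}$ and using $\mathbf{\bar D}_\cY = \bI_\rt \otimes \mathbf{\bar D}$ together with $\mathbf{\bar D}_\cX = \sum_{\nu_\rt} \mathbf{E}_{\nu_\rt}\otimes \mathbf{\bar D}_{\cX,\nu_\rt}$, the $(\nu_\rt',\nu_\rx)$ entry of $\mathbf{\bar D}_\cY[\bM_\rt\otimes\bI_\rx]\mathbf{\bar D}_\cX \bv$ is $\sum_{\nu_\rt}(\bM_\rt)_{\nu_\rt',\nu_\rt}(A_{\nu_\rx}+b_{\nu_\rt})^{-1}(\bv_{\nu_\rt})_{\nu_\rx}$, where again I used the same entrywise identity for $\mathbf{\bar D}\,\mathbf{\bar D}_{\cX,\nu_\rt}$. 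Factoring $(A_{\nu_\rx}+b_{\nu_\rt})^{-1} = b_{\nu_\rt}^{-1}\cdot b_{\nu_\rt}/(A_{\nu_\rx}+b_{\nu_\rt})$ rewrites this as $(\bM_\rt \mathbf{\hat D}_\rt D_{\nu_\rx}\tilde\bv^{(\nu_\rx)})_{\nu_\rt'}$, where $D_{\nu_\rx}$ is the diagonal operator on $\ell_2(\vee_\rt)$ with entries $b_{\nu_\rt}/(A_{\nu_\rx}+b_{\nu_\rt})\in[0,1]$ and $\tilde\bv^{(\nu_\rx)}_{\nu_\rt} = (\bv_{\nu_\rt})_{\nu_\rx}$. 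Since $\norm{D_{\nu_\rx}}\leq 1$, each $\nu_\rx$-slice contributes at most $\norm{\bM_\rt\mathbf{\hat D}_\rt}^2\,\norm{\tilde\bv^{(\nu_\rx)}}^2$, and summing over $\nu_\rx$ and interchanging sums gives the desired $\norm{\bM_\rt \mathbf{\hat D}_\rt}^2\,\norm{\bv}^2$.

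The main obstacle is the second inequality: unlike in the spatial case, the scalings $\mathbf{\bar D}_\cY$ and $\mathbf{\bar D}_\cX$ do not commute with $\bM_\rt\otimes \bI_\rx$ in a way that yields a clean tensor bound, because $\mathbf{\bar D}_\cX$ couples the temporal and spatial indices through the combination $A_{\nu_\rx}+b_{\nu_\rt}$. The factorization trick via the bounded diagonal $D_{\nu_\rx}$ is what isolates the $\mathbf{\hat D}_\rt$ scaling independently of $\nu_\rx$, and thereby enables the final reduction to a purely one-dimensional norm.
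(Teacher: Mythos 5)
Your overall strategy -- pass from the computable scalings to the exact ones at a cost of $(1+\delta)$ per factor, then exploit the explicit entries of $\mathbf{\bar D}$, $\mathbf{\bar D}_\cX$, $\mathbf{\bar D}_\cY$ -- is essentially the paper's argument, and your proof of \eqref{eq:scaling-to-onedim-t} is correct: the slice-wise diagonal $D_{\nu_\rx}$ with entries $b_{\nu_\rt}/(A_{\nu_\rx}+b_{\nu_\rt})\in[0,1]$ is exactly the quantity the paper controls via the commutation $\bDY(\bM_\rt\otimes\bI_\rx)=(\bM_\rt\otimes\bI_\rx)\bDY$ together with $\norm{(\mathbf{\hat D}_\rt^{-1}\otimes\bI_\rx)\mathbf{\bar D}_\cY\mathbf{\bar D}_\cX}\leq 1$, so the two routes for the temporal bound are equivalent.

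In the spatial part, however, the step as written does not hold. You bound the image norm \emph{on a fixed test vector} $\bv$ by replacing the weights $A_\mu^{1/2}/(A_\mu+b_{\nu_\rt})$ with the larger weights $A_\mu^{-1/2}$, and later $A_\nu^{-1}$ with $\norm{\psi_{\nu_i}}_{H^1_0(0,1)}^{-2}$ and $A_\mu^{-1/2}$ with $\norm{\psi_{\mu_i}}_{H^1_0(0,1)}^{-1}$, \emph{inside the modulus of the signed sum} $\sum_{\mu_i}(\bM_\rx)_{\nu_i,\mu_i}(\cdots)\bv_\mu$. Changing coefficient magnitudes inside such a sum does not control its modulus unless $\bM_\rx$ and $\bv$ are entrywise nonnegative; inserting absolute values instead would leave you with $\norm{\mathbf{\hat D}\,\abs{\bM_\rx}\,\mathbf{\hat D}}$ rather than $\norm{\mathbf{\hat D}\bM_\rx\mathbf{\hat D}}$. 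The inequality you need is true, but only at the level of operator norms, and the valid mechanism is the one you already use in the temporal case and in your reduction paragraph: factor out diagonal operators whose entries lie in $[0,1]$ and invoke submultiplicativity, e.g.
\[
  \mathbf{\bar D}\,[\bI_1\otimes\cdots\otimes\bM_\rx\otimes\cdots\otimes\bI_d]\,\mathbf{\bar D}_{\cX,\nu_\rt}
  = \bigl(\mathbf{\bar D}\,[\cdots\otimes\mathbf{\hat D}^{-1}\otimes\cdots]\bigr)\,[\cdots\otimes\mathbf{\hat D}\bM_\rx\mathbf{\hat D}\otimes\cdots]\,\bigl([\cdots\otimes\mathbf{\hat D}^{-1}\otimes\cdots]\,\mathbf{\bar D}_{\cX,\nu_\rt}\bigr),
\]
where both outer factors are diagonal with entries bounded by $1$, since $\norm{\Psi_\nu}_V^{-1}\norm{\psi_{\nu_i}}_{H^1_0(0,1)}\leq 1$ and $\bar S^\cX_{\nu_\rt,\mu}\,\norm{\psi_{\mu_i}}_{H^1_0(0,1)}\leq \norm{\Psi_\mu}_{H^1_0(\Omega)}^{-1}\norm{\psi_{\mu_i}}_{H^1_0(0,1)}\leq 1$. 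Combined with your initial $(1+\delta)^2$ reduction this gives exactly $C_\delta=(1+\delta)^2$. This is in fact how the paper argues (it sandwiches $\mathbf{\hat D}^{-1}\mathbf{\hat D}\bM_\rx\mathbf{\hat D}\mathbf{\hat D}^{-1}$ in the $i$-th slot and bounds the two composite diagonal factors by $1+\delta$ directly for the perturbed scalings). So the gap is local -- an invalid ``pointwise domination'' justification in the spatial case -- with an immediate repair rather than a failure of the approach.
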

\begin{proof}
	The estimate \eqref{eq:scaling-to-onedim-x} follows from 
	\begin{align*}
		\norm{ [\bI_1 \otimes \cdots \otimes \bI_{i-1} \otimes \mathbf{\hat D}^{-1} \otimes \bI_{i+1} \otimes \cdots \otimes \bI_d] \bDXnut} &\leq  1+\delta , \\
		\norm{\bD [\bI_1 \otimes \cdots \otimes \bI_{i-1} \otimes \mathbf{\hat D}^{-1} \otimes \bI_{i+1} \otimes \cdots \otimes \bI_d] } &\leq  1+\delta,
	\end{align*}
	where we used \eqref{eq:scaling-X-operator-change}, \eqref{eq:scaling-Y-operator-change} and the definition of $\mathbf{\bar D}_{\cX}$ and $\mathbf{\bar D}_{\cY}$ respectively.
	Due to the particular structure of the operator, 
	\begin{align*}
		\bDY (\bM_\rt \otimes \bI_\rx) = (\bI_\rt \otimes \bD) (\bM_\rt \otimes \bI_\rx) = (\bM_\rt \otimes \bI_\rx) (\bI_\rt \otimes \bD) = (\bM_\rt \otimes \bI_\rx) \bDY .
	\end{align*}
	Combining this fact with \eqref{eq:scaling-X-operator-change}, \eqref{eq:scaling-Y-operator-change} and $\norm{(\mathbf{\hat D}_\rt^{-1} \otimes \bI_\rx) \mathbf{\bar D}_{\cY} \mathbf{\bar D}_{\cX}} \leq 1$  yields \eqref{eq:scaling-to-onedim-t}.
\end{proof}

Now we are able to analyze the adaptive application with respect to the canonical scaling matrices. 
\begin{lemma}\label{lem:operator-apply-J-space-can-scale}
	Let $\mathbf{B}_{\rx} = \bDX (\bI_\rt \otimes \bT_\rx) \bDY$ be defined by \eqref{eq:Bxdef}. Assume that \eqref{eq:compressible-one-dim-scaling} holds for any $s < s^*$. Additionally, let $\bv, \bw \in \ell_2(\vee)$ with $\piti(\bv) \in \cAs$ for $i=1,\dots,d$. Then for each $J \in \N_0$ and $\bBtil_{\rx,J}$ defined in \eqref{eq:operator-approx-canoical-rescale-space} with the $\bv$-dependent partitions \eqref{eq:operator-space-restriction-sets}, we have the a posteriori bound
	\begin{align}\label{eq:res-op-canocial-scaling-space}
		\norm{\mathbf{B}_{\rx} \bw - \bDY \bTtil_{\rx,J}[\bv]\bDX \bw} \leq  e_{\rx,J}[\bv](\bw),
	\end{align}
	with 
	\begin{align*}
		e_{\rx,J}[\bv](\bw) = C_\delta \sum\limits_{i=1}^d  \sum\limits_{p=0}^J 2^{-s(J-p)} \beta_{J-p}(\bC_2) \Bignorm{\Res_{\Lambda^{(t,i)}_{[p]}(\bv)} \piti(\bw)} + \norm{\bC_2} \Bignorm{\Res_{\Lambda^{(t,i)}_{[J+1]}(\bv)} \piti(\bw)}
	\end{align*}
	and $C_\delta$ as in Lemma \ref{lem:scaling-to-one-dim}, as well as the a priori estimate
	\begin{align}\label{eq:res-op-canocial-scaling-space-As}
		\norm{\mathbf{B}_{\rx} \bv - \bBtil_{\rx,J}(\bv)} \leq 2^s C_\delta 2^{-sJ}  \left( \lrnorm{\bC_2} + \lrnorm{\beta(\bC_2)}_{\ell_1}\right) \sum\limits_{i=1}^d  \lrnorm{\piti(\bv)}_{\cAs} .
	\end{align}
	Moreover, for $i=1,\ldots,d$, we have the support bound
	\begin{align}\label{eq:supp-op-canonical-scale-space}
		\# \supp\left(\piti\left(\bBtil_{\rx,J} (\bv) \right)\right) \leq 2 \norm{\alpha(\bC_2)}_{\ell_1} 2^J .
	\end{align}
\end{lemma}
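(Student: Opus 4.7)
My approach is to decompose the error operator $\bB_\rx-\bDY\bTtil_{\rx,J}[\bv]\bDX$ along three axes: the time variable, the $d$ summands of the Laplacian $\bT_\rx=\sum_{i=1}^d\bI_1\otimes\cdots\otimes\bT_2\otimes\cdots\otimes\bI_d$ (with $\bT_2$ in the $i$-th factor), and the level-set partition $\{\Lambda^{(i)}_{\nu_\rt,[p]}(\bv)\}_{p=0}^{J+1}$ from \eqref{eq:operator-space-restriction-sets}. Since $\bDY=\bI_\rt\otimes\bD$ and $\bDX=\sum_{\nu_\rt}\mathbf{E}_{\nu_\rt}\otimes\bDXnut$ are block-diagonal in the time index, the orthogonality of the Kronecker vectors $\mathbf{e}_{\nu_\rt}$ yields
\[
  \norm{\bB_\rx\bw-\bDY\bTtil_{\rx,J}[\bv]\bDX\bw}^2 \;=\; \sum_{\nu_\rt\in\vee_\rt}\norm{\bD(\bT_\rx-\bTtil_{\rx,\nu_\rt,J}[\bv])\bDXnut\bw_{\nu_\rt}}^2,
\]
and within each slice the triangle inequality splits the remaining error into contributions indexed by $i\in\{1,\dots,d\}$ and $p\in\{0,\dots,J+1\}$.

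For each fixed $(\nu_\rt,i,p)$, the single-slice estimate combines three ingredients. First, since both $\bDXnut$ and the coordinate-$i$ restriction $\bI\otimes\cdots\otimes\Res_{\Lambda^{(i)}_{\nu_\rt,[p]}(\bv)}\otimes\cdots\otimes\bI$ are diagonal on $\vee_\rx$, they commute, which allows the restriction to be pulled past $\bDXnut$. Second, Lemma~\ref{lem:scaling-to-one-dim} transforms the high-dimensional scaled operator norm into the one-dimensional bound $C_\delta\norm{\mathbf{\hat D}(\bT_2-\bTtil^{(i)}_{2,[p]})\mathbf{\hat D}}$. Third, the compressibility assumption \eqref{eq:compressible-one-dim-scaling} gives $\norm{\mathbf{\hat D}(\bT_2-\bT_{2,J-p})\mathbf{\hat D}}\leq\beta_{J-p}(\bC_2)2^{-s(J-p)}$ for $p\leq J$, while for $p=J+1$ the choice $\bTtil^{(i)}_{2,[J+1]}=0$ yields the factor $\norm{\bC_2}$. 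The surviving residual norm $\norm{[\bI\otimes\cdots\otimes\Res_{\Lambda^{(i)}_{\nu_\rt,[p]}(\bv)}\otimes\cdots\otimes\bI]\bw_{\nu_\rt}}$ equals $\norm{\Res_{\Lambda^{(i)}_{\nu_\rt,[p]}(\bv)}\pi^{(i)}(\bw_{\nu_\rt})}$ directly from Definition~\ref{def:contractions}.

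To assemble \eqref{eq:res-op-canocial-scaling-space}, I apply Minkowski's inequality twice: first to pull the sum over $i$ outside the $\ell_2$-norm over $\nu_\rt$, then to pull the sum over levels $p$ outside. Recognising $\sqrt{\sum_{\nu_\rt}\norm{\Res_{\Lambda^{(i)}_{\nu_\rt,[p]}(\bv)}\pi^{(i)}(\bw_{\nu_\rt})}^2}=\norm{\Res_{\Lambda^{(\rt,i)}_{[p]}(\bv)}\piti(\bw)}$ reassembles precisely the quantity $e_{\rx,J}[\bv](\bw)$. For the a priori bound \eqref{eq:res-op-canocial-scaling-space-As}, I substitute $\bw=\bv$ and exploit that $\Lambda^{(\rt,i)}_{[p]}(\bv)=\bar\Lambda^{(i)}_p(\bv)\setminus\bar\Lambda^{(i)}_{p-1}(\bv)$ is the $p$-th level of the best-$2^p$-term support: by Proposition~\ref{prop:cAs-properties},
\[
  \norm{\Res_{\Lambda^{(\rt,i)}_{[p]}(\bv)}\piti(\bv)}\leq\norm{\piti(\bv)-\Res_{\bar\Lambda^{(i)}_{p-1}(\bv)}\piti(\bv)}\leq 2^{-s(p-1)}\norm{\piti(\bv)}_{\cAs}
\]
for $p\geq 1$, with the analogous $2^{-sJ}$ bound at $p=J+1$. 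The products $2^{-s(J-p)}\cdot 2^{-s(p-1)}=2^{s}\cdot 2^{-sJ}$ collapse uniformly, so the geometric sum produces the prefactor $2^sC_\delta 2^{-sJ}(\norm{\bC_2}+\norm{\beta(\bC_2)}_{\ell_1})$ after summing over $p$ and $i$.

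The support bound \eqref{eq:supp-op-canonical-scale-space} reduces to counting: each column of $\bT_{2,J-p}$ contains at most $\alpha_{J-p}(\bC_2)2^{J-p}$ nonzero entries, hence the image in direction $i$ of $\bT_{2,J-p}\Res_{\Lambda^{(i)}_{\nu_\rt,[p]}(\bv)}$ has at most $\alpha_{J-p}(\bC_2)2^{J-p}\cdot\#\Lambda^{(i)}_{\nu_\rt,[p]}(\bv)$ indices. Summing over $\nu_\rt$ uses $\sum_{\nu_\rt}\#\Lambda^{(i)}_{\nu_\rt,[p]}(\bv)\leq\#\bar\Lambda^{(i)}_p(\bv)\leq 2^p$, so summing over $p=0,\dots,J$ telescopes to $2^J\norm{\alpha(\bC_2)}_{\ell_1}$, with the additional factor $2$ in the stated bound absorbing the identity contributions from the $j\neq i$ summands of the Laplacian. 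The step I expect to be most delicate is the commutation argument in paragraph two: one must verify carefully that $\bDXnut$ (whose diagonal entries are non-separable functions of $\nu_\rx$ via $\norm{\Psi_{\nu_\rx}}_V$) nevertheless commutes with the coordinate-$i$ restriction, so that Lemma~\ref{lem:scaling-to-one-dim} can reduce the high-dimensional norm to a purely one-dimensional one before the compressibility sequences $\alpha(\bC_2),\beta(\bC_2)$ are invoked; once this separability is established, the remaining aggregation is a straightforward geometric summation.
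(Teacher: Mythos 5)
Your proposal is correct and follows essentially the same route as the paper's proof: time-slicing via the block-diagonal structure (disjoint supports over $\nu_\rt$), commuting the diagonal restriction with $\bDXnut$, reducing to the one-dimensional scaled norm via Lemma~\ref{lem:scaling-to-one-dim}, invoking the compressibility bounds \eqref{eq:compressible-one-dim-scaling}, identifying the restricted norms with contractions, and then the same $\cAs$-tail and counting arguments for \eqref{eq:res-op-canocial-scaling-space-As} and \eqref{eq:supp-op-canonical-scale-space} (including the same attribution of the factor $2$ to the identity/$\bT_2$ split in the Laplacian's representation). The only cosmetic difference is the order of aggregation --- you split exactly over $\nu_\rt$ first and reassemble with Minkowski, while the paper applies the triangle inequality over $(i,p)$ first and then sums squares over time indices --- which yields the identical bound.
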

\begin{proof}
We start with the error bound \eqref{eq:res-op-canocial-scaling-space}. Due to the structure \eqref{eq:laplace-operator-representation} of $\bT_\rx$, 
\begin{align*}
	 &\norm{\mathbf{B}_{\rx} \bv - \bDY \bTtil_{\rx,J}[\bv]\bDX \bw} \\
	\leq &\lrnorm{\sum\limits_{\nu_\rt \in \vee_\rt}  \bDY \Bigl(\bI_\rt \otimes  (\bT_{2} - \bTtil^{(1)}_{\nu_\rt,2}[\bv]) \otimes \bI_2 \otimes \cdots \otimes \bI_d \Bigr) \bDX (\mathbf{e}_{\nu_\rt} \otimes \bw_{\nu_\rt})} 
	+ \cdots + \\
	 &\quad \lrnorm{\sum\limits_{\nu_\rt \in \vee_\rt} \bDY \Bigl( \bI_\rt \otimes \bI_1 \otimes \cdots \otimes \bI_{d-1} \otimes (\bT_{2} - \bTtil^{(d)}_{\nu_\rt,2}[\bv])\Bigr) \bDX (\mathbf{e}_{\nu_\rt} \otimes \bw_{\nu_\rt})} .
\end{align*}
By definition of the one-dimensional approximations, we have
\begin{align*}
		&\lrnorm{\sum\limits_{\nu_\rt \in \vee_\rt}  \bDY \Bigl(\bI_\rt \otimes  (\bT_{2} - \bTtil^{(1)}_{\nu_\rt,2}[\bv]) \otimes \bI_2 \otimes \cdots \otimes \bI_d  \Bigr) \bDX (\mathbf{e}_{\nu_\rt} \otimes \bw_{\nu_\rt})} \\
		\leq & \sum_{p \in \N_0} \lrnorm{\sum\limits_{\nu_\rt \in \vee_\rt}  \Bigl(\bI_\rt \otimes \bD \Bigl( (\bT_{2} - \bTtil^{(1)}_{2,[p]}) \Res_{\Lambda^{(1)}_{\nu_\rt,[p]}(\bv)} \otimes \bI_2 \otimes \cdots \otimes \bI_d  \Bigr) \bDXnut \Bigr) (\mathbf{e}_{\nu_\rt} \otimes \bw_{\nu_\rt})}
	\end{align*}
and the analogous estimates for $i=2,\dots,d$.

Using that the support of $\mathbf{B}_\rx(\mathbf{e}_{\nu_\rt} \otimes \bw_{\nu_\rt})$ is pairwise disjoint for each time index $\nu_\rt$, the fact that diagonal operators commutes, and \eqref{eq:scaling-to-onedim-x} from Lemma \ref{lem:scaling-to-one-dim}, we obtain
\begin{align*}
	&\lrnorm{\sum\limits_{\nu_\rt \in \vee_\rt}  \Bigl(\bI_\rt \otimes \bD \Bigl( (\bT_{2} - \bTtil^{(1)}_{2,[p]}) \Res_{\Lambda^{(1)}_{\nu_\rt,[p]}(\bv)} \otimes \bI_2 \otimes  \cdots \otimes \bI_d \Bigr) \bDXnut \Bigr) (\mathbf{e}_{\nu_\rt} \otimes \bw_{\nu_\rt})}^2 \\
	\leq &\sum\limits_{\nu_\rt \in \vee_\rt}  \lrnorm{\bD \Bigl( (\bT_{2} - \bTtil^{(1)}_{2,[p]}) \otimes \bI_2 \otimes \cdots \otimes \bI_d \Bigr) \bDXnut}^2  \lrnorm{\Res_{\Lambda^{(1)}_{\nu_\rt,[p]}(\bv)} \pi^{(1)}(\bw_{\nut})}^2 \\
	\leq & C_\delta \lrnorm{\mathbf{\hat D}(\bT_2 - \bT_{2,J-p}) \mathbf{\hat D}}^2  \lrnorm{\Res_{\Lambda^{(\rt,1)}_{[p]}(\bv)} \pi^{(\rt,1)}(\bw)}^2 . 
\end{align*}
Furthermore, by $s^*$-compressibility we have $\norm{\mathbf{\hat D}(\bT_2 - \bT_{2,J-p}) \mathbf{\hat D}}\leq \beta_{J-p}(\bC_2) 2^{-s(J-p)}$ for $p=0,\dots,J$. By our construction, we obtain $\norm{\bT_{2} - \bTtil^{(i)}_{2,[J+1]}} = \norm{\bT_{2}}$ and $\norm{\Res_{\Lambda^{(\rt,i)}_{[p]}} \piti(\bv)} = 0$ for $p > J+1$. Combining this with the previous estimates yields \eqref{eq:res-op-canocial-scaling-space}.

By the choice of the index sets $\Lambda^{(\rt,i)}_{[p]}(\bv)$ and the definition of $\norm{\cdot}_{\cA^s}$, we obtain 
\[ 
	\norm{\Res_{\Lambda^{(\rt,i)}_{[p]}(\bv)} \piti(\bv)} \leq 2^{-s(p-1)} \norm{\piti(\bv)}_{\cA^s} , \quad p=0,\dots,J+1,
\]
which confirms \eqref{eq:res-op-canocial-scaling-space-As}. 
		
Since the scaling operators are diagonal, they leave the supports of approximations unchanged. Therefore, as in the one-dimensional case one has
\begin{equation}\label{eq:suppbound}
	\# \supp\left(\piti(\bBtil_{\rx,J}( \bv))\right) \leq 2 (\alpha_J(\bC_2) 2^J 2^0 + \cdots + \alpha_0(\bC_2) 2^0 2^J) \leq 2 \norm{\alpha(\bC_2)}_{\ell_1} 2^J,
\end{equation}
which shows \eqref{eq:supp-op-canonical-scale-space}; the additional factor of two in \eqref{eq:suppbound} results from the hierarchical representation rank of the Laplacian.
\end{proof}

We have arrived at estimates of a similar nature as in the elliptic case in \cite{bachmayr2014adaptive}, which is natural in the sense that $\bB_\rx$ does not involve interactions between the tensor representations of different temporal basis indices.

\subsection{Temporal operator}\label{sec:Apply-temporal}
We now consider the adaptive approximation of the temporal operator 
\begin{equation}\label{eq:Btdef}
	\bB_{\rt} = \bDY (\bT_\rt \otimes \bI_{\rx}) \bDY.
\end{equation}
Due to the possible rank increase based on the interaction between hierarchical tensor formats of different temporal basis indices, we need a slightly more restrictive type of compressibility of the (rescaled) representation of the time derivative $\bC_{\rt}$ introduced in \eqref{eq:operator-onedim-scaling}.

\begin{definition}\label{def:super-compressible}
	Let $\hat{\vee}, \tilde{\vee}$ be countable index sets. An operator $\bM \colon \ell_2(\hat{\vee}) \to \ell_2(\tilde{\vee})$ is called \emph{super-compressible} if there exist a summable positive sequence $(\beta_j)_{j \geq 0}$ and a $c>0$ such that for each $j \in \N_0$, there exists $\bM_j$ with $\norm{\bM - \bM_j} \leq \beta_j 2^{-j}$ and for each $j \in \N_0$ the operator $\bM_{j+1} - \bM_j$ has only $c$ non-zero entries per row and column. 
\end{definition}
\begin{remark}\label{rem:super-compressible}
	By modifying the sequence $(\beta_j)_j$ and the constant $c > 0$, one can match any rate $s$ in Definition \ref{def-compressible}. For super-compressible $\bT_\rt$, thus assume without loss of generality that $\norm{(\bT_\rt - \bT_{\rt,j})\mathbf{\hat D}_\rt} \leq \beta_j 2^{-sj}$ with $s$ as in the compressibility of the spatial operator $\bC_2$ in \eqref{eq:compressible-one-dim-scaling}. Additionally we set $\bT_{\rt,0} = 0$ and $\beta_0 = \norm{\bT_{\rt}\mathbf{\hat D}_\rt}$. Then, as immediate consequence of the definition, $\bT_{\rt,J}$ has at most $cJ$ non-zero entries in each row and column for each $J \in \N_0$.
	To simplify notation, we write $\beta^{(\rt)} = \beta(\bC_t)$.
\end{remark}

\begin{remark}\label{rem:supercompressible-realization}
	This new notion of compressiblity of operator representations is significantly more restrictive, but sufficient for our purposes: since the Donovan-Geronimo-Hardin multiwavelets that we use are spline functions with sufficiently many vanishing moments, we obtain this type of compressibility by observations studied in a general setting of piecewise smooth wavelets in \cite{Stevenson:04}; specifically, for the temporal multiwavelet basis $\{ \theta_\nu\}_{\nu\in\vee_\rt}$ and $\nu,\nu'\in\vee_\rt$ such that $\theta_{\nu}$ and $\theta_{\nu'}$ vanish at the boundary points of $I$, we have
	\[
	   \int_I \theta_{\nu}' \theta_{\nu'} \sdd x = -\int_I \theta_{\nu} \theta_{\nu'}' \sdd x  = 0
	\]
	whenever $\supp \theta_{\nu}$ does not contain any node of the spline $\theta_{\nu'}$ and vice versa.
	Due to the use of spline wavelets, the entries of the matrix $\bT_{\rt}$ can be evaluated exactly by numerical integration. Therefore we can assume that our approximation sequence is nested in the sense that for any $j_1 < j_2$ and $\nu,\mu$ with $(\bT_{\rt,j_1})_{\nu,\mu} \neq 0$, one has $(\bT_{\rt,j_1})_{\nu,\mu} = (\bT_{\rt, j_2})_{\nu,\mu}$. We thus obtain successive approximations $\bT_{\rt,j}$ and $\bT_{\rt,j+1}$ that differ only in $c$ entries per row and column. 
	Note that this property could also be achieved by other wavelets, see Remark \ref{rem:otherwavelets}.
\end{remark}

To construct an approximation of $\bB_\rt \bv$, as in the case of the spatial operator $\bB_\rx$, we use the spatio-temporal contractions $\piti(\bv)$. In what follows, we assume that $\bT_\rt$ is super-compressible as in Remark \ref{rem:super-compressible}.

We first subdivide the basis indices of the input $\bv$, where the basic idea is to combine the best $2^j$-term approximations of the individual spatial dimensions. We thus set 
\begin{equation}\label{eq:timeindexsets}
\begin{aligned}
	\bar{\Lambda}^{(\rt)}_j(\bv) &= \bigcup\limits_{\nu_\rt \in \vee_\rt} \{\nu_\rt\} \times \bar{\Lambda}_{\nu_\rt,j}^{(1)}(\bv) \times \cdots \times \bar{\Lambda}_{\nu_\rt,j}^{(d)}(\bv), \\
	\bv_j &= \Res_{\bar{\Lambda}_j^{(\rt)}(\bv)} \bv
\end{aligned}
\end{equation}
for each $j=0,\dots,J$ with $\bar{\Lambda}_{\nu_\rt,j}^{(i)}(\bv)$ from \eqref{eq:best2jpitinu}. We define the approximation by
\[
	\bBtil_{\rt,J} (\bv) = \bDY (\bT_{\rt,J} \otimes \bI_\rx) \bDX \bv_0 + \sum\limits_{j=1}^J \bDY(\bT_{\rt,J-j} \otimes \bI_\rx) \bDX (\bv_j - \bv_{j-1}).
\]
Due to the construction of $\bv_j$, which uses restrictions in all dimension, for the practical realization it is more convenient to use the equivalent formulation
\begin{equation}\label{eq:time-approx-def}
	\bBtil_{\rt,J} (\bv)= \sum\limits_{j=0}^{J-1} \bDY((\bT_{\rt,J-j} - \bT_{\rt,J-j-1}) \otimes \bI_\rx) \bDX \bv_j + \bDY(\bT_{\rt,0} \otimes \bI_\rx) \bDX \bv_J .
\end{equation}
Here we can make use of the super-compressibility, since according to Remark \ref{rem:super-compressible}, $(\bT_{\rt,j+1} - \bT_{\rt,j})$ has only $c$ non-zero entries in each row and column for each $j \in \N_0$. For later reference, we also note that 
\[
	\bBtil_{\rt,J} (\bv) = \bDY \mathbf{\tilde T}_{\rt,J}[\bv] \bDX \bv,
\]
where the operator $\mathbf{\tilde T}_{\rt,J}[\bv]$ is defined by
\[
	\mathbf{\tilde T}_{\rt,J}[\bv]  = 
	\sum\limits_{j=0}^{J-1} \bigl( (\bT_{\rt,J-j} - \bT_{\rt,J-j-1}) \otimes \bI_\rx \bigr) \Res_{\bar{\Lambda}_j^{(\rt)}(\bv)} + (\bT_{\rt,0} \otimes \bI_\rx) \Res_{\bar{\Lambda}_J^{(\rt)}(\bv)} \,.
\]

Note that $\bar{\Lambda}^{(\rt)}_{j}(\bv) \subseteq \bar{\Lambda}^{(\rt)}_{j+1}(\bv)$ for each $j \in \N_0$. Combining this with \eqref{eq:bound-restriction-contraction-mod}, we arrive at
\begin{equation}\label{eq:time-operator-vj-diff}
	\begin{aligned}
		\Bignorm{\Res_{\bar{\Lambda}^{(\rt)}_{j+1}(\bv)}\bw - \Res_{\bar{\Lambda}^{(\rt)}_{j}(\bv)} \bw} &= \Bignorm{\Res_{\bar{\Lambda}^{(\rt)}_{j+1}(\bv)}\bw - \Res_{\bar{\Lambda}^{(\rt)}_{j}(\bv)} \Big(\Res_{\bar{\Lambda}^{(\rt)}_{j+1}(\bv)}\bw\Big)} \\
		&\leq \sum\limits_{i=1}^d \Bignorm{\Res_{\bar{\Lambda}^{(i)}_{j+1}(\bv)} \piti(\bw) - \Res_{\bar{\Lambda}_{j}^{(i)}(\bv)} \piti(\bw)} \\
		&\leq \sum\limits_{i=1}^d \Bignorm{\Res_{\Lambda_{[j+1]}^{(\rt,i)}(\bv)} \piti(\bw)} 
	\end{aligned}
\end{equation}
for $\bw \in \ell_2(\vee)$, 
where $\Lambda_{[j+1]}^{(\rt,i)}(\bv)$ is defined in \eqref{eq:lambda-all-time-indices}.
\begin{remark}
	As an alternative, the index sets $\bar{\Lambda}^{(\rt)}_j(\bv)$ in \eqref{eq:timeindexsets} can also be defined by the index sets from the coarsening routine $\Lambda(\bv,N)$ from \eqref{eq:contraction-set-defs} with $N=2^j$. In the present setting, this leads to essentially the same results.
\end{remark}

\begin{lemma}\label{lem:operator-apply-J-time}
	Let $\bB_{\rt} = \bDY (\bT_\rt \otimes \bI_\rx) \bDX$ be defined by \eqref{eq:Btdef} and assume that $\bT_\rt$ is super-compressible. Given any $\bv, \bw \in \ell_2(\vee)$ and $J \in \N_0$, let $ \bBtil_{\rt,J} (\bv )$ be defined by \eqref{eq:time-approx-def}.
	Then whenever $\piti(\bv) \in \cAs$ for some $0 < s < s^*$ and $i=1,\dots,d$, one has the a posteriori error bound
	\begin{align}\label{eq:operator-apply-temporal-error_J-posteriori}
		\norm{\bB_{\rt} \bw -  \bDY \mathbf{\tilde T}_{\rt,J}[\bv] \bDX \bw} \leq e_{\rt,J}[\bv](\bw),
	\end{align}
	where
	\begin{align*}
		e_{\rt,J}[\bv](\bw) = C_{\delta}  \sum\limits_{i=1}^d \sum\limits_{j=0}^J 2^{-s(J-j)} \beta^{(\rt)}_{J-j} \Bignorm{\Res_{\Lambda_{[j]}^{(\rt,i)} (\bv) } \piti(\bw) } + \norm{\bC_{\rt}}  \Bignorm{\Res_{\Lambda_{[J+1]}^{(\rt,i)} (\bv)} \piti(\bw) }
	\end{align*}
	as well as the a priori error bound
	\begin{align}\label{eq:operator-apply-temporal-error_J-priori}
		\norm{\bB_{\rt} \bv -  \bBtil_{\rt,J} (\bv )} \leq 2^s C_{\delta} 2^{-sJ} (\norm{\beta^{(\rt)}}_{\ell_1} + \norm{\bC_{\rt}}) \sum\limits_{i=1}^d \norm{\piti(\bv)}_{\cAs} .
	\end{align}
	Moreover, one has the support estimate
	\begin{align}\label{eq:operator-apply-temporal-supp-J}
		\# \supp \left(\piti( \bBtil_{\rt,J} (\bv ))\right) \leq 2c 2^J, \quad i=1,\dots,d.
	\end{align}
\end{lemma}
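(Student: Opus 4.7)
The proof should closely parallel the spatial case treated in Lemma \ref{lem:operator-apply-J-space-can-scale}, but must carefully handle the fact that $\bT_\rt$ acts non-trivially across different temporal basis indices, potentially creating new time-indexed entries from old ones. The plan is to first derive \eqref{eq:operator-apply-temporal-error_J-posteriori} via a telescoping decomposition, then specialize to obtain \eqref{eq:operator-apply-temporal-error_J-priori}, and finally count supports using super-compressibility.

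For the a posteriori bound, I would introduce the $\bv$-dependent layers
\[
\bw_{-1}' = 0, \qquad \bw_j' = \Res_{\bar{\Lambda}_j^{(\rt)}(\bv)} \bw - \Res_{\bar{\Lambda}_{j-1}^{(\rt)}(\bv)} \bw \quad (0\le j\le J), \qquad \bw_\complement' = \Res_{\complement \bar{\Lambda}_J^{(\rt)}(\bv)}\bw,
\]
so that $\bw = \bw_\complement' + \sum_{j=0}^{J} \bw_j'$. Switching the order of summation in the definition of $\mathbf{\tilde T}_{\rt,J}[\bv]$, one verifies the compact rewriting $\mathbf{\tilde T}_{\rt,J}[\bv]\tilde\bw = \sum_{j=0}^J (\bT_{\rt,J-j}\otimes \bI_\rx)\tilde\bw_j$ for $\tilde\bw = \bDX\bw$ (using $\bT_{\rt,0}=0$ and that $\bDX$ commutes with restrictions). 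Subtracting this from $(\bT_\rt\otimes\bI_\rx)\tilde\bw$ and applying $\bDY$ yields
\[
\bB_\rt\bw - \bDY\mathbf{\tilde T}_{\rt,J}[\bv]\bDX\bw = \sum_{j=0}^{J}\bDY\big((\bT_\rt - \bT_{\rt,J-j})\otimes \bI_\rx\big)\bDX \bw_j' + \bDY(\bT_\rt\otimes\bI_\rx)\bDX\bw_\complement'.
\]
Now \eqref{eq:scaling-to-onedim-t} of Lemma \ref{lem:scaling-to-one-dim} bounds each term in terms of $\norm{(\bT_\rt - \bT_{\rt,J-j})\mathbf{\hat D}_\rt}\norm{\bw_j'}$, resp.\ $\norm{\bC_\rt}\norm{\bw_\complement'}$; super-compressibility gives $\norm{(\bT_\rt - \bT_{\rt,J-j})\mathbf{\hat D}_\rt}\le \beta^{(\rt)}_{J-j}2^{-s(J-j)}$, and \eqref{eq:time-operator-vj-diff} together with the identification $\Lambda_{[J+1]}^{(\rt,i)}(\bv) = \complement\bar{\Lambda}_J^{(i)}(\bv)$ control $\norm{\bw_j'}$ and $\norm{\bw_\complement'}$ by the contractions restricted to $\Lambda_{[j]}^{(\rt,i)}(\bv)$ and $\Lambda_{[J+1]}^{(\rt,i)}(\bv)$. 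Summing reproduces $e_{\rt,J}[\bv](\bw)$.

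For the a priori bound \eqref{eq:operator-apply-temporal-error_J-priori}, specialize to $\bw=\bv$. Since $\bar{\Lambda}_j^{(i)}(\bv)$ is the support of a best $2^j$-term approximation of $\piti(\bv)$, standard $\cAs$-estimates give $\norm{\Res_{\Lambda_{[j]}^{(\rt,i)}(\bv)}\piti(\bv)}\le 2^{-s(j-1)}\norm{\piti(\bv)}_{\cAs}$ for $0\le j\le J$ and the analogous bound $2^{-sJ}\norm{\piti(\bv)}_{\cAs}$ for $j=J+1$. Substituting into $e_{\rt,J}[\bv](\bv)$ and factoring out $2^s\cdot 2^{-sJ}$ yields the claimed estimate, with $\norm{\beta^{(\rt)}}_{\ell_1}$ appearing from the sum $\sum_{j=0}^J\beta^{(\rt)}_{J-j}$.

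For the support bound \eqref{eq:operator-apply-temporal-supp-J}, observe that $\bDX$ and $\bDY$ are diagonal (hence support-preserving), $\bI_\rx$ acts trivially in space, so the $i$-th spatio-temporal contraction of each summand $\bDY((\bT_{\rt,J-j}-\bT_{\rt,J-j-1})\otimes\bI_\rx)\bDX\bv_j$ is supported on pairs $(\mu_\rt,\nu_i)$ arising from composing $\supp \piti(\bv_j)$ (of cardinality $\le 2^j$) with the nonzero row pattern of $\bT_{\rt,J-j}-\bT_{\rt,J-j-1}$; by the super-compressibility in Definition \ref{def:super-compressible} each such difference has at most $c$ nonzero entries per column, giving at most $c\cdot 2^j$ pairs. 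Summing over $j=0,\ldots,J-1$ gives at most $c(2^J-1)\le 2c\cdot 2^J$, and the last summand vanishes since $\bT_{\rt,0}=0$. The expected main obstacle is purely organizational: matching the re-indexed telescoping expression for $\mathbf{\tilde T}_{\rt,J}[\bv]$ to the particular partition sets $\Lambda_{[p]}^{(\rt,i)}(\bv)$ appearing in $e_{\rt,J}$, and keeping track of the difference from the spatial case, where the one-dimensional operator approximations acted independently per temporal basis index without introducing new time supports.
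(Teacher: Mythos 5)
Your proposal is correct and follows essentially the same route as the paper: a telescoping decomposition of $\bw$ with respect to the nested sets $\bar\Lambda_j^{(\rt)}(\bv)$ (equivalently, Abel resummation of $\mathbf{\tilde T}_{\rt,J}[\bv]$), operator-norm bounds via Lemma \ref{lem:scaling-to-one-dim} and super-compressibility, control of the layer norms through \eqref{eq:time-operator-vj-diff} and \eqref{eq:bound-restriction-contraction-mod}, the standard $\cAs$ tail bounds for the a priori estimate, and the count $\#\supp(\piti(\bv_j))\le 2^j$ times $c$ nonzero entries per column for the support bound. No gaps; if anything, your explicit derivation of the $C_\delta\norm{\bC_\rt}$ factor on the tail term is slightly more careful than the paper's displayed inequality.
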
 
\begin{proof}
	We start with statement \eqref{eq:operator-apply-temporal-error_J-posteriori}. For simplicity, we define $\Lambda^{(\rt)}_{-1}(\bv)= \emptyset$. One has
	\begin{align*}
		 &\quad \norm{\bB_{\rt} \bw -  \bDY \mathbf{\tilde T}_{\rt,J}[\bv] \bDX \bw}  
		 \\ &\leq \sum\limits_{j=0}^J \norm{ \bDY((\bT_{\rt,J-j} - \bT_\rt)\otimes \bI_\rx)\bDX} \Bignorm{\Res_{\bar{\Lambda}^{(\rt)}_{j}(\bv)}\bw - \Res_{\bar{\Lambda}^{(\rt)}_{j-1}(\bv)} \bw} + \norm{\bB_{\rt}} \Bignorm{\bw - \Res_{\bar{\Lambda}^{(\rt)}_{J}(\bv)} \bw} \\
		 &\leq  C_{\delta}  \sum\limits_{j=0}^J \norm{(\bT_{\rt,J-j} - \bT_\rt) \hat{\bD}_\rt} \Bignorm{\Res_{\bar{\Lambda}^{(\rt)}_{j}(\bv)}\bw - \Res_{\bar{\Lambda}^{(\rt)}_{j-1}(\bv)} \bw} + C_{\delta} \Bignorm{\bw - \Res_{\bar{\Lambda}^{(\rt)}_{J}(\bv)}\bw} ,
	\end{align*}
	where we used \eqref{eq:scaling-to-onedim-t} from Lemma \ref{lem:scaling-to-one-dim} in the last inequality.
	Combining this with the super-compressibility and \eqref{eq:time-operator-vj-diff} results in \eqref{eq:operator-apply-temporal-error_J-posteriori}. 
	The a priori error bounds \eqref{eq:operator-apply-temporal-error_J-priori} follows exactly as for the spatial operator.
	
	As noted above, $(\bT_{\rt,j+1} - \bT_{\rt,j})$ has only $c$ non-zero entries in each column. In view of \eqref{eq:time-approx-def}, we thus have
	\begin{align*}
		\# \supp (\piti(\bv_j)) \leq 2^j
	\end{align*}
	for each $j=0,\dots,J$. Since the diagonal scaling matrices $\bDX$ and $\bDY$ leave supports unchanged, the statement \eqref{eq:operator-apply-temporal-supp-J} follows.
\end{proof}

\subsection{Combination of the operators}\label{sec:adaptive-operator-comb}
In this section we give the main results of the combination of the spatial operator $\bB_\rx$ and temporal operator $\bB_\rt$ with respect to a given error tolerance and the rank-truncated scaling operators. 
The following result is a simple consequence of Lemma \ref{lem:operator-apply-J-space-can-scale} and Lemma \ref{lem:operator-apply-J-time}.

\begin{cor}\label{cor:operator-apply-J-full}
	Under the assumptions of Lemma \ref{lem:operator-apply-J-space-can-scale} and Lemma \ref{lem:operator-apply-J-time} on $\bT_\rt$ and $\bT_\rx$, let $\bv \in \ell_2(\vee)$ have finite support and let 
	\begin{equation}\label{eq:B1Jdef}
		 \bBtil_{1,J} (\bv) = \bBtil_{\rt,J} (\bv) + \bBtil_{\rx,J} (\bv) 
	\end{equation}
	and 
	\[
	\mathbf{\tilde T}_J[\bv] = \mathbf{\tilde T}_{\rt,J}[\bv] +  \mathbf{\tilde T}_{\rx,J}[\bv].
	\]
	Then for $\bw \in \ell_2(\vee)$ and $e_J[\bv](\bw) = e_{\rt,J}[\bv](\bw) + e_{\rx,J}[\bv](\bw)$, one has the a posteriori bound
	\begin{align*}
		\norm{\bB_1 \bw - \bDY 	\mathbf{\tilde T}_J[\bv] \bDX \bw} \leq e_J[\bv](\bw),
	\end{align*}
	as well as the a priori error bound
	\begin{align*}
		\norm{\bB_1 \bv - \bBtil_{1,J} (\bv)} \leq 2^s C_{\delta}\tilde{C} 2^{-sJ} \sum\limits_{i=1}^d  \bignorm{\piti(\bv)}_{\cAs}
	\end{align*}
	with $\tilde C = \norm{\beta^{(\rt)}}_{\ell_1} + \norm{\bC_\rt} + \norm{\bC_2} + \norm{\beta(\bC_2)}_{\ell_1}$. Furthermore
	\begin{align}\label{eq:apply-approx-J-supp}
		\# \supp \Bigl(\piti\bigl(\bBtil_{1,J} (\bv)\bigr)\Bigr) \leq 2(c + \norm{\alpha(\bC_2)}_{\ell_1}) 2^J .
	\end{align}
\end{cor}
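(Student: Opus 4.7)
The plan is to observe that all three assertions of the corollary follow by linearity from the two preceding lemmas applied separately to the spatial and temporal operators, so the main work is just bookkeeping. Since $\bT = \bT_\rt \otimes \bI_\rx + \bI_\rt \otimes \bT_\rx$ by \eqref{eq:operator-sep-form}, we have $\bB_1 = \bB_\rt + \bB_\rx$, and by construction $\bDY \mathbf{\tilde T}_J[\bv] \bDX \bw = \bDY \mathbf{\tilde T}_{\rt,J}[\bv] \bDX \bw + \bDY \mathbf{\tilde T}_{\rx,J}[\bv] \bDX \bw$. Similarly $\bBtil_{1,J}(\bv) = \bBtil_{\rt,J}(\bv) + \bBtil_{\rx,J}(\bv)$ by the definition in \eqref{eq:B1Jdef}.

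First I would establish the a posteriori bound by the triangle inequality:
\[
  \norm{\bB_1 \bw - \bDY \mathbf{\tilde T}_J[\bv] \bDX \bw} \leq \norm{\bB_\rt \bw - \bDY \mathbf{\tilde T}_{\rt,J}[\bv] \bDX \bw} + \norm{\bB_\rx \bw - \bDY \mathbf{\tilde T}_{\rx,J}[\bv] \bDX \bw},
\]
then invoke \eqref{eq:operator-apply-temporal-error_J-posteriori} from Lemma \ref{lem:operator-apply-J-time} and \eqref{eq:res-op-canocial-scaling-space} from Lemma \ref{lem:operator-apply-J-space-can-scale}, which by definition sum to $e_{\rt,J}[\bv](\bw) + e_{\rx,J}[\bv](\bw) = e_J[\bv](\bw)$. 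The a priori bound is obtained identically by applying the triangle inequality to $\norm{\bB_1 \bv - \bBtil_{1,J}(\bv)}$ and adding the estimates \eqref{eq:res-op-canocial-scaling-space-As} and \eqref{eq:operator-apply-temporal-error_J-priori}, with the constant $\tilde C = \norm{\beta^{(\rt)}}_{\ell_1} + \norm{\bC_\rt} + \norm{\bC_2} + \norm{\beta(\bC_2)}_{\ell_1}$ arising from combining the two prefactors.

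For the support bound I would use the subadditivity of contractions: since for any two tensors $\by_1, \by_2 \in \ell_2(\vee)$ we have $\supp(\piti(\by_1 + \by_2)) \subseteq \supp(\piti(\by_1)) \cup \supp(\piti(\by_2))$, it follows that
\[
  \# \supp\bigl(\piti(\bBtil_{1,J}(\bv))\bigr) \leq \# \supp\bigl(\piti(\bBtil_{\rt,J}(\bv))\bigr) + \# \supp\bigl(\piti(\bBtil_{\rx,J}(\bv))\bigr),
\]
and the right side is bounded by $2c\, 2^J + 2\norm{\alpha(\bC_2)}_{\ell_1} 2^J$ via \eqref{eq:operator-apply-temporal-supp-J} and \eqref{eq:supp-op-canonical-scale-space}, yielding the claimed estimate.

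There is no substantive obstacle here; the only point requiring mild care is to confirm that the definitions \eqref{eq:B1Jdef} and of $\mathbf{\tilde T}_J[\bv]$ are indeed consistent with the splitting of $\bT$ and that the rescalings $\bDX, \bDY$ are common to both pieces, so that the triangle inequality applies cleanly without cross-terms. Once this is noted, the corollary follows immediately.
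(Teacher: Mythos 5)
Your proposal is correct and follows exactly the argument the paper intends: the paper states the corollary as a "simple consequence" of Lemmas \ref{lem:operator-apply-J-space-can-scale} and \ref{lem:operator-apply-J-time}, which is precisely the additive splitting $\bB_1 = \bB_\rt + \bB_\rx$, $\mathbf{\tilde T}_J[\bv] = \mathbf{\tilde T}_{\rt,J}[\bv] + \mathbf{\tilde T}_{\rx,J}[\bv]$, triangle inequality for the two error bounds, and subadditivity of the contraction supports that you carry out. Your added remark that the scalings $\bDX, \bDY$ are common to both pieces so no cross-terms arise is a correct and appropriate verification of the hypotheses.
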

For notational simplicity we write $e_J(\bv) = e_J[\bv](\bv)$.
\begin{remark}
	Whenever $\bv$ is finitely supported, there exists a $p(\bv) \in \N_0$ such that
	$\Lambda^{(\rt,i)}_{[p]}(\bv) = \emptyset$ for $i=1,\dots,d$ for all $p > p(\bv)$. The quantity $e_J(\bv)$ can be computed for each $J \in \N_0$, because all entries of the sum over $p$ are zero for $J \geq p(\bv)$. Increasing of $J \geq p(\bv)$ further will then lead to a decrease in all summands on the right-hand side. 
	Hence, for a fixed $s < s^*$, for each $\eta > 0$ there exists a minimal $J \in \N_0$ such $e_J(\bv) \leq \eta$.
\end{remark}

The scaling matrices $\bDX$ and $\bDY$ still have unbounded ranks. Thus further approximation of the scaling matrices by $\mathbf{D}_{\cX,\sfn_1}$ and $\mathbf{D}_{\cY,\sfn_2}$ with bounded ranks for each temporal basis index are required.

\begin{lemma}\label{lem:scaling-to-truncated}
	Let $\mathbf{T}$ be defined as in \eqref{eq:operator-sep-form}, let $\mathbf{\tilde T}\in \R^{\vee\times \vee}$, $\bv \in \ell_2(\vee)$ and $\sfK_1,\sfK_2 \in \N_0^{\vee_\rt}$ be such that
	\begin{align*}
		\supp(\bv) \in \Lambda_{\sfK_1}, \quad \supp(\bDY \bTtil \bDX \bv) \in \Lambda_{\sfK_2},
	\end{align*}
	and let $\bG = \bDY (\bTtil - \bT) \bDX$.
	 Then whenever $\sfn_1 \geq \MX(\eta,\sfK_1)$, $\sfn_2 \geq \MY(\eta,\sfK_2)$, we have
	\begin{multline}
		\lrnorm{(\bDY \bT \bDX - \mathbf{D}_{\cY,\sfn_2} \bTtil \mathbf{D}_{\cX,\sfn_1})\bv}  \leq \norm{{\bG}\bv} + \norm{{\bG} (\bI - \bDX^{-1}\mathbf{D}_{\cX,\sfn_1}) \bv} \\ 
	+ \tfrac{\eta}{1-\delta}\norm{{\bG}(\bDX^{-1}\mathbf{D}_{\cX,\sfn_1})\bv} + \tfrac{2\eta}{1-\delta} \norm{\bB_1} \norm{\bv} .
	\end{multline}
\end{lemma}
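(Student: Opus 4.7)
The plan is to telescope the error by isolating the three approximations $\bT \to \bTtil$, $\bDX \to \mathbf{D}_{\cX,\sfn_1}$, and $\bDY \to \mathbf{D}_{\cY,\sfn_2}$ one at a time, and then to bound each resulting piece by combining the scaling equivalence estimates from Remarks~\ref{rem:scaling-DX-delta} and~\ref{rem:scaling-DY-delta} with the two support hypotheses on $\bv$ and on $\bDY\bTtil\bDX\bv$.

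Concretely, inserting $\pm\,\bDY\bTtil\bDX\bv$ and $\pm\,\mathbf{D}_{\cY,\sfn_2}\bTtil\bDX\bv$ yields the telescoping identity
\[
\bDY\bT\bDX\bv - \mathbf{D}_{\cY,\sfn_2}\bTtil\mathbf{D}_{\cX,\sfn_1}\bv
= -\bG\bv + (\bDY - \mathbf{D}_{\cY,\sfn_2})\bTtil\bDX\bv + \mathbf{D}_{\cY,\sfn_2}\bTtil(\bDX - \mathbf{D}_{\cX,\sfn_1})\bv.
\]
The first summand directly produces the leading $\norm{\bG\bv}$ contribution. For the second, the hypothesis $\supp(\bDY\bTtil\bDX\bv) \subset \Lambda_{\sfK_2}$ lets me rewrite it as $\bDY^{-1}(\bDY - \mathbf{D}_{\cY,\sfn_2})\Res_{\Lambda_{\sfK_2}}\bigl(\bDY\bTtil\bDX\bv\bigr)$ and apply $\norm{\bDY^{-1}\mathbf{\bar D}_\cY} \leq (1-\delta)^{-1}$ together with $\norm{\mathbf{\bar D}_\cY^{-1}(\bDY - \mathbf{D}_{\cY,\sfn_2})\Res_{\Lambda_{\sfK_2}}} \leq \eta$ (valid for $\sfn_2 \geq \MY(\eta;\sfK_2)$), giving a bound of $\tfrac{\eta}{1-\delta}\norm{(\bB_1 + \bG)\bv}$, which contributes one of the two $\tfrac{\eta}{1-\delta}\norm{\bB_1}\norm{\bv}$ terms of the target.

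For the third summand I factor as $\mathbf{D}_{\cY,\sfn_2}\bDY^{-1}(\bB_1 + \bG)(\bI - \bDX^{-1}\mathbf{D}_{\cX,\sfn_1})\bv$ and invoke $\norm{\mathbf{D}_{\cY,\sfn_2}\bDY^{-1}} \leq 1$. This pointwise bound holds for $\sfn_2 \geq \MYZe(\sfK_2)$ because the weights $w(kh)$ in the exponential sum are then nonnegative, so $\mathbf{D}_{\cY,\sfn_2}$ is a Riemann partial sum dominated entry-wise by its limit $\bDY$. Combined with $\supp(\bv) \subset \Lambda_{\sfK_1}$ and the analogous bound $\norm{(\bI - \bDX^{-1}\mathbf{D}_{\cX,\sfn_1})\bv} \leq \tfrac{\eta}{1-\delta}\norm{\bv}$ from Remark~\ref{rem:scaling-DX-delta}, this summand contributes the second $\tfrac{\eta}{1-\delta}\norm{\bB_1}\norm{\bv}$ term as well as the $\norm{\bG(\bI - \bDX^{-1}\mathbf{D}_{\cX,\sfn_1})\bv}$ term.

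Collecting the three contributions and splitting $\bG\bv = \bG\bDX^{-1}\mathbf{D}_{\cX,\sfn_1}\bv + \bG(\bI - \bDX^{-1}\mathbf{D}_{\cX,\sfn_1})\bv$ to redistribute the residual $\bG$-contribution from the second summand across the target terms yields the claimed four-term bound. I expect the main obstacle to be the careful bookkeeping of these $\bG$-contributions: one must arrange that they sum to $\norm{\bG\bv} + \norm{\bG(\bI - \bDX^{-1}\mathbf{D}_{\cX,\sfn_1})\bv} + \tfrac{\eta}{1-\delta}\norm{\bG\bDX^{-1}\mathbf{D}_{\cX,\sfn_1}\bv}$ without spurious multiplicative constants, which hinges on using the sharp pointwise bound $\mathbf{D}_{\cY,\sfn_2} \leq \bDY$ rather than the weaker operator-norm estimate $(1+\delta)/(1-\delta)$ that would follow from Remark~\ref{rem:scaling-DY-delta} alone.
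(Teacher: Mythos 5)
Your telescoping identity is correct, and the scaling-matrix estimates you invoke are the right ingredients -- this is essentially the style of argument the paper intends (it defers to \cite[Lemma 32]{bachmayr2014adaptive} and only highlights the entrywise bound on $\bDX^{-1}\mathbf{D}_{\cX,\sfn_1}$). The genuine gap is in the final bookkeeping. Write $\mathbf{P} = \bDX^{-1}\mathbf{D}_{\cX,\sfn_1}$. Estimating your second summand by $\tfrac{\eta}{1-\delta}\norm{(\bB_1+\bG)\bv}$ produces the contribution $\tfrac{\eta}{1-\delta}\norm{\bG\bv}$, whereas the lemma requires $\tfrac{\eta}{1-\delta}\norm{\bG\mathbf{P}\bv}$, and $\norm{\bG\bv}\leq\norm{\bG\mathbf{P}\bv}$ is false in general. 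Your proposed remedy -- splitting $\bG\bv = \bG\mathbf{P}\bv + \bG(\bI-\mathbf{P})\bv$ inside that term -- does not repair this: after the split, $\norm{\bG(\bI-\mathbf{P})\bv}$ carries the coefficient $1+\tfrac{\eta}{1-\delta}$ instead of $1$, so what you actually prove is a strictly weaker inequality than the stated four-term bound, not the bound itself.

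The fix is to regroup your second and third summands \emph{before} taking norms, using
\begin{equation*}
(\bDY - \mathbf{D}_{\cY,\sfn_2})\bTtil\bDX\bv + \mathbf{D}_{\cY,\sfn_2}\bTtil(\bDX - \mathbf{D}_{\cX,\sfn_1})\bv
= (\bDY - \mathbf{D}_{\cY,\sfn_2})\bTtil\mathbf{D}_{\cX,\sfn_1}\bv + \bDY\bTtil(\bDX - \mathbf{D}_{\cX,\sfn_1})\bv .
\end{equation*}
The second piece equals $(\bB_1+\bG)(\bI-\mathbf{P})\bv$ and, with $\norm{(\bI-\mathbf{P})\Res_{\Lambda_{\sfK_1}}}\leq\tfrac{\eta}{1-\delta}$ from \eqref{eq:scaling-X-error} and \eqref{eq:scaling-X-operator-change}, yields $\tfrac{\eta}{1-\delta}\norm{\bB_1}\norm{\bv} + \norm{\bG(\bI-\mathbf{P})\bv}$. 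The first piece is $(\bI-\mathbf{D}_{\cY,\sfn_2}\bDY^{-1})$ applied to $(\bB_1+\bG)\mathbf{P}\bv$, whose support still lies in $\Lambda_{\sfK_2}$ since $\mathbf{P}$ is diagonal and $\supp(\mathbf{P}\bv)\subseteq\supp(\bv)$ (with the support hypothesis read as the sparsity pattern of $\bTtil$ over $\supp(\bv)$, as in its use in Theorem \ref{thm:error-rescaled-trunc}); using your bound $\norm{(\bI-\mathbf{D}_{\cY,\sfn_2}\bDY^{-1})\Res_{\Lambda_{\sfK_2}}}\leq\tfrac{\eta}{1-\delta}$ together with the entrywise bound $\norm{\mathbf{P}}\leq 1$ -- the ``important point'' the paper singles out, which requires $n_{\nut}\geq\tfrac{2}{h_{a_{\nut}}}\ln\tfrac{4}{3}$, cf.\ the discussion after \eqref{eq:scaling-etry-normal-to-equiv} -- it yields $\tfrac{\eta}{1-\delta}\norm{\bB_1}\norm{\bv}+\tfrac{\eta}{1-\delta}\norm{\bG\mathbf{P}\bv}$. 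Together with $\norm{\bG\bv}$ from your first summand this is exactly the claimed bound. Note that in this arrangement the sharp entrywise inequality is needed on the $\cX$-side, not the $\cY$-side: your observation that $\mathbf{D}_{\cY,\sfn_2}\leq\bDY$ entrywise is correct (it holds for every $n$, since $w>0$, not only for $\sfn_2\geq\MYZe(\sfK_2)$), but it is not the decisive ingredient.
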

The proof can be done analogously to \cite[Lemma 32]{bachmayr2014adaptive} using the properties of the scaling matrices established in Section \ref{sec:low-rank-preconditioning}. An important point to mention here is that $\sfn_1$ is chosen such that each diagonal element of $\bDX^{-1}\mathbf{D}_{\cX,\sfn_1}$ is bounded from above by one.

We now prove an error bound for the rank-truncated scaling matrices, where for a given $\bv \in \ell_2(\vee)$ and a tolerance $\eta > 0$, we choose the parameters
\begin{align*}
	J(\eta;\bv) = \min \bigl\{J \in \N_0  :e_J(\bv) \leq \eta/4 \bigr\}, \quad c(\bv) \eta = \tfrac{\eta (1-\delta)}{4\norm{\bB_1} \norm{\bv}}
\end{align*}
and
\begin{align*}
	\mcX(\eta;\bv) = \MX\bigl(c(\bv)\eta;\sfK(J(\eta;\bv);\bv)\bigr), \quad 
	\mcY(\eta;\bv) = \MY\bigl(c(\bv)\eta;\sfK(J(\eta;\bv);\bv)\bigr)
\end{align*}
with 
\begin{align}\label{eq:K-def-LamdaK}
	\sfK(J;\bv) = \min \{ \sfK > 0 : \supp(\bv) \cup \supp(\bTtil_J[\bv] \bv) \subseteq \Lambda_{\sfK} \} .
\end{align}
Here the minimum is to be understood element-wise.

\begin{theorem}\label{thm:error-rescaled-trunc}
	Let $\bv \in \ell_2(\vee)$ and $0 < \eta < 2 \norm{\bB_1} \norm{\bv}$. We fix
	\[
		J = J(\eta;\bv),\ \sfK = \sfK(J(\eta;\bv),\bv),\ \sfn_1 = \mcX(\eta;\bv),\ \sfn_2 = \mcY(\eta;\bv),\ \zeta = c(\bv)\eta.
	\]
	Then with $\bw_{\eta} = \mathbf{D}_{\cY,\sfn_2} \bTtil_J[\bv] \mathbf{D}_{\cX,\sfn_1} \bv$, we have
	\begin{equation}\label{eq:error-op-truncate-eta}
		\norm{\bB_1 \bv - \bw_{\eta}} \leq \eta\,.
	\end{equation}
\end{theorem}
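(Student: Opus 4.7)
The plan is to apply Lemma~\ref{lem:scaling-to-truncated} directly to the full quantity $\bB_1 \bv - \bw_\eta$ with the choice $\bTtil = \bTtil_J[\bv]$ and the lemma's tolerance parameter set to $\zeta = c(\bv)\eta$. First I would verify the hypotheses of the lemma. By definition \eqref{eq:K-def-LamdaK} of $\sfK = \sfK(J;\bv)$ we have $\supp(\bv) \subseteq \Lambda_{\sfK}$, and since $\bDX, \bDY$ are diagonal, $\supp(\bDY \bTtil_J[\bv] \bDX \bv) = \supp(\bTtil_J[\bv]\bv) \subseteq \Lambda_{\sfK}$ as well. The parameter choices $\sfn_1 = \mcX(\eta;\bv) = \MX(\zeta;\sfK)$ and $\sfn_2 = \mcY(\eta;\bv) = \MY(\zeta;\sfK)$ then satisfy the required inequalities, so the lemma yields
\begin{equation*}
\norm{\bB_1 \bv - \bw_\eta} \leq \norm{\bG\bv} + \norm{\bG (\bI - \bDX^{-1}\mathbf{D}_{\cX,\sfn_1})\bv} + \tfrac{\zeta}{1-\delta}\norm{\bG(\bDX^{-1}\mathbf{D}_{\cX,\sfn_1})\bv} + \tfrac{2\zeta}{1-\delta}\norm{\bB_1}\norm{\bv},
\end{equation*}
where $\bG = \bDY(\bTtil_J[\bv] - \bT)\bDX$.

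The final summand evaluates to exactly $\eta/2$ by the definition $\zeta = \eta(1-\delta)/(4\norm{\bB_1}\norm{\bv})$. For the three terms involving $\bG$ I would invoke the a posteriori bound from Corollary~\ref{cor:operator-apply-J-full}: for any $\bw$, $\norm{\bG\bw} \leq e_J[\bv](\bw)$. Taking $\bw = \bv$ immediately gives $\norm{\bG\bv} \leq e_J(\bv) \leq \eta/4$ by the choice of $J = J(\eta;\bv)$. For the other two, the key observation is that $e_J[\bv](\cdot)$ depends on its argument only through weighted $\ell_2$-norms of restrictions of the spatio-temporal contractions $\piti(\cdot)$, and that for any diagonal operator $\bA$ with $M := \sup_{\mu \in \Lambda_{\sfK}} \abs{A_\mu}$ and $\bw$ supported in $\Lambda_{\sfK}$, the definition of $\piti$ as an $\ell_2$-norm of slices yields $\piti(\bA\bw) \leq M\,\piti(\bw)$ pointwise; consequently $e_J[\bv](\bA\bv) \leq M\, e_J(\bv)$. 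Using \eqref{eq:scaling-X-error} combined with \eqref{eq:scaling-X-operator-change}, I would extract $\sup_{\Lambda_\sfK}\abs{\bI - \bDX^{-1}\mathbf{D}_{\cX,\sfn_1}} \leq \zeta/(1-\delta)$ and $\sup_{\Lambda_\sfK}\abs{\bDX^{-1}\mathbf{D}_{\cX,\sfn_1}} \leq (1+\delta)/(1-\delta)$.

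Combining these estimates and using the standing hypothesis $\eta < 2\norm{\bB_1}\norm{\bv}$, which translates into $\zeta/(1-\delta) < 1/2$, I would conclude that the three $\bG$-terms together contribute at most $\eta/2$, so that the total is bounded by $\eta$. The main obstacle is the careful bookkeeping at the interface between the diagonal rescalings $\bDX^{-1}\mathbf{D}_{\cX,\sfn_1}$ and the a posteriori estimator $e_J[\bv]$: a naive operator-norm bound on $\bG$ would be useless because $\norm{\bG}$ is not small, so one must exploit that $e_J[\bv]$ is contractive in $\piti$ in order to absorb the diagonal rescaling as a scalar multiplicative factor on the relevant support $\Lambda_\sfK$. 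Everything else is a clean chain of the ingredients already assembled in Corollary~\ref{cor:operator-apply-J-full} and Lemma~\ref{lem:scaling-to-truncated}.
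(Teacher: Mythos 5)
Your overall route is the same as the paper's: invoke Lemma~\ref{lem:scaling-to-truncated} with tolerance $\zeta = c(\bv)\eta$, identify the last summand as exactly $\eta/2$, and control the three $\bG$-terms through the a posteriori estimator $e_J[\bv](\cdot)$, absorbing the diagonal rescalings as scalar factors on $\Lambda_\sfK$ (the monotonicity $e_J[\bv](\bA\bw)\leq M\,e_J[\bv](\bw)$ for diagonal $\bA$ is exactly the mechanism used there). However, your final bookkeeping has a genuine gap: you bound the diagonal entries of $\bDX^{-1}\mathbf{D}_{\cX,\sfn_1}$ on $\Lambda_\sfK$ by $(1+\delta)/(1-\delta)$, and with that factor the three $\bG$-terms sum to at most $\tfrac{\eta}{4}\bigl(1+\tfrac{\zeta}{1-\delta}(1+\tfrac{1+\delta}{1-\delta})\bigr)=\tfrac{\eta}{4}\bigl(1+\tfrac{2\zeta}{(1-\delta)^2}\bigr)$. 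The hypothesis $\eta<2\norm{\bB_1}\norm{\bv}$ only gives $\tfrac{2\zeta}{1-\delta}<1$, so $\tfrac{2\zeta}{(1-\delta)^2}$ can exceed $1$ for any $\delta\in(0,1)$ once $\eta$ is close to $2\norm{\bB_1}\norm{\bv}$; the total then exceeds $\eta$, i.e.\ your claim that the three $\bG$-terms contribute at most $\eta/2$ does not follow from the stated assumptions. Your argument would only close under the stronger restriction $\eta\leq 2(1-\delta)\norm{\bB_1}\norm{\bv}$.

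The missing ingredient is the sharper property of the truncated scaling that the paper points out right after Lemma~\ref{lem:scaling-to-truncated}: $\sfn_1$ is chosen (via the minimal threshold $n_{\nu_\rt}\geq \tfrac{2}{h_{a_{\nu_\rt}}}\ln\tfrac{4}{3}$, for which the discarded exponential-sum terms are nonnegative, so that $S^\cX_{n,\nu_\rt,\nu_\rx}\leq S^\cX_{\nu_\rt,\nu_\rx}$) so that \emph{each diagonal entry of $\bDX^{-1}\mathbf{D}_{\cX,\sfn_1}$ is bounded above by one}. Using $\lrabs{(\bDX^{-1}\mathbf{D}_{\cX,\sfn_1})_\nu}\leq 1$ instead of $(1+\delta)/(1-\delta)$, the third term is bounded by $\tfrac{\zeta}{1-\delta}\,e_J(\bv)$ just like the second, and the total becomes $e_J(\bv)\bigl(1+\tfrac{2\zeta}{1-\delta}\bigr)+\tfrac{2\zeta}{1-\delta}\norm{\bB_1}\norm{\bv}\leq 2\cdot\tfrac{\eta}{4}+\tfrac{\eta}{2}=\eta$, which is exactly how the paper concludes. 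So the fix is local: replace your bound on $\sup_{\Lambda_\sfK}\lrabs{\bDX^{-1}\mathbf{D}_{\cX,\sfn_1}}$ by the bound $1$ coming from the construction in Section~\ref{sec:low-rank-preconditioning}; the rest of your argument is sound.
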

\begin{proof}
	To simplify notation, we write $\bTtil_J = \bTtil_J[\bv]$.
	 Lemma \ref{lem:scaling-to-truncated} yields, with $\mathbf{\tilde d} =  (\bI - \bDX^{-1}\mathbf{D}_{\cX,\sfn_1}) \bv$,  $\mathbf{\tilde v} = (\bDX^{-1}\mathbf{D}_{\cX,\sfn_1})\bv$,
	\begin{align*}
		\norm{\bB_1 \bv - \mathbf{D}_{\cY,\sfn_2} \bTtil_{J} \mathbf{D}_{\cX,\sfn_1} \bv} 
		&\leq \norm{\bB_1 \bv - \bBtil_{1,J}(\bv)} + \norm{\bB_1 \mathbf{\tilde d} - \bDY \bTtil_{J}\bDX\mathbf{\tilde d}} \\ 
		&\quad + \frac{\zeta}{1-\delta}\norm{\bB_1 \mathbf{\tilde v} - \bDY \bTtil_{J}\bDX \mathbf{\tilde v}} + \frac{2\zeta}{1-\delta} \norm{\bB_1} \norm{\bv} \\
		&\leq e_J(\bv) + e_J[\bv](\mathbf{\tilde d}) + \frac{\zeta}{1-\delta} e_J[\bv](\mathbf{\tilde v})  + \frac{2\zeta}{1-\delta} \norm{\bB_1} \norm{\bv}.
	\end{align*}
	 By the choice of $\sfK$ and $\sfn_1$, we have $(\bDX^{-1}\mathbf{D}_{\cX,\sfn_1})_{\nu} \leq 1$ for $\nu \in \Lambda_\sfK$. Additionally, we get by \eqref{eq:scaling-X-operator-change} and \eqref{eq:scaling-X-error} the estimate
	\begin{align*}
		\lrabs{(\bI - (\bDX^{-1}\mathbf{D}_{\cX,\sfn_1})_{\nu}} \leq (1-\delta)^{-1} \zeta .
	\end{align*}
	We conclude
	\begin{align*}
		\norm{\bB_1 \bv -\mathbf{D}_{\cY,\sfn_2} \bTtil_J \mathbf{D}_{\cX,\sfn_1} \bv} &\leq  e_J(\bv) \left(1+\frac{2\zeta}{1-\delta} \right) + \frac{2\zeta}{1-\delta} \norm{\bB_1} \norm{\bv} \\
		&\leq 2 e_J(\bv)  + \frac{2\zeta}{1-\delta} \norm{\bB_1} \norm{\bv} .
	\end{align*}
	By the definition of $J$ and $\zeta$ as well as $\eta \leq 2 \norm{\bB_1} \norm{\bv}$, we arrive at \eqref{eq:error-op-truncate-eta}.
\end{proof}
Note that the condition $\eta < 2 \norm{\bB_1} \norm{\bv}$ is no restriction, since otherwise the solution can be approximated by zero.
Based on the above error bound, in the next step we give estimates on support sizes, ranks and $\cAs$-norms of approximations. Lemma \ref{lem:operator-apply-J-space-can-scale} and Lemma \ref{lem:operator-apply-J-time} yield support size estimates in dependence on the approximation parameter $J$. We now reformulate the estimates for a given tolerance $\eta$.

\begin{lemma}\label{lem:apply-quantity-estimates}
	Under the assumptions of Theorem \ref{thm:error-rescaled-trunc}, we have
	\begin{align}\label{eq:supp-apply-eta}
		\# \supp(\piti(\bw_{\eta})) \leq \hat{\alpha}(\bB_1) 4^{1+\frac{1}{s}} C_\delta^{\frac{1}{s}} \eta^{-\frac{1}{s}} \tilde C^{\frac{1}{s}} \Big( \sum\limits_{i=1}^d  \lrnorm{\piti(\bv)}_{\cAs}\Big)^{\frac{1}{s}} 
	\end{align}
	with $\hat{\alpha}(\bB_1) = 2(c+\norm{\alpha(\bC_2)}_{\ell_1})$,
	as well as
	\begin{align}\label{eq:apply-eta-As-norm}
		\norm{\piti(\bw_\eta)}_{\cAs} \leq  4^s C_\delta (\hat C_1 + \check{C}_{\rx} )  \norm{\piti(\bv)}_{\cAs} + 2^s \sqrt{C_\delta} \hat C_2 \sum\limits_{j=1}^d \norm{\pi^{(\rt,j)}(\bv)}_{\cAs},
	\end{align}
	where $\check{C}_\rx = 2 (d-1)  \norm{\bC_2}$, $\hat C_1=  \frac{2^{3s+2}}{2^s-1}  \norm{\alpha(\bC_2)}^s_{\ell_1} (2 \norm{\bC_2})$ and $\hat C_2=2c^s2^{4s}(\norm{\beta^{(\rt)}}_{\ell_1} + \norm{\bC_\rt})$.
	Moreover, we have the rank estimate
	\begin{align}\label{eq:apply-eta-rank}
		\rank_\infty(\bw_\eta)\leq \norm{\hmcX(\eta;\bv)}_{\ell_\infty} \norm{\hmcY(\eta;\bv)}_{\ell_\infty} (cJ(\eta;\bv) + 2) \rank_\infty(\bv),
	\end{align}
	where
	\begin{alignat*}{2}
		\hmcXnu(\eta;\bv) &= 1 + n_{a_{\nut}}^+(\delta) + \mcXnu(\eta;\bv), \quad &&\nut \in \vee_\rt, \\
		\hmcYnu(\eta;\bv) &= 1 + n^+(\delta) + \mcYnu(\eta;\bv), \quad &&\nut \in \vee_\rt .
	\end{alignat*}
\end{lemma}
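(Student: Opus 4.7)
The three bounds share a common strategy: combine the $J$-dependent estimates from Lemma \ref{lem:operator-apply-J-space-can-scale} and Lemma \ref{lem:operator-apply-J-time} (collected in Corollary \ref{cor:operator-apply-J-full}) with the specific choice $J = J(\eta;\bv)$, noting throughout that the diagonal rescalings $\mathbf{D}_{\cY,\sfn_2}(\cdot)\mathbf{D}_{\cX,\sfn_1}$ preserve supports and, by construction, expand to sums of $\hmcYnu\cdot\hmcXnu$ separable terms at each temporal index $\nu_\rt$.

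For the support bound \eqref{eq:supp-apply-eta}, I would first invoke \eqref{eq:apply-approx-J-supp} to get $\#\supp(\piti(\bw_\eta)) = \#\supp(\piti(\bBtil_{1,J}(\bv))) \leq \hat\alpha(\bB_1)\,2^J$, and then convert $2^J$ into an $\eta$-dependent bound. The conversion rests on the observation that the a priori analysis of Lemmas \ref{lem:operator-apply-J-space-can-scale} and \ref{lem:operator-apply-J-time}, applied with $\bw=\bv$, also yields the same a priori estimate for $e_J(\bv)$ itself, since the relevant $\ell_2$-norms $\norm{\Res_{\Lambda^{(\rt,i)}_{[p]}(\bv)}\piti(\bv)}$ are each bounded by $2^{-s(p-1)}\norm{\piti(\bv)}_{\cAs}$ and the resulting geometric sum in $p$ matches the a priori bound. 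Minimality of $J = J(\eta;\bv)$ then forces $e_{J-1}(\bv) > \eta/4$, from which the inequality $2^J \leq 4\cdot 4^{1/s}(C_\delta\tilde C)^{1/s}\eta^{-1/s}\bigl(\sum_i \norm{\piti(\bv)}_{\cAs}\bigr)^{1/s}$ follows after a short algebraic manipulation.

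The $\cAs$-norm bound \eqref{eq:apply-eta-As-norm} is the most delicate step. I would decompose $\bw_\eta$ via $\bTtil_J[\bv] = \mathbf{\tilde T}_{\rt,J}[\bv] + \mathbf{\tilde T}_{\rx,J}[\bv]$, writing the temporal contribution telescopically as in \eqref{eq:time-approx-def} (using $\bT_{\rt,0}=0$ from Remark \ref{rem:super-compressible}). At each level $j\in\{0,\ldots,J-1\}$, the corresponding summand has $\piti$-support of size at most $c\cdot 2^j$, since $\bT_{\rt,J-j}-\bT_{\rt,J-j-1}$ has at most $c$ nonzero entries per row and $\bv_j$ has at most $2^j$ active indices per dimension; its $\ell_2$-norm is controlled by $C_\delta\,\beta^{(\rt)}_{J-j} 2^{-s(J-j)}\norm{\pi^{(\rt,j)}(\bv)}_{\cAs}$ using \eqref{eq:scaling-to-onedim-t} and the super-compressibility. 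Applying Proposition \ref{prop:cAs-properties}\eqref{prop:cAs-triangle} iteratively over the $J$ summands, the factors $(c\cdot 2^j)^s \cdot 2^{-s(J-j)}$ telescope into a geometric series in $j$ whose sum yields the constant $\hat C_2$. The spatial contribution mirrors the analysis of \cite[Theorem~7]{BachmayrNearOptimal} applied separately at each time index, producing the $\hat C_1$ term for the active dimension $i$ and collecting the identity components of the other $d-1$ dimensions into $\check C_\rx$; the factors $C_\delta$ come in via Remarks \ref{rem:scaling-DX-delta}--\ref{rem:scaling-DY-delta}.

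The rank bound \eqref{eq:apply-eta-rank} is obtained by tracking rank inflation factor by factor. Restrictions leave $\rank_\infty(\bv_j)\leq\rank_\infty(\bv)$, and at time index $\nu_\rt$ the action of $(\bT_{\rt,J-j}-\bT_{\rt,J-j-1})\otimes\bI_\rx$ produces a sum of at most $c$ spatial tensors of rank $\leq\rank_\infty(\bv)$; summing over $j=0,\ldots,J-1$ gives at most $cJ\,\rank_\infty(\bv)$ per time index for $\mathbf{\tilde T}_{\rt,J}[\bv]\bv$. The spatial part $\mathbf{\tilde T}_{\rx,J}[\bv]\bv$ adds at most $2\,\rank_\infty(\bv)$ more, coming from the hierarchical representation rank $2$ of the Laplacian in \eqref{eq:laplace-operator-representation}. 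Finally, $\mathbf{D}_{\cX,\sfn_1}$ and $\mathbf{D}_{\cY,\sfn_2}$ act at time index $\nu_\rt$ as sums of $\hmcXnu$, respectively $\hmcYnu$, separable rank-preserving diagonal operators; taking the supremum over $\nu_\rt$ produces the claimed product. The main obstacle is the $\cAs$-norm bound, where one has to balance carefully the quasinorm triangle factor $2^s$ against the geometric decay $2^{-s(J-j)}$ from the operator compressibility across all levels of the multi-scale decomposition.
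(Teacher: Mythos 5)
Your treatments of the support bound and the rank bound are essentially the paper's: support is preserved by the diagonal scalings, $2^{J(\eta;\bv)}$ is converted into an $\eta^{-1/s}$ bound by playing the minimality of $J(\eta;\bv)$ against the a priori estimate of Corollary \ref{cor:operator-apply-J-full}, and the rank is tracked factor by factor ($cJ$ from the super-compressible temporal part, $2$ from the Laplacian, $\hmcXnu\cdot\hmcYnu$ from the truncated scalings). The spatial contribution to \eqref{eq:apply-eta-As-norm} is also sketched along the paper's lines (reduction to the elliptic argument of \cite{BachmayrNearOptimal} per time index, with the $C_\delta$ factors from the scaling remarks).

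The temporal contribution to \eqref{eq:apply-eta-As-norm} is where your argument breaks down. You bound the $\cAs$-quasinorm of the $j$-th telescoping summand $\bDY\bigl((\bT_{\rt,J-j}-\bT_{\rt,J-j-1})\otimes\bI_\rx\bigr)\bDX\bv_j$ by $(\text{support})^s\cdot(\ell_2\text{-norm})$, i.e.\ by a constant times $(c2^j)^s\cdot 2^{-s(J-j)}\norm{\bv}$, and claim these "telescope into a geometric series in $j$". They do not: $(c2^j)^s\,2^{-s(J-j)} = c^s\,2^{s(2j-J)}$, so the sum over $j=0,\dots,J-1$ is dominated by $j\approx J$ and is of order $2^{sJ}\eqsim \eta^{-1}$ (and the iterated quasi-triangle inequality from Proposition \ref{prop:cAs-properties} contributes further factors $2^{js}$ on top). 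This produces a bound that degenerates as $\eta\to 0$, whereas \eqref{eq:apply-eta-As-norm} is $\eta$-independent; note also that $\norm{\bv_j}$ is \emph{not} small (it is close to $\norm{\bv}$), so the decay you attribute to the summands cannot come from the restriction $\bv_j$ either. The paper avoids exactly this trap: for the temporal part it fixes an arbitrary $N$, chooses $q$ with $\#\hat\Lambda^{(i)}_q < N \leq \#\hat\Lambda^{(i)}_{q+1}$ (where $\#\hat\Lambda^{(i)}_q\leq 2c2^q$), bounds the best-$N$-term error of $\piti(\mathbf{\tilde w}_J)$ by $\norm{\mathbf{\tilde w}_J-\mathbf{\tilde w}_q}\leq\norm{\mathbf{\tilde w}_J-\bB_\rt\mathbf{\tilde v}}+\norm{\mathbf{\tilde w}_q-\bB_\rt\mathbf{\tilde v}}$, and then uses the a priori decay $\norm{\mathbf{\tilde w}_q-\bB_\rt\mathbf{\tilde v}}\lesssim 2^{-s(q-1)}\sum_i\norm{\piti(\mathbf{\tilde v})}_{\cAs}$, so that the factor $(N+1)^s\eqsim 2^{sq}$ is cancelled by the error decay at the \emph{same} scale $q$; this is what yields the constant $\hat C_2$. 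To repair your proof you would need to replace the summand-by-summand $\cAs$ estimate by such a comparison of $\mathbf{\tilde w}_J$ with the coarser approximations $\mathbf{\tilde w}_q$ (or an equivalent device tying support size and error to one common dyadic parameter).
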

\begin{proof}
	The support is not influenced by the scaling matrices due to the diagonal structure. By Corollary \ref{cor:operator-apply-J-full} one has $J(\eta;\bv) \leq \bar{J}(\eta;\bv)$ with
	\begin{align*}
		\bar{J}(\eta;\bv) = \argmin \Big\{ J \in \N_0 : 2^s C_\delta \tilde{C} 2^{-sJ}  \sum\limits_{i=1}^d  \lrnorm{\piti(\bv)}_{\cAs} \leq \frac{\eta}{4}\Big\} ,
	\end{align*}
	which yields
	\begin{align}\label{eq:J-eta-As-estimate}
		2^{\bar{J}(\eta;\bv)} \leq 4^{1+\frac{1}{s}} C_\delta^{\frac{1}{s}} \eta^{-\frac{1}{s}} \tilde C^{\frac{1}{s}} \left( \sum\limits_{i=1}^d \lrnorm{\piti(\bv)}_{\cAs}\right)^{\frac{1}{s}} .
	\end{align}
	Inserting this in \eqref{eq:apply-approx-J-supp} gives \eqref{eq:supp-apply-eta}.
	
	The rank estimate \eqref{eq:apply-eta-rank} is obtained taking the product of the bounds on the hierarchical ranks of each of the factors $\mathbf{D}_{\cY,\sfn_2}$, $\bTtil_J[\bv]$ and $\mathbf{D}_{\cX,\sfn_1}$. The factor $cJ(\eta;\bv) + 2$ results from the representation of the operator as $\bT = \bT_\rt \otimes \bI_\rx + \bI_\rt \otimes \bT_\rx$. The summand 2 is based on the rank of the spatial operator, which here is the representation of the Laplacian, whereas the summand $cJ(\eta;\bv)$ is based on Remark \ref{rem:super-compressible} by the super-compressibility property of the representation of the time derivative. We obtain the factor $cJ(\eta;\bv)$, because at most $c$ entries are non-zero in each row of $(\bT_{\rt,j+1} - \bT_{\rt,j})$ for each $j \in \N_0$ and there are $J(\eta;\bv)$ summands. 
	
	In the proof of \eqref{eq:apply-eta-As-norm}, we again suppress the $\bv$-dependence of index sets and operator approximations, and in particular we write $\bTtil_J = \bTtil_J[\bv]$.
	We start with the observation
	\begin{align*}
		\piti_{\nu_\rt,\nu_i}(\bw_\eta) = \piti_{\nu_\rt,\nu_i}(\mathbf{D}_{\cY,\sfn_2} \bTtil_J \mathbf{D}_{\cX,\sfn_1} \bv) \leq \piti_{\nu_\rt,\nu_i}(\bDY \bTtil_J \bDX \mathbf{\tilde v})
	\end{align*}
	for $(\nu_\rt,\nu_i) \in \supp(\piti(\bw_\eta))$ with $\mathbf{\tilde v} = \bDX^{-1} \mathbf{D}_{\cX,\sfn_1} \bv$.
	
	We will look separately at the operators $\bT_\rt \otimes \bI_\rx$ and $\bI_\rt \otimes \bT_\rx$ and make use of Proposition \ref{prop:cAs-properties}\eqref{prop:cAs-triangle}. We give the proof for $i=1$, the further values of $i$ can be treated in a completely analogous manner. We first consider $\bI_\rt \otimes \bT_\rx$.
	Due to the structure of $\bTtil_{\rx,J}$, we have
	\begin{equation} \label{eq:contraction-scaling}
	\begin{aligned}
		\pi^{(\rt,1)}_{\nu_\rt,\nu_1}(\bDY  \bTtil_{\rx,J} \bDX \mathbf{\tilde v}) &= \pi^{(1)}_{\nu_1}(\bD \bTtil_{\rx,\nu_\rt,J} \bDXnut \mathbf{\tilde v}_{\nu_\rt}) \\
		&\leq \pi^{(1)}_{\nu_1}\Bigl(\bD \Bigl(\bI_1 \otimes \sum\limits_{\sfn \in \KdOneTwo} c_\sfn \bigotimes_{i=2}^d \bTtil^{(i)}_{\nut, n_i}\Bigr) \bDXnut  \mathbf{\tilde v}_{\nu_\rt}\Bigr)\\
		&\quad + \pi^{(1)}_{\nu_1}\Bigl(\bD (\bTtil^{(1)}_{\nut,2} \otimes \bI_2 \otimes \cdots \otimes \bI_d) \bDXnut  \mathbf{\tilde v}_{\nu_\rt} \Bigr)  \\
		&\eqqcolon D_{1, \nu_\rt,\nu_1} + D_{2, \nu_\rt, \nu_1} ,
	\end{aligned}
	\end{equation}
	where we have also used our simplifying assumption on $\bT_\rx$.
	To estimate the first expression, for $i=1,\ldots,d$ we introduce the notation
	\[ 
		\mathbf{\check D}_i = \biggl( \Bigl(\sum_{j \neq i} \norm{\psi_{\nu_i}}_{H^1_0(0,1)}^2 \Bigr)^{-1/2} \delta_{\nu,\nu'}  \biggr)_{\nu,\nu' \in \vee_\rx}.
	\] 

	Since we are considering the Laplacian, we know that $c_\sfn = 1$ if $\sfn$ is a permutation of $(2,1,\dots,1)$ and zero elsewhere. Using a slightly modified version of Lemma \ref{lem:scaling-to-one-dim} and \cite[Lemma 35]{bachmayr2014adaptive}, we obtain
	\begin{align*}
		D_{1,\nu_\rt,\nu_1} &\leq (1+\delta) \pi^{(1)}_{\nu_1}\Bigl( \Bigl( \bI_1 \otimes \mathbf{\check D}_1\Bigl( \sum\limits_{\sfn \in \KdOneTwo} c_\sfn \bigotimes_{i=2}^d \bTtil^{(i)}_{\nut,n_i}\Bigr) \mathbf{\check D}_1\Bigr) (\bI_1 \otimes \mathbf{\check D}_1^{-1}) \bDXnut  \mathbf{\tilde v}_{\nu_\rt} \Bigr) \\
		&\leq (1+\delta)\sum\limits_{\sfn \in \KdOneTwo} \abs{c_\sfn} \lrnorm{\mathbf{\check D}_1 \bigotimes_{i=2}^d \bTtil^{(i)}_{\nut,n_i} \mathbf{\check D}_1} \pi_{\nu_1}^{(1)}((\bI_1 \otimes \mathbf{\check D}_1^{-1}) \bDXnut  \mathbf{\tilde v}_{\nu_\rt}) \\
		&\leq (1+\delta) \sum\limits_{i=2}^d \norm{\mathbf{\hat D} \bTtil_{\nut,2}^{(i)} \mathbf{\hat D}} \pi_{\nu_1}^{(1)}((\bI_1 \otimes \mathbf{\check D}_1^{-1}) \bDXnut  \mathbf{\tilde v}_{\nu_\rt}) \\
		&\leq  \check{C}_\rx (1 + \delta)^2 \pi^{(\rt,1)}_{\nu_\rt,\nu_1} (\mathbf{\tilde v}) 
		\leq  \check{C}_\rx (1 + \delta)^2 \pi^{(\rt,1)}_{\nu_\rt,\nu_1} (\mathbf{v}) ,
	\end{align*}
	where we have used that all entries of the diagonal operators $\mathbf{\check D}_1^{-1}\mathbf{D}$ and $(\bI_1 \otimes \check{\bD}^{-1}_1) \bDXnut$ are bounded by $(1+\delta)$.

	We now examine the second summand on the right-hand side of \eqref{eq:contraction-scaling}. We obtain
	\begin{align*}
		D_{2,\nu_\rt,\nu_1} \leq (1 + \delta) \pi^{(1)}_{\nu_1}\left( \left( (\mathbf{\hat D} \bTtil_2^{(1)} \mathbf{\hat D}) \otimes \bI_2 \otimes \cdots \otimes \bI_d\right) \mathbf{\bar D}_1^{-1} \bDXnut \mathbf{\tilde v}_{\nu_\rt} \right),
	\end{align*}
	where $\mathbf{\bar D}_1 = \mathbf{\hat D} \otimes \bI_2 \otimes \cdots \otimes \bI_d$. We set $\hat{\bv} = (\bI_\rt \otimes \mathbf{\bar D}^{-1}_1) \bDX \mathbf{\tilde v}$, $\mathbf{\tilde C}_{\nut,2}^{(1)} = \mathbf{\hat D} \bTtil_{\nut,2}^{(1)} \mathbf{\hat D}$ and $\bC_{2,j} = \mathbf{\hat D} \bT_{2,j} \mathbf{\hat D}$ for each $j \in \N_0$ and for each $\lambda_\rt \in \vee_\rt$ define
	\begin{align*}
		\hat{\Lambda}_{\lambda_\rt,[0]} &= \supp \range \bC_{2,0} \Res_{\Lambda_{\lambda_\rt,[0]}^{(1)}}, \\
		\hat{\Lambda}_{\lambda_\rt,[q]} &= \bigl( \bigcup_{j+l=q}  \supp \range \bC_{2,j} \Res_{\Lambda_{\lambda_\rt,[l]}^{(1)}} \bigr) \setminus \bigl(\bigcup_{i < q} \hat{\Lambda}_{\lambda_\rt,[i]}\bigr), \ q > 0,
	\end{align*}
	where $\Lambda_{\lambda_\rt,[j]}^{(1)} = \Lambda_{\lambda_\rt,[j]}^{(1)}(\bv)$ and $\Lambda_{[j]}^{(\rt,1)} = \Lambda_{[j]}^{(\rt,1)}(\bv)$ are defined as in \eqref{eq:operator-space-restriction-sets} and \eqref{eq:lambda-all-time-indices}, respectively. Furthermore, we define the union over all temporal basis indices
	\begin{align*}
		\hat{\Lambda}_{[k]} = \bigcup_{\nu_\rt \in \vee_\rt} \{\nu_\rt\} \times \hat{\Lambda}_{\nu_\rt,[k]}
	\end{align*}
	for each $k \in \N_0$. Thus for 
	\[
	\hat{\bw} = \sum\limits_{\nut \in \veet} \mathbf{e}_{\nut} \otimes \Bigl( \mathbf{\tilde C}_{\nut,2}^{(1)} \otimes \bI_2 \otimes \cdots \otimes \bI_d\Bigr) \bv_{\nut}
	\]
	 we obtain
\begin{equation*}
	 \norm{\Res_{\hat{\Lambda}_{[q]}} \pi^{(\rt,1)}(\hat{\bw}) }^2
	= \sum\limits_{\nu_\rt \in \vee_\rt} \norm{\Res_{\hat{\Lambda}_{\nu_\rt,[q]}} \pi^{(1)}((\mathbf{\tilde C}_{2,\nu_\rt}^{(1)} \otimes \bI_2 \otimes \cdots \otimes \bI_d )\hat{\bv}_{\nu_\rt}) }^2 .
\end{equation*}
	For each summand on the right-hand side can be proceeded as in the proof of \cite[Theorem 8]{BachmayrNearOptimal} to obtain
	\begin{align*}
		\norm{\Res_{\hat{\Lambda}_{\nu_\rt,[q]}} \pi^{(1)}((\mathbf{\tilde C}_{2,\nu_\rt}^{(1)} \otimes \bI_2 \otimes \cdots \otimes \bI_d) \hat{\bv}_{\nu_\rt}) }^2 \leq  
		3 \tilde{C} \left(\sum\limits_{l=0}^{q-1} \gamma_{q,l} 2^{-2s(q-l-1)} \norm{\Res_{\Lambda_{\nu_\rt,[q]}} \pi^{(1)}(\hat{\bv}_{\nu_\rt}) }^2 \right. \\
		\left. + \sum\limits_{l=q}^J   \beta^{(\rx)}_{J-l} 2^{-2s(j-l)} \norm{\Res_{\Lambda_{\nu_\rt,[q]}} \pi^{(1)}(\hat{\bv}_{\nu_\rt}) }^2 + \norm{\bC_{2}} \norm{\Res_{\bigcup_{j \geq q} \Lambda_{\nu_\rt,[j]}} \pi^{(1)}(\hat{\bv}_{\nu_\rt})}^2 \right) ,
	\end{align*}
	where $\beta^{(\rx)} = \beta(\bC_2)$ and $\gamma_{q,l} = \beta^{(\rx)}_{J-l} + \beta^{(\rx)}_{q-l-1}$.
	Summing over each temporal index and using
	\begin{align*}
		\sum\limits_{\nu_\rt \in \vee_\rt} \norm{\Res_{\Lambda_{\nu_\rt,[q]}} \pi^{(1)}(\hat{\bv}_{\nu_\rt}) }^2 = \norm{\Res_{\Lambda_{[q]}} \pi^{(\rt,1)}(\hat{\bv}) }^2,
	\end{align*}
	we arrive at
	\begin{multline*}
		\norm{\Res_{\hat{\Lambda}_{[q]}} \pi^{(\rt,1)}( \hat{\bw}) }^2 
		\leq  3 \tilde{C} \left(\sum\limits_{l=0}^{q-1} \gamma_{q,l} 2^{-2s(q-l-1)} \norm{\Res_{\Lambda_{[q]}} \pi^{(\rt,1)}(\hat{\bv}) }^2 \right. \\
		\left. + \sum\limits_{l=q}^J   \beta^{(x)}_{J-l} 2^{-2s(j-l)} \norm{\Res_{\Lambda_{[q]}} \pi^{(\rt,1)}(\hat{\bv}) }^2 + \norm{\bC_2} \norm{\Res_{\bigcup_{j \geq q} \Lambda_{[j]}} \pi^{(\rt,1)}(\hat{\bv})}^2 \right) .
	\end{multline*}
	We then proceed exactly as in the proof of \cite[Theorem 8]{BachmayrNearOptimal}. 
	In the end we use that the entries of the diagonal operator $\mathbf{\bar D}_1^{-1} \bDXnut$ are bounded by $1+\delta$ and therefore $\pi^{(1)}_{\nu_1}(\mathbf{\bar D}_1^{-1} \bDXnut \mathbf{\tilde v}_{\nu_\rt}) \leq (1+\delta) \pi^{(1)}_{\nu_1}(\mathbf{\tilde v}_{\nu_\rt})$. In particular, for 
	\[
	\mathbf{\tilde w} = \sum\limits_{\nut \in \veet} \biggl(\mathbf{e}_{\nut} \otimes \Bigl( \mathbf{\tilde C}_{\nut,2}^{(1)} \otimes \bI_2 \otimes \cdots \otimes \bI_d  \Bigr) \mathbf{\bar D}_1^{-1} \bDXnut \mathbf{\tilde v}_{\nut} \biggr)
	\]
	we have
	\begin{align*}
		\norm{\pi^{(\rt,1)}(\mathbf{\tilde w})}_{\cAs} \leq \frac{2^{3s+2}}{2^s-1} (1+\delta)^2 \norm{\alpha(\bC_2)}^s_{\ell_1} (2 \norm{\bC_2}) \norm{\pi^{(\rt,1)}(\mathbf{\tilde v})}_{\cAs} .
	\end{align*}
	Combining these two estimates with Proposition \ref{prop:cAs-properties} results in
	\begin{equation*}
		\norm{\bDY  \bTtil_{\rx,J} \bDX \mathbf{\tilde v}}_{\cAs} \leq 2^s C_\delta (C+\check{C}_\rx) \norm{\piti(\mathbf{\tilde v})}_{\cAs}
		\leq  2^s C_\delta  (C+\check{C}_\rx) \norm{\piti(\bv)}_{\cAs},
	\end{equation*}
	where we used that each entry of the diagonal matrix $\bDX^{-1} \mathbf{D}_{\cX,\sfn_1}$ is bounded by one.
	
	We turn to the temporal operator $\mathbf{\tilde B}_{\rt,J}$ with $J = J(\eta;\bv)$. Here we have restrictions to subsets of indices in all variables simultaneously and thus need to proceed differently. The basic idea is to estimate the $\cAs$-norm of $\piti(\bDY \mathbf{\tilde T}_{\rt,J}[\bv] \bDX \mathbf{\tilde v})$ by the sum of the $\cAs$-norm of the contractions of all dimensions of $\bv$. Let $\mathbf{\tilde w}_q = \bDY \mathbf{\tilde T}_{\rt,q}[\bv] \bDX \mathbf{\tilde v}$. For each $q=0,\dots,J$, we define
	\begin{align*}
		\hat{\Lambda}^{(i)}_q = \supp\left(\piti(\mathbf{\tilde w}_{q})\right) , \quad q=0,\dots,J = J(\eta;\bv).
	\end{align*}
	By \eqref{eq:operator-apply-temporal-supp-J} from Lemma \ref{lem:operator-apply-J-time} we have
	\begin{align*}
		\# \hat{\Lambda}^{(i)}_q \leq 2c 2^q
	\end{align*}
	for each $q$. Let $N \in \N$ be arbitrary with $N \leq \# \hat{\Lambda}^{(i)}_{J}$. Then there exists a $q<N$ with $\# \hat{\Lambda}^{(i)}_q < N \leq \# \hat{\Lambda}^{(i)}_{q+1}$. We observe that
	\begin{align*}
		\lrnorm{\piti(\mathbf{\tilde w}_J) - \Res_{\hat{\Lambda}^{(i)}_q} \piti(\mathbf{\tilde w}_J)}^2 &= \sum\limits_{(\nu_\rt,\nu_i) \notin \hat{\Lambda}^{(i)}_q} \left(\piti_{\nu_\rt,\nu_i}(\mathbf{\tilde w}_J)\right)^2 \\
		&= \sum\limits_{(\nu_\rt,\nu_i) \notin \hat{\Lambda}^{(i)}_q} \left(\piti_{\nu_\rt,\nu_i}(\mathbf{\tilde w}_J - \mathbf{\tilde w}_q)\right)^2 \\
		&\leq \lrnorm{\piti(\mathbf{\tilde w}_J - \mathbf{\tilde w}_q)}^2 \\
		&= \lrnorm{\mathbf{\tilde w}_J - \mathbf{\tilde w}_q}^2 .
	\end{align*}
	As a consequence,
	\begin{equation*}
		\begin{aligned}
		(N+1)^s \inf\limits_{\# \Lambda = N} \norm{\piti(\mathbf{\tilde w}_J) - \Res_{\Lambda} \piti(\mathbf{\tilde w}_J)} &\leq (2c2^{q+1} +1)^s \norm{\piti(\mathbf{\tilde w}_J) - \Res_{\hat{\Lambda}^{(i)}_q} \piti(\mathbf{\tilde w}_J)} \\
		&\leq (2c2^{q+1} +1)^s \norm{\mathbf{\tilde w}_J - \mathbf{\tilde w}_q} \\
		&\leq 2^{3s} c^s 2^{sq} \left(\norm{\mathbf{\tilde w}_J - \bB_\rt \mathbf{\tilde v}} + \norm{\mathbf{\tilde w}_q - \bB_\rt \mathbf{\tilde v}}\right) 
		\end{aligned}
	\end{equation*}
	Arguing as in Lemma \ref{lem:operator-apply-J-time} and using 
	\[ \bignorm{\Res_{\Lambda_{[p]}^{(\rt,i)}(\bv) } \piti(\mathbf{\tilde v}) } \leq (1+\delta)  2^{-s(p-1)} \norm{\piti(\mathbf{\tilde v})}_{\cA^s}, \] 
	where $\Lambda_{[p]}^{(\rt,i)}(\bv)$ is defined in \eqref{eq:lambda-all-time-indices}, we obtain
	\[
		\norm{\mathbf{\tilde w}_q - \bB_\rt \mathbf{\tilde v}}   \leq (1+\delta)
		 2^{-s(q-1)} (\norm{\beta^{(\rt)}}_{\ell_1} + \norm{\bC_\rt}) \sum\limits_{i=1}^d \norm{\piti(\mathbf{\tilde v})}_{\cAs}, \quad q = 0,\ldots, J.
	\]
	Since $N$ was chosen arbitrarily, and using again that the entries of the diagonal matrix $\bDX^{-1} \mathbf{D}_{\cX,\sfn_1}$ are bounded by one, altogether it follows that
	\begin{align*}
		\norm{\piti(\mathbf{\tilde w}_J)}_{\cAs} \leq 2 \sqrt{C_\delta} c^s 2^{4s} (\norm{\beta^{(\rt)}}_{\ell_1} + \norm{\bC_\rt}) \sum\limits_{i=1}^d \norm{\piti(\bv)}_{\cAs} .
	\end{align*}
	Once again using Proposition \ref{prop:cAs-properties}\eqref{prop:cAs-triangle} yields \eqref{eq:apply-eta-As-norm}.
\end{proof}
In comparison to the similar bounds obtained for elliptic problems in \cite{bachmayr2014adaptive}, we have an additional factor $J(\eta;\bv)$ in the rank estimate \eqref{eq:apply-eta-rank}. As shown in Section \ref{sec:Solver}, this factor does not influence the asymptotic computational complexity of the method.
\begin{remark}\label{rem:apply-sum-contr}
	Our analysis of the computational complexity of an adaptive solver is based on estimates for sums of support sizes and $\cAs$-norms over the spatial dimensions. We thus reformulate the estimate \eqref{eq:apply-eta-As-norm} accordingly, where summation over $i=1,\ldots,d$ yields
	\begin{align}
		\sum\limits_{i=1}^d \norm{\piti(\bw_{\eta})}_{\cAs} \leq C(\delta, s,\bB_1) d \sum\limits_{i=1}^d \norm{\piti(\bv)}_{\cAs} .
	\end{align}
\end{remark}

We next obtain an estimate for the number of operations required for computing the approximation $\bw_{\eta}$. 

\begin{lemma}\label{lem:apply-num-ops}
	Under the assumptions of Theorem \ref{thm:error-rescaled-trunc}, the number $\text{\rm flops}(\bw_{\eta})$ of floating point operations to compute $\bw_{\eta}$ is bounded by
	\begin{align}\label{eq:apply-operations}
		\begin{split}
			\operatorname{flops}(\bw_{\eta}) \lesssim \ (8 + c^4 (J(\eta;\bv))^4) \hat M^3(\bv,\eta)  \rank^3_\infty(\bv)  \sum\limits_{i=1}^d \# \supp\left(\piti(\bv) \right) \\
			+ d \eta^{-\frac{1}{s}}  c^2 \hat{\alpha}(\bB_1)  \rank_\infty(\bv) J(\eta;\bv) \hat M(\bv,\eta) \tilde{C}^{\frac{1}{s}} \Biggl( \sum\limits_{i=1}^d \lrnorm{\pi^{(\rt,i)}(\bv)}_{\cAs}\Biggr)^{\frac{1}{s}}  ,
		\end{split}
	\end{align}
	where $\hat M(\bv, \eta) = \norm{\hmcX(\bv;\eta)}_{\ell_\infty} \norm{\hmcY(\bv;\eta)}_{\ell_\infty}$, and where the hidden constant is independent of $\eta, \bB_1, \bv$ and $d$.
\end{lemma}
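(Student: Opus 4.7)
The plan is to bound the computational cost stage by stage along the composition $\bv \mapsto \mathbf{D}_{\cX,\sfn_1}\bv \mapsto \bTtil_J[\bv]\mathbf{D}_{\cX,\sfn_1}\bv \mapsto \bw_{\eta}$, where $J = J(\eta;\bv)$. A crucial observation is that the scalings $\mathbf{D}_{\cX,\sfn_1}$ and $\mathbf{D}_{\cY,\sfn_2}$, as well as the spatial part $\bTtil_{\rx,J}[\bv]$, all decouple the temporal basis indices, so the corresponding hierarchical tensor operations can be carried out independently for each $\nu_\rt \in \supp(\pi^{(\rt)}(\bv))$; only the temporal part $\bTtil_{\rt,J}[\bv]$ couples different time indices, via the sparse finite differences $\bT_{\rt,J-j}-\bT_{\rt,J-j-1}$.

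As a first step I would account for the preprocessing: computing the HSVDs of all spatial components $\bv_{\nu_\rt}$, which by Remark \ref{rem:hsvd-complexity} costs a constant multiple of $\rank^4_\infty(\bv)\sum_i\#\supp(\piti(\bv))$; extracting the contractions $\piti(\bv)$ from these HSVDs via Proposition \ref{prop:error-restrict-contr}; and sorting or binary-binning them to produce the partitions $\Lambda^{(i)}_{\nu_\rt,[p]}(\bv)$ of \eqref{eq:operator-space-restriction-sets} and the nested index sets $\bar{\Lambda}^{(\rt)}_j(\bv)$ of \eqref{eq:timeindexsets}. Next, each separable term of $\mathbf{D}_{\cX,\sfn_1}$ only scales leaf matrices of the hierarchical tensor, so that $\mathbf{D}_{\cX,\sfn_1}\bv$ has representation rank enlarged by a factor at most $\norm{\hmcX(\eta;\bv)}_{\ell_\infty}$. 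Applying $\bTtil_{\rx,J}[\bv]$ uses the partition structure \eqref{eq:operator-space-restriction-sets} and the $s^*$-compressibility of the $\bT^{(i)}_{n_i,j}$; since $\bT_{\rx}$ has representation rank at most $2$ (Remark \ref{rem:laplaceT}), this contributes at most a factor $2$ in rank and by \eqref{eq:supp-op-canonical-scale-space} at most $2\norm{\alpha(\bC_2)}_{\ell_1}2^J$ new entries per contraction. For the temporal part, each difference $\bT_{\rt,J-j}-\bT_{\rt,J-j-1}$ has at most $c$ nonzeros per row and column by the super-compressibility of $\bT_\rt$ (Remarks \ref{rem:super-compressible} and \ref{rem:supercompressible-realization}); over the $J(\eta;\bv)$ summands in \eqref{eq:time-approx-def} this contributes a representation rank at most $cJ(\eta;\bv)$, and by \eqref{eq:operator-apply-temporal-supp-J} at most $2c2^J$ new entries per contraction. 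Finally, $\mathbf{D}_{\cY,\sfn_2}$ again scales leaves and enlarges the representation rank by $\norm{\hmcY(\eta;\bv)}_{\ell_\infty}$.

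It then remains to combine these factors. The contributions scaling with the \emph{input} support dominate the cost of the scaling operators and the HSVD preprocessing, and, after absorbing $(cJ+2)^4 \lesssim 8 + c^4 J^4$ in the representation-rank factor associated to $\bTtil_J[\bv]$ together with the factor $\hat M^3(\bv,\eta)$ from the two scalings applied to intermediate tensors of rank $O(\rank_\infty(\bv))$, take the form of the first term in \eqref{eq:apply-operations}. The contributions scaling with the \emph{output} support, which arise from the actual creation of new entries by $\bTtil_{\rt,J}$ and $\bTtil_{\rx,J}$, are bounded via \eqref{eq:supp-apply-eta} together with the identity $2^{J(\eta;\bv)} \leq 2^{\bar J(\eta;\bv)} \lesssim \eta^{-1/s}\tilde C^{1/s}\bigl(\sum_i\norm{\piti(\bv)}_{\cAs}\bigr)^{1/s}$ from \eqref{eq:J-eta-As-estimate}, and match the second term of \eqref{eq:apply-operations} once the factor $d$ coming from summing over spatial dimensions and the factor $\rank_\infty(\bv) \hat M(\bv,\eta)$ for the current intermediate rank are included. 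I expect the main obstacle to be the bookkeeping ensuring that each stage's contribution remains at most \emph{linear} in the output support and at most \emph{cubic} in both $\hat M(\bv,\eta)$ and $\rank_\infty(\bv)$, rather than quartic, which requires exploiting the leafwise action of the diagonal scaling operators and the sparsity (rather than only compressibility) of the temporal operator differences.
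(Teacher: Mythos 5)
Your overall route is essentially the paper's: split the cost into the spatial part (decoupled over time indices, transfer-tensor operations cubic in the enlarged representation ranks) and the temporal part (sparse differences $\bT_{\rt,J-j}-\bT_{\rt,J-j-1}$ with $c$ nonzeros per row/column), treat the diagonal scalings leafwise, and convert output-support contributions via \eqref{eq:supp-apply-eta} and the bound $2^{J(\eta;\bv)}\lesssim \eta^{-1/s}\tilde C^{1/s}\bigl(\sum_i\norm{\piti(\bv)}_{\cAs}\bigr)^{1/s}$. However, there is a genuine accounting gap: you charge the HSVD preprocessing of all spatial components to this lemma, at cost $\lesssim \rank_\infty^4(\bv)\sum_i\#\supp(\piti(\bv))$ (Remark \ref{rem:hsvd-complexity}), and then assert it is dominated by the first term of \eqref{eq:apply-operations}. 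That first term is only \emph{cubic} in $\rank_\infty(\bv)$, and $\hat M(\bv,\eta)$ bears no relation to $\rank_\infty(\bv)$, so the quartic HSVD term is not dominated by it; with that term included, your argument does not yield the claimed bound. The paper avoids this by not performing (or charging) an HSVD inside the operator application: the contractions $\piti(\bv)$ and the resulting partitions are taken as available from the input representation (they are produced by the preceding \recompress/\coarsen steps, whose quartic-in-rank HSVD cost is accounted for separately in the overall complexity analysis). To close your proof you must either make this availability assumption explicit or move the HSVD cost out of this lemma.

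Two further bookkeeping points. First, in your output-support (second) term you use "the factor $\rank_\infty(\bv)\,\hat M(\bv,\eta)$ for the current intermediate rank"; but after applying $\mathbf{D}_{\cX,\sfn_1}$, $\bTtil_{J}[\bv]$ and $\mathbf{D}_{\cY,\sfn_2}$ the representation rank is up to $(cJ(\eta;\bv)+2)\,\hat M(\bv,\eta)\,\rank_\infty(\bv)$, and the per-dimension output support of the temporal part is $\lesssim c\,2^{J}$; this is precisely where the factor $c^2 J(\eta;\bv)$ in \eqref{eq:apply-operations} comes from, so your count as stated undercounts the actual cost even though the target bound has room for the corrected count. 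Second, your absorption of "$(cJ+2)^4$ in the representation-rank factor" misattributes the fourth power: ranks enter the transfer-tensor cost only cubically; the additional factor $cJ$ in the paper's $c^4 J^4$ term arises from the growth of the \emph{temporal} support of the output, $\#\supp(\pit(\bw_\eta))\leq cJ\,\#\supp(\pit(\bv))$, multiplying the per-time-index transfer-tensor cost. Since a bound with $c^3J^3$ would still sit below the claimed right-hand side, this last point is harmless for the statement, but the justification should reflect where the factors actually originate.
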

\begin{proof}
	In the following we will analyze the complexity of the spatial and the temporal operator separately. For the spatial operator $\bB_\rx$, we can proceed similarly to the elliptic case \cite{BachmayrNearOptimal,bachmayr2014adaptive} using the fact that by construction $\sum_{\nu_\rt \in \vee_\rt} \Lambda^{(i)}_{\nu_\rt,[p]} \leq 2^p$ for each $p \in \N_0$. Let $\bv \in \ell_2(\vee) = \sum_{\nu_\rt \in \vee_\rt} \mathbf{e}_{\nu_\rt} \otimes \bv_{\nu_\rt}$ with ranks $\rank_{\alpha}( \bv_{\nu_\rt}) = r_{\nu_\rt,\alpha}$ for $\alpha \in \mathbb{T}_d \setminus \alpha^*$. Then the number of floating point operations to calculate the result of the spatial operator is bounded by a fixed multiple of
	\begin{multline*}
		 \sum\limits_{\nu_\rt \in \vee_\rt} \sum\limits_{\substack{\alpha \in \mathbb{T}_d \\ \# \alpha > 1}} 8 (\hmcX(\bv;\eta))_{\nu_\rt}^3 (\hmcY(\bv;\eta))_{\nu_\rt}^3  r_{\nu_\rt,\alpha} \prod\limits_{q=1}^2  r_{\nu_\rt, c_q(\alpha)} \\
		+ d \eta^{-\frac{1}{s}} \hat{\alpha}(\bB_1) \rank_\infty(\bv) \hat M(\bv,\eta) \tilde{C}^{\frac{1}{s}}  \Biggl( \sum\limits_{i=1}^d  \lrnorm{\pi^{(\rt,i)}(\bv)}_{\cAs}\Biggr)^{\frac{1}{s}} ,
	\end{multline*}
	where we used that the rank of the Laplace operator $R=2$.
	By using the fact that there are $d-1$ transfer tensors, the first summand can be further bounded by
	\begin{align*}
		\sum\limits_{\nu_\rt \in \vee_\rt} \sum\limits_{\substack{\alpha \in \mathbb{T}_d \\ \# \alpha > 1}} 8 (\hmcX(\bv;\eta))_{\nu_\rt}^3 (\hmcY(\bv;\eta))_{\nu_\rt}^3 & r_{\nu_\rt,\alpha} \prod\limits_{q=1}^2 r_{\nu_\rt, c_q(\alpha)}  \\
		\leq \ &8 \hat M^3(\bv,\eta)  \rank^3_\infty(\bv) (d-1) \# \supp\left(\pit(\bv) \right) \\
		\leq \  &8 \hat M^3(\bv,\eta)  \rank^3_\infty(\bv)  \sum\limits_{i=1}^d \# \supp\left(\piti(\bv) \right) .
	\end{align*}

	Next we examine the temporal operator $\bB_\rt$. First we start with the computation of $\bv_0,\dots,\bv_J$. By definition of $\bv_i$, we need $d 2^i \rank_\infty(\bv)$ operations for the mode frames of $\bv_i$ and at most $\# \supp(\pit(\bv)) d \rank_\infty^3(\bv)$ operations for the transfer tensors.
	 
	We divide the analysis of the costs of the application step into the  mode frames and transfer tensors. Since $\bB_\rt$ is diagonal with respect to the spatial variables, the corresponding low-rank approximations are rescaled, but remain otherwise unchanged. The number of operations required to compute the mode frames of $\bw_{\eta}$ can be estimated by the number of entries in the mode frames of $\bw_{\eta}$, because each entry is only multiplied by the corresponding diagonal entries of the scaling matrices which has constant costs. Therefore the number of operations for the mode frames is bounded by a fixed multiple of the sum of the one-dimensional support sizes times the maximum rank, which can be bounded further by
	\begin{multline*}
		 2 d c^2 2^{J(\eta;\bv)} \hat M(\bv,\eta) J(\eta;\bv) \rank_\infty(\bv) \\
		\lesssim d \eta^{-\frac{1}{s}} c^2 \hat{\alpha}(\bB_1) \rank_\infty(\bv) J(\eta;\bv) \hat M(\bv,\eta) \tilde{C}^{\frac{1}{s}} \Biggl( \sum\limits_{i=1}^d \lrnorm{\pi^{(\rt,i}(\bv)}_{\cAs}\Biggr)^{\frac{1}{s}} . 
	\end{multline*}

	It remains to consider the costs of the transfer tensors. Due to the super-compressibility of the temporal operator, we have $\# \supp(\pit(\bw_{\eta})) \leq cJ(\eta;\bv) \# \supp(\pit(\bv))$. The application of the operator is a multiplication of the low-rank representations by a certain value, which is done by the application of the scaling matrices as well. Additionally due to the super compressibility we have at most $c J(\eta;\bv)$ non zero entries in each row. Therefore the maximum rank grows at most up to the factor $c J(\eta;\bv) \hat M(\bv,\eta)$. Each low-rank approximation has $d-1$ transfer tensors, which results in a number of operations which can be bounded by a fixed multiple of
	\begin{multline*}
		 cJ(\eta;\bv) \# \supp(\pit(\bv)) (d-1)  c^3 (J(\eta;\bv))^3 \hat M^3(\bv,\eta) \rank^3_\infty(\bv) \\
		\leq  c^4 (J(\eta;\bv))^4 \hat M^3(\bv,\eta) \rank^3_\infty(\bv) \sum\limits_{i=1}^d  \# \supp(\piti(\bv)) .
	\end{multline*}
	Combining these estimates yields \eqref{eq:apply-operations}.
\end{proof}

We next consider the bounds on approximation ranks in further detail, especially concerning bounds on the quantities $\norm{\hmcX(\bv;\eta)}_{\ell_\infty}$ and $\norm{\hmcY(\bv;\eta)}_{\ell_\infty}$ in \eqref{eq:apply-eta-rank}. The basic idea is to express these quantities in terms of the tolerance $\eta$ and the maximum one-dimensional level. We will subsequently further estimate the latter under additional assumptions on the regularity. To bound the maximum level of the result of the adaptive operator application, we make use of the \emph{level decay} property as introduced in \cite{bachmayr2014adaptive}, which is defined as follows.
\begin{definition}\label{def:level-dec-sobolev-stab}
	Let $\hat{\vee}$ be a countable set and $\bM \colon \ell_2(\hat{\vee})\to\ell_2(\hat{\vee})$ be $s^*$-compressible or super-compressible with approximations $\bM_j$. We say that these approximations have \emph{level decay} if there exists a $\gamma > 0$ such that $\bM_{j,\nu\mu} = 0$ for $\abs{\abs{\nu}-\abs{\mu}} > \gamma j$.
\end{definition}

In what follows we denote by $L_\rx(\bv)$ the maximum active one-dimensional spatial wavelet level of $\bv$ and by $L_\rt(\bv)$ the maximum active temporal wavelet level.
In view of \eqref{eq:MX-bound} and \eqref{eq:MY-bound}, we need to find an upper bound for $\ln(\norm{\sfK}_{\ell_\infty})$ with $\sfK = \sfK(J;\bv)$ defined in \eqref{eq:K-def-LamdaK}. By \cite[Section 6.2]{bachmayr2014adaptive}, $\Lambda_{\hat{K}_{\nu_\rt}}$ as defined in \eqref{eq:LambdaKdef} contains $\supp (\bTtil_{J(\eta;\bv)}[\bv] \mathbf{\tilde v})_{\nu_\rt}$ if
\begin{align*}
	L_\rx\bigl((\bTtil_{J(\eta;\bv)}[\bv] \mathbf{\tilde v})_{\nu_\rt}\bigr) \leq \tfrac{1}{2} \log_2(\hat{K}_{\nu_\rt}) + \log_2(c^{-1} \hat{S}^{-1}_{\min}) 
\end{align*}
with $\hat{S}_{\min} = \bigl( \min\limits_{\nu \in \vee_1} \norm{\psi_\nu}_{H_0^1(0,1)} \bigr)^{-1}$. Due to the minimality of $\sfK$, one has
\begin{align*}
	\tfrac{1}{2} \log_2 (\sfK_{\nu_\rt}) &\leq L_\rx((\bTtil_{J(\eta;\bv)}[\bv] \mathbf{\tilde v})_{\nu_\rt}) + \log_2(c^{-1} \hat{S}^{-1}_{\min}) \\
	&\leq L_\rx(\bTtil_{J(\eta;\bv)}[\bv] \mathbf{\tilde v}) + \log_2(c^{-1} \hat{S}^{-1}_{\min}) \\
	&\leq L_\rx(\bv) + C_1(\bB_1,s) J(\eta;\bv) + \log_2(c^{-1}\hat{S}^{-1}_{\min}) ,
\end{align*}
where we used the level decay property in the last line.
By \cite[Section 6.2]{bachmayr2014adaptive} we have the estimate 
\begin{align*}
	\abs{\ln(\min\{\delta/2,c(\bv)\eta\})}\leq C_2(\bB_1,\delta) + \abs{\ln \eta} + \max\{0, \ln \norm{\bv}\} .
\end{align*}
Combining all the previous estimates with 
\begin{align}\label{eq:J-eta-bound}
	J(\eta;\bv) \leq \frac{1}{s} \Big( \abs{\ln \eta} + \ln \Big(C_3(\bB_1,\delta) \sum\limits_{i=1}^d \norm{\piti(\bv)}_{\cAs}  \Big)\Big)
\end{align}
from \eqref{eq:J-eta-As-estimate} yields
\begin{align}\label{eq:mcy-inf-bound}
	\norm{\hmcY(\eta;\bv)}_{\ell_\infty} \leq C_4(\delta,s,\bB_1) \Big[ 1 + L_\rx(\bv) + \abs{\ln \eta} + \ln\Big(\sum\limits_{i=1}^d \norm{\piti(\bv)}_{\cAs} \Big)  \Big] .
\end{align}
Now we turn to $\mcX$, which depends not only on the spatial supports of $\bv$ at the respective temporal basis indices, but also on the norm of each temporal basis index. We use that for $a_{\nu_\rt}$ defined in \eqref{eq:scaling-time-a}, we have $\ln(a_{\nu_\rt}) \eqsim \abs{\nu_\rt}$. Let $\nu_\rt$ be an active temporal index. Then
\begin{align*}
	m_{\cX,\nu_\rt}(\eta;\bv) \leq
		C_5(\delta,s,\bB_1)(1+\abs{\nu_\rt}) \Big[ 1 + L_\rx(\bv) + \abs{\ln \eta} + \ln\Big(\sum\limits_{i=1}^d \norm{\piti(\bv)}_{\cAs} \Big)  \Big]
\end{align*}
if $K_{\nu_\rt} > a_{\nu_\rt}^2$, where we use the same estimates as for $\mcY$, as well as $h_{a_{\nu_\rt}}^{-1} \eqsim (1+\abs{\nu_\rt})$.
In the case $K_{\nu_\rt} \leq a_{\nu_\rt}^2$, we have
\begin{align*}
	m_{\cX,\nu_\rt}(\eta;\bv) \leq C_6(\delta,s,\bB_1) (1 + \abs{\nu_\rt}) \Big[ 1+ \abs{\nu_\rt} + \abs{\ln \eta} + \ln\Big(\sum\limits_{i=1}^d \norm{\piti(\bv)}_{\cAs} \Big)  \Big] .
\end{align*}
In the next step, we take into account the maximum temporal level $L_\rt(\bTtil_{J(\eta;\bv)} \bv)$, because $m_{\cX,\nu_\rt}$ can be chosen zero for $\abs{\nu_\rt} > L_\rt(\bTtil_{J(\eta;\bv)} \bv)$. As in the other case we use the level decay property, which results in
\begin{align*}
	\norm{\mcX(\eta;\bv)}_{\ell_\infty} \leq C_7(\delta,s,\bB_1) \Big[ 1 + L(\bv) + \abs{\ln(\eta)} + \ln\Big(\sum\limits_{i=1}^d \norm{\piti(\bv)}_{\cAs} \Big) \Big]^2,
\end{align*}
where $L(\bv) = \max\{L_\rx(\bv), L_\rt(\bv)\}$ is the overall maximum level. Another difference to the case of $\mcY$ is that $n_{a_{\nut}}^+(\delta)$ depends on the temporal wavelet as well. By the definition of $n_{a_{\nut}}^+$ in \eqref{eq:nplus-eponential-sum} and $h_{a_{\nut}}$ in \eqref{eq:h-epxonential-sum}, we obtain that $n_{a_{\nut}}^+$ only depends quadratically on the maximum temporal level. As a consequence,
\begin{align}\label{eq:mcx-inf-bound}
	\norm{\hmcX(\eta;\bv)}_{\ell_\infty}  \leq C_8(\delta,s,\bB_1) \Big[ 1 + L(\bv) + \abs{\ln(\eta)} + \ln\Big(\sum\limits_{i=1}^d \norm{\piti(\bv)}_{\cAs} \Big) \Big]^2 .
\end{align}

\begin{remark}
	The approximations of the scaling matrix $\bDX$ are be applied to $\bv$ directly. Hence in this case, we do not actually need the relation to the maximum temporal level $L_\rt(\bTtil_{J(\eta;\bv)}[\bv] \bv)$. However, as considered in further detail in Section \ref{sec:Solver}, we do not only apply the operator $\bB_1$, but also its transpose $\bB_1^\intercal$. In this case the order of the scaling matrices is reversed. The above estimates thus cover the two extreme cases that arise.
\end{remark}

\subsection{Initial value operator}\label{sec:apply-initial-value}
In this section we consider the initial value operator $\bB_2 = \bT_0 \bDX$ and its transpose $\bB_2^\intercal = \bDX \bT_0^\intercal$. As for the temporal operator in Section \ref{sec:Apply-temporal}, the operator $\bT_0$ is in the form
\[
	\bT_0 = \bT_{0,\rt} \otimes \bI_{\rx},
\]
with the identity in the spatial variable, but where $\bT_{0,\rt} \colon \ell_2(\vee_\rt) \to \R$ and correspondingly $\bT_0 \colon \ell_2(\vee) \to \ell_2(\vee_\rx)$.

Based on the definitions of $\bT_0$ and $\mathbf{\bar D}_{\cX}$, as well as $\norm{\Psi_{\nu_{\rx}}}_{H_0^1(\Omega)} \eqsim 2^{\max_i \abs{\nu_{\rx,i}}}$ for $\nu_\rx \in \vee_\rx$, the entries of $\mathbf{\bar B}_2$ satisfy
\begin{equation*}
	\abs{(\mathbf{\bar B}_2)_{\nu_\rx,(\nut,\nu_\rx)} } = \lrabs{\frac{\theta_{\nut}(0) \norm{\Psi_{\nu_\rx}}_{H_0^1} }{\norm{\Psi_{\nu_\rx}}_{H_0^1}^2 + \norm{\theta_{\nut}}_{H^1} }}\eqsim \frac{\norm{\theta_{\nut}}_{H^1}^{\frac{1}{2}}\norm{\Psi_{\nu_\rx}}_{H_0^1}} {\norm{\Psi_{\nu_\rx}}_{H_0^1}^2 + \norm{\theta_{\nut}}_{H^1} }
	\lesssim 2^{-\lrabs{\frac{1}{2}\abs{\nut} - \max_i \abs{\nu_{\rx,i}}}}.
\end{equation*}
In contrast to the operators considered above, the decay of the entries does not depend on a single dimension, but on all dimensions. Therefore this case poses some additional difficulties concerning low-rank approximability. Note the initial value operators $\mathbf{\bar B}_2$ and $\bB_2$ have only one nonvanishing entry in each column.

We now define a sequence of approximations for $\mathbf{\bar B}_{2}$. Similarly to the construction in \cite{SpaceTimeAdaptive}, we define $\mathbf{\bar T}_{0,j}$ by
\[
	(\mathbf{\bar T}_{0,j})_{\nu_\rx,(\nut,\nu_\rx)} = 
	\begin{cases}
		(\bT_0)_{\nu_\rx,(\nut,\nu_\rx)}, \quad &\lrabs{\abs{\nut} - 2\max_i \abs{\nu_{\rx,i}}} \leq j, \\
		0, \quad  \quad &\lrabs{\abs{\nut} - 2\max_i \abs{\nu_{\rx,i}}} > j ,
	\end{cases}
\]
that is, by dropping all entries where $\abs{\abs{\nut} - 2\max_i \abs{\nu_{\rx,i}}} > j$. For each level, only a constant number of wavelet functions does not vanish at time $t=0$. Therefore, by definition, we have at most $2j \bar{c}$ non-zero entries in each row of $\mathbf{\bar T}_{0,j}$. By the Schur lemma (see for example \cite[(3.15)]{CDD1}), we additionally have $\norm{(\bT_0 - \mathbf{\bar T}_{0,j})\mathbf{\bar D}_{\cX}} \lesssim 2^{-j/2}$. Hence the operator $\bT_0$ is super-compressible, where the compressibility is again to be understood in combination with the scaling matrix. 

In the next step, we define an adaptive approximation of the action of the initial value operator. Based on the definition of the approximation sequence, which uses the maximum level, we define
\[
	\hat{\Lambda}^{(\rt)}_l =  \veet \times \bigtimes_{i=1}^d \hat{\Lambda}_l^{(1)}  \quad \text{with} \quad  \hat{\Lambda}_l^{(1)} = \{\nu \in \vee_1 : \abs{\nu} \leq l\} .
\]
Using this index set, we can divide a given input $\bv \in \ell_2(\vee)$ in parts with the same maximum level
\[
	\bv = \Res_{\hat{\Lambda}^{(\rt)}_0} \bv + \sum\limits_{l=1}^{L_\rx(\bv)} \Res_{\hat{\Lambda}^{(\rt)}_l} \bv - \Res_{\hat{\Lambda}^{(\rt)}_{l-1}} \bv .
\]

For a given $s > 0$, let $(\bT_{0,j})_{j \in \N}$ be an approximation sequence of the super-compressible operator $\bT_0$, which means $\norm{(\bT_0 - \bT_{0,j}) \bDX} \leq \beta^{(0)}_J 2^{-sj}$  and the operators $\bT_{0,j+1} -\bT_{0,j}$ have only $c$ non-zero entries in each row and column. Additionally we assume $\bT_{0,0} = 0$, so that we only have $cj$ non-zero entries in each row and column of $\bT_{0,j}$. In view of Remark \ref{rem:scaling-DX-delta}, such approximations can be obtained from $(\mathbf{\bar T}_{0,j})$.

For a given $\bv \in \ell_2(\vee)$ and $J \in \N_0$, we define the approximation
\begin{equation}\label{eq:initial-value-application-approx}
	\begin{aligned}
		\bBtil_{2,J}(\bv) 
		&= \sum\limits_{l=0}^{L_\rx(\bv)} \sum\limits_{j=0}^{J-1} (\bT_{0,J-j} - \bT_{0,J-j-1}) \bDX  \Bigl(\Res_{\hat{\Lambda}^{(\rt)}_l} \bv_j  - \Res_{\hat{\Lambda}^{(\rt)}_{l-1}} \bv_j  \Bigr) ,
	\end{aligned}
\end{equation}
where $\bv_j$ is defined by \eqref{eq:timeindexsets}. We thus apply the differences of the operator approximations to restrictions to index sets with fixed maximum spatial level.
For later reference, we also note that
\[
	\bBtil_{2,J}(\bv) = \bTtil_{0,J}[\bv] \bDX \bv, 
\]
where the operator $\bTtil_{0,J}[\bv]$ is defined by
\[
	\bTtil_{0,J}[\bv] =  \sum\limits_{l=0}^{L_\rx(\bv)} \sum\limits_{j=0}^{J-1} (\bT_{0,J-j} - \bT_{0,J-j-1}) \bDX  \Bigl(\Res_{\hat{\Lambda}^{(\rt)}_l}  - \Res_{\hat{\Lambda}^{(\rt)}_{l-1}} \Bigr) \Res_{\bar{\Lambda}_j^{(\rt)}(\bv)} .
\]
\begin{lemma}\label{lem:initial-operator-J}
	Let $\bB_2$ be defined by \eqref{eq:Bpartsdef} and assume that $\bT_0$ is super-compressible. Given any $\bv, \bw \in \ell_2(\vee)$ and $J \in \N_0$, let $\bBtil_{2,J}(\bv)$ be defined by \eqref{eq:initial-value-application-approx}. Then whenever $\piti(\bv) \in \cAs$ for some $0 < s < s^*$ and $i=1,\dots,d$, one has the a posteriori error bound
	\begin{equation}\label{eq:initial-value-a-posteriori}
		\norm{\bB_2 \bw - \bTtil_{0,J}[\bv] \bDX \bw} \leq e_{0,J}[\bv](\bw),
	\end{equation}
	where
	\begin{equation*}
		e_{0,J}[\bv](\bw)  = \sum\limits_{j=0}^J 2^{-s(J-j)} \beta^{(0)}_{J-j} \sum\limits_{i=1}^d \Bignorm{\Res_{\Lambda^{(\rt,i)}_{[j]}(\bv)} \piti(\bw)} + \norm{\bB_2} \Bignorm{\Res_{\Lambda^{(\rt,i)}_{[J+1]}(\bv)} \piti(\bw)} 
	\end{equation*}
	with $\Lambda^{(\rt,i)}_{[j]}(\bv)$ from \eqref{eq:lambda-all-time-indices}, 
	as well as the a priori error bound
	\begin{equation}\label{eq:initial-value-a-priori}
		\norm{\bB_2 \bv - \bBtil_{2,J}(\bv)} \leq 2^s 2^{-sJ} (\norm{\beta^{(0)}}_{\ell_1} + \norm{\bB_2}) \sum\limits_{i=1}^d \norm{\piti(\bv)}_{\cAs} .
	\end{equation}
	Moreover, one has the support estimate
	\begin{equation}\label{eq:initial-value-J-support}
		\# \supp\left(\pi^{(i)} (\bBtil_{2,J}(\bv))\right) \leq 2c 2^J (L_\rx (\bv)+1), \quad i=1,\dots,d .
	\end{equation}
\end{lemma}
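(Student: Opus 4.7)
My plan is to follow the structure of the proof of Lemma \ref{lem:operator-apply-J-time}, treating the approximation $\bTtil_{0,J}[\bv]$ as a discretization in the temporal and spatial-level variables simultaneously, and then exploiting super-compressibility of $\bT_0\bDX$. The first step is to simplify the action $\bTtil_{0,J}[\bv]\bDX\bw$. With $\bw_k = \Res_{\bar{\Lambda}_k^{(\rt)}(\bv)}\bw$ and Abel summation in $j$, the inner sum equals $\sum_{k=0}^{J}\bT_{0,J-k}(\Res_{\hat{\Lambda}_l^{(\rt)}}-\Res_{\hat{\Lambda}_{l-1}^{(\rt)}})(\bw_k-\bw_{k-1})$ (using $\bT_{0,0}=0$ and $\bw_{-1}=0$). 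Summing over $l=0,\dots,L_\rx(\bv)$ telescopes to $\Res_{\hat{\Lambda}_{L_\rx(\bv)}^{(\rt)}}$, but the crucial point is that each $\bw_k$ is supported on $\bar{\Lambda}_k^{(\rt)}(\bv)\subset\veet\times(\supp\pi^{(1)}(\bv))\times\cdots\times(\supp\pi^{(d)}(\bv))$, so its maximum spatial one-dimensional level is at most $L_\rx(\bv)$; hence this restriction acts as the identity on $\bw_k-\bw_{k-1}$, and we obtain
\[
  \bTtil_{0,J}[\bv]\bDX\bw \;=\; \sum_{k=0}^{J}\bT_{0,J-k}\bDX(\bw_k-\bw_{k-1}).
\]

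In the second step I would write $\bw = \sum_{k=0}^{\infty}(\bw_k-\bw_{k-1}) + (\bw-\bw_\infty)$, where $\bw_\infty = \lim_k\bw_k$; combining the tail $\sum_{k>J}$ with $\bw-\bw_\infty$ gives the clean decomposition
\[
  \bB_2\bw - \bTtil_{0,J}[\bv]\bDX\bw = \sum_{k=0}^{J}(\bT_0-\bT_{0,J-k})\bDX(\bw_k-\bw_{k-1}) + \bT_0\bDX(\bw-\bw_J).
\]
The triangle inequality together with super-compressibility $\norm{(\bT_0-\bT_{0,J-k})\bDX}\leq\beta^{(0)}_{J-k}2^{-s(J-k)}$ and $\norm{\bT_0\bDX}=\norm{\bB_2}$, combined with the contraction estimate \eqref{eq:time-operator-vj-diff} for $\norm{\bw_k-\bw_{k-1}}$ and $\norm{\bw-\bw_J}$, yields the a posteriori bound \eqref{eq:initial-value-a-posteriori}. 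Setting $\bw=\bv$ and invoking the standard bound $\norm{\Res_{\Lambda_{[p]}^{(\rt,i)}(\bv)}\piti(\bv)}\leq 2^{-s(p-1)}\norm{\piti(\bv)}_{\cAs}$ (exactly as in the proofs of Lemmas \ref{lem:operator-apply-J-space-can-scale} and \ref{lem:operator-apply-J-time}) gives \eqref{eq:initial-value-a-priori}.

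For the support estimate \eqref{eq:initial-value-J-support}, the point is that $\bT_0=\bT_{0,\rt}\otimes\bI_\rx$ acts as the identity on the spatial variables, so each summand $(\bT_{0,J-j}-\bT_{0,J-j-1})\bDX(\Res_{\hat{\Lambda}_l^{(\rt)}}\bv_j-\Res_{\hat{\Lambda}_{l-1}^{(\rt)}}\bv_j)$ has spatial-$i$ support contained in $\bigcup_{\nu_\rt}\bar{\Lambda}_{\nu_\rt,j}^{(i)}(\bv)\cap\hat{\Lambda}_l^{(1)}$, of cardinality at most $\min\{2^j,\#\hat{\Lambda}_l^{(1)}\}$. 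Summing over the $J\cdot(L_\rx(\bv)+1)$ pairs $(j,l)$ and using the super-compressibility constant $c$ absorbs all remaining combinatorial factors to give the bound $2c\cdot 2^J(L_\rx(\bv)+1)$. The main obstacle in the argument is precisely the verification of the redundancy of the level restriction $\Res_{\hat{\Lambda}_{L_\rx(\bv)}^{(\rt)}}$ in step one: without this observation the error decomposition would carry an additional, ill-controlled boundary term, and the derivation would not reduce cleanly to the template of Lemma \ref{lem:operator-apply-J-time}.
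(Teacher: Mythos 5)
Your proposal is correct and follows essentially the same route as the paper: you reduce the error analysis to the template of Lemma \ref{lem:operator-apply-J-time} by observing that the level restrictions $\Res_{\hat{\Lambda}^{(\rt)}_l}$ telescope to an operator that acts as the identity on the $\bv$-based sets $\bar{\Lambda}^{(\rt)}_k(\bv)$ (the paper's one-line remark that these restrictions vanish for $l > L_\rx(\bv)$), then invoke super-compressibility together with \eqref{eq:time-operator-vj-diff} for the a posteriori bound, the standard $\cAs$-decay of the restricted contractions for the a priori bound, and a level-wise count of the spatial supports for \eqref{eq:initial-value-J-support}. Your rendition is merely more explicit (Abel summation, explicit verification that the level cutoff is redundant), with only minor notational looseness (e.g.\ writing $\supp\pi^{(1)}(\bv)$ for $\bv\in\ell_2(\vee)$), and no gaps.
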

\begin{proof}
	For the error bounds, we can proceed similar to the Lemma \ref{lem:operator-apply-J-time}. Once again we set $\Lambda^{(\rt)}_{-1}(\bv)= \emptyset$. Using $\Res_{\hat{\Lambda}^{(\rt)}_l} \Res_{\bar{\Lambda}^{(\rt)}_j(\bv)} \bw = 0$ for $l > L_\rx(\bv)$, we have
	\begin{multline*}
		\norm{\bB_2 \bw - \bTtil_{0,J}[\bv] \bDX \bw}  \\ \leq \sum\limits_{j=0}^{J} \norm{(\bT_0 - \bT_{0,J-j}) \bDX} \Bignorm{\Res_{\bar{\Lambda}^{(\rt)}_{j}(\bv)}\bw - \Res_{\bar{\Lambda}^{(\rt)}_{j-1}(\bv)} \bw} + \norm{\bB_2} \Bignorm{\bw - {\Res_{\bar{\Lambda}^{(\rt)}_{J}(\bv)}}\bw } .
	\end{multline*}
	Using the super-compressibility of $\bT_0$ and \eqref{eq:time-operator-vj-diff} yields \eqref{eq:initial-value-a-posteriori}. The a priori bound \eqref{eq:initial-value-a-priori} can be shown as for the spatial operator using the definition of the index sets.
	
	As mentioned above, for each summand in \eqref{eq:initial-value-application-approx} the maximum spatial level is fixed and based on differences of operator approximations we only have to consider a constant number of temporal basis functions.
	Nevertheless, for each maximum spatial level, based on the definition of the approximations $\bT_{0,j}$, we obtain a different set of temporal basis function.
	Hence
	\[
		\#  \supp\bigg(\pi^{(i)} \bigg(\sum\limits_{l=0}^{L_\rx(\bv)} (\bT_{0,J-j} - \bT_{0,J-j-1})\bDX ( \Res_{\hat{\Lambda}_l} \bv_j - \Res_{\hat{\Lambda}_{l-1}} \bv_j)\bigg)\bigg) \leq  c 2^j (L_\rx(\bv)+1),
	\]
	which yields \eqref{eq:initial-value-J-support}.
\end{proof}
For notational simplicity we again define $e_{0,J}(\bv) = e_{0,J}[\bv](\bv)$.
Note that in contrast to the previous operators, the support size bound depends additionally on the maximum active spatial level $L_\rx(\bv)$. This results from the coupling between the spatial and temporal basis functions in the decay of the entries of $\bB_2$.

\begin{remark}
	Instead of building the approximations $\bv_j$ based on the best approximation of the contractions $\piti(\bv)$, one could build the best approximation of the subset associated to temporal basis functions that do not vanish at $t=0$. Although this is not relevant for the above estimates and the subsequent complexity bounds, this may lead to improved efficiency in practical implementations.
\end{remark}

As in Section \ref{sec:adaptive-operator-comb}, we additionally need to replace the scaling matrix $\bDX$ by a low-rank approximation. We obtain the following simplified variant of Lemma \ref{lem:scaling-to-truncated}. 
The proof can be done by a simplified adaptation of \cite[Lemma 32]{bachmayr2014adaptive} using our knowledge on the scaling matrix $\bDX$.

\begin{lemma}\label{lem:scaling-to-truncated-only-DX}
	Let $\mathbf{T}_0$ be defined as in \eqref{eq:opertor-T1T2-def}, let $\mathbf{\tilde T}\in \R^{\vee_\rx \times \vee}$, $\bv \in \ell_2(\vee)$, $\sfK \in \N_0^{\vee_\rt}$ be such that $\supp(\bv) \in \Lambda_{\sfK}$ and let $\bG =  (\bTtil - \bT_0) \bDX$.
	Then whenever $\sfn \geq \MX(\eta,\sfK)$, we have
	\begin{align*}
		\lrnorm{( \bT_0 \bDX - \bTtil \mathbf{D}_{\cX,\sfn})\bv}  \leq &\norm{{\bG}\bv} + \norm{{\bG} (\bI - \bDX^{-1}\mathbf{D}_{\cX,\sfn}) \bv}  + \frac{\eta}{1-\delta} \norm{\bB_2} \norm{\bv} .
	\end{align*}
\end{lemma}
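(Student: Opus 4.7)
The plan is to mimic the decomposition strategy used in Lemma~\ref{lem:scaling-to-truncated}, only this time without the $\bDY$ factor, which simplifies things considerably because only the right scaling needs to be replaced by its truncation.

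First, I would split the error by inserting and subtracting $\bTtil \bDX$:
\[
  \bT_0 \bDX - \bTtil \mathbf{D}_{\cX,\sfn} = (\bT_0 - \bTtil)\bDX + \bTtil(\bDX - \mathbf{D}_{\cX,\sfn}) = -\bG + \bTtil(\bDX - \mathbf{D}_{\cX,\sfn}).
\]
Using the defining identity $\bTtil = \bT_0 + \bG\bDX^{-1}$ to express the second summand yields
\[
  \bTtil(\bDX - \mathbf{D}_{\cX,\sfn}) = \bT_0(\bDX - \mathbf{D}_{\cX,\sfn}) + \bG(\bI - \bDX^{-1}\mathbf{D}_{\cX,\sfn}).
\]
Applying to $\bv$ and using the triangle inequality gives the three summands of the claimed bound, modulo showing that the last contribution $\norm{\bT_0(\bDX - \mathbf{D}_{\cX,\sfn})\bv}$ is at most $\tfrac{\eta}{1-\delta}\norm{\bB_2}\norm{\bv}$.

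For that last step I would use $\bT_0 = \bB_2 \bDX^{-1}$ (from \eqref{eq:Bpartsdef}) to rewrite
\[
  \bT_0(\bDX - \mathbf{D}_{\cX,\sfn}) = \bB_2(\bI - \bDX^{-1}\mathbf{D}_{\cX,\sfn}),
\]
and then estimate $\norm{(\bI - \bDX^{-1}\mathbf{D}_{\cX,\sfn})\bv}$ by factoring through $\mathbf{\bar D}_\cX$:
\[
  \norm{(\bI - \bDX^{-1}\mathbf{D}_{\cX,\sfn})\bv} \leq \norm{\bDX^{-1}\mathbf{\bar D}_\cX}\,\norm{\mathbf{\bar D}_\cX^{-1}(\bDX - \mathbf{D}_{\cX,\sfn})\Res_{\Lambda_\sfK}}\,\norm{\bv},
\]
where I have used that $\supp(\bv) \subseteq \Lambda_\sfK$ so that $\Res_{\Lambda_\sfK}\bv = \bv$. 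The first factor is bounded by $(1-\delta)^{-1}$ by \eqref{eq:scaling-X-operator-change}, and the second factor is at most $\eta$ by \eqref{eq:scaling-X-error} since $\sfn \geq \MX(\eta,\sfK)$.

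There is no genuine obstacle here: the argument is purely algebraic combined with two already-proved scaling estimates from Section~\ref{sec:low-rank-preconditioning}. The only point that requires a small bit of care is that one exploits the fact that $\bv$ is supported in $\Lambda_\sfK$ exactly once, in the application of \eqref{eq:scaling-X-error}; the remaining factors do not require any support restriction, since the uniform bound \eqref{eq:scaling-X-operator-change} on $\bDX^{-1}\mathbf{\bar D}_\cX$ holds everywhere.
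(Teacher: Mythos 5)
Your decomposition $\bT_0\bDX - \bTtil\mathbf{D}_{\cX,\sfn} = -\bG + \bT_0(\bDX-\mathbf{D}_{\cX,\sfn}) + \bG(\bI-\bDX^{-1}\mathbf{D}_{\cX,\sfn})$, together with $\bT_0(\bDX-\mathbf{D}_{\cX,\sfn}) = \bB_2(\bI-\bDX^{-1}\mathbf{D}_{\cX,\sfn})$ and the bounds \eqref{eq:scaling-X-operator-change}, \eqref{eq:scaling-X-error} applied via $\Res_{\Lambda_\sfK}\bv=\bv$, is correct and yields exactly the stated three-term estimate. This is essentially the same route the paper intends, namely the simplified adaptation of Lemma~\ref{lem:scaling-to-truncated} (i.e.\ of \cite[Lemma 32]{bachmayr2014adaptive}) using the properties of $\bDX$ from Section~\ref{sec:low-rank-preconditioning}.
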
 

For a given $\bv \in \ell_2(\vee)$ and tolerance $\eta > 0$, we choose the parameters
\[
	J_0(\eta;\bv) = \min\{ J \in \N_0 : e_{0,J}(\bv) \leq \tfrac{\eta}{4}  \}, \quad c_0(\bv)\eta = \tfrac{\eta (1-\delta)}{2 \norm{\bB_2} \norm{\bv}}
\]
and 
\[
	\sfm_{\cX,0}(\eta;\bv) = \MX(c_0(\bv)\eta;K_0(\bv)) \quad \text{with} \quad \sfK_0(\bv) = \min \{ \sfK : \supp(\bv) \subseteq \Lambda_{\sfK} \} .
\]

\begin{theorem}\label{thm:error-bound-initial-value-truncated}
	Let $\bv \in \ell_2(\vee)$ and $0 < \eta < 2 \norm{\bB_2} \norm{\bv}$. We fix
	$J = J_0(\eta;\bv)$  and $\sfn = \sfm_{\cX,0}(\eta;\bv)$.
	Then with $\bw_{\eta} = \mathbf{\tilde T}_{0,J}[\bv] \bD_{\cX,\sfn} \bv$, we have
	\begin{equation*}
		\norm{\bB_2 \bv - \bw_{\eta}} \leq \eta .
	\end{equation*}
\end{theorem}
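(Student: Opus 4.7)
The plan is to mimic the proof of Theorem~\ref{thm:error-rescaled-trunc}, simplified by the absence of a second scaling factor $\bDY$, and to use the a posteriori bound of Lemma~\ref{lem:initial-operator-J} together with the truncation estimate of Lemma~\ref{lem:scaling-to-truncated-only-DX}.

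First, I write $\bTtil_{0,J} = \bTtil_{0,J}[\bv]$ for brevity and set $\bG = (\bTtil_{0,J} - \bT_0)\bDX$, $\mathbf{\tilde d} = (\bI - \bDX^{-1}\mathbf{D}_{\cX,\sfn})\bv$. By the choice $\sfn = \sfm_{\cX,0}(\eta;\bv) \geq \MX(c_0(\bv)\eta;\sfK_0(\bv))$ and the definition of $\sfK_0(\bv)$, the hypotheses of Lemma~\ref{lem:scaling-to-truncated-only-DX} are met with tolerance $c_0(\bv)\eta$, giving
\begin{align*}
	\norm{\bB_2\bv - \bw_\eta} = \norm{(\bT_0 \bDX - \bTtil_{0,J} \mathbf{D}_{\cX,\sfn})\bv}
	\leq \norm{\bG\bv} + \norm{\bG\mathbf{\tilde d}} + \tfrac{c_0(\bv)\eta}{1-\delta}\norm{\bB_2}\norm{\bv}.
\end{align*}

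Next, I apply Lemma~\ref{lem:initial-operator-J}: the a posteriori bound \eqref{eq:initial-value-a-posteriori} with $\bw = \bv$ gives $\norm{\bG\bv} = \norm{\bTtil_{0,J}\bDX\bv - \bB_2\bv} \leq e_{0,J}(\bv)$, which by the choice $J = J_0(\eta;\bv)$ is at most $\eta/4$. For the second term, the same a posteriori bound applied to $\bw = \mathbf{\tilde d}$ yields $\norm{\bG\mathbf{\tilde d}} \leq e_{0,J}[\bv](\mathbf{\tilde d})$. Since by Remark~\ref{rem:scaling-DX-delta} and \eqref{eq:scaling-X-error} the diagonal entries of $\bI - \bDX^{-1}\mathbf{D}_{\cX,\sfn}$ on $\Lambda_{\sfK_0(\bv)} \supseteq \supp(\bv)$ are bounded in absolute value by $(1-\delta)^{-1}c_0(\bv)\eta$, one has $|\mathbf{\tilde d}_\nu| \leq (1-\delta)^{-1}c_0(\bv)\eta\,|\bv_\nu|$ for all $\nu$, and therefore (by monotonicity of the contractions $\pi^{(\rt,i)}$ in the absolute values of entries) $e_{0,J}[\bv](\mathbf{\tilde d}) \leq (1-\delta)^{-1}c_0(\bv)\eta\,e_{0,J}(\bv)$.

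Finally, I plug in $c_0(\bv)\eta = \eta(1-\delta)/(2\norm{\bB_2}\norm{\bv})$, so that $c_0(\bv)\eta/(1-\delta) = \eta/(2\norm{\bB_2}\norm{\bv}) \leq 1$ by the hypothesis $\eta < 2\norm{\bB_2}\norm{\bv}$. Combining the three estimates yields
\begin{align*}
	\norm{\bB_2\bv - \bw_\eta} \leq e_{0,J}(\bv)\bigl(1 + \tfrac{c_0(\bv)\eta}{1-\delta}\bigr) + \tfrac{\eta}{2} \leq 2 e_{0,J}(\bv) + \tfrac{\eta}{2} \leq \tfrac{\eta}{2} + \tfrac{\eta}{2} = \eta,
\end{align*}
as claimed. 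The argument is completely parallel to Theorem~\ref{thm:error-rescaled-trunc}; the only step requiring a moment of care is the monotonicity observation for $e_{0,J}[\bv](\cdot)$, which follows from the pointwise subadditivity/monotonicity of the spatio-temporal contractions in Definition~\ref{def:contractions}, so no serious new obstacle is expected.
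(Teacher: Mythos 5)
Your proof is correct and follows exactly the route the paper intends: it explicitly states that Theorem \ref{thm:error-bound-initial-value-truncated} is proved analogously to Theorem \ref{thm:error-rescaled-trunc} using Lemma \ref{lem:scaling-to-truncated-only-DX}, and your argument carries out precisely that adaptation, including the correct accounting of the simpler constant $c_0(\bv)\eta = \eta(1-\delta)/(2\norm{\bB_2}\norm{\bv})$ and the componentwise bound $\abs{(\bI - \bDX^{-1}\mathbf{D}_{\cX,\sfn})_\nu} \leq (1-\delta)^{-1}c_0(\bv)\eta$ on $\Lambda_{\sfK_0(\bv)}$. The monotonicity/homogeneity step $e_{0,J}[\bv](\mathbf{\tilde d}) \leq (1-\delta)^{-1}c_0(\bv)\eta\, e_{0,J}(\bv)$, which you single out, is indeed the same (implicit) observation used in the proof of Theorem \ref{thm:error-rescaled-trunc}, so there is no gap.
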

The theorem is proved analogously to Theorem \ref{thm:error-rescaled-trunc} using Lemma \ref{lem:scaling-to-truncated-only-DX}. Based on this error bound, we derive estimates for the support sizes, hierarchical ranks and $\cAs$-norms of approximations.

\begin{lemma}\label{lem:initial-operator-trunc-estimates}
	Under the assumptions of Theorem \ref{thm:error-bound-initial-value-truncated}, we have
	\begin{equation}\label{eq:supp-initial-value-operator}
		\#\supp(\pi^{(i)}(\bw_{\eta})) \leq 2c 4^{1+\frac{1}{s}} \tilde{C}_0^{\frac{1}{s}} (L_\rx(\bv)+1) \eta^{-\frac{1}{s}} \bigg(\sum\limits_{i=1}^d \norm{\piti(\bv)}_{\cAs}\bigg)^{\frac{1}{s}} ,
	\end{equation}
	with $\tilde{C}_0 = (\norm{\beta^{(0)}}_{\ell_1} + \norm{\bB_2})$, as well as
	\begin{equation*}
		\norm{\pi^{(i)}(\bw_\eta)}_{\cAs} \leq 2 c^s 2^{4s} \tilde{C}_0  (L_\rx(\bv)+1)^s \sum\limits_{i=1}^d \norm{\piti(\bv)}_{\cAs} .
	\end{equation*}
	Moreover, we have the rank estimate
	\begin{equation}\label{eq:ranks-initial-value-operator}
		\abs{\rank(\bw_{\eta})}_{\infty} \leq \norm{\hat{\sfm}_{\cX,0}(\eta;\bv)}_{\ell_\infty} c J_0(\eta;\bv) (L_\rx(\bv)+1) \rank_\infty(\bv),
	\end{equation}
	where
	\[
		\hat{\sfm}_{\cX,0,\nut}(\eta;\bv) = 1+ n_{a_{\nut}}^+(\delta) + \sfm_{\cX,0,\nut}(\eta;\bv), \quad \nut \in \veet .
	\]
\end{lemma}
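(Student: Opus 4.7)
The proof will proceed by combining the $J$-dependent estimates of Lemma \ref{lem:initial-operator-J} with the choice $J = J_0(\eta;\bv)$ determined by the a priori bound \eqref{eq:initial-value-a-priori}, and the rank/support properties of the truncated scaling matrix $\mathbf{D}_{\cX,\sfn}$ from Section \ref{sec:low-rank-preconditioning}. This follows the same overall pattern as Lemma \ref{lem:apply-quantity-estimates}, but with the simplification that only the scaling $\bDX$ acts (the test side being the identity), and the additional complication that the level decomposition in \eqref{eq:initial-value-application-approx} brings in the extra factor $L_\rx(\bv)+1$.

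First, for the support bound, I observe that by \eqref{eq:initial-value-a-priori} and the definition of $J_0(\eta;\bv)$, we have $J_0(\eta;\bv) \leq \bar J_0(\eta;\bv)$, where $\bar J_0(\eta;\bv)$ is the smallest integer with
\begin{equation*}
    2^s \tilde C_0 \, 2^{-s \bar J_0(\eta;\bv)} \sum_{i=1}^d \norm{\piti(\bv)}_{\cAs} \leq \tfrac{\eta}{4},
\end{equation*}
so that $2^{\bar J_0(\eta;\bv)} \leq 4^{1+\frac{1}{s}} \tilde C_0^{\frac{1}{s}} \eta^{-\frac{1}{s}} \bigl(\sum_i \norm{\piti(\bv)}_{\cAs}\bigr)^{\frac{1}{s}}$. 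Since the diagonal scaling $\mathbf{D}_{\cX,\sfn}$ does not enlarge supports, substituting this into \eqref{eq:initial-value-J-support} yields \eqref{eq:supp-initial-value-operator}.

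Second, the $\cAs$-bound is obtained in the same spirit as the temporal part of the proof of Lemma \ref{lem:apply-quantity-estimates}. For each $q=0,\dots,J_0(\eta;\bv)$, let $\bw^{(q)} = \bTtil_{0,q}[\bv]\mathbf{D}_{\cX,\sfn}\bv$ and $\hat\Lambda^{(i)}_q = \supp \pi^{(i)}(\bw^{(q)})$; by \eqref{eq:initial-value-J-support} these satisfy $\#\hat\Lambda^{(i)}_q \leq 2c(L_\rx(\bv)+1)2^q$. Any $N \leq \#\hat\Lambda^{(i)}_{J_0}$ lies between two successive $\#\hat\Lambda^{(i)}_q$, and using that $\pi^{(i)}(\bw_\eta) - \Res_{\hat\Lambda^{(i)}_q}\pi^{(i)}(\bw_\eta)$ is supported where $\bw^{(q)}$ and $\bw_\eta$ differ, one bounds the $N$-term tail by $\norm{\bw_\eta - \bw^{(q)}}$, which in turn is controlled by the a priori bound \eqref{eq:initial-value-a-priori} combined with $\norm{\bDX^{-1}\mathbf{D}_{\cX,\sfn}}\leq 1$. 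Taking the supremum in $N$ and using Proposition \ref{prop:cAs-properties}\eqref{prop:cAs-triangle} produces the claimed constant $2c^s 2^{4s}\tilde C_0 (L_\rx(\bv)+1)^s$.

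Finally, for the rank bound \eqref{eq:ranks-initial-value-operator} I multiply the representation ranks of the three factors of $\bw_\eta = \bTtil_{0,J}[\bv]\mathbf{D}_{\cX,\sfn}\bv$. The scaling matrix $\mathbf{D}_{\cX,\sfn}$, for each active $\nut$, is a sum of $\hat{\sfm}_{\cX,0,\nut}(\eta;\bv) = 1 + n^+_{a_{\nut}}(\delta) + \sfm_{\cX,0,\nut}(\eta;\bv)$ elementary tensors, so it multiplies the ranks of $\bv$ by at most $\norm{\hat{\sfm}_{\cX,0}(\eta;\bv)}_{\ell_\infty}$. The telescoping sum in the definition of $\bTtil_{0,J}[\bv]$ consists of $J_0(\eta;\bv)$ differences $\bT_{0,J-j}-\bT_{0,J-j-1}$, each super-compressible with at most $c$ nonzero entries per row, and the extra outer sum $\sum_{l=0}^{L_\rx(\bv)}(\Res_{\hat\Lambda_l^{(\rt)}}-\Res_{\hat\Lambda_{l-1}^{(\rt)}})$ contributes a further factor $L_\rx(\bv)+1$ because each of these restrictions acts as a direct sum over disjoint support strips. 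Since $\bT_0 = \bT_{0,\rt}\otimes\bI_\rx$ acts only in the temporal mode, the spatial hierarchical ranks of $\bv$ are not increased, and the product of these three factors with $\rank_\infty(\bv)$ yields precisely \eqref{eq:ranks-initial-value-operator}. The main obstacle is bookkeeping the interaction between the outer level-decomposition index $l$ and the temporal telescoping index $j$ to confirm that their contributions multiply rather than interfere in a more costly way; this is ensured by the fact that the operator $\bT_{0,J-j}-\bT_{0,J-j-1}$ acts independently of the spatial level restriction.
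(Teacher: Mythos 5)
Your proposal is correct and follows essentially the same route as the paper: the support bound comes from bounding $J_0(\eta;\bv)$ via the a priori estimate \eqref{eq:initial-value-a-priori} and inserting it into \eqref{eq:initial-value-J-support}, the $\cAs$-bound mirrors the temporal-operator argument of Lemma \ref{lem:apply-quantity-estimates} with the extra factor $(L_\rx(\bv)+1)^s$ coming from the enlarged support sets, and the rank bound counts the $cJ_0(\eta;\bv)(L_\rx(\bv)+1)$ summands of \eqref{eq:initial-value-application-approx} multiplied by the separable terms of the truncated scaling matrix and $\rank_\infty(\bv)$. Your constants and bookkeeping (including the observation that $\bT_0$ acts only in the temporal mode and that the diagonal scaling neither enlarges supports nor exceeds the bound $\norm{\bDX^{-1}\mathbf{D}_{\cX,\sfn}}\leq 1$) match the paper's reasoning.
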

\begin{proof}
	We first show the rank estimate. By the representation \eqref{eq:initial-value-application-approx} with fixed maximum spatial wavelet level in each summand, we have to consider a constant number of corresponding temporal basis functions, because only a constant number of wavelet functions do not vanish at time $t=0$ on each level. We thus have at most $c J_0(\eta;\bv)(L_\rx(\bv)+1)$ summands. Combining this with the truncated scaling matrix yields \eqref{eq:ranks-initial-value-operator}.
	
	The support size estimate \eqref{eq:supp-initial-value-operator} is a direct consequence of the definition of $J_0$ as well as  \eqref{eq:initial-value-J-support} from Lemma \ref{lem:initial-operator-J} and can be proven similar to Theorem \ref{thm:error-rescaled-trunc}.
	
	The $\cAs$-norms of contractions can be estimated in the same way as for the temporal part in Theorem \ref{thm:error-rescaled-trunc}. The additional factor $(L_\rx(\bv)+1)^s$ depending on the maximum spatial level is based on the support estimate \eqref{eq:supp-initial-value-operator}.
\end{proof}

\begin{lemma}\label{lem:apply-initial-value-num-ops}
	Under the assumptions of Theorem \ref{thm:error-bound-initial-value-truncated}, the number $\text{\rm flops}(\bw_{\eta})$ of floating point operations required to compute $\bw_{\eta}$ is bounded by
		\begin{multline*}
			\operatorname{flops}(\bw_{\eta}) \lesssim \  d c^3 (J_0(\eta;\bv))^3 (L_\rx(\bv)+1)^3 \norm{\sfm_{\cX,0}(\eta;\bv)}_{\ell_\infty}^3 \rank^3_\infty(\bv) \\
			+ 8c4^{\frac{1}{s}}\tilde{C}_0^{\frac{1}{s}} d \eta^{-\frac{1}{s}}   \rank_\infty(\bv) J_0(\eta;\bv) \norm{\sfm_{\cX,0}(\eta;\bv)}_{\ell_\infty} (L_\rx(\bv)+1)^2 \Biggl( \sum\limits_{i=1}^d \lrnorm{\piti(\bv)}_{\cAs}\Biggr)^{\frac{1}{s}}  ,
		\end{multline*}
	 where the hidden constant is independent of $\eta, \bv, \bB_2$ and $d$.
\end{lemma}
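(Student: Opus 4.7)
The plan is to mirror the argument of Lemma \ref{lem:apply-num-ops}, decomposing the computation of
$\bw_\eta = \mathbf{\tilde T}_{0,J}[\bv]\,\mathbf{D}_{\cX,\sfn}\bv$ with $J = J_0(\eta;\bv)$ and $\sfn = \sfm_{\cX,0}(\eta;\bv)$ into three conceptual stages: (a) applying the separable rank-truncated scaling $\mathbf{D}_{\cX,\sfn}$ to $\bv$; (b) forming the restrictions $\bv_j$ and the level-layered differences $\Res_{\hat\Lambda_l^{(\rt)}}\bv_j - \Res_{\hat\Lambda_{l-1}^{(\rt)}}\bv_j$ appearing in \eqref{eq:initial-value-application-approx}; (c) applying the sparse operator differences $\bT_{0,J-j}-\bT_{0,J-j-1}$ and accumulating the contributions. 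A structural simplification compared to Lemma \ref{lem:apply-num-ops} is that $\bB_2$ maps into $\ell_2(\vee_\rx)$ rather than $\ell_2(\vee)$, so $\bw_\eta$ is a single hierarchical tensor over $\vee_\rx$ and its transfer-tensor cost scales as $d$ times the cube of $\rank_\infty(\bw_\eta)$, without an additional $\pi^{(\rt)}$-support factor; this is the origin of the first summand of the bound.

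For stage (a), $\mathbf{D}_{\cX,\sfn}$ is block-diagonal across temporal indices and each block admits a decomposition into at most $1 + n^+_{a_{\nu_\rt}}(\delta) + \sfm_{\cX,0,\nu_\rt}(\eta;\bv)$ separable terms. Its application therefore inflates the spatial mode-frame ranks of $\bv$ by a factor of at most $\|\hat\sfm_{\cX,0}(\eta;\bv)\|_{\ell_\infty}$, while mode-frame entries are merely rescaled, and the resulting cost is bounded by a fixed multiple of this factor times the hierarchical data size of $\bv$. For stage (b), the restrictions are purely index-set operations that leave the hierarchical format intact and are cheap compared with the adjacent stages; the $J_0(\eta;\bv)(L_\rx(\bv)+1)$ slices give contributions absorbed by the terms produced in (a) and (c).

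For stage (c), the decisive input is the super-compressibility of $\bT_0$: each $\bT_{0,J-j}-\bT_{0,J-j-1}$ has at most $c$ nonzero entries per row and column and acts solely on the temporal mode, so it leaves the spatial mode frames and transfer tensors of each summand untouched while inflating the temporal mode support by at most $c$. Summing over the $J_0(\eta;\bv)$ differences and the $L_\rx(\bv)+1$ spatial-level partitions produces exactly the rank bound \eqref{eq:ranks-initial-value-operator}, $\rank_\infty(\bw_\eta) \le \|\hat\sfm_{\cX,0}(\eta;\bv)\|_{\ell_\infty}\, c J_0(\eta;\bv) (L_\rx(\bv)+1)\rank_\infty(\bv)$. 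Applying the standard hierarchical-tensor operation counts of Remark \ref{rem:hsvd-complexity}, the transfer-tensor cost of $\bw_\eta$ scales as $d\,\rank_\infty^3(\bw_\eta)$, yielding the first summand of the claim, while the mode-frame cost scales as $\rank_\infty(\bw_\eta)\sum_i \#\supp(\pi^{(i)}(\bw_\eta))$. Substituting the support bound \eqref{eq:supp-initial-value-operator} then produces the $\eta^{-1/s}$-term, with the exponent $2$ on $(L_\rx(\bv)+1)$ arising as the product of the $(L_\rx(\bv)+1)$ factor in the support and the same factor in the rank.

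The main obstacle is the bookkeeping: ensuring that the three stages compose without double-counting, that the contributions from the $n^+_{a_{\nu_\rt}}(\delta)$ summands of $\mathbf{D}_{\cX,\sfn}$ can be absorbed into the constant (so that $\|\hat\sfm_{\cX,0}\|_{\ell_\infty}$ may be replaced by $\|\sfm_{\cX,0}\|_{\ell_\infty}$ in the final bound), and that the factors $\|\sfm_{\cX,0}(\eta;\bv)\|_{\ell_\infty}$, $c J_0(\eta;\bv)$ and $L_\rx(\bv)+1$ appear in the exact positions claimed once the rank and support bounds from Lemma \ref{lem:initial-operator-trunc-estimates} are plugged in.
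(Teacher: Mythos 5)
Your proposal is correct and follows essentially the route the paper intends: the paper omits the proof and simply refers to the argument of Lemma \ref{lem:apply-num-ops}, and your staged accounting (separable scaling, level-sliced restrictions on the $O(J_0(L_\rx+1))$ relevant time slices, sparse operator differences, then the transfer-tensor and mode-frame counts via the rank bound \eqref{eq:ranks-initial-value-operator} and support bound \eqref{eq:supp-initial-value-operator}) is exactly that adaptation, including the correct origin of the two summands. The only caveat you rightly flag — replacing $\norm{\hat{\sfm}_{\cX,0}}_{\ell_\infty}$ by $\norm{\sfm_{\cX,0}}_{\ell_\infty}$ despite the level-dependent $n^+_{a_{\nu_\rt}}(\delta)$ — is a bookkeeping issue inherited from the paper's own statement rather than a gap in your argument.
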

The above lemma can be proved analogously to Lemma \ref{lem:apply-num-ops} for the temporal operator, and we thus omit the proof.
As mentioned above, we also need to apply the transposed operator $\bB^\intercal$, and thus also need an approximate application of the transposed operator $\bB_2^\intercal = \bDX \bT_0^\intercal$. The procedure differs from the one for $\bB_2$ because we start from the space $\ell_2(\vee_\rx)$ with only spatial variables and map to $\ell_2(\vee)$. The operator $\bT_0^\intercal$ duplicates the input to different time indices where the input is scaled depending on the value of the time basis function at time $t=0$. Nevertheless, we can adapt the basic ideas used for the approximation of $\bB_2$.

Before we define the approximation, we need to adapt some notation for the case without temporal variable. We set
\[
\hat{\Lambda}_l =  \bigtimes_{i=1}^d \hat{\Lambda}_l^{(1)}  \quad \text{with} \quad  \hat{\Lambda}_l^{(1)} = \{\nu \in \vee_1 : \abs{\nu} \leq l\} .
\]
Let now $\bar{\Lambda}_j^{(i)}(\bv)$ be the best $2^j$-term approximation of the contractions $\pi^{(i)}(\bv)$ and
\begin{align}\label{eq:operator-space-only-restriction-sets}
	\Lambda^{(i)}_{[p]}(\bv) = \begin{cases}
		\bar{\Lambda}^{(i)}_{p}(\bv) \setminus \bar{\Lambda}^{(i)}_{p-1}(\bv), \quad &p = 0,\dots,J, \\
		\vee_{1} \setminus \bar{\Lambda}^{(i)}_{J}(\bv), \quad &p = J+1, \\
		\emptyset, \quad &p > J+1.
	\end{cases}
\end{align}
We set
\begin{equation}
		\bar{\Lambda}_j(\bv) = \bar{\Lambda}_j^{(1)}(\bv) \times \cdots \times \bar{\Lambda}_j^{(d)}(\bv)  \quad\text{and}\quad  
		\bv_j = \Res_{\bar{\Lambda}_j(\bv)} \bv .
\end{equation}
Analogously to \eqref{eq:time-operator-vj-diff}, we have
\begin{equation}
	\norm{\bv_{j+1} - \bv_j} \leq \sum\limits_{i=1}^d \norm{\Res_{\Lambda_{[j+1]}^{(i)}(\bv)} \pi^{(i)}(\bv)} .
\end{equation}

To define the approximation, we reuse the approximation sequence $(\bT_{0,j})_{j \in \N_0}$. One can easily see that $\bT_0^\intercal$ is super-compressible with the approximation sequence $(\bT^\intercal_{0,j})_{j \in \N_0}$. We set
\begin{equation}\label{eq:initial-value-trans-application-approx}
	\begin{aligned}
		\bTtil^\ad_{0,J}(\bv) 
		&= \sum\limits_{l=0}^{L_\rx(\bv)} \sum\limits_{j=0}^{J-1} (\bT^\intercal_{0,J-j} - \bT^\intercal_{0,J-j-1}) ( \Res_{\hat{\Lambda}_l} \bv_j - \Res_{\hat{\Lambda}_{l-1}} \bv_j)  , \\
		\bBtil^\ad_{2,J}(\bv) &= \bDX \bTtil^\ad_{0,J}(\bv) .
	\end{aligned}
\end{equation}
Here we again apply the differences of the operator approximations to parts with fixed maximum level, and therefore only a constant number of temporal basis functions are affected for each summand.

\begin{lemma}
	Let $\bB_2$ be defined by \eqref{eq:Bpartsdef} and assume that $\bB_2$ is super-compressible. Given any $\bv \in \ell_2(\vee_\rx)$ and $J \in \N_0$, let $\bBtil^\ad_{2,J}(\bv)$ be defined by \eqref{eq:initial-value-trans-application-approx}. Then whenever $\pi^{(i)}(\bv) \in \cAs$ for some $0 < s < s^*$ and $i=1,\dots,d$, one has the a posteriori error bound
	\begin{equation}\label{eq:initial-trans-value-a-posteriori}
		\norm{\bB^\intercal_2 \bv - \bBtil^\ad_{2,J}(\bv)} \leq e^\ad_{0,J}(\bv),
	\end{equation}
	where
	\begin{equation*}
		e^\ad_{0,J}(\bv)  = \sum\limits_{j=0}^J 2^{-s(J-j)} \beta^{(0)}_{J-j} \sum\limits_{i=1}^d \Bignorm{\Res_{\Lambda^{(i)}_{[j]}} \pi^{(i)}(\bv)} + \norm{\bB_2} \Bignorm{\Res_{\Lambda^{(i)}_{[J+1]}} \pi^{(i)}(\bv)} 
	\end{equation*}
	with $\Lambda^{(i)}_{[j]}$ from \eqref{eq:operator-space-only-restriction-sets}, 
	as well as the a priori error bound
	\begin{equation}\label{eq:initial-trans-value-a-priori}
		\norm{\bB^\intercal_2 \bv - \bBtil^\ad_{2,J}(\bv)} \leq 2^s 2^{-sJ} \tilde{C}_0 \sum\limits_{i=1}^d \norm{\pi^{(i)}(\bv)}_{\cAs} .
	\end{equation}
	Moreover, one has the support estimate
	\begin{equation}\label{eq:initial-trans-value-J-support}
		\# \supp\left(\pi^{(\rt,i)} (\bBtil^\ad_{2,J}(\bv))\right) \leq 2c 2^J (L(\bv)+1), \quad i=1,\dots,d ,
	\end{equation}
	where $L(\bv)$ is the maximum one dimensional spatial level of $\bv$.
\end{lemma}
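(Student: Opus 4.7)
The plan is to transpose, almost verbatim, the argument used for Lemma \ref{lem:initial-operator-J}. The key preliminary observation is that super-compressibility is preserved under transposition in operator norm: $\norm{\bT_0^\intercal - \bT_{0,j}^\intercal}=\norm{\bT_0 - \bT_{0,j}}\le \beta^{(0)}_j 2^{-sj}$, and the differences $\bT_{0,j+1}^\intercal-\bT_{0,j}^\intercal$ inherit at most $c$ nonzero entries per row and column from $\bT_{0,j+1}-\bT_{0,j}$. Moreover, since $\supp \bv\subseteq \hat{\Lambda}_{L_\rx(\bv)}$ and $\bv_j=\Res_{\bar{\Lambda}_j(\bv)}\bv$, the inner sum in \eqref{eq:initial-value-trans-application-approx} telescopes as $\sum_{l=0}^{L_\rx(\bv)}(\Res_{\hat{\Lambda}_l}-\Res_{\hat{\Lambda}_{l-1}})\bv_j=\bv_j$, so that
\[
  \bTtil_{0,J}^{\ad}(\bv)=\sum_{j=0}^{J-1}(\bT_{0,J-j}^\intercal-\bT_{0,J-j-1}^\intercal)\bv_j,
\]
which is the transposed analog of \eqref{eq:time-approx-def}. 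The spatial-level splitting is thus irrelevant for the norm bounds and is retained only for the support count.

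For the a posteriori bound, I rewrite $\bT_0^\intercal \bv$ telescopically as $\bT_0^\intercal \bv_0+\sum_{j=1}^{J-1}\bT_0^\intercal(\bv_j-\bv_{j-1})+\bT_0^\intercal(\bv-\bv_{J-1})$ (setting $\bv_{-1}=0$ and using $\bT_{0,0}=0$), subtract the corresponding terms of $\bTtil_{0,J}^\ad(\bv)$, and use the triangle inequality to get
\[
  \norm{\bT_0^\intercal \bv - \bTtil_{0,J}^\ad(\bv)}\le \sum_{j=0}^{J-1}\norm{\bT_0^\intercal-\bT_{0,J-j}^\intercal}\norm{\bv_j-\bv_{j-1}} + \norm{\bB_2^\intercal}\norm{\bv-\bv_{J}},
\]
up to a harmless re-indexing. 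The super-compressibility estimate controls the operator-norm factors, while the spatial analog of \eqref{eq:time-operator-vj-diff} (derived by the same calculation with the temporal mode suppressed), namely $\norm{\bv_{j+1}-\bv_j}\le \sum_{i=1}^d\norm{\Res_{\Lambda_{[j+1]}^{(i)}(\bv)}\pi^{(i)}(\bv)}$ and $\norm{\bv-\bv_J}\le \sum_i \norm{\Res_{\Lambda_{[J+1]}^{(i)}(\bv)}\pi^{(i)}(\bv)}$, handles the differences. This yields \eqref{eq:initial-trans-value-a-posteriori}. The a priori bound \eqref{eq:initial-trans-value-a-priori} then follows by plugging in $\norm{\Res_{\Lambda_{[j]}^{(i)}(\bv)}\pi^{(i)}(\bv)}\le 2^{-s(j-1)}\norm{\pi^{(i)}(\bv)}_{\cAs}$ and summing the geometric factors, together with $\beta^{(0)}_{J-j}\le\norm{\beta^{(0)}}_{\ell_1}$.

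For the support estimate \eqref{eq:initial-trans-value-J-support}, the spatial-level decomposition matters: each summand $(\bT_{0,J-j}^\intercal-\bT_{0,J-j-1}^\intercal)(\Res_{\hat{\Lambda}_l}-\Res_{\hat{\Lambda}_{l-1}})\bv_j$ acts on a vector whose spatial indices all have $\max_i|\nu_{\rx,i}|=l$, and by the level-decay structure of the sparsified approximants of $\bT_0^\intercal$ only a constant number of temporal indices per spatial index (those with $\bigl||\nu_\rt|-2l\bigr|$ bounded) can be produced. Hence $(\bT_{0,J-j}^\intercal-\bT_{0,J-j-1}^\intercal)$ produces at most $c\cdot 2^j$ one-dimensional contraction entries from each level $l$. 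Summing over $j=0,\dots,J-1$ and $l=0,\dots,L_\rx(\bv)$ yields the claimed bound $2c\cdot 2^J(L_\rx(\bv)+1)$; the subsequent application of the diagonal $\bDX$ preserves the support. The main (minor) obstacle is being careful that the spatial-level decomposition is truly necessary for the support count but immaterial for the norm bounds, and that the sharper $L_\rx(\bv)+1$ factor in \eqref{eq:initial-trans-value-J-support} (rather than $L(\bv)+1$) agrees with our setting since $\bv\in\ell_2(\vee_\rx)$ has no temporal component, so $L(\bv)=L_\rx(\bv)$.
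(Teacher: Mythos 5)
Your argument is correct and is essentially the paper's proof: the paper simply declares this lemma analogous to Lemma \ref{lem:initial-operator-J}, and your telescoping of $\bTtil^\ad_{0,J}(\bv)=\sum_{j=0}^{J-1}(\bT^\intercal_{0,J-j}-\bT^\intercal_{0,J-j-1})\bv_j$, the spatial analogue of \eqref{eq:time-operator-vj-diff}, and the level-splitting count for \eqref{eq:initial-trans-value-J-support} are exactly that analogy carried out. One correction: super-compressibility of $\bT_0$ is defined \emph{with the scaling matrix attached}, so the unscaled norms $\norm{\bT_0^\intercal-\bT_{0,j}^\intercal}$ in your display are not controlled by the assumption; instead keep $\bDX$ glued to the differences, writing $\bB_2^\intercal\bv-\bBtil^\ad_{2,J}(\bv)=\sum_{j}\bDX(\bT_0^\intercal-\bT^\intercal_{0,J-j})(\bv_j-\bv_{j-1})+\bDX\bT_0^\intercal(\bv-\bv_J)$ and use $\norm{\bDX(\bT_0^\intercal-\bT^\intercal_{0,j})}=\norm{(\bT_0-\bT_{0,j})\bDX}\leq\beta^{(0)}_j 2^{-sj}$ together with $\norm{\bDX\bT_0^\intercal}=\norm{\bB_2}$ — with this adjustment the rest goes through verbatim.
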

The proof is analogous to the one of Lemma \ref{lem:initial-operator-J}.
Once again we need to replace the scaling matrix $\bDX$ by a low-rank approximation. In the present case, the scaling matrix is applied from the left, which leads to a modified error estimate, which can again be shown by a simplification of the proof of \cite[Lemma 32]{bachmayr2014adaptive}.

\begin{lemma}\label{lem:scaling-to-truncated-only-DX-left}
	Let $\mathbf{T}_0$ be defined as in \eqref{eq:opertor-T1T2-def}, let $\mathbf{\tilde T}\in \R^{\vee \times \vee_\rx}$, $\bv \in \ell_2(\vee_\rx)$, $\sfK \in \N_0^{\vee_\rt}$ be such that $\supp(\bDX \mathbf{\tilde T} \bv) \in \Lambda_{\sfK}$ and let $\bG =  \bDX (\bTtil - \bT^\intercal_0) $.
	Then whenever $\sfn \geq \MX(\eta,\sfK)$ we have
	\begin{align*}
		\lrnorm{( \bDX \bT_0^\intercal  - \mathbf{D}_{\cX,\sfn} \bTtil)\bv}  \leq \Big(1+ \frac{\eta}{1-\delta}\Big) \norm{{\bG}\bv} +  \frac{\eta}{1-\delta} \norm{\bB_2} \norm{\bv} .
	\end{align*}
\end{lemma}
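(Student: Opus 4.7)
The plan is to mimic the telescoping decomposition used in the proof of Lemma~\ref{lem:scaling-to-truncated-only-DX}, but with the roles of left and right scaling reversed so that the support hypothesis on $\bDX\bTtil\bv$ (rather than on $\bv$) can be exploited. First I would write
\[
  \bDX \bT_0^\intercal - \mathbf{D}_{\cX,\sfn}\bTtil = -\bG + (\bDX - \mathbf{D}_{\cX,\sfn})\bTtil,
\]
so that after applying the difference to $\bv$ the first summand already yields the contribution $\|\bG\bv\|$ on the right-hand side of the claimed bound.

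For the second summand, I would use that $\bDX$ and $\mathbf{D}_{\cX,\sfn}$ are diagonal and therefore commute, which gives the identity
\[
  (\bDX - \mathbf{D}_{\cX,\sfn})\bTtil = (\bI - \mathbf{D}_{\cX,\sfn}\bDX^{-1})\bDX\bTtil = \bDX^{-1}(\bDX - \mathbf{D}_{\cX,\sfn})\bDX\bTtil.
\]
Since $\supp(\bDX\bTtil\bv) \subseteq \Lambda_{\sfK}$ by hypothesis, I can insert $\Res_{\Lambda_{\sfK}}$ in front of $\bDX\bTtil\bv$ without changing the value. Then the operator norm of $\bDX^{-1}(\bDX - \mathbf{D}_{\cX,\sfn})\Res_{\Lambda_{\sfK}}$ is controlled by chaining the bound $\|\mathbf{\bar D}_\cX^{-1}(\bDX - \mathbf{D}_{\cX,\sfn})\Res_{\Lambda_{\sfK}}\| \leq \eta$ from \eqref{eq:scaling-X-error} with the estimate $\|\bDX^{-1}\mathbf{\bar D}_\cX\| \leq (1-\delta)^{-1}$ from Remark~\ref{rem:scaling-DX-delta}, yielding a factor of $\eta/(1-\delta)$.

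What remains is to estimate $\|\bDX\bTtil\bv\|$, which I would do by writing $\bDX\bTtil = \bG + \bDX\bT_0^\intercal = \bG + \bB_2^\intercal$ and using the triangle inequality together with $\|\bB_2^\intercal\| = \|\bB_2\|$ to conclude
\[
  \|\bDX\bTtil\bv\| \leq \|\bG\bv\| + \|\bB_2\|\,\|\bv\|.
\]
Combining the three pieces gives exactly the stated bound. I do not anticipate a genuine obstacle here: the entire argument rests on algebraic manipulations of diagonal operators and the already established support-restricted bound \eqref{eq:scaling-X-error}, and the only care required is to verify that the transposed geometry (initial-value operator mapping $\ell_2(\vee_\rx) \to \ell_2(\vee)$ rather than the reverse) still allows the support assumption to be used on the correct side of the product.
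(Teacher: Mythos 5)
Your proof is correct and follows essentially the route the paper intends: the splitting $\bDX \bT_0^\intercal - \mathbf{D}_{\cX,\sfn}\bTtil = -\bG + \bDX^{-1}(\bDX - \mathbf{D}_{\cX,\sfn})\,\bDX\bTtil$, the use of the support hypothesis to insert $\Res_{\Lambda_\sfK}$ so that \eqref{eq:scaling-X-error} together with $\norm{\bDX^{-1}\mathbf{\bar D}_\cX}\leq (1-\delta)^{-1}$ gives the factor $\eta/(1-\delta)$, and the final bound $\norm{\bDX\bTtil\bv}\leq\norm{\bG\bv}+\norm{\bB_2}\norm{\bv}$ is exactly the simplified adaptation of \cite[Lemma 32]{bachmayr2014adaptive} that the paper invokes without spelling it out.
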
 
In view of the above lemma, we choose the approximation parameters
\[
J_{0}^\mathrm{ad}(\eta;\bv) = \min\{ J \in \N_0 : e^\mathrm{ad}_{0,J}(\bv) \leq \tfrac{\eta}{4}  \}, \quad c_0(\bv)\eta = \tfrac{\eta (1-\delta)}{2 \norm{\bB_2} \norm{\bv}}
\]
and 
\[
\sfm_{\cX,0}^\mathrm{ad}(\eta;\bv) = \MX(c_0(\bv)\eta;K^{\mathrm{ad}}_{0}(J_{0}^{\mathrm{ad}}(\eta;\bv);\bv)),
\]
where
\[	
	\sfK^\ad_{0}(J;\bv) = \min \{ \sfK : \supp(\mathbf{\tilde T}^\ad_{0,J}(\bv)) \subseteq \Lambda_{\sfK} \} .
\]
Following the lines of Theorem \ref{thm:error-rescaled-trunc} using Lemma \ref{lem:scaling-to-truncated-only-DX-left}, we obtain the following error bound.

\begin{theorem}\label{thm:error-bound-initial-value-trans-truncated}
	Let $\bv \in \ell_2(\vee_\rx)$ and $0 < \eta < 2 \norm{\bB_2} \norm{\bv}$. We fix
	$J = J^\ad_{0}(\eta;\bv)$ and $\sfn = \sfm^\ad_{\cX,0}(\eta;\bv)$ .
	Then with $\bw_{\eta} =  \bD_{\cX,\sfn} \mathbf{\tilde T}^\ad_{0,J} (\bv)$, one has
	\begin{equation*}
		\norm{\bB^\intercal_2 \bv - \bw_{\eta}} \leq \eta .
	\end{equation*}
\end{theorem}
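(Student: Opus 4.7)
The plan is to proceed entirely in parallel with the proof of Theorem \ref{thm:error-rescaled-trunc}, but using Lemma \ref{lem:scaling-to-truncated-only-DX-left} in place of Lemma \ref{lem:scaling-to-truncated}. The simplification is that here only the scaling matrix $\bDX$ on the left needs to be truncated; there is no $\bDY$ and no rescaling of the argument $\bv$, so only two error contributions remain instead of four.

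The first step is to write
\begin{equation*}
   \bB_2^\intercal \bv - \bw_\eta = \bDX \bT_0^\intercal \bv - \mathbf{D}_{\cX,\sfn} \bTtil^\ad_{0,J}(\bv)
\end{equation*}
and invoke Lemma \ref{lem:scaling-to-truncated-only-DX-left} with $\bTtil = \bTtil^\ad_{0,J}$ and the parameter in that lemma chosen as $c_0(\bv)\eta$. The hypothesis $\sfn \geq \MX(c_0(\bv)\eta;\sfK^\ad_0)$ is exactly our choice of $\sfn = \sfm^\ad_{\cX,0}(\eta;\bv)$, and $\supp(\bTtil^\ad_{0,J}(\bv)) \subseteq \Lambda_{\sfK^\ad_0}$ holds by the definition of $\sfK^\ad_0$. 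Setting $\bG = \bDX(\bTtil^\ad_{0,J} - \bT_0^\intercal)$, this gives
\begin{equation*}
   \norm{\bB_2^\intercal \bv - \bw_\eta} \leq \Bigl(1 + \tfrac{c_0(\bv)\eta}{1-\delta}\Bigr)\norm{\bG\bv} + \tfrac{c_0(\bv)\eta}{1-\delta}\norm{\bB_2}\norm{\bv}.
\end{equation*}

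The second step is to control $\norm{\bG\bv}$ via the a~posteriori bound \eqref{eq:initial-trans-value-a-posteriori}. Since $\bG\bv = \bBtil^\ad_{2,J}(\bv) - \bB_2^\intercal \bv$, one has $\norm{\bG\bv} \leq e^\ad_{0,J}(\bv) \leq \eta/4$ by the definition of $J = J^\ad_0(\eta;\bv)$. For the second contribution, the choice $c_0(\bv)\eta = \eta(1-\delta)/(2\norm{\bB_2}\norm{\bv})$ gives directly
\begin{equation*}
   \tfrac{c_0(\bv)\eta}{1-\delta}\norm{\bB_2}\norm{\bv} = \tfrac{\eta}{2}.
\end{equation*}
The assumption $\eta < 2\norm{\bB_2}\norm{\bv}$ guarantees $c_0(\bv)\eta < 1-\delta$, so $1 + c_0(\bv)\eta/(1-\delta) < 2$, and combining the estimates yields
\begin{equation*}
   \norm{\bB_2^\intercal \bv - \bw_\eta} \leq 2 \cdot \tfrac{\eta}{4} + \tfrac{\eta}{2} = \eta,
\end{equation*}
which is the claim.

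There is no real obstacle: the structure is dictated by Lemma \ref{lem:scaling-to-truncated-only-DX-left} and the design of $J^\ad_0$ and $c_0(\bv)$, which have been chosen precisely so that each error contribution accounts for at most half of the budget $\eta$. The only point requiring mild care is to verify the support inclusion for $\sfK^\ad_0$ (automatic from its definition) and to check that the factor $(1 + c_0(\bv)\eta/(1-\delta))$ multiplying the operator-compression error is at most $2$ under the standing hypothesis on $\eta$; this is exactly the role played by the a priori restriction $\eta < 2\norm{\bB_2}\norm{\bv}$, which is harmless since otherwise the trivial approximation by zero already satisfies the tolerance.
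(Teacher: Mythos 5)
Your proof is correct and follows exactly the route the paper intends: the paper only remarks that the theorem is proved ``following the lines of Theorem~\ref{thm:error-rescaled-trunc} using Lemma~\ref{lem:scaling-to-truncated-only-DX-left}'', and your argument fills in precisely those steps, with the correct identification $\bG\bv = \bBtil^\ad_{2,J}(\bv)-\bB_2^\intercal\bv$, the bound $\norm{\bG\bv}\leq e^\ad_{0,J}(\bv)\leq\eta/4$, and the budget split via $c_0(\bv)\eta$. No gaps.
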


\begin{lemma}\label{lem:initial-operator-trans-trunc-estimates}
	Under the assumptions of Theorem \ref{thm:error-bound-initial-value-trans-truncated}, we have
	\begin{equation}\label{eq:supp-initial-value-trans-operator}
		\#\supp(\piti(\bw_{\eta})) \leq 2c 4^{1+\frac{1}{s}} \tilde{C}_0^{\frac{1}{s}}(L(\bv)+1) \eta^{-\frac{1}{s}} \bigg(\sum\limits_{i=1}^d \norm{\pi^{(i)}(\bv)}_{\cAs}\bigg)^{\frac{1}{s}} ,
	\end{equation}
	as well as
	\begin{equation}\label{eq:Asnorm-initial-value-trans-operator}
		\norm{\piti(\bw_\eta)}_{\cAs} \leq 2 c^s 2^{4s} \tilde{C}_0  (L(\bv)+1)^s \sum\limits_{i=1}^d \norm{\pi^{(i)}(\bv)}_{\cAs} .
	\end{equation}
	Moreover, we have the rank estimate
	\begin{equation}\label{eq:ranks-initial-value-trans-operator}
		\rank_\infty(\bw_{\eta}) \leq \norm{\hat{\sfm}^\ad_{\cX,0}(\eta;\bv)}_{\ell_\infty} J^\ad_{0}(\eta;\bv) (L(\bv)+1) \rank_\infty(\bv),
	\end{equation}
	where
	\[
	\hat{\sfm}^\ad_{\cX,0,\nut}(\eta;\bv) = 1+ n_{a_{\nut}}^+(\delta) + \sfm^\ad_{\cX,0,\nut}(\eta;\bv), \quad \nut \in \veet .
	\]
\end{lemma}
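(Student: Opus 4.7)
The plan is to follow the pattern of Lemma \ref{lem:initial-operator-trunc-estimates} line by line, treating the three estimates separately and exploiting the fact that $\mathbf{D}_{\cX,\sfn}$ is diagonal (so it preserves supports and only multiplies ranks) while the operator $\mathbf{\tilde T}^{\mathrm{ad}}_{0,J}(\bv)$ from \eqref{eq:initial-value-trans-application-approx} is built by applying the super-compressible differences $\bT^\intercal_{0,J-j}-\bT^\intercal_{0,J-j-1}$ to restrictions of $\bv$ onto fixed-maximum-level slices.

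For the support bound \eqref{eq:supp-initial-value-trans-operator}, since diagonal scalings do not alter supports of spatio-temporal contractions, it suffices to bound $\#\supp(\pi^{(\rt,i)}(\mathbf{\tilde T}^{\mathrm{ad}}_{0,J}(\bv)))$, which by \eqref{eq:initial-trans-value-J-support} is at most $2c\, 2^{J^{\mathrm{ad}}_0(\eta;\bv)}(L(\bv)+1)$. The a priori error bound \eqref{eq:initial-trans-value-a-priori} shows that $J^{\mathrm{ad}}_0(\eta;\bv)$ does not exceed the smallest $J\in\N_0$ with $2^s 2^{-sJ}\tilde C_0\sum_i\norm{\pi^{(i)}(\bv)}_{\cAs}\le \eta/4$, so that $2^{J^{\mathrm{ad}}_0(\eta;\bv)}\le 4^{1+1/s}\tilde C_0^{1/s}\eta^{-1/s}(\sum_i\norm{\pi^{(i)}(\bv)}_{\cAs})^{1/s}$, and \eqref{eq:supp-initial-value-trans-operator} follows.

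For the rank bound \eqref{eq:ranks-initial-value-trans-operator}, I inspect the decomposition \eqref{eq:initial-value-trans-application-approx}: it is a sum of at most $J^{\mathrm{ad}}_0(\eta;\bv)(L(\bv)+1)$ summands, each of which acts on a restriction of $\bv$ by an operator that, by the super-compressibility of $\bT_0$ (see the approximation sequence described before Definition \ref{def:super-compressible} and Remark \ref{rem:supercompressible-realization}), has at most $c$ nonzero entries in each row and column. Each summand therefore produces at most $c$ distinct Kronecker terms $\mathbf{e}_{\nu_\rt}\otimes(\Res_{\hat\Lambda_l}\bv_j-\Res_{\hat\Lambda_{l-1}}\bv_j)$ and the spatial representation rank of the restriction is bounded by $\rank_\infty(\bv)$. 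Summing yields a total hierarchical rank bounded by $cJ^{\mathrm{ad}}_0(\eta;\bv)(L(\bv)+1)\rank_\infty(\bv)$, and the subsequent application of $\mathbf{D}_{\cX,\sfn}$ adds the factor $\norm{\hat{\sfm}^{\mathrm{ad}}_{\cX,0}(\eta;\bv)}_{\ell_\infty}$ as in Lemma \ref{lem:initial-operator-trunc-estimates}, using that each $\bD_{\cX,n_{\nu_\rt}}$ is a sum of $1+n^+_{a_{\nu_\rt}}(\delta)+\sfm^{\mathrm{ad}}_{\cX,0,\nu_\rt}(\eta;\bv)$ separable terms.

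The main work, as in the temporal part of Lemma \ref{lem:apply-quantity-estimates}, is the $\cAs$-bound \eqref{eq:Asnorm-initial-value-trans-operator}. I would introduce intermediate approximations $\bw_{\eta,q}=\bD_{\cX,\sfn}\mathbf{\tilde T}^{\mathrm{ad}}_{0,q}(\bv)$ for $q=0,\ldots,J^{\mathrm{ad}}_0(\eta;\bv)$, set $\hat\Lambda^{(\rt,i)}_q=\supp(\pi^{(\rt,i)}(\bw_{\eta,q}))$, and note that by the step already carried out $\#\hat\Lambda^{(\rt,i)}_q\le 2c\,2^q(L(\bv)+1)$. Given $N\le\#\supp(\pi^{(\rt,i)}(\bw_\eta))$, I pick $q$ with $\#\hat\Lambda^{(\rt,i)}_q<N\le\#\hat\Lambda^{(\rt,i)}_{q+1}$ and bound
\[
\norm{\pi^{(\rt,i)}(\bw_\eta)-\Res_{\hat\Lambda^{(\rt,i)}_q}\pi^{(\rt,i)}(\bw_\eta)}\le\norm{\bw_\eta-\bw_{\eta,q}}\le\norm{\bw_\eta-\bB_2^\intercal\bv}+\norm{\bw_{\eta,q}-\bB_2^\intercal\bv},
\]
where the right-hand terms are controlled by \eqref{eq:initial-trans-value-a-priori} and the analogous bound with $\mathbf{D}_{\cX,\sfn}$ replaced by $\bDX$ (using that $\bDX^{-1}\mathbf{D}_{\cX,\sfn}$ has entries bounded by one on the relevant supports, exactly as in Theorem \ref{thm:error-rescaled-trunc}). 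Multiplying by $(N+1)^s$ and using $(2c\,2^{q+1}(L(\bv)+1)+1)^s\lesssim c^s 2^{3s}(L(\bv)+1)^s 2^{sq}$ telescopes the factor $2^{sq}$ against $2^{-sq}$ from the a priori bound, producing the stated constant $2c^s 2^{4s}\tilde C_0(L(\bv)+1)^s$. The delicate step is propagating the $(L(\bv)+1)^s$ factor cleanly through the maximum over $N$; this is where care is needed, but it parallels the argument in Lemma \ref{lem:initial-operator-trunc-estimates} verbatim once one accounts for the coupling between spatial levels and active temporal indices near $t=0$.
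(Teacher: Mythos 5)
Your handling of the support bound and of the $\cAs$-bound follows essentially the same route as the paper, which for \eqref{eq:supp-initial-value-trans-operator} and \eqref{eq:Asnorm-initial-value-trans-operator} simply defers to Lemma \ref{lem:initial-operator-trunc-estimates} and, through it, to the temporal-operator argument of Lemma \ref{lem:apply-quantity-estimates}: the support count comes from \eqref{eq:initial-trans-value-J-support} combined with the a priori bound \eqref{eq:initial-trans-value-a-priori} to control $2^{J^\ad_0(\eta;\bv)}$, and the $\cAs$-estimate comes from intermediate approximations, the weighting by $(N+1)^s$, and telescoping $2^{sq}$ against the $2^{-sq}$ decay, with the factor $(L(\bv)+1)^s$ entering through the support estimate. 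This part of your proposal is in order (your replacement of $\mathbf{D}_{\cX,\sfn}$ by $\bDX$ via the boundedness of the entries of $\bDX^{-1}\mathbf{D}_{\cX,\sfn}$ on the relevant index sets is the same device the paper uses in Theorem \ref{thm:error-rescaled-trunc} and Lemma \ref{lem:apply-quantity-estimates}).

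The rank estimate is where your argument does not deliver the stated bound. You count ``at most $c$ distinct Kronecker terms per summand'' and conclude a bound $c\,J^\ad_0(\eta;\bv)(L(\bv)+1)\rank_\infty(\bv)$ before the scaling, which exceeds \eqref{eq:ranks-initial-value-trans-operator} by the factor $c$. The structural point you are missing -- and which the paper relies on, see also the remark following the lemma -- is that $\rank_\infty$ is measured \emph{per temporal index}: $\bT_0^\intercal$, and hence every difference $\bT^\intercal_{0,J-j}-\bT^\intercal_{0,J-j-1}$, has at most one nonvanishing entry in each row, so for fixed $\nu_\rt$ and fixed $(l,j)$ in \eqref{eq:initial-value-trans-application-approx} the spatial component at $\nu_\rt$ receives at most one \emph{scaled copy} of $\Res_{\hat{\Lambda}_l}\bv_j-\Res_{\hat{\Lambda}_{l-1}}\bv_j$; indeed, on the slice of maximum spatial level exactly $l$ the selection pattern depends only on $\abs{\nu_\rt}$ and $l$, so it acts as a scalar there. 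The $c$ nonzeros per column of the transposed differences only spread the output over different time indices, which does not increase the rank of any single spatial component. The correct count is therefore at most $J^\ad_0(\eta;\bv)(L(\bv)+1)$ summands per time index, each of spatial rank at most $\rank_\infty(\bv)$, multiplied by $\norm{\hat{\sfm}^\ad_{\cX,0}(\eta;\bv)}_{\ell_\infty}$ from the rank-truncated scaling -- which is precisely \eqref{eq:ranks-initial-value-trans-operator}. As written, your proof only establishes the weaker bound with the spurious factor $c$.
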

\begin{proof}
	The estimates \eqref{eq:supp-initial-value-trans-operator} and \eqref{eq:Asnorm-initial-value-trans-operator} can be shown analogously to Lemma \ref{lem:initial-operator-trunc-estimates}. By the representation of the approximation and the structure of $\bB_2^\intercal$ and its approximations, we have at most $\norm{\hat{\sfm}^\ad_{\cX,0}(\eta;\bv)}_{\ell_\infty} J^\ad_{0}(\eta;\bv) (L(\bv)+1)$ summands per time index. The rank of each summand is bounded by $\rank_\infty(\bv)$, which yields \eqref{eq:ranks-initial-value-trans-operator}.	
\end{proof}

\begin{remark}
	Due to the structure of the operator $\bT^\intercal_0$, the maximum rank does not increase by its application, as the input is scaled and distributed over different time indices. However, in the bound \eqref{eq:ranks-initial-value-trans-operator}, we have a potential increase of the rank by the approximate application of $\bB_2^\intercal$. Although it would be possible to construct approximations where rank do not increase, this leads to difficulties in controlling the support sizes and $\cAs$-quasinorms.
\end{remark}

\begin{lemma}\label{lem:apply-initial-value-trans-num-ops}
	Under the assumptions of Theorem \ref{thm:error-bound-initial-value-trans-truncated}, for the number $\text{\rm flops}(\bw_{\eta})$ of floating point operations required to compute $\bw_{\eta}$ we have the bound
	\begin{equation}\label{eq:apply-initial-value-trans-num-ops}
		\begin{aligned}
			\operatorname{flops}(\bw_{\eta}) &\lesssim \\  
			&d (J^\ad_{0}(\eta;\bv))^3 (L(\bv)+1)^3 (C(s,\bB_2)J^\ad_{0}(\eta;\bv) + L(\bv)) \norm{\sfm^\ad_{\cX,0}(\eta;\bv)}_{\ell_\infty}^3 \abs{\rank(\bv)}_{\infty}^3 \\
			+ &8c 4^{\frac{1}{s}} \tilde{C}_0^{\frac{1}{s}} d \eta^{-\frac{1}{s}}   \abs{\rank(\bv)}_{\infty} J^\ad_{0}(\eta;\bv) \norm{\sfm^\ad_{\cX,0}(\eta;\bv)}_{\ell_\infty} (L(\bv)+1)^2 \Biggl( \sum\limits_{i=1}^d \lrnorm{\pi^{(i)}(\bv)}_{\cAs}\Biggr)^{\frac{1}{s}}  ,
		\end{aligned}
	\end{equation}
	where the hidden constant is independent of $\eta, \bv$ and $d$.
\end{lemma}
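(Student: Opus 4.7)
The plan is to mirror the cost analysis of Lemma~\ref{lem:apply-num-ops} and Lemma~\ref{lem:apply-initial-value-num-ops}, adapted to the fact that $\bB_2^\intercal$ maps from $\ell_2(\vee_\rx)$ into $\ell_2(\vee)$ and hence lifts a purely spatial hierarchical tensor to a collection of hierarchical tensors indexed by temporal wavelets. Writing $J = J^\ad_{0}(\eta;\bv)$ and $\sfn = \sfm^\ad_{\cX,0}(\eta;\bv)$, I decompose the total work for evaluating $\bw_\eta = \bD_{\cX,\sfn} \bTtil^\ad_{0,J}(\bv)$ into three stages: (i) preparation of the restricted inputs $\bv_j$ and the level-based splittings $\Res_{\hat\Lambda_l}\bv_j - \Res_{\hat\Lambda_{l-1}}\bv_j$ for $j=0,\dots,J-1$ and $l=0,\dots,L(\bv)$; (ii) application of the operator increments $\bT^\intercal_{0,J-j} - \bT^\intercal_{0,J-j-1}$ and accumulation of the result in the format \eqref{eq:parabolic-lowrank-format}; (iii) application of the rank-truncated scaling $\bD_{\cX,\sfn}$.

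For stage (i), by Remark~\ref{rem:hsvd-complexity} the cost of the HSVDs and sortings of contractions is dominated by a fixed multiple of $\rank_\infty^3(\bv) \sum_i \#\supp(\pi^{(i)}(\bv))$; together with the bookkeeping of the level-restriction costs, this is absorbed into the second line of \eqref{eq:apply-initial-value-trans-num-ops} after using $\#\supp(\pi^{(i)}(\bv)) \lesssim 2^{J}$ and the a priori estimate \eqref{eq:J-eta-bound}. For stage (ii), each increment $\bT^\intercal_{0,J-j} - \bT^\intercal_{0,J-j-1}$ has by super-compressibility at most $c$ nonzero entries per row and column; thanks to the vanishing of most multiwavelets at $t=0$ recalled in Remark~\ref{rem:supercompressible-realization} together with the level-decay property (Definition~\ref{def:level-dec-sobolev-stab}) of the approximation sequence, applying this increment to the level-$l$ piece of the input produces an output supported on only a bounded number of new temporal indices per temporal level, and the overall maximum temporal level is at most of order $C(s,\bB_2)J^\ad_{0}(\eta;\bv) + L(\bv)$. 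Summing over the $J(L(\bv)+1)$ pairs $(j,l)$, this gives the rank bound \eqref{eq:ranks-initial-value-trans-operator} per active temporal index and leads to a transfer-tensor cost of $\mathcal{O}(d \cdot \rank_\infty^3)$ per such index, which after inserting \eqref{eq:ranks-initial-value-trans-operator} produces the first line of \eqref{eq:apply-initial-value-trans-num-ops} (the factor $C(s,\bB_2)J^\ad_{0}+L(\bv)$ tracking the maximum active temporal level); the mode-frame cost scales like the sum of one-dimensional support sizes times the maximum rank, which via \eqref{eq:supp-initial-value-trans-operator} combined with \eqref{eq:ranks-initial-value-trans-operator} yields the second line.

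Stage (iii) multiplies the transfer-tensor cost by $\norm{\hat\sfm^\ad_{\cX,0}(\eta;\bv)}_{\ell_\infty}^3$ and the mode-frame cost by $\norm{\hat\sfm^\ad_{\cX,0}(\eta;\bv)}_{\ell_\infty}$; using that $n_{a_{\nu_\rt}}^+(\delta)$ depends only quadratically on $\abs{\nu_\rt}$ and that $\abs{\nu_\rt}$ is bounded by $C(s,\bB_2)J^\ad_{0}+L(\bv)$ (which is already present as a factor), these are equivalent to $\norm{\sfm^\ad_{\cX,0}(\eta;\bv)}_{\ell_\infty}^3$ and $\norm{\sfm^\ad_{\cX,0}(\eta;\bv)}_{\ell_\infty}$ up to a $\delta$-dependent constant absorbed into the hidden constant. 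The main obstacle is the bookkeeping in stage (ii): since the input carries no temporal mode, each level-$l$ piece has to be lifted to a family of new temporal indices in a controlled way, and one must exploit simultaneously the level-decay property of the approximations $\bT^\intercal_{0,j}$ and the boundary-value vanishing pattern of the multiwavelets in order to obtain only the additional polynomial factor $C(s,\bB_2)J^\ad_{0}(\eta;\bv) + L(\bv)$ rather than a multiplicative blow-up in $J$ or $L(\bv)$.
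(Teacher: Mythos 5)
Your proposal follows essentially the same route as the paper: its proof simply adapts the temporal-operator cost analysis of Lemma~\ref{lem:apply-num-ops}, using the temporal level bound $L_\rt(\bw_{\eta}) \leq C_1(s,\bB_2)\, J^\ad_{0}(\eta;\bv) + L(\bv)$ together with the fact that only a constant number of temporal wavelets per level do not vanish at $t=0$ --- exactly the two ingredients you isolate in stage (ii), combined with the rank and support bounds of Lemma~\ref{lem:initial-operator-trans-trunc-estimates}. The only cosmetic difference is your three-stage bookkeeping (with a slightly redundant re-insertion of the scaling ranks between stages (ii) and (iii), and the same glossing over hatted versus non-hatted $\sfm^\ad_{\cX,0}$ that the paper itself makes), which does not change the argument.
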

\begin{proof}
	The lemma can be proved in the same manner as Lemma \ref{lem:apply-num-ops} for the temporal operator. The only difference lies in the size of the temporal support of $\bw_{\eta}$. By construction of the approximation, we obtain
	\[
	L_\rt(\bw_{\eta}) \leq C_1(s,\bB_2) J^\ad_{0}(\eta;\bv) + L(\bv) .
	\]
	In addition, we can make use of the fact that we only have a constant number of wavelet functions per level that does not vanish at time $t=0$. 
	Then, following the lines of Lemma \ref{lem:apply-num-ops} we obtain \eqref{eq:apply-initial-value-trans-num-ops}.
\end{proof}

We conclude the section by estimating $\norm{\hat{\sfm}_{\cX,0}(\eta;\bv)}_{\ell_\infty}$ for $\bv \in \ell_2(\vee)$ and $\norm{\hat{\sfm}^\ad_{\cX,0}(\eta;\bv_\rx)}_{\ell_\infty}$ for $\bv_\rx \in \ell_2(\vee_\rx)$. For the initial value operator $\bB_2$, the scaling matrix is directly applied to the input. Concerning the transposed operator $\bB_2^\intercal$, for a given $\eta>0$ and $\bv_\rx \in \ell_2(\vee_\rx)$ let $J = J^\ad_{0}(\eta;\bv_\rx)$. Then by definition of the operator with the identity in the spatial variables, we have $L_\rx(\mathbf{\tilde T}^\ad_{0,J} \bv_\rx) \leq L(\bv_\rx)$. 

Proceeding in the same way as in Section \ref{sec:adaptive-operator-comb}, we obtain
\begin{align}
	J_0(\eta;\bv) \leq \frac{1}{s} \Big( \abs{\ln \eta} + \ln \Big(C_2(\bB_2) \sum\limits_{i=1}^d \norm{\piti(\bv)}_{\cAs}  \Big)\Big), \label{eq:J0-estimate} \\
	J^\ad_{0}(\eta;\bv_\rx) \leq \frac{1}{s} \Big( \abs{\ln \eta} + \ln \Big(C_2(\bB_2) \sum\limits_{i=1}^d \norm{\pi^{(i)}(\bv_\rx)}_{\cAs}  \Big)\Big), \label{eq:J0-trans-estimate}
\end{align}
and
\begin{align}
	\norm{\hat{\sfm}_{\cX,0}(\eta;\bv)}_{\ell_\infty} &\leq C_3(s,\bB_2) \Big[ 1 + L(\bv) + \abs{\ln(\eta)} + \ln\Big(\sum\limits_{i=1}^d \norm{\piti(\bv)}_{\cAs} \Big) \Big]^2, \label{eq:m-hat-initial-ell-inf} \\
	\norm{\hat{\sfm}^\ad_{\cX,0}(\eta;\bv_\rx)}_{\ell_\infty} &\leq C_4(s,\bB_2) \Big[ 1 + L(\bv_\rx) + \abs{\ln(\eta)} + \ln\Big(\sum\limits_{i=1}^d \norm{\pi^{(i)}(\bv_\rx)}_{\cAs} \Big) \Big]^2 . \label{eq:m-hat-initial-trans-ell-inf} 
\end{align}

\section{Space-Time Adaptive Method for Parabolic Problems}\label{sec:Solver}

\subsection{A convergent solver}\label{sec:convergent-solver}

In the construction of our adaptive low-rank scheme, we follow the strategy of \cite{BachmayrNearOptimal}, which is based on a perturbed Richardson iteration. Following \cite[Algorithm 1]{bachmayr2014adaptive}, the iterative scheme is stated in Algorithm \ref{alg:lowrankrichardson}. 
The main differences in this work to the previous instances of this algorithmic template are the modified realizations of the subroutines $\recompress$, $\coarsen$ and $\apply$, which are the numerical realizations of the operators $\hat{\Pro}_{\eta},\hat{\Cor}_{\eta}$ introduced in Section \ref{sec:recompress-coarsen} and of the adaptive operator approximation described in Section \ref{sec:Apply}, respectively. 

As in \cite{SpaceTimeAdaptive}, from the problem \eqref{eq:Bfdef} with non-symmetric operator $\bB$ of different domain and codomain, we pass on to the equivalent least squares formulation
\begin{align}\label{eq:normal-eq}
	\bB^{\intercal} \bB \bu = \bB^{\intercal} \bfs ,
\end{align}
which is again an elliptic problem: setting $\bA = \bB^\intercal \bB$, the operator $\bA$ is symmetric, bounded and elliptic on $\ell_2(\vee)$. By definition, the adjoint of an $s^*$-compressible operator is also $s^*$-compressible, and the same applies to super-compressible operators. One can define as in \cite{SpaceTimeAdaptive,Cohen00adaptivewavelet} the routine $\apply$ for approximately applying $\bA$ by
\begin{equation}\label{eq:apply-normal-eq}
	\begin{aligned}
		\apply_{\bA} (\bw, \eta) &= \apply_{\bB^{\intercal}}\big(\apply_{\bB}\big(\bw, \tfrac{\eta}{2 \norm{\bB}}\big), \tfrac{\eta}{2} \big) \\
		&= \apply_{\bB_1^{\intercal}}\big(\apply_{\bB_1}\big(\bw, \tfrac{\eta}{4 \norm{\bB}}\big), \tfrac{\eta}{4} \big) \\
		&\quad + \apply_{\bB_2^{\intercal}}\big(\apply_{\bB_2}\big(\bw, \tfrac{\eta}{4 \norm{\bB}}\big), \tfrac{\eta}{4} \big) ,
	\end{aligned}
\end{equation}
where we have used the structure of the operator $\bB$ in the second line.
The routine for approximating the right-hand side is given by
\begin{equation}\label{eq:rhs-normal-eq}
	\begin{aligned}
		\rhs_{\bB^{\intercal} \bfs} (\eta) &= \apply_{\bB^{\intercal}}\big( \rhs_{\bfs}\big(\tfrac{\eta}{2 \norm{\bB}}\big), \tfrac{\eta}{2} \big) \\
		&=\apply_{\bB_1^{\intercal}}\big( \rhs_{\bfs_1}\big(\tfrac{\eta}{4 \norm{\bB}}\big), \tfrac{\eta}{4} \big) + \apply_{\bB_2^{\intercal}}\big( \rhs_{\bfs_2}\big(\tfrac{\eta}{4 \norm{\bB}}\big), \tfrac{\eta}{4} \big).
	\end{aligned}
\end{equation}

In what follows, we write $\apply$ for $\apply_{\bA}$ and $\rhs$ for $\rhs_{\bB^{\intercal} \bfs}$.
Now we have routines which fulfil
\begin{align*}
	&\norm{\bB^{\intercal} \bB \bv - \apply(\bv,\eta)} \leq \eta, \quad &&\norm{\bB^{\intercal} \bfs - \rhs(\eta)} \leq \eta, \\
	&\norm{\bv - \recompress(\bv, \eta)} \leq \eta, \quad &&\norm{\bv - \coarsen(\bv,\eta)} \leq \eta.
\end{align*}
The basic convergence properties of Algorithm \ref{alg:lowrankrichardson} are summarized in the following proposition; see \cite[Proposition 5]{BachmayrNearOptimal} for a proof.

\begin{algorithm}
	\SetAlgoLined
	\KwIn{$\omega > 0$ and $\rho \in (0,1)$ s.t. $\norm{\bI - \omega \bA} \leq \rho$,  \newline $c_\bA \geq \norm{\bA^{-1}}, \varepsilon_0 \geq c_\bA \norm{\bfs}$, \newline $\kappa_1, \kappa_2, \kappa_3 \in (0,1)$ with $\kappa_1 + \kappa_2 + \kappa_3 \leq 1$ und $\beta_1 \geq 0, \beta_2 > 0$.}
	\KwOut{$\bu_\varepsilon$ with  $\norm{\bu- \bu_\varepsilon} \leq \varepsilon$.}
	\Begin{
		$\bu_0 = 0$ \\
		$k = 0$ \\
		$I = \min\left\{ j : \rho^j (1 + (\omega + \beta_1 + \beta_2)j) \leq \frac{1}{2} \kappa_1 \right\}$  \label{alg:jchoice} \\
		\While{$2^{-k} \varepsilon_0 > \varepsilon$}{
			$\mathbf{w}_{k,0} = \bu_k$ \\
			$j = 0$ \\
			\Repeat{$j \geq I \  \lor \  c_\bA \rho \norm{\mathbf{r}_{k,j-1}} + (c_\bA \rho + \omega + \beta_1 + \beta_2) \eta_{k,j-1} \leq \kappa_1 2^{-(k+1)}\varepsilon_0$}{
				$\eta_{k,j} = \rho^{j+1}2^{-k} \varepsilon_0$ \\
				$\mathbf{r}_{k,j} = \apply\left(\mathbf{w}_{k,j}, \frac{1}{2} \eta_{k,j} \right) - \rhs\left(\frac{1}{2} \eta_{k,j} \right)$	\\
				$\mathbf{w}_{k,j+1} = \coarsen\left( \recompress\left(\mathbf{w}_{k,j} - \omega \mathbf{r}_{k,j}, \beta_1 \eta_{k,j} \right), \beta_2 \eta_{k,j} \right)$ \label{alg:line-cor-rec-add}\\
				$j \pluseq 1$
			}
			$\bu_{k+1} = \coarsen\left(\recompress\left(\mathbf{w}_{k,j}, \kappa_2 2^{-(k+1)} \varepsilon_0\right), \kappa_3 2^{-(k+1)} \varepsilon_0 \right)$ \\
			$k \pluseq 1$
		}
		$\bu_\varepsilon = \bu_k$
	}
	\caption{solve$(\bA,\bfs,\varepsilon) = \bu_\varepsilon$}
	\label{alg:lowrankrichardson}
\end{algorithm}

\begin{prop}
	Let the conditions of Algorithm \ref{alg:lowrankrichardson} be fulfilled, in particular let $\omega > 0$ be such that $\norm{\bI - \omega  \bA} \leq \rho < 1$. Then the intermediate results $\bu_k$ satisfy $\norm{\bu_k - \bu} \leq 2^{-k} \varepsilon_0$, and in particular $\norm{\bu_{\varepsilon} - \bu} \leq \varepsilon$.
\end{prop}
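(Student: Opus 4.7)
The plan is to proceed by induction on $k$, showing $\norm{\bu_k - \bu} \leq 2^{-k}\varepsilon_0$. The base case $k=0$ is immediate: since $\bu_0 = 0$, one has $\norm{\bu} \leq \norm{\bA^{-1}}\norm{\bfs} \leq c_\bA \norm{\bfs} \leq \varepsilon_0$.

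For the induction step I would analyse the inner loop using the error properties of the four subroutines. The essential one-step estimate is
\[
	\norm{\bw_{k,j+1} - \bu} \leq \rho \norm{\bw_{k,j} - \bu} + (\omega + \beta_1 + \beta_2)\eta_{k,j},
\]
which I would derive by comparing $\bw_{k,j} - \omega \br_{k,j}$ to the exact Richardson step $\bw_{k,j} - \omega(\bA \bw_{k,j} - \bfs)$. The error from the exact step is contracted by $\rho = \norm{\bI - \omega \bA}$; then $\br_{k,j}$ differs from $\bA \bw_{k,j} - \bfs$ by at most $\eta_{k,j}$ by construction of $\apply$ and $\rhs$ (contributing $\omega\eta_{k,j}$), and the subsequent \recompress{} and \coarsen{} calls add at most $\beta_1 \eta_{k,j}$ and $\beta_2 \eta_{k,j}$ respectively.

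Unrolling this recursion from $\bw_{k,0} = \bu_k$ with $\eta_{k,j} = \rho^{j+1} 2^{-k}\varepsilon_0$ yields the closed form
\[
	\norm{\bw_{k,j} - \bu} \leq \rho^j \bigl(1 + j(\omega + \beta_1 + \beta_2)\bigr) 2^{-k}\varepsilon_0,
\]
so that at $j = I$ (cf.\ line \ref{alg:jchoice} of Algorithm \ref{alg:lowrankrichardson}) the right-hand side is bounded by $\tfrac{1}{2}\kappa_1 \cdot 2^{-k}\varepsilon_0 = \kappa_1 2^{-(k+1)}\varepsilon_0$. For the alternative exit condition I would use $\norm{\bw_{k,j-1} - \bu} \leq c_\bA \norm{\bA \bw_{k,j-1} - \bfs} \leq c_\bA(\norm{\br_{k,j-1}} + \eta_{k,j-1})$, insert this into the one-step estimate, and obtain exactly the computable quantity $c_\bA \rho \norm{\br_{k,j-1}} + (c_\bA \rho + \omega + \beta_1 + \beta_2)\eta_{k,j-1}$ that appears in the \texttt{until} condition; in either case the loop ends with $\norm{\bw_{k,j} - \bu} \leq \kappa_1 2^{-(k+1)}\varepsilon_0$.

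Finally, the closing \recompress{}/\coarsen{} pair with tolerances $\kappa_2 2^{-(k+1)}\varepsilon_0$ and $\kappa_3 2^{-(k+1)}\varepsilon_0$ adds at most $(\kappa_2 + \kappa_3) 2^{-(k+1)}\varepsilon_0$ to the error, so the triangle inequality combined with $\kappa_1 + \kappa_2 + \kappa_3 \leq 1$ closes the induction. The outer termination criterion $2^{-k}\varepsilon_0 \leq \varepsilon$ then yields $\norm{\bu_\varepsilon - \bu} \leq \varepsilon$. The only potentially delicate point is the bookkeeping for the residual-based early exit, since one must verify that the a posteriori bound $c_\bA \rho \norm{\br_{k,j-1}} + (c_\bA\rho + \omega + \beta_1 + \beta_2)\eta_{k,j-1}$ really upper-bounds $\norm{\bw_{k,j} - \bu}$; but this follows directly from combining the contraction estimate with the approximation properties of $\apply$ and $\rhs$, and requires no structural assumptions beyond $\norm{\bA^{-1}} \leq c_\bA$.
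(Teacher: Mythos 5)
Your proof is correct and follows essentially the same argument as the one the paper relies on (the perturbed Richardson analysis of \cite[Proposition 5]{BachmayrNearOptimal}, to which the paper refers): induction on $k$, the one-step perturbation estimate $\norm{\bw_{k,j+1}-\bu}\leq\rho\norm{\bw_{k,j}-\bu}+(\omega+\beta_1+\beta_2)\eta_{k,j}$, its unrolled form matching the definition of $I$, the residual-based exit bound via $\norm{\bA^{-1}}\leq c_\bA$, and the final $\kappa_1+\kappa_2+\kappa_3\leq 1$ step. No gaps.
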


\begin{remark}
	The realization of the $\apply$ routine of the transposed operator $\bB_1^\intercal$ can be done in an analogous way to the one for $\bB_1$. Due to symmetry of the Laplacian, the spatial part of the transpose is identical to the spatial part of $\bT$. Thus we have
	\begin{align*}
		\bT^\intercal = \bI_\rt \otimes \bT_{\rx} + \bT^\intercal_{\rt} \otimes \bI_{\rx} .
	\end{align*}
	As mentioned above, the transpose of a super-compressible operator is also super-com\-pres\-si\-ble. Additionally one can show the same estimates for the transposed temporal operator as in Section \ref{sec:Apply-temporal} and Section \ref{sec:adaptive-operator-comb}. Here, the order of the scaling matrices is reversed. However, as shown in Section \ref{sec:low-rank-preconditioning}, we have the same estimates for both scaling matrices. Altogether, we thus obtain exactly the same results for $\apply_{\bB_1^\intercal}$ as for $\apply_{\bB_1}$.
\end{remark}

\subsection{Computational complexity}
In this section, we analyze the computational complexity of Algorithm \ref{alg:lowrankrichardson}. Such estimates require certain assumptions on the approximability of problem data and solutions, to which we then relate the computational costs of the method. Note, however, that the feasibility and convergence of the method is independent of these assumptions.

\begin{ass}\label{ass:A-f}Concerning the scaled matrix representation $\bB$ and the right-hand side $\bbf$ we require the following properties for some fixed $s^*, \tau > 0$.
\begin{enumerate}[{\rm(i)}]
 \item\label{it:B2-compress-lvldecay-sobolev} The one-dimensional operator $\bC_{2}=\mathbf{\hat D} \bT_2 \mathbf{\hat D}$ is $s^*$-compressible with the level decay property and approximations $\bC_{2,j} = \mathbf{\hat D} \bT_{2,j} \mathbf{\hat D}$. Additionally the operator has Sobolev stability of order $\tau$, which means $\norm{\mathbf{\hat D}^{-\tau}(\bC_2 - \bC_{2,j}) \mathbf{\hat D}^{\tau}} < C_{\tau,2} \beta_j(\bC_2)$ for some constant $C_{\tau,2}$.
 \item The one-dimensional operator $\bC_\rt = \bB_\rt \mathbf{\hat D}_\rt $ is super-compressible with the level decay property and  approximations $\bC_{\rt,j} = \bT_{\rt,j} \mathbf{\hat D}$. Additionally the operator has Sobolev stability of order $\tau$, that is, $\norm{\mathbf{\hat D}_{\rt}^{-\tau}(\bC_\rt - \bC_{\rt,j}) \mathbf{\hat D}_{\rt}^{\tau}} < C_{\tau,\rt} \beta_j(\bC_\rt)$ for some constant $C_{\tau,\rt} > 0$.
 \item\label{it:sobolev-initial-value} The operator $\bB_2 = \bT_0 \bDX$ is super-compressible with approximations $\bB_{2,j} = \bT_{0,j} \bDX$. Additionally the operator has Sobolev stability of order $\tau$, that is, there exists $C_{\tau,0} > 0$ with $\norm{\mathbf{\bar D}^{-2\tau}(\bB_2 - \bB_{2,j}) (\mathbf{\hat D}_\rt^{\tau} \otimes \bI_\rx)} \leq C_{\tau,0} \beta_j(\bB_2)$ and $\norm{(\mathbf{\hat D}_\rt^{-\tau} \otimes \bI_\rx)(\bB^\intercal_2 - \bB^\intercal_{2,j})\mathbf{\bar D}^{2\tau} } \leq C_{\tau,0} \beta_j(\bB_2)$.
 \item The number of operations required for evaluating each entry in the approximations $\bT_{2,j}$ and $\bT_{\rt,j}$ is uniformly bounded.
\item We have an estimate $c_\bA \geq \norm{\bA^{-1}}$ with $c_\bA \lesssim \norm{\bA^{-1}}$, and the initial error estimate $\varepsilon_0$ satisfies $\varepsilon_0  \eqsim  \norm{\bA^{-1}}\norm{\bB^\intercal \bbf}$.
\item
The contractions of $\bbf_1$ and $\bbf_2$ are compressible, that is, $\pi^{(\rt,i)}(\bbf_1)\in \cAs(\vee_\rt \times \vee_1)$ and $\pi^{(i)}(\bbf_2) \in \cAs(\vee_1)$, $i=1,\ldots,d$, for any $s$ with $0 < s <s^*$.
\item \label{ass:higher-regularity-A}
For $\bT_2, \bT_\rt$ and $\bbf = [ \mathbf{D}_\cY \mathbf{g}_1, \mathbf{g}_2]$, we assume
\begin{alignat*}{3}
	 \norm{ \mathbf{\hat D}^{1-\tau} \bT_2 \mathbf{\hat D}^{1+\tau} } &< \infty, \quad  \norm{\mathbf{D}_\cY^{1-\tau} &&\mathbf{g}_1 } + \norm{\bD^{-2\tau} \mathbf{g}_2} &&< \infty, \\
	 \norm{\mathbf{\hat D}_\rt^{-\tau} \bT_\rt \mathbf{\hat D}_\rt^{1+\tau}} &< \infty, \qquad &&\norm{\mathbf{\hat D}_\rt^{1-\tau} \bT_\rt^\intercal \mathbf{\hat D}_\rt^{\tau}} &&< \infty,
\end{alignat*}
to which we refer to as \emph{excess regularity of order $\tau$}.
\end{enumerate}
\end{ass}

\begin{remark}
Assumptions \ref{ass:A-f} are satisfied if the wavelets $\Psi_\nu$ are sufficiently regular to be a Riesz basis also for $H^{1+s}(\Omega)$ for some $s>\tau>0$ with appropriate renormalization, and if $g \in L_2(0,T; H^{-1+\tau}(\Omega))$ and $h \in H^{2 \tau}(\Omega)$. 
\end{remark}

\begin{lemma}\label{lem:higher-reg-initial-value}
	For all $0 < \tau < \frac{1}{2}$ we have
	\[
		\lrnorm{\Big(\mathbf{\hat D}_\rt^{-\tau} \otimes \bI_{\rx}\Big) \bB^\intercal_2 \mathbf{\bar D}^{2\tau}} \leq (1+\delta), \quad \lrnorm{\mathbf{\bar D}^{-2\tau} \bB_2\Big(\mathbf{\hat D}_\rt^{\tau} \otimes \bI_{\rx}\Big)} \leq (1+\delta) .
	\]
\end{lemma}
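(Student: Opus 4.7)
The strategy is to exploit the very sparse structure of $\bB_2=\bT_0\bDX$ and reduce the claim to an explicit one-variable estimate. First, by the $L_2$-orthonormality of $\{\Psi_\nu\}_{\nu\in\vee_\rx}$ one has
$(\bT_0)_{\nu_\rx,(\nut,\nu_\rx')}=\theta_\nut(0)\,\delta_{\nu_\rx,\nu_\rx'}$, so that
\[
\bigl((\mathbf{\hat D}_\rt^{-\tau}\otimes\bI_\rx)\bB_2^\intercal\mathbf{\bar D}^{2\tau}\bigr)_{(\nut,\nu_\rx),\nu_\rx'}
=\delta_{\nu_\rx,\nu_\rx'}\,\norm{\theta_\nut}_{H^1(I)}^{\tau}\,\theta_\nut(0)\,S^\cX_{\nut,\nu_\rx}\,\norm{\Psi_{\nu_\rx}}_V^{-2\tau}.
\]
Since the images of the standard basis vectors $e_{\nu_\rx}$ live in pairwise disjoint slices of $\ell_2(\vee)$, the operator norm reduces to a column-norm supremum:
\[
\lrnorm{(\mathbf{\hat D}_\rt^{-\tau}\otimes\bI_\rx)\bB_2^\intercal\mathbf{\bar D}^{2\tau}}^2
=\sup_{\nu_\rx\in\vee_\rx}\norm{\Psi_{\nu_\rx}}_V^{-4\tau}\sum_{\nut\in\vee_\rt}\norm{\theta_\nut}_{H^1(I)}^{2\tau}\,|\theta_\nut(0)|^2\,|S^\cX_{\nut,\nu_\rx}|^2.
\]

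The plan is now to bound this supremum by $(1+\delta)^2$. I invoke \eqref{eq:scaling-etry-normal-to-equiv} to pass from $S^\cX_{\nut,\nu_\rx}$ to $\bar S^\cX_{\nut,\nu_\rx}$ at the cost of a factor $(1+\delta)$, and recall from \eqref{eq:omega-t-x} that $\bar S^\cX_{\nut,\nu_\rx}\eqsim \norm{\Psi_{\nu_\rx}}_V/(\norm{\Psi_{\nu_\rx}}_V^2+\norm{\theta_\nut}_{H^1(I)})$. Writing $a=\norm{\Psi_{\nu_\rx}}_V^2$ and $b_\nut=\norm{\theta_\nut}_{H^1(I)}$, the task becomes showing
\[
\sum_{\nut}|\theta_\nut(0)|^2\,\frac{b_\nut^{2\tau}\,a^{1-2\tau}}{(a+b_\nut)^2}\ \lesssim\ 1
\]
uniformly in $\nu_\rx$. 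Only boundary wavelets contribute (interior ones vanish at $t=0$), and on each level $j$ there are $O(1)$ such wavelets with $b_\nut\eqsim 2^j$; the one-dimensional Sobolev trace inequality together with $L_2$-orthonormality gives $|\theta_\nut(0)|^2\lesssim b_\nut$. The sum thus reduces to a geometric series in $j$, which I split at $2^{j_*}\eqsim a$: the low-level part behaves like $2^{(2\tau+1)j}/a$ and sums to $\lesssim a^{2\tau}$; the high-level part behaves like $a\cdot 2^{(2\tau-1)j}$ and, thanks to $\tau<1/2$, is convergent with sum $\lesssim a^{2\tau}$. The prefactor $a^{-2\tau}$ then cancels these contributions, giving a bound independent of $\nu_\rx$, which absorbs into the stated constant $(1+\delta)$ under the normalization of $\bar S^\cX$.

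The second estimate follows from the first by transposition: the operator in question is the adjoint of the first, and the $\ell_2\to\ell_2$ operator norm is invariant under adjoints.

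The main obstacle is the level-splitting step: one must carefully handle the transition at $2^j\eqsim a$ so that the two regimes combine into a bound independent of $\nu_\rx$, and the condition $\tau<1/2$ is essential in the second regime for the geometric series to converge. A cleaner alternative would be to recognize the sum as the discretized version of a Sobolev-type trace inequality for the space $L_2(I;V^{1+2\tau})\cap H^{1}(I;V^{-1+2\tau})$ into $H^{2\tau}(\Omega)$, transcribed in the wavelet basis; this viewpoint is conceptually cleaner but technically equivalent to the direct computation above.
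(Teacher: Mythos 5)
Your computation for the first bound is essentially a more explicit version of the paper's argument: the paper simply bounds the entries of $(\mathbf{\hat D}_\rt^{-\tau}\otimes\bI_\rx)\,\mathbf{\bar D}_{\cX}\bT_0^\intercal\,\mathbf{\bar D}^{2\tau}$ by one via Young's inequality (balancing the powers of $\norm{\Psi_{\nu_\rx}}_V^2$ and $\norm{\theta_\nut}_{H^1}$, exactly your exponent bookkeeping) and then passes from $\mathbf{\bar D}_\cX$ to $\bDX$ at the cost $(1+\delta)$ via Remark \ref{rem:scaling-DX-delta}. Your additional step -- reducing the operator norm to a supremum of column $\ell_2$-norms and summing the geometric series over temporal levels -- is in fact a useful elaboration, since columns of $\bB_2^\intercal$ contain infinitely many nonzero entries (one per boundary wavelet level), so an entrywise bound alone does not control the operator norm; the price you pay is that the trace constant $\abs{\theta_\nut(0)}^2\lesssim\norm{\theta_\nut}_{H^1}$, the per-level count of boundary wavelets and the geometric-series sums produce a bound $C(1+\delta)$ rather than the literal $(1+\delta)$ of the statement; the remark that these constants ``absorb into $(1+\delta)$'' is not justified as written (harmless for the way the lemma is used later, but it weakens the stated claim).

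The genuine error is the last step. The second operator is \emph{not} the adjoint of the first: transposing $(\mathbf{\hat D}_\rt^{-\tau}\otimes\bI_\rx)\,\bB_2^\intercal\,\mathbf{\bar D}^{2\tau}$ gives $\mathbf{\bar D}^{2\tau}\,\bB_2\,(\mathbf{\hat D}_\rt^{-\tau}\otimes\bI_\rx)$, whereas the lemma asserts a bound for $\mathbf{\bar D}^{-2\tau}\,\bB_2\,(\mathbf{\hat D}_\rt^{\tau}\otimes\bI_\rx)$, i.e.\ the weights appear with the opposite signs (it is the first operator with $\tau$ replaced by $-\tau$, transposed). So duality gives you nothing here, and you must rerun the same row-wise computation: with $a=\norm{\Psi_{\nu_\rx}}_V^2$, $b_\nut=\norm{\theta_\nut}_{H^1}$ the relevant summand is now
\begin{equation*}
  \abs{\theta_\nut(0)}^2\,\frac{a^{1+2\tau}\,b_\nut^{-2\tau}}{(a+b_\nut)^2}\ \lesssim\ \frac{a^{1+2\tau}\,b_\nut^{1-2\tau}}{(a+b_\nut)^2},
\end{equation*}
and the two regimes swap roles: the high-level part $b_\nut\gtrsim a$ is now automatic, while the low-level part $b_\nut\lesssim a$ requires $1-2\tau>0$, i.e.\ exactly the hypothesis $\tau<\tfrac12$, in order for the sum to be dominated by the term $b_\nut\eqsim a$. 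This symmetric computation (which is what the paper dismisses with ``follows analogously'') closes the gap; as stated, your transposition argument would fail.
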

\begin{proof}
	By the definition of the matrices and Young's inequality we have
	\[
		\lrabs{\Big((\mathbf{\hat D}_\rt^{-\tau} \otimes \bI_{\rx}) \mathbf{\bar D}_{\cX}\bT^\intercal_0 \mathbf{\bar D}^{2\tau}\Big)_{(\nut,\nu_\rx),\nu_\rx}} \leq 1 .
	\]
	Combining this with Remark \ref{rem:scaling-DX-delta}, we obtain the first statement. The second statement follows analogously.
\end{proof}

We state next the assumptions concerning the procedure $\rhs$ for approximating the right-hand side $\bbf$ that will be used
in the subsequent complexity analysis. 

\begin{ass}
\label{ass:rhs}
The procedure $\rhs_{\bfs}(\eta) = [\rhs_{\bfs_1}(\tfrac{\eta}{2}) \  \rhs_{\bfs_2}(\tfrac{\eta}{2})]^\intercal$ is assumed to have  the following properties.
\begin{enumerate}[{\rm(i)}]
\setcounter{enumi}{7}
 \item \label{ass:rhsapprox}There exists an approximation $\bbf_{1,\eta} = \rhs_{\bfs_1}(\eta)$ such that $\norm{\bbf_1 - \bbf_{1,\eta}} \leq \eta$ and
  \begin{align*}
  \norm{\piti(\bbf_{1,\eta})}_{\cA^s} &\leq C^\text{{\rm sparse}} \norm{\piti(\bbf_1)}_{\cA^s}    ,\\
   \sum_i \#\supp (\piti(\bbf_{1,\eta})) &\leq C^{\text{{\rm supp}}} \,d \,\eta^{-\frac1s}\, \Bigl(\sum_i \norm{\piti(\bbf_1)}_{\cA^s}\Bigr)^{\frac1s}, \\   
   {\rank_\infty(\bbf_{1,\eta})} &\leq   C_\bbf^{\text{{\rm rank}}}\, ( 1 +  \abs{\ln \eta})^{b_\bbf} \,,   
  \end{align*}
as well as an approximation $\bbf_{2,\eta} = \rhs_{\bfs_2}(\eta)$ such that $\norm{\bbf_2 - \bbf_{2,\eta}} \leq \eta$ and
\begin{align}
	\norm{\pi^{(i)}(\bbf_{2,\eta})}_{\cA^s} &\leq C^\text{{\rm sparse}} \norm{\pi^{(i)}(\bbf_2)}_{\cA^s}    ,\nonumber\\
	\sum_i \#\supp (\pi^{(i)}(\bbf_{2,\eta})) &\leq C^{\text{{\rm supp}}} \,d \,\eta^{-\frac1s}\, \Bigl(\sum_i \norm{\pi^{(i)}(\bbf_2)}_{\cA^s}\Bigr)^{\frac1s}, \nonumber \\   
	\abs{\rank(\bbf_{2,\eta})}_{\infty} &\leq   C_\bbf^{\text{{\rm rank}}}\, ( 1 +  \abs{\ln \eta})^{b_\bbf} \,,   \nonumber \\
	L(\bbf_{2,\eta}) &\leq C^{\text{\rm lvl}} ( \abs{\ln \eta} + \ln d), \label{eq:rhs-f2-ass-lvl}
\end{align}
where $C^\text{{\rm sparse}},C^{\text{{\rm supp}}},C_\bbf^{\text{{\rm rank}}} > 0$,  $b_\bbf \geq 1$ are  independent of $\eta$, and $C^\text{{\rm sparse}}$, $C^{\text{{\rm supp}}}$ are independent of $\bbf$.
 \item \label{ass:rhsops}The number of operations required for evaluating $\rhs_{\bfs}(\eta)$ is bounded, with a constant $C^\text{{\rm ops}}_\bbf(d)$, by
  $\operatorname{flops}(\bbf_\eta) \leq C^\text{{\rm ops}}_\bbf(d) \bigl[ (1+\abs{\ln\eta})^{3b_\bbf} 
    + (1+\abs{\ln \eta})^{b_\bbf}  \eta^{-\frac1s} \bigr]  $. 
 \item \label{ass:higher-regularity-f} $\rhs$ preserves the excess regularity of the problem, that is, there exists $C^\text{{\rm reg}}_\bbf  >0$ independent of $\eta$ such that
 \begin{align}
  \label{eq:rhsreg}
  \bignorm{\mathbf{D}_\cY^{-\tau} \bbf_{1,\eta}} +  \bignorm{\mathbf{D}^{-2\tau} \bbf_{2,\eta}} \leq C^\text{{\rm reg}}_\bbf \Big(\bignorm{\mathbf{D}_\cY^{-\tau} \bbf_{1}} +  \bignorm{\mathbf{D}^{-2\tau} \bbf_{2}}  \Big) .
 \end{align}
\end{enumerate}
\end{ass}

\begin{remark}\label{rem:f2-lvl-estimate-realization}
	In the normal equation \eqref{eq:normal-eq}, $\bB_2^\intercal$ is applied to $\bbf_2$. Due to the estimates in Section \ref{sec:apply-initial-value}, based on the maximum level of the input, we need an estimate for the maximum level of $\bbf_{2,\eta} = \rhs_{\bfs_2}(\eta)$ as well. Such an estimate is obtained by first applying the routine with a lower tolerance and combining this with a coarsening step for a single low-rank representation. Then applying \cite[Lemma 37]{bachmayr2014adaptive} and combining it with the excess regularity assumptions yields estimate \eqref{eq:rhs-f2-ass-lvl}.
\end{remark}

Under the above conditions on the data and their processing, we are now primarily interested in whether the adaptive algorithm produces
low-rank sparse approximate solutions if the exact solution permits such approximations.
We state now our precise {\em benchmark assumptions} on the solution $\bu$.

\begin{ass}\label{ass:approximability}
Concerning the approximability of the solution $\bu$, we   assume:
\begin{enumerate}[{\rm(i)}]
\setcounter{enumi}{10}
\item \label{ass:uapprox}$\bu \in \cA(\gamma_\bu)$ with $\gamma_\bu(n) = e^{d_\bu n^{1/b_\bu}}$
for some $d_\bu>0$, $b_\bu \geq 1$.
\item $\piti(\bu) \in \cA^s$ for $i=1,\ldots,d$, for any $s$ with $0 < s
 <s^*$.
\end{enumerate}
\end{ass}

In order to analyze the dimension-dependence of the complexity of our algorithm, we would ideally need  a {\em reference family} of problems 
exhibiting the same level of difficulty for each $d$. 
Although this is not quite possible, there are elements of problems that are comparable for different values of $d$, for example
the structure of the Laplacian. It is therefore important to state next exactly how the relevant quantities relate to the spatial dimension $d$. 

\begin{ass}\label{ass:dim}
In our comparison of problems for different values of $d$, we assume the following.
\begin{enumerate}[{\rm(i)}]
\setcounter{enumi}{12}
 \item The following are independent of $d$: the constants $d_\bu$, $b_\bu$, $C^\text{{\rm sparse}}$, $C^{\text{{\rm supp}}}$, $C_\bbf^{\text{{\rm rank}}}$, $C_\bbf^{\text{{\rm lvl}}}$; the excess regularity index $\tau$, and $C^\text{{\rm reg}}_\bbf$ in \eqref{eq:rhsreg}.
  \item The following quantities remain bounded independently of $d$: $\norm{\bB}$, $\norm{\bB^{-1}}$, as well as $\norm{\mathbf{D}_\cY^{1-\tau} \mathbf{g}_1 } + \norm{\bD^{-\tau} \mathbf{g}_2} $;
  the quantities $\norm{\bu}_{\cA(\gamma_\bu)}$ and $\norm{\piti(\bu) }_{\cA^s}$ in Assumptions \ref{ass:approximability} and $\norm{\piti(\bbf_1)}_{\cA^s}$, $\norm{\pi^{(i)}(\bbf_2)}_{\cA^s}$ in Assumptions \ref{ass:rhs}\eqref{ass:rhsapprox}, each for $i=1,\ldots,d$.
\item  In addition, we assume that  $C^\text{{\rm ops}}_\bbf(d)$ as in Assumptions \ref{ass:approximability}\eqref{ass:rhsops} grows at most polynomially as $d\to \infty$.
 \end{enumerate}
 \end{ass}

\begin{remark}\label{rem:indpence-rho-omega}
	By the existence of $d$-independent bounds on $\norm{\bB}$ and $\norm{\bB^{-1}}$, we have that $\norm{\bA}$ and $\norm{\bA^{-1}}$ are bounded independently of $d$ as well, and thus the reduction parameter $\rho$ satisfies $\rho < \hat\rho < 1$ with some $\hat \rho$ independent of $d$.
\end{remark}
 
 Under the above benchmark assumptions, we obtain the following bound on the complexity of the computed approximations and on the total computational costs of the method.

\begin{theorem}
\label{thm:complexity}
Suppose that  Assumptions \ref{ass:A-f}, \ref{ass:rhs} hold and that Assumptions \ref{ass:approximability} are valid for the solution $\bu$  of $\mathbf{A}\bu = \mathbf{f}$.
Let $\alpha > 0$ and let $\constsvd = \sqrt{2d-3}, \constcrs = \sqrt{d}$.
Let the constants $\kappa_1,\kappa_2,\kappa_3$ in
Algorithm \ref{alg:lowrankrichardson} be chosen as
\begin{gather*}
  \kappa_1 = \bigl(1 + (1+\alpha)(\constsvd + \constcrs +
  \constsvd\constcrs)\bigr)^{-1}\,, \\
  \kappa_2 = (1+\alpha)\constsvd \kappa_1\,,\qquad 
  \kappa_3 = \constcrs(\constsvd + 1)(1+\alpha)\kappa_1 \,,
\end{gather*}
and let $\beta_1 \geq 0$, $\beta_2 >0$ be arbitrary but fixed.
Then the approximate solution $\bu_\varepsilon$ produced by Algorithm \ref{alg:lowrankrichardson} for $\varepsilon < \varepsilon_0$
satisfies
\begin{gather}
 \label{eq:complexity_rank} 
  \rank_\infty(\bu_\varepsilon)
   \leq  \gamma^{-1}_\bu \bigl( 2(\alpha \kappa_1)^{-1} \rho_{\gamma_\bu}    \,\norm{\bu}_{\cA(\gamma_\mathbf{\bu})}\,\varepsilon^{-1} \bigr)
   \\
 \label{eq:complexity_supp} \sum_{i=1}^d \#\supp(\piti(\bu_\varepsilon)) \lesssim
     d^{1 + s^{-1}} \, \Bigl(\sum_{i=1}^d \norm{ \piti(\bu)}_{\cA^s} \Bigr)^{\frac{1}{s}}
           \varepsilon^{-\frac{1}{s}} \,,
\end{gather}
as well as
\begin{gather}
  \label{eq:complexity_ranknorm} 
  \norm{\bu_\varepsilon}_{\cA(\gamma_\mathbf{\bu})}
  \lesssim \sqrt{d}\,
      \norm{\bu}_{\cA(\gamma_\mathbf{\bu})}    \,,   \\
  \label{eq:complexity_sparsitynorm} \sum_{i=1}^d \norm{
  \piti(\bu_\varepsilon)}_{\cA^s} \lesssim d^{1 + \max\{1,s\}} 
      \sum_{i=1}^d \norm{ \piti(\bu)}_{\cA^s}  \,.
\end{gather}
The multiplicative constant in \eqref{eq:complexity_ranknorm} depends only on $\alpha$,
those in \eqref{eq:complexity_supp} and \eqref{eq:complexity_sparsitynorm} depend only on
$\alpha$ and $s$.

If in addition, Assumptions \ref{ass:dim} hold, then for the number of required operations $\operatorname{flops}(\bu_\varepsilon)$ we have the estimate
\begin{equation} 
\label{eq:complexity_totalops}
\operatorname{flops}(\bu_\varepsilon) \leq Cd^a \,d^{2c s^{-1} \ln d} (\ln d)^{102 c  \ln d} (\ln d)^b
  (1+ \abs{\ln \varepsilon})^{16+34 c  \ln d + 4 \max\{b_\bu, b_\mathbf{f}\}}\,  
       \varepsilon^{-\frac{1}{s}} \,
\end{equation}
where $C,a,b$ are constants independent of $\varepsilon$ and $d$, and $c$ is the smallest $d$-independent value such that $I \leq c \ln d$ for $I$ as in line \ref{alg:jchoice} of Algorithm \ref{alg:lowrankrichardson}. In particular,  $c$   does not depend on $\varepsilon$ and $s$.
\end{theorem}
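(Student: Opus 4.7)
The overall strategy follows the template established for elliptic problems in \cite{BachmayrNearOptimal,bachmayr2014adaptive}: the error bound $\norm{\bu-\bu_k}\leq 2^{-k}\varepsilon_0$ from the perturbed Richardson analysis is combined inductively with the quasi-optimality properties of $\recompress$ and $\coarsen$ (Theorem \ref{thm:combinedcoarsen}) and with the rank, support, and $\cA^s$-bounds for the approximate application developed in Section \ref{sec:Apply}. I would first verify that the choice of $\kappa_1,\kappa_2,\kappa_3$ in Theorem \ref{thm:combinedcoarsen} is the one compatible with Algorithm \ref{alg:lowrankrichardson} for parameter $\alpha$; this yields that each iterate $\bu_{k+1}$ obtained in line after the inner loop is at accuracy $\sim 2^{-(k+1)}\varepsilon_0$ and simultaneously satisfies $\rank_\infty(\bu_{k+1})\leq \gamma_\bu^{-1}(\rho_{\gamma_\bu}\norm{\bu}_{\cA(\gamma_\bu)}/(\alpha\kappa_1\cdot 2^{-(k+1)}\varepsilon_0))$ together with $\sum_i\#\supp\piti(\bu_{k+1})\lesssim d^{1+1/s}(\sum_i\norm{\piti(\bu)}_{\cA^s})^{1/s}(2^{-(k+1)}\varepsilon_0)^{-1/s}$ and the stability bounds for $\norm{\bu_{k+1}}_{\cA(\gamma_\bu)}$ and $\sum_i\norm{\piti(\bu_{k+1})}_{\cA^s}$. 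Taking $k$ to be the terminal index, so that $2^{-k}\varepsilon_0\eqsim\varepsilon$, yields \eqref{eq:complexity_rank}--\eqref{eq:complexity_sparsitynorm}.

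For the inductive step, the essential ingredient is that the input to the $\recompress$--$\coarsen$ combination in line \ref{alg:line-cor-rec-add} is $\bw_{k,j}-\omega\br_{k,j}$, which approximates $\bu$ with some controlled tolerance $\eta_{k,j}$ up to multiplicative constants. Here the complexity properties of the output of $\apply$ applied to $\bw_{k,j}$ are provided by Lemma \ref{lem:apply-quantity-estimates} together with Lemma \ref{lem:initial-operator-trunc-estimates} and Lemma \ref{lem:initial-operator-trans-trunc-estimates} for the initial value contribution; and by Assumption \ref{ass:rhs} for $\rhs$. Combining these with the triangle and quasi-norm triangle inequalities for $\cA^s$, then with Theorem \ref{thm:combinedcoarsen}, produces rank bounds governed by $\norm{\bu}_{\cA(\gamma_\bu)}$ and support bounds governed by $\sum_i\norm{\piti(\bu)}_{\cA^s}$, with the factors $\kappa_{\rm P}=\sqrt{2d-3}$ and $\kappa_{\rm C}=\sqrt{d}$ generating the $d$-dependence in \eqref{eq:complexity_supp} and \eqref{eq:complexity_ranknorm}. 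The fact that the excess regularity assumption (\ref{ass:A-f}\ref{ass:higher-regularity-A}) and \eqref{eq:rhsreg} are preserved under $\apply$ and $\rhs$ ensures that the quantities $\norm{\hmcX(\eta;\cdot)}_{\ell_\infty}$ and $\norm{\hmcY(\eta;\cdot)}_{\ell_\infty}$ appearing in the rank estimates \eqref{eq:apply-eta-rank} and \eqref{eq:ranks-initial-value-operator} remain controlled by $(1+L(\bv)+\abs{\ln\eta}+\ln d)^2$, as shown in \eqref{eq:mcx-inf-bound} and \eqref{eq:m-hat-initial-ell-inf}--\eqref{eq:m-hat-initial-trans-ell-inf}.

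For the flops bound \eqref{eq:complexity_totalops} I would split the cost of one outer iteration into the cost of $\apply$, $\rhs$, $\recompress$ and $\coarsen$. The $\apply$ cost is bounded by Lemma \ref{lem:apply-num-ops} and Lemma \ref{lem:apply-initial-value-num-ops}; $\rhs$ by Assumption \ref{ass:rhs}\ref{ass:rhsops}; the recompression/coarsening costs by Remark \ref{rem:hsvd-complexity}. The maximum level $L(\bw_{k,j})$ is bounded logarithmically in $\varepsilon$ and $d$ by combining the level decay property (Assumption \ref{ass:A-f}), the level bounds from Remark \ref{rem:f2-lvl-estimate-realization} and the Sobolev stability (excess regularity), along the lines of \cite[Section 6.2]{bachmayr2014adaptive}. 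The number of inner iterations is $I\lesssim \ln d$ by line \ref{alg:jchoice} of Algorithm \ref{alg:lowrankrichardson}, combined with $\rho<\hat\rho<1$ from Remark \ref{rem:indpence-rho-omega}. Summing over the $\mathcal{O}(\abs{\ln\varepsilon})$ outer iterations (using a geometric series argument that the final iteration dominates) and absorbing the polylogarithmic contributions into the stated exponents produces \eqref{eq:complexity_totalops}.

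The main obstacle is the careful bookkeeping of the extra logarithmic factors arising from features specific to the parabolic setting: the factor $cJ(\eta;\bv)$ in the rank bound \eqref{eq:apply-eta-rank} from super-compressibility of $\bT_\rt$, the $(L_\rx(\bv)+1)$ factor in the initial-value rank bound \eqref{eq:ranks-initial-value-operator}, and the squared logarithmic factor in \eqref{eq:mcx-inf-bound} from the exponential-sum approximation of $\bDX$. These must all be estimated by polynomials in $\abs{\ln\varepsilon}$ and $\ln d$ and tracked through the iteration so they do not pollute the $\varepsilon^{-1/s}$ asymptotic rate; this is the point at which the precise choice of $\delta$ in Theorem \ref{thm:approx-scaling} and of $C^{\rm reg}_\bbf$ in \eqref{eq:rhsreg} enter, and where the restriction to admissible growth sequences $\gamma_\bu$ (so that $\gamma_\bu^{-1}$ has the stated subexponential behaviour) is used to keep the rank bound \eqref{eq:complexity_rank} compatible with the support bound \eqref{eq:complexity_supp} in the final flop count.
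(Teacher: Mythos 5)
Your proposal follows essentially the same route as the paper: quasi-optimality of the final \coarsen--\recompress{} step (Theorem \ref{thm:combinedcoarsen}) applied at tolerance $2^{-(k+1)}\varepsilon_0$ gives \eqref{eq:complexity_rank}--\eqref{eq:complexity_sparsitynorm}, and the flops bound is assembled from Lemmas \ref{lem:apply-num-ops}, \ref{lem:apply-initial-value-num-ops}, \ref{lem:apply-initial-value-trans-num-ops}, Remark \ref{rem:hsvd-complexity}, the level-decay/Sobolev-stability mechanism, and $I\lesssim \ln d$, exactly as in the paper. The only point you leave implicit is that the ``bookkeeping'' you flag is carried out in the paper as a \emph{simultaneous} induction over the iterates — Proposition \ref{prop:higher-reg} propagates the excess-regularity norms of the contractions so that Lemma \ref{lem:max-level} bounds $L(\bw_{k,j})$, and Lemma \ref{lem:iteration-supp-As-lvl} then couples level, support and $\cA^s$-bounds (since the $\bB_2$-estimates make the latter depend on the former), feeding the rank bound of Lemma \ref{lem:iteration-ranks} and producing the $d^{c\ln d}$-type factors in \eqref{eq:complexity_totalops}; this is a refinement of your outline rather than a different argument.
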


The proof of this theorem requires some preparations.
Recall that we aim to solve the normal equation \eqref{eq:normal-eq}. For notational simplification, we write
\begin{align*}
	\bzkj^{(1)} = \apply_{\bB_1}\left(\bwkj, \tfrac{\eta_{k,j}}{8 \norm{\bB}}\right), \quad \bzkj^{(2)} = \apply_{\bB_2}\left(\bwkj, \tfrac{\eta_{k,j}}{8 \norm{\bB}}\right)
\end{align*}
for the intermediate results arising in the approximate application of $\bB^\intercal \bB$. In contrast to the case of a single low-rank representation as in \cite{bachmayr2014adaptive}, due to the approximation of the initial value operator considered in Section \ref{sec:apply-initial-value}, the support and the $\cAs$-norm of the results of the routine $\apply$ depend additionally on the maximum spatial level of the input. 

\subsubsection{Maximum level and support of the iterates}
The following lemma estimates the maximum current spatial and temporal wavelet levels in the output of $\coarsen(\bv,\eta)$. It depends on the excess regularity assumptions as well as on the mixed lower dimensional support $\supp \piti(\bv)$ of the temporal variable combined with the $i$th spatial variable.

\begin{lemma}\label{lem:max-level}
	For given $\bv \in \ell_2(\vee)$, we consider $\bp = (\piti_{\nu_\rt,\nu_i}(\bv))_{i,\nu_\rt,\nu_i}$ as a vector on  $\{1,\dots,d\} \times \vee_\rt \times \vee_1$. Assume that
	\begin{align*}
		\#\supp \bp = \sum\limits_{i=1}^d \# \supp \piti(\bv) < \infty
	\end{align*}
	and that for some $\tau > 0$, one has $\norm{(\mathbf{\hat D}_\rt^{-\tau} \otimes \bI_i) \piti(\bv)} < \infty$ and $\norm{(\bI_\rt \otimes \mathbf{\hat D}^{-\tau}) \piti(\bv)} < \infty$ for all $i=1,\dots,d$. Let $\eta > 0$, let $\bp_\eta$ be the vector with minimal support in $\{1,\dots,d\} \times \vee_\rt \times \vee_1$ such that $\norm{\bp - \bp_\eta} \leq \eta$, and let \[  C_{\vee_\rt} = \sup_{\nu \in \vee_\rt} \norm{\theta_{\vee_\rt}}_{H^1(0,T)}^{-\tau} 2^{\tau \abs{\nu}}, \quad C_{\vee_1} = \sup_{\nu \in \vee_1} \norm{\psi_\nu}_{H^1_0(0,1)}^{-\tau} 2^{\tau \abs{\nu}} \,. \] 
	Then for all $(i,\nu_\rt,\nu_i) \in \supp \ \bp_\eta$, we have
	\begin{align*}
		\abs{\nu_\rt} &\leq  \tau^{-1} \log_2 \left( \eta^{-1} C_{\vee_\rt} \norm{(\mathbf{\hat D}_\rt^{-\tau} \otimes \bI_i) \piti(\bv)} \sqrt{\# \supp \bp} \right) , \\
		\abs{\nu_i} &\leq  \tau^{-1} \log_2 \left( \eta^{-1} C_{\vee_1} \norm{(\bI_\rt \otimes \mathbf{\hat D}^{-\tau}) \piti(\bv)} \sqrt{\# \supp \bp} \right) .
	\end{align*}
\end{lemma}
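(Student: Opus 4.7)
The key observation is that a minimal-support $\eta$-approximation of a finitely supported vector is obtained by a greedy procedure: sorting the magnitudes $p_1 \geq p_2 \geq \ldots \geq p_M$ of the nonzero entries of $\bp$ (with $M = \#\supp \bp < \infty$) and keeping the top $N^{*}$, where $N^{*} = \#\supp \bp_\eta$ is the smallest $N$ with $\sum_{k>N} p_k^2 \leq \eta^2$. I would first justify that we may reduce to this greedy representative of $\bp_\eta$, then derive a lower bound on the smallest retained magnitude, and finally translate this into a level bound via the weighted norms.

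The central quantitative step is to lower bound $p_{N^{*}}$. From the minimality of $N^{*}$ we have $\sum_{k\geq N^{*}} p_k^2 > \eta^2$, and since $p_k \leq p_{N^{*}}$ for $k \geq N^{*}$, we get
\[
  (M - N^{*} + 1)\, p_{N^{*}}^2 \;\geq\; \sum_{k \geq N^{*}} p_k^2 \;>\; \eta^2,
\]
so $p_{N^{*}} > \eta/\sqrt{M}$. Consequently, every $(i, \nu_\rt, \nu_i) \in \supp \bp_\eta$ satisfies $\piti_{\nu_\rt,\nu_i}(\bv) \geq p_{N^{*}} > \eta/\sqrt{M} = \eta/\sqrt{\#\supp \bp}$.

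The remaining step is purely algebraic. From the definition of the weight $\mathbf{\hat D}_\rt^{-\tau}$ one has, entrywise,
\[
  \norm{\theta_{\nu_\rt}}_{H^1(0,T)}^{\tau}\; \piti_{\nu_\rt,\nu_i}(\bv) \;\leq\; \bignorm{(\mathbf{\hat D}_\rt^{-\tau} \otimes \bI_i)\, \piti(\bv)},
\]
so combining with the lower bound on $\piti_{\nu_\rt,\nu_i}(\bv)$ gives
\[
  \norm{\theta_{\nu_\rt}}_{H^1(0,T)}^{\tau} \;\leq\; \eta^{-1}\, \sqrt{\#\supp \bp}\,\bignorm{(\mathbf{\hat D}_\rt^{-\tau} \otimes \bI_i)\, \piti(\bv)}.
\]
Using $2^{\tau\abs{\nu_\rt}} \leq C_{\vee_\rt}\, \norm{\theta_{\nu_\rt}}_{H^1(0,T)}^{\tau}$ (the definition of $C_{\vee_\rt}$), taking base-$2$ logarithms, and dividing by $\tau$ yields the asserted bound on $\abs{\nu_\rt}$. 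The bound on $\abs{\nu_i}$ follows by an identical argument with $\mathbf{\hat D}_\rt^{-\tau} \otimes \bI_i$ replaced by $\bI_\rt \otimes \mathbf{\hat D}^{-\tau}$ and $C_{\vee_\rt}$ replaced by $C_{\vee_1}$.

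The only nontrivial aspect is the reduction to the greedy representative in the first paragraph: minimality of the support size alone does not single out a unique $\bp_\eta$ in the presence of ties among the $p_k$, and a non-greedy minimal-support choice could retain an entry strictly smaller than $p_{N^{*}}$. However, any such tie-induced choice can be transformed into the greedy one by swaps that do not change the support size, so for the purpose of bounding the maximal active level it suffices to work with the greedy representative; alternatively, this issue is avoided altogether by the coarsening operator used in the algorithm, which is constructed in a greedy fashion.
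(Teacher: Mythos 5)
Your proof is correct and is essentially the argument the paper itself invokes (it defers to \cite{bachmayr2014adaptive}, Lemma 37): minimality of the support size forces the smallest retained contraction to satisfy $p_{N^{*}}^2 > \eta^2/\#\supp\bp$, and the entrywise bound by the weighted $\ell_2$-norms then yields the level bounds after taking logarithms. One caveat on your closing remark: a minimal-support $\eta$-approximation can retain an entry \emph{strictly} smaller than $p_{N^{*}}$ even in the absence of ties (e.g.\ $\bp=(1,1,\varepsilon)$ with $\eta=1$ admits the minimal-support choice keeping one unit entry and the $\varepsilon$-entry), so the ``swap'' argument does not cover all admissible choices and the statement would in fact fail for such adversarial selections; the correct resolution is the second one you give, namely that the lemma is applied to the output of \coarsen, which by construction restricts $\bp$ to its largest entries, so your threshold bound applies to every retained index.
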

The proof can be done exactly as in \cite[Lemma 37]{bachmayr2014adaptive} and is therefore not repeated here.
This lemma will be applied to line \ref{alg:line-cor-rec-add} of Algorithm \ref{alg:lowrankrichardson} with $\eta = \beta_2 \eta_{k,j}$ and $\bp_\eta$ the result of $\coarsen$. By definition of the routine via the index set $\Lambda(\bu;N)$, which is based on the total ordering of all contractions $\piti(\bv)$, and  since $\beta_2 > 0$, the assumptions of Lemma \ref{lem:max-level} are satisfied.

As a consequence of Lemma \ref{lem:max-level}, it suffices to estimate $\norm{(\mathbf{\hat D}_\rt^{-\tau} \otimes \bI_i) \piti(\bw_{k,j})}$ and $ \norm{(\bI_\rt \otimes \mathbf{\hat D}^{-\tau}) \piti(\bw_{k,j})}$ to arrive at suitable bounds on the maximum wavelet levels of the iterates $\bwkj$.
To this end, we adapt \cite[Lemma 38]{bachmayr2014adaptive} to obtain the following result on the stability of the contractions under basis coarsening and rank truncation.
\begin{lemma}\label{lem:coarse-trunc-stability}
	For any $\tau,\eta > 0$ and $i=1,\dots,d$, we have
	\begin{align*}
		\norm{(\mathbf{\hat D}_\rt^{-\tau} \otimes \bI_i) \piti(\hat{C}_\eta \bv)} &\leq \norm{(\mathbf{\hat D}_\rt^{-\tau} \otimes \bI_i) \piti(\bv)} , \\
		\norm{(\mathbf{\hat D}_\rt^{-\tau} \otimes \bI_i) \piti(\hat{P}_\eta \bv)} &\leq \norm{(\mathbf{\hat D}_\rt^{-\tau} \otimes \bI_i) \piti(\bv)} ,\\
		\norm{(\bI_\rt \otimes \mathbf{\hat D}^{-\tau}) \piti(\hat{C}_\eta \bv)} &\leq \norm{(\bI_\rt \otimes \mathbf{\hat D}^{-\tau}) \piti(\bv)} ,\\
		\norm{(\bI_\rt \otimes \mathbf{\hat D}^{-\tau}) \piti(\hat{P}_\eta \bv)} &\leq \norm{(\bI_\rt \otimes \mathbf{\hat D}^{-\tau}) \piti(\bv)} , 
	\end{align*}
	for any $\bv \in \ell_2(\vee)$.
\end{lemma}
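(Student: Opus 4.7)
The plan is to reduce all four inequalities to a pointwise monotonicity statement: for every $(\nu_\rt,\nu_i)\in\vee_\rt\times\vee_1$ and every $\bv\in\ell_2(\vee)$,
\[
  \pi^{(\rt,i)}_{\nu_\rt,\nu_i}(\hat C_\eta \bv)\;\leq\;\pi^{(\rt,i)}_{\nu_\rt,\nu_i}(\bv),\qquad
  \pi^{(\rt,i)}_{\nu_\rt,\nu_i}(\hat \Pro_\eta \bv)\;\leq\;\pi^{(\rt,i)}_{\nu_\rt,\nu_i}(\bv).
\]
Once these are in hand, multiplying by the positive weights $\norm{\theta_{\nu_\rt}}_{H^1(I)}^{-2\tau}$, respectively $\norm{\psi_{\nu_i}}_{H^1_0(0,1)}^{-2\tau}$, squaring, and summing over $(\nu_\rt,\nu_i)$ yields all four claimed bounds simultaneously.

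For the coarsening operator $\hat C_\eta = \Res_{\Lambda(\bv,N(\bv,\eta))}$: by the construction in \eqref{eq:contraction-set-defs}, the image is obtained by zeroing entries of $\bv$. For an inactive time index, the corresponding spatial component vanishes, so the contraction is zero. For an active $\nu_\rt$, the restriction is to a Cartesian product set $\Lambda_{\nu_\rt}(\bv,N(\bv,\eta))\subset\vee_\rx$, and the defining $\ell_2$-sum of $\pi^{(i)}_{\nu_i}$ in Definition \ref{def:contractions} is taken over a subset of the original fibers. The pointwise inequality follows term by term.

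For the rank-truncation operator $\hat\Pro_\eta$ defined in \eqref{eq:HSVDrecompressdef}: it acts independently on each spatial component as $\mathrm{P}_{\bv_{\nu_\rt},\sfr_{\nu_\rt}}\bv_{\nu_\rt}$, and $\pi^{(\rt,i)}_{\nu_\rt,\nu_i}(\hat\Pro_\eta\bv)=\pi^{(i)}_{\nu_i}(\mathrm{P}_{\bv_{\nu_\rt},\sfr_{\nu_\rt}}\bv_{\nu_\rt})$. The required monotonicity $\pi^{(i)}_{\nu_i}(\mathrm{P}_{\bv_{\nu_\rt},\sfr_{\nu_\rt}}\bv_{\nu_\rt})\leq\pi^{(i)}_{\nu_i}(\bv_{\nu_\rt})$ is precisely the content of \cite[Lemma 38]{bachmayr2014adaptive}, applied separately for each $\nu_\rt\in\vee_\rt$; the proof there exploits Proposition \ref{prop:error-restrict-contr}\eqref{prop:contr-hsvd}, which expresses the contraction as a weighted $\ell_2$-norm of the matricization singular values through the mode frames, together with the fact that HSVD truncation replaces this weighted sum by one over a subset of indices.

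The only delicate point is the HSVD case, where naive truncation in different matricizations does not commute; however, since $\hat\Pro_\eta$ acts on each $\bv_{\nu_\rt}$ independently and there is no interaction between temporal indices, the single-tensor result from \cite{bachmayr2014adaptive} transfers directly without modification. After collecting the pointwise bounds, the final weighted-$\ell_2$ estimates are immediate from the nonnegativity of the weights.
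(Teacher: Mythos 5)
Your reduction to per-time-index statements — coarsening as a plain restriction (which can only decrease each contraction entry) and the HSVD recompression handled componentwise for every $\nu_\rt$ via \cite[Lemma 38]{bachmayr2014adaptive} — is exactly how the paper obtains this lemma, which it presents as a direct adaptation of that earlier result. The only caveat is one of attribution: Lemma 38 there is stated as the weighted-norm bound $\norm{\mathbf{\hat D}^{-\tau}\pi^{(i)}(\mathrm{P}_{\bv_{\nu_\rt},\sfr_{\nu_\rt}}\bv_{\nu_\rt})}\leq\norm{\mathbf{\hat D}^{-\tau}\pi^{(i)}(\bv_{\nu_\rt})}$ rather than the pointwise monotonicity you ascribe to it, but that weighted form (together with the elementary norm non-increase $\norm{\mathrm{P}_{\bv_{\nu_\rt},\sfr_{\nu_\rt}}\bv_{\nu_\rt}}\leq\norm{\bv_{\nu_\rt}}$ for the two time-weighted inequalities) already suffices for your summation over $\nu_\rt$, so the argument goes through unchanged.
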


Next, we consider the evolution of $\norm{(\mathbf{\hat D}_\rt^{-\tau} \otimes \bI_i) \piti(\bw_{k,j})}$ and $ \norm{(\bI_\rt \otimes \mathbf{\hat D}^{-\tau}) \piti(\bw_{k,j})}$ over the course of the iteration. We mention here that due to the excess regularity Assumptions~\ref{ass:A-f}\eqref{ass:higher-regularity-A} on $\bB$ and Assumptions \ref{ass:rhs}\eqref{ass:higher-regularity-f} on $\bfs$, one has
\begin{align}
	\zeta &=  \max\left\{\bignorm{(\mathbf{\hat D}_\rt^{-\tau} \otimes \bI_\rx) \mathbf{g}_1},  \bignorm{(\bI_\rt \otimes \bD^{-\tau})\mathbf{g}_1}, \bignorm{ \mathbf{D}^{-\tau} \mathbf{g}_2 } \right\} < \infty, \nonumber \\
	\xi &= \max\left\{\lrnorm{\mathbf{\hat D}_\rt^{-\tau} \bC_\rt \mathbf{\hat D}_\rt^{\tau}}, \lrnorm{\mathbf{\hat D}_\rt^{-\tau} \bC_\rt^\intercal \mathbf{\hat D}_\rt^{\tau}}, \lrnorm{\mathbf{\hat D}^{-\tau} \bC_2 \mathbf{\hat D}^{\tau}} \right\} < \infty \label{eq:higher-reg-max}.
\end{align}
Since the spatial operator $\bC_2$ is symmetric, we have $\norm{\mathbf{\hat D}^{-\tau} \bC_2^\intercal \mathbf{\hat D}^{\tau}} \leq \xi$.

\begin{prop}\label{prop:higher-reg}
	Under the assumptions of Theorem \ref{thm:complexity}, the iterates $\bw_{k,j}$ of Algorithm~\ref{alg:lowrankrichardson} satisfy
	\begin{align*}
		\max \Bigl\{ \norm{(\mathbf{\hat D}_\rt^{-\tau} \otimes \bI_i) \piti(\bw_{k,j})},\norm{(\bI_\rt \otimes \mathbf{\hat D}^{-\tau}) \piti(\bw_{k,j})}  \Bigr\} \leq \frac{\gamma^{kI+j+1} -1}{\gamma - 1} \bar{C}_{\bfs} ,
	\end{align*}
	where
	\begin{align*}
		\gamma = 1+ \omega \max \left\{ \gamma_{\rt,1}^2+ \gamma_{\rt,2}^2, \gamma_{\rx,1}^2 + \gamma_{\rx,2}^2 \right\}	, \quad \bar{C}_{\bbf} = \omega C_{\bbf}^\text{{\rm reg}} \zeta \max \left\{ \gamma_{\rt,1} + \gamma_{\rt,2},  \gamma_{\rx,1} + \gamma_{\rx,2} \right\}
	\end{align*}
	with
	\begin{align*}
		\gamma_{\rx,1} &=  (1+\delta)^2(\check{C}_\rx + 2 \norm{\bC_\rt} + \xi + C_{\tau,2} \norm{\beta(\bC_2)}_{\ell_1}), \quad  \gamma_{\rx,2} = 2 \norm{\bB_2} \\
		\gamma_{\rt,1} &=(1+\delta)^2(\check{C}_\rx + 2 \norm{\bC_2} + \xi + C_{\tau,\rt} \norm{\beta(\bC_\rt)}_{\ell_1}) , \quad \gamma_{\rt,2} = C_{\tau,0} \norm{\beta(\bB_2)}_{\ell_1} + (1+\delta).
	\end{align*}
\end{prop}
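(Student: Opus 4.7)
The plan is to prove the bound by induction on the iteration index $m = kI + j$. At $m = 0$, we have $\bw_{0,0} = \bu_0 = 0$, so both weighted contractions vanish and the bound is trivially satisfied. For the inductive step it suffices to establish the one-step recursion
\[
  Q_{k,j+1} \leq \gamma Q_{k,j} + \bar C_{\bbf},
\]
where $Q_{k,j}$ denotes the left-hand side of the asserted bound; the closed-form geometric estimate then follows by summing the geometric series. The iteration reads $\bw_{k,j+1} = \coarsen(\recompress(\bw_{k,j} - \omega \br_{k,j}, \beta_1 \eta_{k,j}), \beta_2 \eta_{k,j})$, so by Lemma \ref{lem:coarse-trunc-stability} the weighted contractions of $\bw_{k,j+1}$ are bounded by those of $\bw_{k,j} - \omega \br_{k,j}$, and then by the triangle inequality by $Q_{k,j} + \omega\, Q(\br_{k,j})$.

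Thus the task reduces to bounding $Q(\br_{k,j})$ where $\br_{k,j} = \apply(\bw_{k,j}, \eta_{k,j}/2) - \rhs(\eta_{k,j}/2)$. Using the structure $\bA = \bB_1^\intercal \bB_1 + \bB_2^\intercal \bB_2$ and the decomposition \eqref{eq:apply-normal-eq} of the $\apply$ routine, I would split $\br_{k,j}$ additively into four contributions — two from approximating $\bB_i^\intercal \bB_i \bw_{k,j}$ and two from approximating $\bB_i^\intercal \bbf_i$ — and estimate each separately. By triangle inequality and the subadditivity of contractions, $Q$ splits as a sum over these four terms.

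The core step is to prove operator-specific estimates of the form
\[
  Q_\rx(\bB_i^\intercal \bB_i \bv) \leq \gamma_{\rx,i}^2\, Q_\rx(\bv), \qquad
  Q_\rt(\bB_i^\intercal \bB_i \bv) \leq \gamma_{\rt,i}^2\, Q_\rt(\bv),
\]
along with the rhs bound $Q(\bB_i^\intercal \bbf_i) \leq (\gamma_{\rx,i} + \gamma_{\rt,i}) C^{\text{reg}}_\bbf \zeta$, where $Q_\rx(\bv) = \max_i \norm{(\bI_\rt \otimes \mathbf{\hat D}^{-\tau})\piti(\bv)}$ and analogously $Q_\rt$. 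These per-factor inequalities are obtained by writing out $\bB_1 = \bDY(\bT_\rt \otimes \bI_\rx + \bI_\rt \otimes \bT_\rx)\bDX$ and $\bB_2 = \bT_0 \bDX$ in their actual \apply-realizations (with compressed $\bT_{2,j}$, $\bT_{\rt,j}$, $\bT_{0,j}$ and truncated scalings $\mathbf{D}_{\cX,\sfn}$, $\mathbf{D}_{\cY,\sfn}$), bounding each resulting contraction by the Sobolev-stable one-dimensional norms $\xi$ and the compressibility residuals $C_{\tau,2}\norm{\beta(\bC_2)}_{\ell_1}$, $C_{\tau,\rt}\norm{\beta(\bC_\rt)}_{\ell_1}$, $C_{\tau,0}\norm{\beta(\bB_2)}_{\ell_1}$, and absorbing the scaling truncation errors via the $(1+\delta)$-factors from Remarks \ref{rem:scaling-DX-delta} and \ref{rem:scaling-DY-delta}. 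The constants $\check C_\rx$, $2\norm{\bC_\rt}$, $2\norm{\bC_2}$ account for the fact that the respective operators act trivially in some modes while producing extra contraction mass in others, precisely matching the definitions of $\gamma_{\rx,1}$ and $\gamma_{\rt,1}$. Lemma \ref{lem:higher-reg-initial-value} supplies the corresponding estimate for the initial-value factors, yielding $\gamma_{\rx,2}$ and $\gamma_{\rt,2}$. For the right-hand side, Assumption \ref{ass:rhs}\eqref{ass:higher-regularity-f} guarantees that $\rhs$ preserves the excess regularity encoded in $\zeta$.

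The main obstacle I anticipate is the careful bookkeeping in estimating how each spatial operator factor transports the weighted contraction bounds between $Q_\rx$ and $Q_\rt$: the contraction along mode $i$ interacts differently with $\bT_2$ acting in mode $i$ versus $\bT_2$ acting in some other mode $j \neq i$. This forces an argument analogous to the proof of Lemma \ref{lem:apply-quantity-estimates}, where one distinguishes contributions $D_{1,\nu_\rt,\nu_1}$ and $D_{2,\nu_\rt,\nu_1}$ according to whether $\bT_2$ acts on the contracted coordinate or on a spectator coordinate; the Sobolev stability assumption is what allows us to replace one-dimensional norms by the bounded quantities $\xi$ without picking up mode-dependent factors. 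Once these per-factor inequalities are in place, combining them via the operator decomposition of $\bA$ and taking the maximum over $Q_\rx$ and $Q_\rt$ yields the stated constants $\gamma$ and $\bar C_\bbf$, completing the inductive step.
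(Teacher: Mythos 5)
Your proposal follows essentially the same route as the paper's proof: reduce to a one-step recursion $Q_{k,j+1}\leq \gamma Q_{k,j}+\bar C_{\bbf}$ via the coarsening/recompression stability of Lemma \ref{lem:coarse-trunc-stability}, split according to $\bA=\bB_1^\intercal\bB_1+\bB_2^\intercal\bB_2$ in its $\apply$-realization, and obtain the per-factor constants from the $D_1/D_2$ mode split, Sobolev stability, the $(1+\delta)$-bounds for the truncated scalings, Lemma \ref{lem:higher-reg-initial-value}, and the excess-regularity assumption on $\rhs$, which is exactly the paper's argument. The only detail to make explicit is that for the $\bB_2$-chain in the $\mathbf{\hat D}_\rt^{-\tau}$-weighted quantity the intermediate estimate must be taken in the norm $\norm{\mathbf{\bar D}^{-2\tau}\,\bzkj^{(2)}}$ (since $\bzkj^{(2)}$ carries no temporal variable), as done in the paper, and that the right-hand-side contributions should be grouped per weight type, i.e. $(\gamma_{\rx,1}+\gamma_{\rx,2})$ resp. $(\gamma_{\rt,1}+\gamma_{\rt,2})$, to recover exactly the stated $\bar C_{\bbf}$.
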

\begin{proof}
	The proof follows the idea of \cite[Proposition 39]{bachmayr2014adaptive}, with adaptations to the additional time component and to the structure of the normal equation. We give the proof for $\norm{(\bI_\rt \otimes \mathbf{\hat D}^{-\tau}) \piti(\bw_{k,j})}$; for $\norm{(\mathbf{\hat D}_\rt^{-\tau} \otimes \bI_i) \piti(\bw_{k,j})}$ we will only cover the part with the application of $\bB_2$, for the rest one can proceed analogously.
	In the outer loop $k$ each inner loop step $j$ is of the form
	\begin{align*}
		\bw_{k,j+1} = \hat{C}_{\beta_2 \eta_{k,j}} \left( \hat{P}_{\beta_1 \eta_{k,j}} \big( (\bI - \omega \bAtil_{k,j}) \bw_{k,j} + \omega \bfs_{k,j} \big) \right) .
	\end{align*}
	By definition of $\bu_0$ in Algorithm \ref{alg:lowrankrichardson}, we have $\norm{(\bI_\rt \otimes \mathbf{\hat D}^{-\tau}) \piti(\bu_0)} = 0$ for $i=1,\dots,d$.
	Using Lemma \ref{lem:coarse-trunc-stability} yields
	\begin{align*}
		\norm{(\bI_\rt \otimes \mathbf{\hat D}^{-\tau}) \piti(\bw_{k,j+1})} &= \norm{(\bI_\rt \otimes \mathbf{\hat D}^{-\tau}) \piti(\hat{C}_{\beta_2 \eta_{k,j}} ( \hat{P}_{\beta_1 \eta_{k,j}} ( (\bI - \omega \bAtil_{k,j}) \bw_{k,j} + \omega \bfs_{k,j} ) ))} \\
		&\leq \norm{(\bI_\rt \otimes \mathbf{\hat D}^{-\tau}) \piti(\bw_{k,j})} + \omega \norm{(\bI_\rt \otimes \mathbf{\hat D}^{-\tau}) \piti(\bAtil_{k,j} \bw_{k,j})} \\ &\quad + \omega \norm{(\bI_\rt \otimes \mathbf{\hat D}^{-\tau}) \piti(\bbf_{k,j})} .
	\end{align*}
	Taking into account the definition of $\bAtil_{k,j}$, we need to estimate
	\[
		\norm{(\bI_\rt \otimes \mathbf{\hat D}^{-\tau}) \piti(\bAtil_{k,j} \bw_{k,j})} \leq \norm{(\bI_\rt \otimes \mathbf{\hat D}^{-\tau}) \piti(\bAtil^{(1)}_{k,j} \bw_{k,j})} + \norm{(\bI_\rt \otimes \mathbf{\hat D}^{-\tau}) \piti(\bAtil^{(2)}_{k,j} \bw_{k,j})},
	\]
	where $\bAtil^{(l)}_{l,j}$ for $l=1,2$ is defined by
	\begin{align*}
		\norm{(\bI_\rt \otimes \mathbf{\hat D}^{-\tau}) \piti(\bAtil^{(l)}_{k,j} \bw_{k,j})} &= \lrnorm{(\bI_\rt \otimes \mathbf{\hat D}^{-\tau}) \piti\big(\apply_{\bB_l^\intercal}\big(\bzkj^{(l)}, \tfrac{\eta_{k,j}}{8}\big) \big) } \\
		&= \lrnorm{(\bI_\rt \otimes \mathbf{\hat D}^{-\tau}) \piti\big(\apply_{\bB_l^\intercal}\big(\apply_{\bB_l} \big(\bwkj, \tfrac{\eta_{k,j}}{8 \norm{\bB}}\big), \tfrac{\eta_{k,j}}{8}\big) \big) } .
	\end{align*}
	We start with $l=1$. We proceed by the stepwise estimates
	\begin{align*}
		\lrnorm{(\bI_\rt \otimes \mathbf{\hat D}^{-\tau}) \piti\left(\apply_{\bB_1^\intercal}\left(\bzkj^{(1)}, \tfrac{\eta_{k,j}}{8}\right) \right) } &\leq \gamma_{\rx,1}  \lrnorm{(\bI_\rt \otimes \mathbf{\hat D}^{-\tau}) \piti(\bzkj^{(1)}) }, \\
		\lrnorm{(\bI_\rt \otimes \mathbf{\hat D}^{-\tau}) \piti(\bzkj^{(1)}) } &\leq \gamma_{\rx,1} \lrnorm{(\bI_\rt \otimes \mathbf{\hat D}^{-\tau}) \piti(\bwkj) }.
	\end{align*}
	The two estimates can be proved in the same manner, and we thus state the proof only for the second estimate.
	
	In what follows, we write $\mathbf{\bar D}_i = \bI_1\otimes\cdots \otimes \bI_{i-1}\otimes \mathbf{\hat D} \otimes \bI_{i+1}\otimes\cdots\otimes \bI_d$.
	By definition, we have $ \bzkj^{(1)} =  \bD_{\cY,\sfn_2} \bTtil_{k,j} \bD_{\cX,\sfn_1} \bwkj$ with $\bTtil_{k,j} = \bTtil_{J(\eta;\bwkj)}[\bwkj]$, $\eta = \tfrac{\eta_{k,j}}{8\norm{\bB}} , \sfn_1 = \hmcX(\eta;\bw_{k,j})$ and $\sfn_2 = \hmcY(\eta; \bw_{k,j})$.
	We define $\mathbf{\tilde w}_{k,j} = \bDX^{-1} \bDXtildeNN \bw_{k,j}$. Using that each entry of the diagonal matrix $\bD_{\cY,\sfn_2} \bD_{\cY}^{-1}$ is bounded by one and that diagonal operators commute, we get
	\begin{align*}
		\lrnorm{\piti \left( (\bI_\rt \otimes \mathbf{\bar D}_i^{-\tau}) \mathbf{D}_{\cY,\sfn_2} \bTtil_{k,j} \bD_{\cX,\sfn_1} \bw_{k,j} \right)} \leq  \lrnorm{\piti\left( (\bI_\rt \otimes \mathbf{\bar D}_i^{-\tau}) \bDY \bTtil_{k,j} \bDX \mathbf{\tilde w}_{k,j} \right)} .
	\end{align*}
	In the next step we make use of the structure of $\bT$ as in \eqref{eq:contraction-scaling}, and additionally take into account the temporal operator. For notational simplicity we restrict ourselves to $i=1$ from here on. For $\bTtil_\rt = \bTtil_{\rt,J(\eta;\bwkj)}[\bwkj]$, we have
	\begin{equation}\label{eq:higher-regularity-split-space-time}
		\lrnorm{\pitiOne\left( (\bI_\rt \otimes \mathbf{\bar D}_1^{-\tau}) \bDY \bTtil_{k,j} \bDX \mathbf{\tilde w}_{k,j} \right)} \leq \lrnorm{\mathbf{d}_{\rx}} + \lrnorm{\pitiOne\left( (\bI_\rt \otimes \mathbf{\bar D}_1^{-\tau})  \bTtil_\rt \bDX   \mathbf{\tilde w}_{k,j} \right)}
	\end{equation}
	with $\mathbf{d}_{\rx} = \bigl( D_{1,\nu_\rt,\nu_1} + D_{2,\nu_\rt,\nu_1}\bigr)_{\nu_\rt \in \vee_\rt, \nu_1\in\vee_1}$, where
	\begin{align*}
		D_{1,\nu_\rt,\nu_1} &= \pi^{(1)}_{\nu_1}\Big(  \mathbf{\bar D}_1^{-\tau} \bD (\bI_1 \otimes \sum\limits_{\sfn \in \KdOneTwo} c_\sfn \bigotimes_{i=2}^d \bTtil^{(i)}_{\nut,n_i}[\bwkj]) \bDXnut (\mathbf{\tilde w}_{k,j})_{\nut} \Big) , \\
		D_{2,\nu_\rt,\nu_1} &= \pi^{(1)}_{\nu_1}\left( \mathbf{\bar D}_1^{-\tau} \bD (\bTtil^{(1)}_{\nut,2}[\bwkj] \otimes \bI_2 \otimes \cdots \otimes \bI_d) \bDXnut (\mathbf{\tilde w}_{k,j})_{\nut} \right) .
	\end{align*}
	To estimate $D_{1,\nu_\rt,\nu_1}$, we can use that diagonal matrices commute and that also $(\mathbf{\bar D}_1^{-\tau})$ and $\bI_1 \otimes \bM$ commute for each $\bM$ by definition of $\mathbf{\bar D}_1^{-\tau}$. We thus arrive at
	\begin{align*}
		D_{1,\nu_\rt,\nu_1} \leq \check{C}_\rx (1+\delta)^2 \pitiOne_{\nu_\rt,\nu_1}((\bI_\rt \otimes \mathbf{\bar D}_1^{-\tau}) \bw_{k,j}) .
	\end{align*}
	
	To estimate $D_{2,\nu_\rt,\nu_1}$, we can proceed similarly to \cite[Proposition 39]{bachmayr2014adaptive} and Lemma \ref{lem:apply-quantity-estimates}. We define $\mathbf{\tilde C}^{(1)}_{\nut,2} = \mathbf{\hat D} \mathbf{\tilde T}^{(1)}_{\nut,2}[\bwkj] \mathbf{\hat D}$. Using that the entries of the diagonal matrices $\bD \mathbf{\bar D}_1^{-1}$ and $\mathbf{\bar D}_1^{-1}\bDXnut$ are bounded by $(1+\delta)$, we obtain
	\begin{align*}
		D_{2,\nu_\rt,\nu_1} &\leq (1+\delta) \pi^{(1)}_{\nu_1}\left(\mathbf{\bar D}_1^{-\tau} \mathbf{\bar D}_1  (\bTtil^{(1)}_{\nut,2}[\bwkj] \otimes \bI_2 \otimes \cdots \otimes \bI_d)  \mathbf{\bar D}_1 \mathbf{\bar D}_1^{-1} \bDXnut (\mathbf{\tilde w}_{k,j})_{\nut} \right) \\
		&= (1+\delta) \pi^{(1)}_{\nu_1}\left(\mathbf{\bar D}_1^{-\tau}  (\mathbf{\tilde C}^{(1)}_{\nut,2} \otimes \bI_2 \otimes \cdots \otimes \bI_d)  \mathbf{\bar D}_1^{-1} \bDXnut (\mathbf{\tilde w}_{k,j})_{\nut} \right) \\
		&\leq (1+ \delta) \pi^{(1)}_{\nu_1}\left(\mathbf{\bar D}_1^{-\tau}  (\bC_2 \otimes \bI_2 \otimes \cdots \otimes \bI_d)  \mathbf{\bar D}_1^{-1} \bDXnut (\mathbf{\tilde w}_{k,j})_{\nut} \right) \\
		&\ + (1+\delta) \pi^{(1)}_{\nu_1}\left(\mathbf{\bar D}_1^{-\tau}  ((\mathbf{\tilde C}^{(1)}_{\nut,2} - \bC_2) \otimes \bI_2 \otimes \cdots \otimes \bI_d)  \mathbf{\bar D}_1^{-1} \bDXnut (\mathbf{\tilde w}_{k,j})_{\nut} \right) \\
		&\leq (1+ \delta) \pi^{(1)}_{\nu_1}\left(\mathbf{\bar D}_1^{-\tau}  (\bC_2 \otimes \bI_2 \otimes \cdots \otimes \bI_d)  \mathbf{\bar D}_1^{-1} \bDXnut (\mathbf{\tilde w}_{k,j})_{\nut} \right) \\
		&\ + (1+\delta)^2 C_{\tau,2} \norm{\beta(\bC_2)}_{\ell_1} \pi^{(1)}_{\nu_1}\left(\mathbf{\bar D}_1^{-\tau}   (\mathbf{\tilde w}_{k,j})_{\nut} \right) ,
	\end{align*}
	where we used Assumption \ref{ass:A-f}\eqref{it:B2-compress-lvldecay-sobolev} in the last line.
	
	For the second summand in \eqref{eq:higher-regularity-split-space-time} we can use that diagonal matrices commute, and due to the special structure with restrictions only in the spatial part of the operator $\bTtil_\rt$, we obtain $(\bI_\rt \otimes \mathbf{\bar D}_1^{-\tau})\bTtil_\rt \bDX = \bTtil_\rt \bDX (\bI_\rt \otimes \mathbf{\bar D}_1^{-\tau})$. Hence, we arrive at
	\begin{align*}
	\lrnorm{\pitiOne\left( (\bI_\rt \otimes \mathbf{\bar D}_1^{-\tau})  \bTtil_\rt \bDX   \mathbf{\tilde w}_{k,j} \right)} \leq 2 (1+\delta)^2 \norm{\bC_\rt} \lrnorm{\pitiOne((\bI_\rt \otimes \mathbf{\bar D}_1^{-\tau})\bw_{k,j})} .
	\end{align*}

	To obtain a bound on $\norm{\mathbf{d}_{\rx}}$, we sum the estimates for $D_{1,\nu_\rt,\nu_1}$ and $D_{2,\nu_\rt,\nu_1}$ over all $\nu_\rt,\nu_1$ and use \eqref{eq:higher-reg-max}, which yields
	\begin{align*}
		\norm{(\bI_\rt \otimes \mathbf{\hat D}^{-\tau}) \pitiOne( \bz^{(1)}_{k,j})} \leq \gamma_{\rx,1} \norm{(\bI_\rt \otimes \mathbf{\hat D}^{-\tau}) \pitiOne(\bw_{k,j})} .
	\end{align*}
	For $\norm{(\bI_\rt \otimes \mathbf{\hat D}^{-\tau}) \piti(\bAtil^{(2)}_{k,j} \bw_{k,j})}$ we can proceed in the same way as for the temporal operator using the fact that we only have restrictions in the spatial variables. Hence,
	\[
		\norm{(\bI_\rt \otimes \mathbf{\hat D}^{-\tau}) \piti(\bAtil^{(2)}_{k,j} \bw_{k,j})} \leq \gamma_{\rx,2}^2 \norm{(\bI_\rt \otimes \mathbf{\hat D}^{-\tau}) \pitiOne(\bw_{k,j})} .
	\]

	For $\bfs_{k,j} = \rhs(\tfrac{1}{2}\eta_{k,j})$ with $\rhs$ defined in \eqref{eq:rhs-normal-eq}, we can proceed similarly and combine the result with 
	\eqref{eq:rhsreg} from Assumption \ref{ass:rhs}\eqref{ass:higher-regularity-f}.
	Thus we arrive at
	\begin{align*}
		\norm{(\bI_\rt \otimes \mathbf{\hat D}^{-\tau}) \pitiOne(\bw_{k,j+1})} &\leq (1 + \omega (\gamma_{\rx,1}^2+\gamma_{\rx,2}^2))\norm{(\bI_\rt \otimes \mathbf{\hat D}^{-\tau}) \pitiOne(\bw_{k,j})} + \bar{C}_{\bbf} \\
		&\leq \gamma \norm{(\bI_\rt \otimes \mathbf{\hat D}^{-\tau}) \pitiOne(\bw_{k,j})} + \bar{C}_{\bbf}  .
	\end{align*}
	
	This estimate is preserved by the recompression and coarsening steps at the end of the outer loop. By taking $j \leq I$ and $\norm{(\bI_\rt \otimes \mathbf{\hat D}^{-\tau}) \pitiOne(\bu_0)} = 0$ into account, we thus obtain
	\begin{align*}
		\norm{(\bI_\rt \otimes \mathbf{\hat D}^{-\tau}) \pitiOne(\bw_{k,j})} \leq \frac{\gamma^{kI+j+1} -1}{\gamma - 1} \bar{C}_{\bfs} .
	\end{align*}
	
	For the quantity $\norm{(\mathbf{\hat D}_\rt^{-\tau} \otimes \bI_i) \pitiOne(\bw_{k,j})}$, we can proceed  analogously for the operator $\bB_1$, using the fact that in this case $(\mathbf{\hat D}_\rt^{-\tau} \otimes \bI_i)$ and the spatial operator commute. 
	However, for the operator $\bB_2$ we need to proceed differently. We again use stepwise estimates, here in the form
	\begin{equation} \label{eq:higher-regularity-time-gammat}
		\begin{aligned}
		\lrnorm{(\mathbf{\hat D}_\rt^{-\tau} \otimes \bI_i) \piti\big(\apply_{\bB_1^\intercal}\big(\bzkj^{(2)}, \tfrac{\eta_{k,j}}{8}\big) \big) } &\leq \gamma_{\rt,2}  \lrnorm{\mathbf{\bar D}^{-2\tau}  \bzkj^{(2)} }, \\
		\lrnorm{\mathbf{\bar D}^{-2\tau} \bzkj^{(2)} } &\leq \gamma_{\rt,2} \lrnorm{(\mathbf{\hat D}_\rt^{-\tau} \otimes \bI_i) \piti(\bwkj) }. 
		\end{aligned}
	\end{equation}

	By definition, we have $ \bzkj^{(2)} =  \bTtil_{0,J} \bD_{\cX,\sfn} \bwkj$ with $\bTtil_{0,J} = \bTtil_{0,J_0(\eta;\bwkj)}[\bwkj]$, $\eta = \tfrac{\eta_{k,j}}{8\norm{\bB}} $ and $\sfn = \hat{\sfm}_{\cX,0}(\eta;\bw_{k,j})$. Let $\mathbf{\tilde w}_{k,j} = \bDX^{-1} \bD_{\cX,\sfn} \bw_{k,j}$. Then
	\begin{equation*}
		\begin{aligned}
			\lrnorm{\mathbf{\bar D}^{-2\tau} \bzkj^{(2)} } &= \lrnorm{\mathbf{\bar D}^{-2\tau}  \bTtil_{0,J} \bD_{\cX} (\mathbf{\tilde w}_{k,j}) } \\
			&\leq \lrnorm{\mathbf{\bar D}^{-2\tau}  (\bT_0 - \mathbf{\tilde T}_{0,J}) \bDX  (\mathbf{\tilde w}_{k,j}) } + \lrnorm{\mathbf{\bar D}^{-2\tau}  \bB_2  \mathbf{\tilde w}_{k,j} } \\
			&\leq (C_{\tau,0} \norm{\beta(\bB_2)}_{\ell_1} + (1+\delta)) \norm{(\mathbf{\hat D}_{\rt}^{-\tau} \otimes \bI_i) \piti(\mathbf{\tilde w}_{k,j})},
		\end{aligned}
	\end{equation*}
	where we used Assumption \ref{ass:A-f}\eqref{it:sobolev-initial-value} and Lemma \ref{lem:higher-reg-initial-value} in the last line. For \eqref{eq:higher-regularity-time-gammat} we can proceed in the same manner.
	We thus arrive at
	\begin{align*}
		\norm{(\mathbf{\hat D}_\rt^{-\tau} \otimes \bI_i) \piti(\bw_{k,j+1})} &\leq (1 + \omega (\gamma_{\rt,1}^2 + \gamma_{\rt,2}^2)) \norm{(\mathbf{\hat D}_\rt^{-\tau} \otimes \bI_i) \piti(\bw_{k,j})} + \bar{C}_\bbf \\
		&\leq \gamma \norm{(\mathbf{\hat D}_\rt^{-\tau} \otimes \bI_i) \piti(\bw_{k,j})} + \bar{C}_\bbf .
	\end{align*}
	The remaining statements follow as above.
\end{proof}
From Lemma \ref{lem:max-level}, we obtain a bound on the maximum level that depends additionally on the mode-wise support sizes. However, according to Section \ref{sec:apply-initial-value}, the support of an iterate depend on the maximum level of the last iterate, as well as on the $\cAs$-norm, which depends on the maximum level as well. Therefore we estimate the three quantities together step by step. An estimate for the support of the result of $\apply$ is given by Lemma \ref{lem:apply-quantity-estimates} for $\bB_1$ and by Lemma \ref{lem:initial-operator-trunc-estimates} and Lemma \ref{lem:initial-operator-trans-trunc-estimates} for $\bB_2$.
We now aim to use these estimates as well as Lemma \ref{lem:max-level} to derive a bound for the support and the maximum level of the iterates $\bwkj$. In particular, we are interested in the dependence on $d$.
In the statement of the corresponding lemma, we use the notation
\begin{align*}
	C_{\bu,\bbf} = \max \Big\{ \Big(\sum\limits_{i=1}^d \norm{\piti(\bu)}_{\cAs}\Big)^{\frac{1}{s}}, \Big(\sum\limits_{i=1}^d \norm{\piti(\bbf_1)}_{\cAs}\Big)^{\frac{1}{s}} , \Big(\sum\limits_{i=1}^d \norm{\pi^{(i)}(\bbf_2)}_{\cAs}\Big)^{\frac{1}{s}}   \Big\} .
\end{align*}
\begin{lemma}\label{lem:iteration-supp-As-lvl}
	Let $\bw_{k,j}$ be defined by Algorithm \ref{alg:lowrankrichardson}. Under the assumptions of Theorem \ref{thm:complexity}, one has
	\begin{align}
		\sum\limits_{i=1}^d \norm{\piti(\bwkj)}_{\cAs} &\leq \bar{C}_1 (\tilde{C} d)^{2j} d^{p_0} \big((\ln d)^2 \abs{\ln \eta_{k,j}} + (\ln d)^3\big)^{2js} C_{\bu,\bfs}^s, \label{eq:iteration-As-estimate} \\
		\sum\limits_{i=1}^d \# \supp(\piti(\bwkj)) &\leq \bar{C}_2 (\tilde{C}d)^{\frac{2j}{s}} d^{p_1} \big((\ln d)^2 \abs{\ln \eta_{k,j}} + (\ln d)^3\big)^{2j} \eta_{k,j}^{-\frac{1}{s}} C_{\bu,\bfs}, \label{eq:iteration-supp-estimate} \\
		L(\bw_{k,j}) &\leq \bar{C}_3 ((\ln d)^2 \abs{\ln \eta_{k,j}} + (\ln d)^3) . \label{eq:iteration-rank-estimate}
	\end{align}
	where $p_0 = 1 + \max\{1,s\}$, $p_1 = \max\{2+2s^{-1},1+3s^{-1}\}$ and $\bar{C}_1,\bar{C}_2,\bar{C}_3,\tilde{C}$ are $d$-independent. 
\end{lemma}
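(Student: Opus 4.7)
The plan is to prove the three estimates \eqref{eq:iteration-As-estimate}, \eqref{eq:iteration-supp-estimate}, \eqref{eq:iteration-rank-estimate} \emph{simultaneously} by induction on the global iteration counter $(k,j)$, because the three quantities are intertwined: the support size bound from \textsc{apply} depends on the input $\cAs$-norm (and, through $\bB_2$, also on the maximum level), while the maximum level bound from Lemma \ref{lem:max-level} depends in turn on the support size through the factor $\sqrt{\#\supp\bp}$ in the logarithm. The base case $k=j=0$ is trivial since $\bw_{0,0}=\bu_0 = 0$. For the induction step, I would write
\[
   \bw_{k,j+1} = \hat{\Cor}_{\beta_2 \eta_{k,j}}\Bigl(\hat{\Pro}_{\beta_1 \eta_{k,j}}\bigl( (\bI - \omega \bAtil_{k,j})\bw_{k,j} + \omega \bfs_{k,j}\bigr)\Bigr),
\]
and control each of the three quantities along this formula.

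For the $\cAs$-bound \eqref{eq:iteration-As-estimate}, I would propagate through the four constituents: the contributions of $\bB_1^\intercal \bB_1$ via Remark \ref{rem:apply-sum-contr} (one application of $\apply$ picks up a factor $C(\delta,s,\bB_1)d$ on the sum of $\cAs$-norms), the contributions of $\bB_2^\intercal \bB_2$ via Lemma \ref{lem:initial-operator-trunc-estimates} and Lemma \ref{lem:initial-operator-trans-trunc-estimates} (each application picks up an additional factor $(L(\bwkj)+1)^s$), the right-hand side contribution controlled by Assumption \ref{ass:rhs}\eqref{ass:rhsapprox}, and stability of the \textsc{recompress} and \textsc{coarsen} steps via Theorem \ref{thm:combinedcoarsen}. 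The composition $\bB^\intercal \bB$ introduces four applications of \textsc{apply} and thus gives the factor $(\tilde C d)^{2j}$ together with the level factor $(L(\bwkj)+1)^{2s}$ per step.

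For the maximum level \eqref{eq:iteration-rank-estimate}, the key ingredient is the combination of Lemma \ref{lem:max-level} with Proposition \ref{prop:higher-reg}: the proposition shows that $\max\{\norm{(\mathbf{\hat D}_\rt^{-\tau}\otimes \bI_i)\piti(\bwkj)}, \norm{(\bI_\rt\otimes \mathbf{\hat D}^{-\tau})\piti(\bwkj)}\}$ grows at most geometrically in the iteration counter $kI+j$, which after taking the logarithm becomes an additive term of order $kI+j \lesssim \abs{\ln \eta_{k,j}}$. Lemma \ref{lem:max-level} applied to the output of the coarsening step then yields a bound of the form $\tau^{-1} \log_2\bigl(\eta_{k,j}^{-1} \cdot (\text{geometric factor}) \cdot \sqrt{\#\supp\bp}\bigr)$; the logarithm of the support size bound from \eqref{eq:iteration-supp-estimate} at step $j$ introduces another factor $\ln d + \ln\abs{\ln \eta_{k,j}} + \abs{\ln \eta_{k,j}}$, which together yields the bound \eqref{eq:iteration-rank-estimate} after a slight enlargement of $\bar C_3$ absorbing the iteration-dependent polynomial factors into the $(\ln d)^3$ term (using $I \lesssim \ln d$, cf.\ the statement of Theorem \ref{thm:complexity}).

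For the support bound \eqref{eq:iteration-supp-estimate}, I would combine the support estimates from Lemma \ref{lem:apply-quantity-estimates} (for $\bB_1^\intercal\bB_1$) and Lemma \ref{lem:initial-operator-trunc-estimates}, Lemma \ref{lem:initial-operator-trans-trunc-estimates} (for $\bB_2^\intercal \bB_2$), all proportional to $\eta_{k,j}^{-1/s}$ times $\bigl(\sum_i \norm{\piti(\bwkj)}_{\cAs}\bigr)^{1/s}$ and (for the $\bB_2$ contribution) an extra polynomial factor in $L(\bwkj)$. Substituting the inductive bound \eqref{eq:iteration-As-estimate} on the $\cAs$-sum and \eqref{eq:iteration-rank-estimate} on the level yields the claimed form. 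Finally, the stability of \textsc{recompress} and \textsc{coarsen} with respect to both norms (Theorem \ref{thm:combinedcoarsen}) and the additive contribution of $\bfs_{k,j}$ (Assumption \ref{ass:rhs}) are absorbed into the constants $\bar C_1, \bar C_2$.

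The main obstacle will be the bookkeeping of the $d$-dependence in the interaction between the three estimates: the support bound loses $d^{1/s}$ at each application, while the $\cAs$-bound loses $d$, and the level bound remains $d$-independent up to an additive $\ln d$ from the initial value operator via $L(\bbf_{2,\eta})$. Ensuring that the iteration constants $\tilde C, p_0, p_1$ are truly independent of $j$ (so that they are absorbed into the stated polynomial prefactors) is the delicate point, and requires carefully distinguishing between factors that compound over $j$ (hence appear as $(\cdot)^{2j}$) and factors that appear only once per coarsening cycle.
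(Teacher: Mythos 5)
There is a genuine structural gap in your plan: you induct on the global counter $(k,j)$ and propagate the multiplicative per-step factors (roughly $(\tilde C d)^2 (L(\bwkj)+1)^{2s}$ for the $\cAs$-sum, and the corresponding factors for the supports) through \emph{every} step of the iteration history. But the bounds \eqref{eq:iteration-As-estimate} and \eqref{eq:iteration-supp-estimate} carry these factors only to the power $2j$ with $j \leq I \leq c\ln d$, not $2(kI+j)$. Since $k \eqsim \abs{\ln \eta_{k,j}} \eqsim \abs{\ln \varepsilon}$ over the course of the computation, compounding over the whole history would produce factors like $(\tilde C d)^{2kI}$, i.e.\ growth of order $d^{c'\ln d\,\abs{\ln\varepsilon}}$, which is strictly weaker than the stated lemma and would destroy the complexity bound \eqref{eq:complexity_totalops}. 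The missing ingredient is the \emph{reset at the start of each outer iteration}: because $\bu_{k+1} = \bw_{k+1,0}$ is produced by $\recompress$/$\coarsen$ with tolerances $\kappa_2 2^{-(k+1)}\varepsilon_0$, $\kappa_3 2^{-(k+1)}\varepsilon_0$ that are comparable to the current error $\norm{\bw_{k,j}-\bu}$, Theorem \ref{thm:combinedcoarsen} applies with the \emph{exact solution} $\bu$ as reference and yields $\sum_i\norm{\piti(\bw_{k,0})}_{\cAs} \lesssim d^{p_0}\sum_i\norm{\piti(\bu)}_{\cAs}$ and $\sum_i\#\supp(\piti(\bw_{k,0})) \lesssim d^{1+1/s}\eta_{k,0}^{-1/s}\bigl(\sum_i\norm{\piti(\bu)}_{\cAs}\bigr)^{1/s}$, independently of the history. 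Only then does the inner-loop induction (which is where your propagation argument is correct) give exponents in $j$ alone. You invoke Theorem \ref{thm:combinedcoarsen} only as a stability statement inside the inner loop, which does not produce this reset.

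By contrast, your treatment of the level bound \eqref{eq:iteration-rank-estimate} is essentially the right mechanism: the level \emph{is} propagated across outer iterations via Lemma \ref{lem:max-level} and Proposition \ref{prop:higher-reg}, and the geometric factor $\gamma^{kI+j}$ is harmless because only its logarithm enters; note however that $\log_2\gamma^{kI+j} \lesssim (\ln d)\,\bigl(k + j\bigr)\,\ln\gamma$ with $\ln\gamma \lesssim \ln d$ (since $\gamma \leq C_\gamma d^2$ through $\check C_\rx$), and $k \eqsim \abs{\ln\eta_{k,j}}$, which is precisely how the $(\ln d)^2\abs{\ln\eta_{k,j}} + (\ln d)^3$ structure arises; your sketch "$kI+j \lesssim \abs{\ln\eta_{k,j}}$" drops a factor $\ln d$ that must be tracked, even though it is ultimately absorbed in the stated form. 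To repair the proposal: prove \eqref{eq:iteration-As-estimate} and \eqref{eq:iteration-supp-estimate} for $\bw_{k,0}$ directly from Theorem \ref{thm:combinedcoarsen} relative to $\bu$ for every $k$, run your propagation argument only within the inner loop $j \mapsto j+1$, and handle the transition $\bw_{k,I}\to\bw_{k+1,0}$ separately for the level bound only.
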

\begin{proof}
	Let $\varepsilon_k = 2^{-k} \varepsilon_0$. By definition of the algorithm and \eqref{eq:recompress-coarsen-supp}, we have
	\begin{align*}
		\sum\limits_{i=1}^d \norm{\piti(\bw_{k,0})}_{\cAs} &\leq C_1 d^{p_0} \sum\limits_{i=1}^d \norm{\piti(\bu)}_{\cAs}, \\
		\sum\limits_{i=1}^d \# \supp(\piti(\bw_{k,0})) &\leq C_2 d^{1+s^{-1}} (\eta_{k,0})^{-\frac{1}{s}} \Big(\sum\limits_{i=1}^d \norm{\piti(\bu)}_{\cAs}\Big)^{\frac{1}{s}} .
	\end{align*}
	where we take into account the definition of $\kappa_{\Cor}$ and $\kappa_{\Pro}$ as well as the $d$-independence of $\rho$, so the constants $C_1,C_2$ are independent of $d$. Additionally by definition of the algorithm, we know that $\bw_{0,0} = 0$. Hence, \eqref{eq:iteration-As-estimate} and \eqref{eq:iteration-supp-estimate} hold for $\bw_{k,0}$ for each $k \in \N_0$.
	
	The proof is structured as follows. In the first step we show that the statement is true for $\bw_{k,j+1}$ if it is true for $\bw_{k,j}$. We then show that estimate \eqref{eq:iteration-rank-estimate} is true for $\bw_{k+1,0}$ if it is true for $\bw_{k,I}$.
	
	Let the statements \eqref{eq:iteration-As-estimate}, \eqref{eq:iteration-supp-estimate} and \eqref{eq:iteration-rank-estimate} be true for $\bwkj$. We refer here once to Assumption \ref{ass:dim} to recall which parameters are independent of the dimension $d$. By Remark \ref{rem:apply-sum-contr}, we have
	\begin{align*}
		\sum\limits_{i=1}^d \norm{\piti(\apply_{\bB_1^\intercal}(\bzkj^{(1)};\tfrac{1}{8}\eta_{k,j}))}_{\cAs} &\leq C_3 d \sum\limits_{i=1}^d \norm{\piti(\apply_{\bB_1}(\bwkj;\tfrac{1}{8\norm{\bB}}\eta_{k,j}))}_{\cAs} \\
		&\leq (C_3 d)^2 \sum\limits_{i=1}^d \norm{\piti(\bwkj)}_{\cAs} .
	\end{align*}
	Based on the structure of $\bB_2$ with the identity in the spatial variable, we have $L(\bzkj^{(2)}) \leq L_\rx(\bwkj)$.
	Lemma \ref{lem:initial-operator-trunc-estimates} and Lemma \ref{lem:initial-operator-trans-trunc-estimates} yield
	\[
	\sum\limits_{i=1}^d \norm{\piti(\apply_{\bB_2^\intercal}(\bzkj^{(2)};\tfrac{1}{8}\eta_{k,j}))}_{\cAs} \leq (C_4 d)^2 (L_\rx(\bwkj)+1)^{2s} \sum\limits_{i=1}^d \norm{\piti(\bwkj)}_{\cAs} .
	\]
	Combining these two results with Proposition \ref{prop:cAs-properties}\eqref{prop:cAs-triangle} we obtain
	\begin{align*}
		\sum\limits_{i=1}^d \norm{\piti(\apply(\bwkj;\tfrac{1}{2}\eta_{k,j}))}_{\cAs} \leq 2^{1+s} (C_5 d)^2 (L_\rx(\bwkj)+1)^{2s} \sum\limits_{i=1}^d \norm{\piti(\bwkj)}_{\cAs}.
	\end{align*}
	Moreover, by definition of the right-hand side \eqref{eq:rhs-normal-eq} as well as Proposition \ref{prop:cAs-properties}\eqref{prop:cAs-triangle}, Remark \ref{rem:apply-sum-contr}, Lemma \ref{lem:initial-operator-trunc-estimates}, Lemma \ref{lem:initial-operator-trans-trunc-estimates} and Assumption \ref{ass:rhs}\eqref{ass:rhsapprox}, we have
	\begin{align*}
		\sum\limits_{i=1}^d \norm{\piti(\rhs(\tfrac{\eta_{k,j}}{2}))} &\leq  C_6 d \sum\limits_{i=1}^d \norm{\piti(\bbf_1)}_{\cAs} + C_7 d ((\ln d)^2 \abs{\ln \eta_{k,j}} + (\ln d)^3)^s  \sum\limits_{i=1}^d \norm{\pi^{(i)}(\bbf_2)}_{\cAs} \\
		&\leq C_8 d ((\ln d)^2 \abs{\ln \eta_{k,j}} + (\ln d)^3)^s C_{\bu,\bbf}^s .
	\end{align*}
	By the definition of $\bw_{k,j+1}$, the previous estimates and the assumptions on $\bwkj$, we arrive at
	\begin{align*}
		\sum\limits_{i=1}^d \norm{\piti(\bw_{k,j+1})}_{\cAs} &\leq \bar{C}_1 C_{9}  \bar{C}_3^{2s} (C_5 d)^2  \big((\ln d)^2 \abs{\ln \eta_{k,j}} + (\ln d)^3\big)^{2s(j+1)} (\tilde{C} d)^{2j} d^{p_0}  C_{\bu,\bfs}^s \\
		&\leq \bar{C}_1  \big((\ln d)^2 \abs{\ln \eta_{k,j+1}} + (\ln d)^3\big)^{2s(j+1)} (\tilde{C} d)^{2(j+1)} d^{p_0}  C_{\bu,\bfs}^s,
	\end{align*}
	where we assume that $\tilde{C}$ is chosen such that $C_{9} \bar{C}_{3}^{2s} C_5 \leq \tilde{C}$ and that without loss of generality $\eta_{k,j} < 1$ for each $k,j$. If this is not the case, we can instead solve a scaled problem. The parameters $\bar{C}_1$ and $\tilde{C}$ are independent of $d$.
	
	We estimate the support sizes of iterates. Here we can proceed in a similar manner as for the $\cAs$-norm. We set $\mathbf{\bar w}_{k,j+1} = \bwkj - \omega \br_{k,j}$. In the recompression and coarsening steps, the support sizes of $\bw_{k,j+1}$ cannot increase. Hence it is sufficient to show the support bound for $\mathbf{\bar w}_{k,j+1}$. Using the definition of $\apply$ from \eqref{eq:apply-normal-eq}, as well as Lemma \ref{lem:apply-quantity-estimates}, Remark \ref{rem:apply-sum-contr}, Lemma \ref{lem:initial-operator-trunc-estimates} and Lemma \ref{lem:initial-operator-trans-trunc-estimates}, we obtain
	\begin{align*}
		\sum\limits_{i=1}^d \# \supp(\piti(\apply(\bwkj;\tfrac{\eta_{k,j}}{2}))) \leq  C_{10} d^{1 +\frac{1}{s}} (L(\bwkj)+1)^2 \eta_{k,j}^{-\frac{1}{s}} \Big(\sum\limits_{i=1}^d \norm{\piti(\bwkj)}_{\cAs}\Big)^{\frac{1}{s}}
	\end{align*}
	By Assumption \ref{ass:rhs}\eqref{ass:rhsapprox}, we have
	\begin{align*}
		\sum\limits_{i=1}^d \# \supp(\piti(\rhs(\tfrac{1}{2}\eta_{k,j}))) \leq C_{11} d^{1+s^{-1}} ((\ln d)^2 \abs{\ln \eta_{k,j}} + (\ln d)^3) \eta_{k,j}^{-\frac{1}{s}} C_{\bu,\bbf}
	\end{align*}
	Using our knowledge on $\bwkj$ yields
	\begin{align*}
		\sum\limits_{i=1}^d \# \supp(\piti(\mathbf{\bar w}_{k,j+1})) &\leq  \bar{C}_2 (\tilde{C}d)^{\frac{2j}{s}} d^{p_1} \big((\ln d)^2 \abs{\ln \eta_{k,j}} 
		+ (\ln d)^3\big)^{2j} \eta_{k,j}^{-\frac{1}{s}} C_{\bu,\bfs} \\ 
		&\quad +  C_{12} \bar{C}_3^2 \bar{C}_1^{\frac{1}{s}} (\tilde{C}d)^{\frac{2j}{s}} d^{p_1} \big((\ln d)^2 \abs{\ln \eta_{k,j}} + (\ln d)^3\big)^{2(j+1)} \eta_{k,j}^{-\frac{1}{s}} C_{\bu,\bbf} \\
		&\leq \bar{C}_2 (\tilde{C}d)^{\frac{2(j+1)}{s}} d^{p_1} \big((\ln d)^2 \abs{\ln \eta_{k,j+1}} + (\ln d)^3\big)^{2(j+1)} \eta_{k,j+1}^{-\frac{1}{s}} C_{\bu,\bfs} ,
	\end{align*}
	where we take $\tilde{C}$ sufficiently large depending on the other constants and again use $\eta_{k,j} < 1$.
	
	We now estimate the maximum ranks of iterates. By applying Lemma \ref{lem:max-level} at $\mathbf{\bar w}_{k,j}$ as well as using Proposition \ref{prop:higher-reg} and the requirement $\beta_2 > 0$ in Algorithm \ref{alg:lowrankrichardson}, one has the estimate
	\begin{align*}
		L_{k,j+1} \leq  \tau^{-1} \log_2 \Big(C_{13} d^{\frac{p_1}{2}} \eta_{k,j}^{-1} \gamma^{kI+j} \bar{C}_{\bbf} \eta_{k,j+1}^{-\frac{1}{2s}} (\tilde{C}d)^{\frac{(j+1)}{s}} \big((\ln d)^2 \abs{\ln \eta_{k,j+1}} + (\ln d)^3\big)^{(j+1)} C_{\bu,\bbf}^{\frac{1}{2}} \Big) ,
	\end{align*}
	where $L_{k,j+1} = L(\bw_{k,j+1})$. At this point we note that $C_{\bu,\bbf}$ may depend on $d$, but $C_{\bu,\bbf} \leq d^{\frac{1}{s}} \hat{C}_{\bu,\bbf}$ with $\hat{C}_{\bu,\bbf} = \max_i\{\norm{\piti(\bu)}_{\cA}^{\frac{1}{s}}, \norm{\piti(\bbf_1)}_{\cAs}^{\frac{1}{s}},\norm{\pi^{(i)}(\bbf_2)}_{\cAs}^{\frac{1}{s}}\}$, which is independent of $d$ by Assumption \ref{ass:dim}. Additionally by the definition of $\eta_{k,j}$, one has
	\begin{align*}
		\log_2 \gamma^{k} \leq (\abs{\log_2 \eta_{k,j+1}} + j \abs{\log_2 \rho}  + \abs{\log_2 (\rho^2 \varepsilon_0)}) \log_2(\gamma) .
	\end{align*}
	 We know that $\gamma$ only depends on $d$ by $\check{C}_\rx$, which grows at most linearly in $d$. Hence, there exist constants $c, C_{\gamma}$ with
	\begin{align}\label{eq:j-itter-upper-bound}
		 j+1 \leq I \leq c \ln d, \quad \gamma \leq C_{\gamma} d^2 .
	\end{align}
	Combining this with $\log_2(x) \leq x$ yields
	\begin{align*}
		\log_2\Big(\big((\ln d)^2 \abs{\ln \eta_{k,j+1}} + (\ln d)^3\big)^{(j+1)}\Big) \leq c \big((\ln d)^2 \abs{\ln \eta_{k,j+1}} + (\ln d)^3\big) .
	\end{align*}
	Defining $C_{14}(d) = C_{13} \bar{C}_{\bbf} C^{\frac{1}{2}}_{\bu,\bbf} \rho$ and combining the previous estimates, we obtain
	\begin{align*}
		\tau L_{k,j+1} &\leq \log_2( C_{14}(d)) + \tfrac{j+1}{s} \log_2 (\tilde{C}d) + \tfrac{p_1}{2} \log_2 d + (1+\tfrac{j}{2s}) \abs{\log_2 \eta_{k,j+1}} + I \log_2 \gamma^k \\ 
		&\qquad + j \log_2 \gamma + \log_2\Big(\big((\ln d)^2 \abs{\ln \eta_{k,j+1}} + (\ln d)^3\big)^{(j+1)}\Big) \\
		&\leq  \log_2 (C_{14}(d)) + \tfrac{j+1}{s} \log_2 (\tilde{C}d) + \tfrac{p_1}{2} \log_2 d + (1+\tfrac{j}{2s} + I \log_2 \gamma) \abs{\log_2 \eta_{k,j+1}} \\
		&\qquad + (\log_2 \gamma )(j + jI \abs{\log_2(\rho)} + I \abs{\log_2(\rho^2 \varepsilon_0)}) + c \big((\ln d)^2 \abs{\ln \eta_{k,j+1}} + (\ln d)^3\big)\\
		&\leq \tau \bar{C}_3 \big((\ln d)^2 \abs{\ln \eta_{k,j+1}} + (\ln d)^3\big)
	\end{align*}
	for sufficiently large $\bar{C}_3$, where we used that without loss of generality $\ln d > 1$.
	Hence, we have shown that the statements are true for $\bw_{k,j+1}$ if they are true for $\bwkj$.
	
	In the last step we have to show that \eqref{eq:iteration-rank-estimate} is true for $\bw_{k+1,0}$ if it is true for $\bw_{k,I}$. By Lemma \ref{lem:max-level}, Proposition \ref{prop:higher-reg} and $\bw_{k+1,0} = \bu_{k+1}$, we have
	\begin{align*}
		L_{k+1,0} &\leq  \tau^{-1} \log_2 \Big(C_{15} d^{\frac{p_1}{2}} (\kappa_3 2^{-k+1} \varepsilon_0)^{-1} \gamma^{kI+I} \bar{C}_{\bbf} \eta_{k,I}^{-\frac{1}{2s}} (\tilde{C}d)^{\frac{I}{s}} \big((\ln d)^2 \abs{\ln \eta_{k,I}} + (\ln d)^3\big)^{I} C_{\bu,\bbf}^{\frac{1}{2}} \Big)  \\
		&=  \tau^{-1} \log_2 \Big( C_{16}(d) d^{\frac{p_1}{2}} \eta_{k+1,0}^{-1-\frac{1}{2s}} \rho^{\frac{I}{2s}} \gamma^{kI+I} (\tilde{C}d)^{\frac{I}{s}} \big((\ln d)^2 \abs{\ln \eta_{k,I}} + (\ln d)^3\big)^{I} \Big)
	\end{align*}
	with $C_{16}(d) = C_{15}  \kappa_3^{-1} 2^{-\frac{1}{2s}}\rho \bar{C}_{\bbf} C_{\bu,\bbf}^{\frac{1}{2}}$. Additionally we have by $I \leq c \ln d$ the estimate
	\begin{align*}
		\log_2\Big(\big((\ln d)^2 \abs{\ln \eta_{k,I}} + (\ln d)^3\big)^{I} \Big) \leq C_{17} \big((\ln d)^2 \abs{\ln \eta_{k+1,0}} + (\ln d)^3\big) .
	\end{align*}
	Proceeding as before, we find
	\begin{align*}
		L(\bw_{k+1,0}) &\leq \bar{C}_3 ((\ln d)^2 \abs{\ln \eta_{k+1,0}} + (\ln d)^3),
	\end{align*}
	where we again assume that $\bar{C}_3$ is chosen sufficiently large in dependence on the other constants.
\end{proof}
In this lemma we have derived bounds on the maximum temporal and spatial wavelet levels, whereas for the estimates on support sizes and $\cAs$-quasinorms we only need a bound on the maximum \emph{spatial} levels. However, the maximum overall level bound is also required for rank estimates.

\subsubsection{Hierarchical ranks of iterates}
For making use of the rank bounds provided by Lemma \ref{lem:apply-quantity-estimates}, Lemma \ref{lem:initial-operator-trunc-estimates} and Lemma \ref{lem:initial-operator-trans-trunc-estimates}, as a next step we estimate the quantities appearing in these bounds, which are 
\[
	J(\tfrac{1}{8\norm{\bB}}\eta_{k,j};\bwkj), \quad J_0(\tfrac{1}{8\norm{\bB}}\eta_{k,j};\bwkj), \quad  J(\tfrac{1}{8}\eta_{k,j},\bzkj^{(1)}), \quad J^\ad_{0}(\tfrac{1}{8}\eta_{k,j},\bzkj^{(2)})
\]
and the quantities related to scaling matrices
\[
\norm{\hmcX(\tfrac{1}{8\norm{\bB}}\eta_{k,j};\bw_{k,j})}_{\ell_\infty}, \quad \norm{\hat{\sfm}_{\cX,0}(\tfrac{1}{8\norm{\bB}}\eta_{k,j};\bw_{k,j})}_{\ell_\infty}, \quad
\norm{\hmcY(\tfrac{1}{8\norm{\bB}}\eta_{k,j};\bw_{k,j})}_{\ell_\infty}, \\
\]
as well as
\[
\norm{\hmcX(\tfrac{1}{8}\eta_{k,j};\bzkj^{(1)})}_{\ell_\infty}, \quad
\norm{\hat{\sfm}^\ad_{\cX,0}(\tfrac{1}{8}\eta_{k,j};\bzkj^{(2)})}_{\ell_\infty}, \quad
\norm{\hmcY(\tfrac{1}{8}\eta_{k,j};\bzkj^{(1)})}_{\ell_\infty} .
\]
The rank estimates depend on the maximum wavelet levels of activated basis indices, where a bound is given by Lemma \ref{lem:iteration-supp-As-lvl}.
Moreover, as shown in \eqref{eq:mcy-inf-bound} and \eqref{eq:mcx-inf-bound} in Section \ref{sec:adaptive-operator-comb} and in \eqref{eq:m-hat-initial-ell-inf} and \eqref{eq:m-hat-initial-trans-ell-inf} in Section \ref{sec:apply-initial-value}, the scaling matrix based quantities can be bounded using the maximum wavelet level among the basis indices that are active in the iterates.

\begin{lemma}\label{lem:iteration-ranks}
	Let $\bwkj$ be defined by Algorithm \ref{alg:lowrankrichardson}. Under the assumptions of Theorem \ref{thm:complexity}, one has
	\begin{align*}
		\rank_\infty(\bwkj)\leq C d^{p_2} (\ln d)^{b+4} \left((\ln d)^2 \abs{\ln \eta_{k,0}} + (\ln d)^3\right)^{8j} (1 + \abs{\ln \eta_{k,0}})^{b+4} 
	\end{align*}
	with $b = \max\{b_\bu, b_\bbf\}, p_2 = (\ln \bar{C} + 8 \ln(1 + c \abs{\ln \rho}))c$, and where $C, \bar{C}>0$ are $d$-independent constants.
\end{lemma}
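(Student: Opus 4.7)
The plan is to prove the estimate by induction on the inner iteration index $j$ for fixed $k$, exploiting the fact that neither $\recompress$ nor $\coarsen$ can increase the hierarchical rank, so it suffices to track how $\apply$ inflates ranks while controlling the base rank at $\bw_{k,0}$.

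For the base case $j=0$ with $k\geq 1$, I would note that $\bw_{k,0}=\bu_k$ was produced at the end of the previous outer iteration by $\recompress$ and $\coarsen$ with tolerance proportional to $2^{-k}\varepsilon_0$, and that the convergence property from Section \ref{sec:convergent-solver} yields $\norm{\bu - \bw_{k-1,j^*}} \lesssim 2^{-k}\varepsilon_0$. Theorem \ref{thm:combinedcoarsen} combined with $\bu\in\cA(\gamma_\bu)$ and the explicit form $\gamma_\bu^{-1}(x) = (\ln x / d_\bu)^{b_\bu}$ then gives $\rank_\infty(\bw_{k,0}) \lesssim (1+\abs{\ln\eta_{k,0}})^{b_\bu}$. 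The case $k=0$ is trivial since $\bw_{0,0}=0$.

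For the inductive step $j\to j+1$, I would use $\rank_\infty(\bw_{k,j+1}) \leq \rank_\infty(\bw_{k,j}) + \rank_\infty(\mathbf{r}_{k,j})$ and then bound the residual rank via Lemma \ref{lem:apply-quantity-estimates} applied to $\bB_1$ and its transpose together with Lemmas \ref{lem:initial-operator-trunc-estimates} and \ref{lem:initial-operator-trans-trunc-estimates} for $\bB_2$ and $\bB_2^\intercal$. Each single application multiplies the input rank by a factor of the form $\norm{\hmcX}_{\ell_\infty}\norm{\hmcY}_{\ell_\infty}(cJ+2)$, with additional level-dependent factors for the initial-value parts; these quantities I would estimate through \eqref{eq:mcy-inf-bound}--\eqref{eq:mcx-inf-bound} and \eqref{eq:m-hat-initial-ell-inf}--\eqref{eq:m-hat-initial-trans-ell-inf}, substituting the level bound $L(\bw_{k,j}) \lesssim (\ln d)^2 \abs{\ln\eta_{k,j}} + (\ln d)^3$ from Lemma \ref{lem:iteration-supp-As-lvl}. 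The composition $\apply_{\bB^\intercal} \circ \apply_\bB$ from \eqref{eq:apply-normal-eq}, with $\bB$ split as $\bB_1 + \bB_2$, squares and combines these factors, yielding a rank growth factor per inner iteration bounded by $G \lesssim \bar C \bigl((\ln d)^2 \abs{\ln\eta_{k,j}} + (\ln d)^3\bigr)^{8}$, where the exponent $8$ absorbs the cubic dependence of $\norm{\hmcX}_{\ell_\infty}$ on the level bound, the linear factor in $J$, and the squaring introduced by the transposition. The $\rhs$ contribution adds at most $C(1+\abs{\ln\eta_{k,j}})^{b_\bbf}$ via Assumption \ref{ass:rhs}\eqref{ass:rhsapprox} composed with the operator $\apply$-rank estimates.

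Telescoping the recursion $\rank_\infty(\bw_{k,j+1}) \leq (1+G_{k,j})\rank_\infty(\bw_{k,j}) + C(1+\abs{\ln\eta_{k,j}})^{b_\bbf}$ and using $\abs{\ln\eta_{k,j}} \leq \abs{\ln\eta_{k,0}} + j\abs{\ln\rho}$ together with $j \leq I \leq c\ln d$, all factors $G_{k,j}$ can be replaced by a common upper bound in which the logarithmic dependence on $\eta_{k,j}$ is absorbed into $\abs{\ln\eta_{k,0}}$ and the term $(1+c\abs{\ln\rho})^{8j}$ becomes at most $d^{8c\ln(1+c\abs{\ln\rho})}$, accounting for the exponent $p_2$; combining this with the base-case bound and the additive polylogarithmic $\rhs$ contributions yields the stated estimate. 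The hardest part will be the careful $d$-bookkeeping: the level estimate contributes $(\ln d)^2$ and $(\ln d)^3$ factors inside every apply, and since $I$ itself scales like $\ln d$, these accumulate through the iterated product to $d^{O(\ln d)}$ factors that must be packaged cleanly into $p_2$ and $b+4$, uniformly in $\eta$ and $k$, while simultaneously guaranteeing that the analogous bound holds across the transition $\bw_{k,I}\to\bw_{k+1,0}$ treated by the base case with $k$ replaced by $k+1$.
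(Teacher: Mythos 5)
Your proposal follows essentially the same route as the paper's proof: bound the per-step rank growth factor by the truncation ranks of the scaling matrices and the parameter $J$ (via the level bounds of Lemma \ref{lem:iteration-supp-As-lvl} and the estimates \eqref{eq:mcy-inf-bound}--\eqref{eq:mcx-inf-bound}, \eqref{eq:m-hat-initial-ell-inf}--\eqref{eq:m-hat-initial-trans-ell-inf}), handle $\bw_{k,0}$ through the recompression bound and $\bu\in\cA(\gamma_\bu)$, and telescope the resulting recursion using $j\leq I\leq c\ln d$ and $\abs{\ln\eta_{k,j}}\leq\abs{\ln\eta_{k,0}}+c\ln d\,\abs{\ln\rho}$, exactly as in the paper. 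The only deviations are minor bookkeeping slips (the exponent $8$ arises as $2\cdot(2+1+1)$ from $\hmcX$, $\hmcY$ and $J$ rather than your cubic-plus-linear count, the base case carries an extra $\ln d$ from $\kappa_1^{-1}\lesssim d$, and the $\rhs$ term picks up an additional factor $(\ln d+\abs{\ln\eta_{k,j}})^4$), all of which are absorbed by the stated bound.
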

\begin{proof}
	The basic approach of this proof is similar to \cite[Section 6.5]{bachmayr2014adaptive}. The differences is based on the new scaling matrix $\bDX$, the additional factor $J$ when applying the temporal operator and the wavelet level dependence when applying the initial value operator.
	
	First of all, we derive estimates for the scaling matrix based quantities of the non-transposed operator.
	Both of them depend on the maximum active level of $\bwkj$ as well as on the $\cAs$-norm of the contractions.
	Combining \eqref{eq:iteration-As-estimate} and \eqref{eq:j-itter-upper-bound} yields
	\begin{align*}
		\ln\Big(\sum\limits_{i=1}^d\norm{\piti(\bwkj)}_{\cAs}\Big) \leq C_1 ((\ln d)^2 \abs{\ln \eta_{k,j}} + (\ln d)^3) .
	\end{align*}
	Therefore, by \eqref{eq:mcy-inf-bound}, \eqref{eq:mcx-inf-bound} and \eqref{eq:m-hat-initial-ell-inf}, we have
	\begin{align*}
		\norm{\hmcY\big(\tfrac{1}{8\norm{\bB}}\eta_{k,j};\bwkj\big)}_{\ell_\infty} &\leq C_2 ((\ln d)^2 \abs{\ln \eta_{k,j}} + (\ln d)^3), \\
		\norm{\hmcX\big(\tfrac{1}{8\norm{\bB}}\eta_{k,j};\bwkj\big)}_{\ell_\infty} &\leq C_3 \left((\ln d)^2 \abs{\ln \eta_{k,j}} + (\ln d)^3 \right)^2 \\
		\norm{\hat{\sfm}_{\cX,0}\big(\tfrac{1}{8\norm{\bB}}\eta_{k,j};\bwkj\big)}_{\ell_\infty} &\leq C_4 \left((\ln d)^2 \abs{\ln \eta_{k,j}} + (\ln d)^3 \right)^2 
	\end{align*}
	and by \eqref{eq:J-eta-bound} and \eqref{eq:J0-estimate} the estimates
	\begin{align*}
		J\big(\tfrac{1}{8\norm{\bB}}\eta_{k,j};\bwkj \big) \leq C_5 ((\ln d)^2 \abs{\ln \eta_{k,j}} + (\ln d)^3), \\
		J_0\big(\tfrac{1}{8\norm{\bB}}\eta_{k,j};\bwkj \big) \leq C_6 ((\ln d)^2 \abs{\ln \eta_{k,j}} + (\ln d)^3)
	\end{align*}
	Additionally we have to estimate the quantities with respect to the intermediate results $\bzkj^{(1)}$ and $\bzkj^{(2)}$. We obtain
	\begin{align*}
		\norm{\hmcY(\tfrac{1}{8} \eta_{k,j};\bzkj^{(1)})}_{\ell_\infty} &\leq C_7 \Big[ 1 + L_\rx(\bzkj^{(1)}) + \abs{\ln \tfrac{\eta_{k,j}}{8}} + \ln\Big(\sum\limits_{i=1}^d \norm{\piti(\bzkj)}_{\cAs} \Big)  \Big] \\
		&\leq C_{8} \Big[ 1 + L_\rx(\bwkj) + \abs{\ln \eta_{k,j}} + \ln d  +\ln\Big(\sum\limits_{i=1}^d \norm{\piti(\bwkj)}_{\cAs} \Big)  \Big] \\
		&\leq C_{9} ((\ln d)^2 \abs{\ln \eta_{k,j}} + (\ln d)^3),
	\end{align*}
	where we again used the estimation \eqref{eq:mcy-inf-bound} as well as Remark \ref{rem:apply-sum-contr} and the level decay property for $L_{\rx}(\bzkj^{(1)})$ in the second line. Analogously we have
	\begin{align*}
		\norm{\hmcX(\tfrac{1}{8} \eta_{k,j};\bzkj^{(1)})}_{\ell_\infty} &\leq C_{10} \left((\ln d)^2 \abs{\ln \eta_{k,j}} + (\ln d)^3 \right)^2, \\
		\norm{\hat{\sfm}^\ad_{\cX,0}(\tfrac{1}{8} \eta_{k,j};\bzkj^{(2)})}_{\ell_\infty} &\leq C_{11} \left((\ln d)^2 \abs{\ln \eta_{k,j}} + (\ln d)^3 \right)^2, \\
		J\left(\tfrac{1}{8} \eta_{k,j}; \bzkj^{(1)}\right) &\leq C_{12} ((\ln d)^2 \abs{\ln \eta_{k,j}} + (\ln d)^3) , \\
		J^\ad_{0}\left(\tfrac{1}{8} \eta_{k,j}; \bzkj^{(2)}\right) &\leq C_{13} ((\ln d)^2 \abs{\ln \eta_{k,j}} + (\ln d)^3) ,
	\end{align*}
	where we use $L(\bzkj^{(2)}) \leq L_{\rx}(\bwkj)$.
	Furthermore we have to estimate the rank of $\bfs_{k,j} = \bfs_{k,j}^{(1)} + \bfs_{k,j}^{(2)}$. We have $\mathbf{\tilde f}_{k,j}^{(1)} = \rhs_{\bfs_1}(\tfrac{\eta_{k,j}}{8\norm{\bB}})$ and $\bfs_{k,j}^{(1)} = \apply_{\bB_1^{\intercal}}(\mathbf{\tilde f}^{(1)}_{k,j},\tfrac{\eta_{k,j}}{8})$. 
	Without loss of generality, we can apply Lemma \ref{lem:max-level} on $\mathbf{\tilde f}^{(1)}_{k,j}$, because the $\rhs_{\bbf_1}$ routine can be realized with a coarsening step in the end similar to Remark \ref{rem:f2-lvl-estimate-realization}. Combining Lemma \ref{lem:max-level} with Assumption \ref{ass:rhs}\eqref{ass:rhsapprox} and Assumption \ref{ass:rhs}\eqref{ass:higher-regularity-f}, yields
	\begin{align*}
		L(\mathbf{\tilde f}^{(1)}_{k,j}) \leq C_{14} (\ln d + \abs{\ln \eta_{k,j}}) .
	\end{align*}
	Using again Assumption \ref{ass:rhs}\eqref{ass:rhsapprox}, we obtain
	\begin{align*}
		\norm{\hmcY(\tfrac{1}{8} \eta_{k,j};\mathbf{\tilde f}_{k,j}^{(1)})}_{\ell_\infty},J\left(\tfrac{1}{8} \eta_{k,j}; \mathbf{\tilde f}_{k,j}^{(1)}\right) &\leq C_{16} (\ln d + \abs{\ln \eta_{k,j}}) , \\
		\norm{\hmcX(\tfrac{1}{8} \eta_{k,j};\mathbf{\tilde f}_{k,j}^{(1)})}_{\ell_\infty} &\leq C_{17} (\ln d + \abs{\ln \eta_{k,j}})^2 .
	\end{align*}
	For $\mathbf{\tilde f}_{k,j}^{(2)} = \rhs_{\bfs_2}(\tfrac{\eta_{k,j}}{8\norm{\bB}})$and $\bfs_{k,j}^{(2)} = \apply_{\bB_2^{\intercal}}(\mathbf{\tilde f}^{(2)}_{k,j},\tfrac{\eta_{k,j}}{8})$, we get in the same way
	\begin{align*}
			\norm{\hat{\sfm}^\ad_{\cX,0}(\tfrac{1}{8} \eta_{k,j};\mathbf{\tilde f}_{k,j}^{(2)})}_{\ell_\infty}  &\leq C_{18} (\ln d + \abs{\ln \eta_{k,j}})^2 , \\
			J^\ad_{0}\left(\tfrac{1}{8} \eta_{k,j}; \mathbf{\tilde f}_{k,j}^{(2)}\right) &\leq C_{19} (\ln d + \abs{\ln \eta_{k,j}}) .
	\end{align*}
	Taking Assumption \ref{ass:rhs}\eqref{ass:rhsapprox} into account as well as Lemma \ref{lem:apply-quantity-estimates} and Lemma \ref{lem:initial-operator-trans-trunc-estimates}, we arrive at
	\begin{align*}
		\rank_\infty(\bfs_{k,j})  &\leq C_{20} (\ln d + \abs{\ln \eta_{k,j}})^4   \big(\rank_\infty(\mathbf{\tilde f}^{(1)}_{k,j}) + \abs{\rank(\mathbf{\tilde f}^{(2)}_{k,j})}_{\infty} \big) \\
		&\leq  C_{21}(1 + \abs{\ln \eta_{k,j}})^{b_{\bbf}}  (\ln d + \abs{\ln \eta_{k,j}})^4 .
	\end{align*}
	Hence, by applying Lemma \ref{lem:apply-quantity-estimates} two times as well as Lemma \ref{lem:initial-operator-trunc-estimates} and Lemma \ref{lem:initial-operator-trans-trunc-estimates}, we arrive at
	\begin{equation}
		\rank_\infty(\bw_{k,j+1}) \leq C_{22} ((\ln d)^2 \abs{\ln \eta_{k,j}} + (\ln d)^3)^8 \rank_\infty(\bwkj) + C_{17} (\ln d + \abs{\ln \eta_{k,j}})^{b_{\bbf}+4} . \label{eq:rank-recursion}
	\end{equation}
	 Additionally by \eqref{eq:complexity_ranknorm} in Theorem \ref{thm:complexity} and Assumption \ref{ass:approximability}\eqref{ass:uapprox} we have
	\begin{align*}
		\rank_\infty(\bw_{k,0}) \leq (d_{\bu}^{-1} \ln[ 2 (\alpha \kappa_1)^{-1} \rho_{\gamma_\bu} \norm{\bu}_{\cA(\gamma_\bu)} \eta_{k,0}^{-1} \rho ] )^{b_\bu} \leq C(\bu) (\abs{\ln \eta_{k,0}} + \ln d)^{b_\bu},
	\end{align*}
	where we used $\kappa_1^{-1} \lesssim d$. We recall $j \leq I \leq c \ln d$, which yields
	\begin{equation}\label{eq:etakj-etak0-estimate}
		\abs{\ln \eta_{k,j}} \leq \abs{\ln \eta_{k,0}} + c \ln d \abs{\ln \rho}, \quad j=0,\dots,I.
	\end{equation}
	 By repeated insertion of \eqref{eq:rank-recursion}, we obtain
	 \begin{align*}
		\rank_\infty(\bwkj) &\leq C_{23} d^{c\ln C_{18}} ((\ln d)^2 (\abs{\ln \eta_{k,0}} + c \ln d \abs{\ln \rho}) + (\ln d)^3)^{8j} \\
		&\quad \times (\abs{\ln \eta_{k,0}} + c \ln d \abs{\ln \rho} + \ln d)^{b+4} \\
		&\leq C_{24} d^{p_2} (\ln d)^{b+4} \left((\ln d)^2 \abs{\ln \eta_{k,0}} + (\ln d)^3\right)^{8j} (1 + \abs{\ln \eta_{k,0}})^{b+4} ,
	 \end{align*}
 	where $b =  \max\{b_\bu, b_\mathbf{f}\}$.
\end{proof}

\subsubsection{Complexity of Algorithm \ref{alg:lowrankrichardson}}
With the above preparations, we are in a position to prove the complexity bound in Theorem \ref{thm:complexity}.

\begin{proof}[Proof of Theorem \ref{thm:complexity}]
	The estimates \eqref{eq:complexity_rank} and \eqref{eq:complexity_ranknorm} follow directly from \eqref{eq:recompress-coarsen-rank}, whereas \eqref{eq:complexity_supp} and \eqref{eq:complexity_sparsitynorm} are an intermediate consequence of \eqref{eq:recompress-coarsen-supp}. 
	
	We now turn to the proof of the estimate \eqref{eq:complexity_totalops} for the number of required operations.
	By Assumption \ref{ass:rhs}\eqref{ass:rhsops}, Lemma \ref{lem:apply-num-ops}, Lemma \ref{lem:apply-initial-value-num-ops}, Lemma \ref{lem:apply-initial-value-trans-num-ops} and Remark \ref{rem:hsvd-complexity} the complexity of each inner iteration step is dominated by the hierarchical singular value decomposition which is required for the $\recompress$ and $\coarsen$ routines. Hence, by Remark \ref{rem:hsvd-complexity} and \eqref{eq:iteration-supp-estimate} from Lemma \ref{lem:iteration-supp-As-lvl} for each $k$ and $j$ the complexity is bounded by
	\begin{align*}
		C_1 d^q \rank^4_\infty(\bwkj) d^{p_1} (\tilde{C}d)^{\frac{2j}{s}} C_{\bu,\bbf} \eta_{k,j}^{-\frac{1}{s}} \big((\ln d)^2 \abs{\ln \eta_{k,j}} + (\ln d)^3\big)^{2j} ,
	\end{align*}
	where $d^q$ corresponds to the growth of $C^\text{{\rm ops}}_\bbf(d)$, which by Assumption \ref{ass:dim} is at most polynomial. 
	Using \eqref{eq:etakj-etak0-estimate}, we obtain
	\begin{equation*}
		\big((\ln d)^2 \abs{\ln \eta_{k,j}} + (\ln d)^3\big)^{2j} \leq d^{p_3} \big((\ln d)^2 \abs{\ln \eta_{k,0}} + (\ln d)^3\big)^{2j} 
	\end{equation*}
	with $p_3 = 2c \ln(1+c \abs{\ln \rho})$.
	Consequently, the complexity of the outer loop $k$ is bounded by
	\begin{align*}
		C_2 d^{q+p_1+p_3} \rank_\infty^4(\bw_{k,I})  (\tilde{C}d)^{\frac{2(I+1)}{s}} C_{\bu,\bbf} \eta_{k,I}^{-\frac{1}{s}}   \big((\ln d)^2 \abs{\ln \eta_{k,0}} + (\ln d)^3\big)^{2I}  .
	\end{align*} 
	Furthermore, the total work to arrive at $\bu_k$ is bounded by
	\begin{align}\label{eq:total-work-uk}
		C_3 d^{q+p_1+p_3} \rank_\infty^4(\bw_{k-1,I}) (\tilde{C}d)^{\frac{2c \ln d+2}{s}} d^{\frac{1}{s}} \hat{C}_{\bu,\bbf} \eta_{k-1,I}^{-\frac{1}{s}} \big((\ln d)^2 \abs{\ln \eta_{k-1,0}} + (\ln d)^3\big)^{2I} .
	\end{align}
	In the next step we express the bound in terms of the tolerance $\varepsilon_k$. It holds $\eta_{k-1,0} = 2 \rho \varepsilon_k$ and $\eta_{k-1,I} = 2\rho^{I+1} \varepsilon_k$. Using $\rho \in (0,1)$ and $I \leq c \ln d$, yields $\eta_{k-1,I}^{-\frac{1}{s}} \leq (2\rho)^{-\frac{1}{s}} d^{cs^{-1} \abs{\ln \rho}} \varepsilon_k^{-\frac{1}{s}}$. 
	Therefore, we obtain
	\begin{equation}
		\big((\ln d)^2 \abs{\ln \eta_{k-1,0}} + (\ln d)^3\big)^{I} \leq (\ln d)^{c \ln(1+\abs{\ln(2\rho)}) + 3c\ln d} (1 + \abs{\ln \varepsilon_k})^{c \ln d} .
	\end{equation}
	Combining this with Lemma \ref{lem:iteration-ranks} yields
	\begin{align*}
		\rank_\infty(\bw_{k-1,I}) &\leq C_4 d^{p_2} (\ln d)^{b+4} \left((\ln d)^2 \abs{\ln \eta_{k-1,0}} + (\ln d)^3\right)^{8I} (1 + \abs{\ln \eta_{k-1,0}})^{b+4} \\
		&\leq C_5 d^{p_2} (\ln d)^{b + 4 + 8c \ln(1+ \abs{\ln 2\rho}) + 24 c \ln d} (1 + \abs{\ln \varepsilon_k})^{b + 4 + 8c \ln d} .
	\end{align*}
	By inserting this bound in \eqref{eq:total-work-uk} for the ranks, we arrive at
	\begin{align*}
		\operatorname{flops}(\bu_\varepsilon) \leq C_6 d^{p_4} (\ln d)^{4b + 16 + 34c \ln(1+ \abs{\ln 2\rho})} (\ln d)^{102 c \ln d} d^{2c s^{-1} \ln d} \varepsilon_k^{-\frac{1}{s}} (1 + \abs{\ln \varepsilon_k})^{4b +16+ 34c \ln d}
	\end{align*}
	with $p_4 = q + p_1 + 4p_2+p_3 + \frac{3}{s} + cs^{-1} \abs{\ln \rho} + 2cs^{-1} \ln \tilde{C}$, which shows \eqref{eq:complexity_totalops}.
\end{proof}

\section{Numerical Experiments}\label{sec:Numexp}

\subsection{Basic considerations}
In our implementation of Algorithm \ref{alg:lowrankrichardson}, all hierarchical tensor representations use the linear dimension tree
\[
 \mathbb{T}_d = \alpha^* \cup \bigcup_{i=1}^{d-1} \bigl\{ \{i\}, \{i+1,\ldots,d\}  \bigr\}	\,.
\]
As wavelet bases, we use $L_2$-orthonormal, continuously differentiable, piecewise polynomial Donovan-Geronimo-Hardin multiwavelets \cite{multiwavelets} of polynomial degree 6 and $L_2$-approximation order 7.
These wavelets satisfy the assumptions stated in \ref{sec:scaling-matrix} and 
used in Sections \ref{sec:Apply-spatial} and \ref{sec:Apply-temporal}, especially the requirement of $L_2(\Omega)$-orthonormality of the resulting spatial product basis that is crucial in view of Proposition \ref{prop:riesz}. In particular, with appropriate rescaling, we obtain $s^*$-compressibility of the one-dimensional operator $\bT_{2}$ and super-compressibility of $\bT_{\rt}$, as discussed in Remark \ref{rem:supercompressible-realization}. 

Additionally we use the technique described in \cite[Section 7.2]{bachmayr2014adaptive} for improving the practical efficiency of $\apply$. The basic idea is to systematically apply the $\recompress$ routine to intermediate results generated in $\apply$. This strategy leads to a substantial reduction of computational costs in practice.

\subsection{High-dimensional heat equation}
As a test case, we consider two versions of the heat equation
\begin{equation}\label{eq:heat-equation}
	\partial_t u - \Delta u = g\quad\text{in $\Omega = (0,1)^d$}, \qquad 
	u|_{t=0} = h,
\end{equation}
with Dirichlet boundary conditions in the spatial variables, one with $\norm{g}_{L_2} = 1$ and vanishing initial values $h=0$, the other with $\norm{h}_{L_2}=1$ and vanishing source term $g=0$. With data normalized in this manner, we consider absolute residual norms in what follows. As noted in Remark \ref{rem:laplaceT}, the hierarchical tensor representation of the operator $\bT_\rx$ in this case has a simple structure with ranks two. 

\subsubsection{Vanishing initial condition}\label{sec:results-source}
We first consider the case with vanishing initial conditions.
Note that such cases cannot directly be treated by alternative approaches based on dynamical low-rank approximation as in \cite{BEKU:21}.
For simplicity we use a function $g$ that can be written as a tensor product of single wavelet basis elements in the temporal and each spatial dimension.
The parameters in Algorithm \ref{alg:lowrankrichardson} are chosen as in our convergence analysis.

\begin{figure}
	\minipage{0.33\textwidth}
	\includegraphics{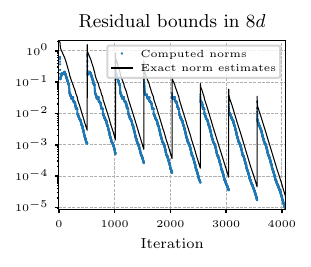}
	\endminipage\hfill
	\minipage{0.33\textwidth}
	\includegraphics{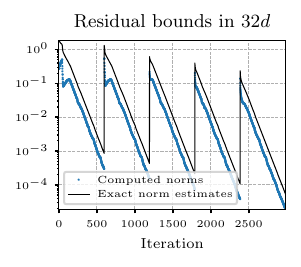}
	\endminipage\hfill
	\minipage{0.33\textwidth}
	\includegraphics{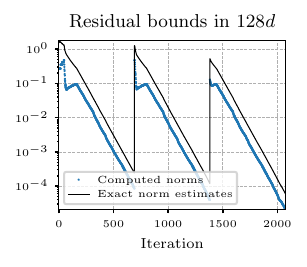}
	\endminipage
	\caption{Norms of computed error estimates and error bounds in dependence on the iteration number for the heat equation with source term, for $d=8,32,128$.}
	\label{fig:residual-sourceTerm}
\end{figure}
Figure \ref{fig:residual-sourceTerm} shows the residuals and the corresponding estimates of the error in $\cX$-norm in dependence on the iteration number for spatial dimensions $d \in \{ 8,32, 128\}$. The method behaves as expected with an increase of both estimates after each outer loop step due to the recompression and coarsening routines. Because of the $d$-dependence of the parameter $\kappa_1$, the number of inner steps required for the inner loop increases with increasing $d$.
Figure \ref{fig:supp-res-source-term} shows the sum of the one-dimensional supports as well as the maximum ranks with respect to the computed residual norm estimates, which are proportional to $\norm{\bu - \bw_{k,j}}$, over the course of the iteration. There is a pronounced preasymptotic range, and the expected rate of $s = \frac{1}{6}$ according to Theorem \ref{thm:complexity} is observed only for small residual norms, which are reached in the additional tests for $d=4$. In addition, the maximum ranks $\rank_\infty(\bwkj)$, which are shown in a semi-logarithmic plot, exhibit a logarithmic dependence on the residual norm estimate.

\begin{figure}
	\minipage{0.5\textwidth}
	\includegraphics{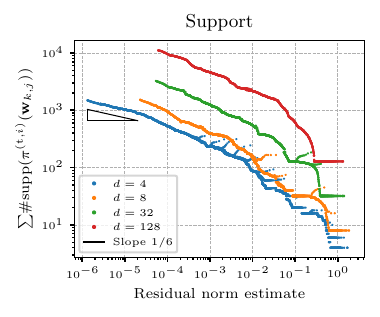}
	\endminipage
	\minipage{0.45\textwidth}
\includegraphics{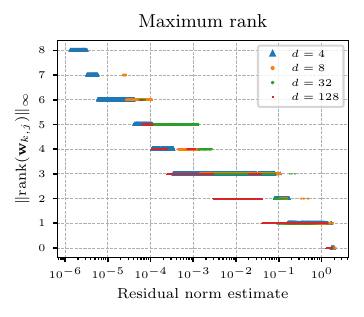}
	\endminipage\hfill
	\caption{Sum of the support and maximum rank per iteration versus the current residual norm estimate for the heat equation with source term, for $d=4,8,32,128$.}
	\label{fig:supp-res-source-term}
\end{figure}

\begin{figure}
	\minipage{0.33\textwidth}
	\includegraphics{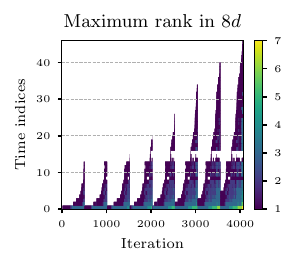}
	\endminipage\hfill
	\minipage{0.33\textwidth}
	\includegraphics{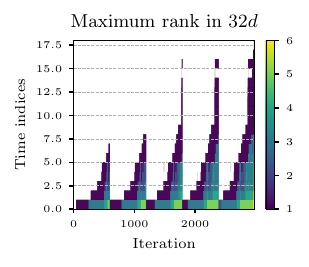}
	\endminipage\hfill
	\minipage{0.33\textwidth}
	\includegraphics{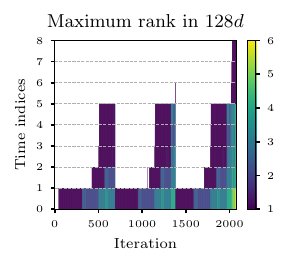}
	\endminipage
	\caption{Maximum ranks for each time index in dependence on the iteration number for the heat equation with source term, for $d=8, 32, 128$.}
	\label{fig:ranks-it-source-term}
\end{figure}

\begin{figure}
	\minipage{0.32\textwidth}
	\includegraphics[height=4.2cm]{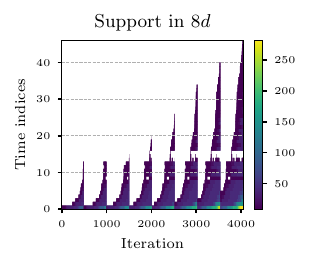}
	\endminipage\hfill
	\minipage{0.34\textwidth}
        \includegraphics[height=4.2cm]{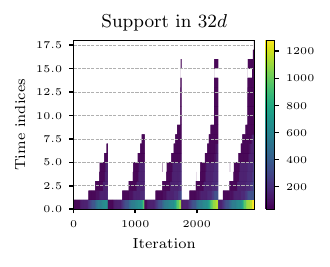}
	\endminipage\hfill
	\minipage{0.33\textwidth}
	\includegraphics[height=4.2cm]{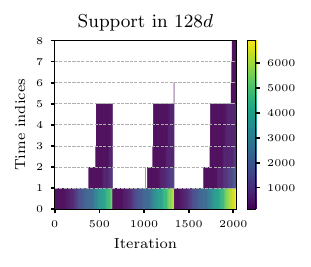}
	\endminipage
	\caption{Sum of one-dimensional support for each time index in dependence on the iteration number for the heat equation with source term, for $d=8,32,128$.}
	\label{fig:supp-it-source-term}
\end{figure}
For the same values of $d$, Figure \ref{fig:ranks-it-source-term} shows the maximum ranks of the separate low-rank approximations in each time index in dependence on the iteration count. One can discern an earlier increase of ranks for larger $d$ at comparable outer iteration numbers. 
In an analogous manner, the sum of the one-dimensional supports for each time index is shown in Figure \ref{fig:supp-it-source-term}. Altogether, we observe that only a small number of active spatial basis functions and a comparably low maximum rank is required for most of the activated temporal basis indices. 

\begin{figure}
	\includegraphics{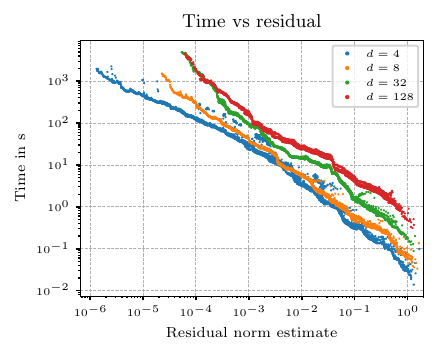}
	\caption{Time required for each inner iteration step for the heat equation with source term for $d=4,8,32, 128$ in dependence on the achieved residual estimate.}
	\label{fig:time-source-term}
\end{figure}
Figure \ref{fig:time-source-term} shows the time for each inner iteration step in dependence of the achieved residual norm estimate after this step, where the time in seconds is to be taken as a measure for the number of operations. While we observe algebraic behaviour with respect to the residual norm estimate, comparison to Figure \ref{fig:supp-res-source-term} shows that this is still in the pre-asymptotic regime. The $d$-dependence of the costs is subject to the same effects, but the results are consistent with a polynomial dependence on $d$ within the observable range.

\subsubsection{Vanishing source term}
As second test case we consider the one with vanishing source term $g = 0$ in \eqref{eq:heat-equation}. Similarly to the previous test case, for simplicity we use as initial condition a function $h$ that can be written as a tensor product of a single wavelet basis element in each spatial dimension.

This test case has higher computational costs due to a stronger increase of the temporal supports of approximations as the iteration progresses. Therefore we restrict ourselves to spatial dimensions $d=4,8,16$ in this case. In addition, we use error tolerances in $\apply$, $\coarsen$ and $\recompress$ that are larger than the ones used in the convergence analysis, which turn out to be stricer than necessary in practice. Specifically, in line 10 of Algorithm \ref{alg:lowrankrichardson} we replace $\eta_{k,j}$ by $10\eta_{k,j}$ and in line 11 by $2\eta_{k,j}$ without observing an impact on the convergence of the method.

\begin{figure}
	\minipage{0.33\textwidth}
	\includegraphics{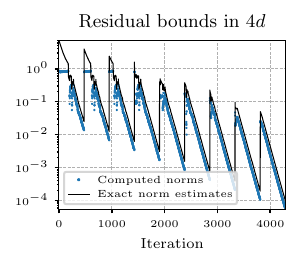}
	\endminipage\hfill
	\minipage{0.33\textwidth}
	\includegraphics{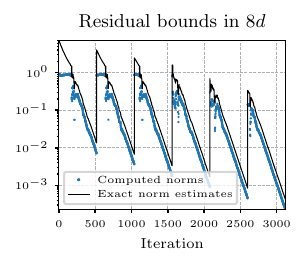}
	\endminipage\hfill
	\minipage{0.33\textwidth}
	\includegraphics{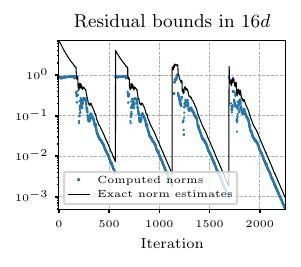}
	\endminipage
	\caption{Norms of computed error estimates and error bounds in dependence on the iteration number for the heat equation with initial value, for $d=4, 8, 16$.}
	\label{fig:residual-initialvalue}
\end{figure}

\begin{figure}
	\minipage{0.5\textwidth}
	\includegraphics{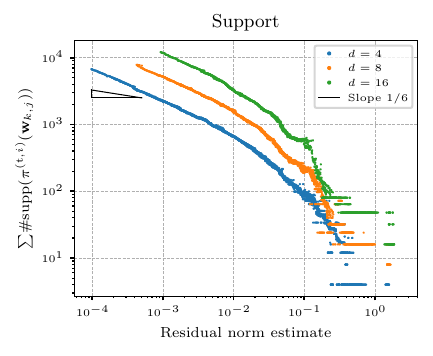}
	\endminipage
	\minipage{0.45\textwidth}
	\includegraphics{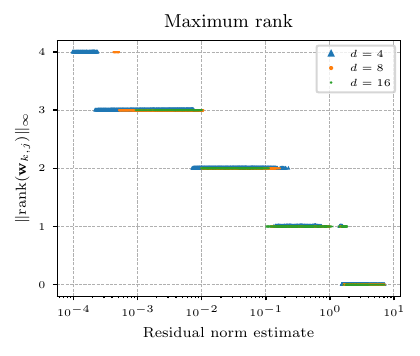}
	\endminipage\hfill
	\caption{Sum of the support and maximum rank per iteration versus the current residual norm estimate for the heat equation with initial value, for $d=4,8,16$.}
	\label{fig:supp-res-initial-value}
\end{figure}

\begin{figure}
	\minipage{0.33\textwidth}
	\includegraphics{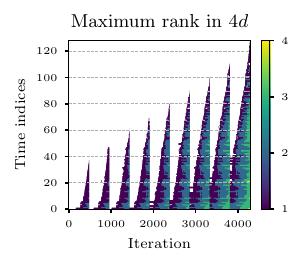}
	\endminipage\hfill
	\minipage{0.33\textwidth}
	\includegraphics{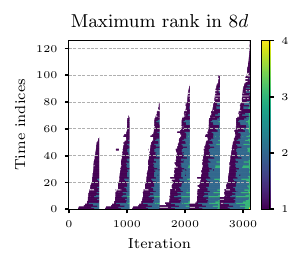}
	\endminipage\hfill
	\minipage{0.33\textwidth}
	\includegraphics{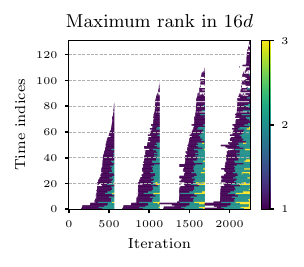}
	\endminipage
	\caption{Maximum ranks for each time index in dependence on the iteration number for the heat equation with initial value, for $d=4, 8, 16$.}
	\label{fig:ranks-it-initial-value}
\end{figure}

\begin{figure}
	\minipage{0.3333\textwidth}
	\includegraphics{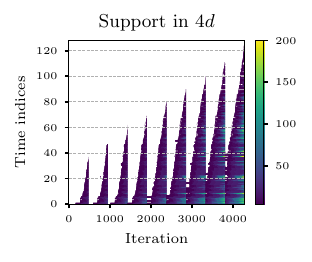}
	\endminipage\hfill
	\minipage{0.3333\textwidth}
	\includegraphics{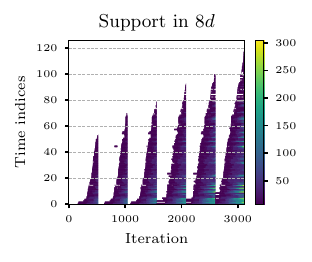}
	\endminipage\hfill
	\minipage{0.3333\textwidth}
	\includegraphics{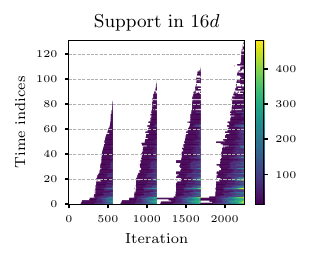}
	\endminipage
	\caption{Sum of one-dimensional support for each time index in dependence on the iteration number for the heat equation with initial value, for $d=4,8,16$.}
	\label{fig:supp-it-initial-value}
\end{figure}

\begin{figure}
	\includegraphics{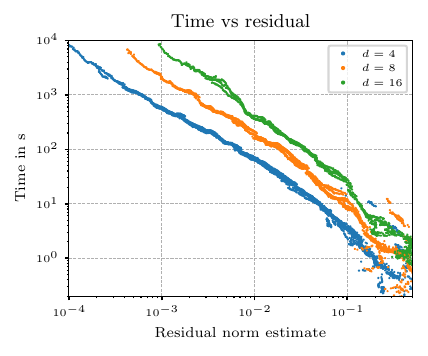}
	\caption{Time required for each inner iteration step for the heat equation with initial value for $d=4,8,16$ in dependence on the achieved residual estimate.}
	\label{fig:time-initial-value}
\end{figure}

In the following we present the same quantities as in Section \ref{sec:results-source}. Figure \ref{fig:residual-initialvalue} shows the residuals as well as the corresponding estimates for the $\cX$-norm with respect to the iteration number for the three different spatial dimensions $d \in \{4,8,16\}$, again with the expected behaviour. Figures \ref{fig:supp-res-initial-value}, \ref{fig:ranks-it-initial-value} and \ref{fig:supp-it-initial-value} show results for the one-dimensional supports as well as for the maximum ranks. Figure \ref{fig:supp-res-initial-value} compares the sum of the one-dimensional support with the computed residual norm estimates, where one can see that preasymptotic behaviour is still present.  Figure \ref{fig:time-initial-value} shows the time for each inner iteration step in dependence of the residual estimates for different spatial dimensions. As in the previous test case, we observe a polynomial dependence of the costs on the problem dimension. Altogether, the results are consistent with the previous test case with vanishing initial value. However, due to the higher computation costs, in the present case we do not enter the asymptotic regime for the one-dimensional support sizes.

\section{Conclusion and Outlook}\label{sec:Conclusion}

We have constructed a space-time adaptive solver for parabolic PDEs that combines sparse wavelet approximations in time with low-rank hierarchical tensor approximations in the spatial variables. The method yields guaranteed error bounds with respect to the exact solution in the natural norm of $L_2(0,T; H^1_0(\Omega)) \cap H^1(0,T; H^{-1}(\Omega))$. 
In addition, we obtain near-optimal bounds on hierarchical ranks and discretization sizes in terms of those of best approximations of similar accuracy, and corresponding bounds on the total computational complexity of the method in terms of the total error with respect to the exact solution of the PDE.

The present paper shows how such methods can be constructed and that they are applicable to problems in large dimensions $d$. On this basis, there is a number of directions for further work. Concerning the quantitative efficiency of the method, based on the adaptive residual approximation constructed here, several further improvements are possible. In particular, the conceptually simple but quantitatively rather expensive approximate Richardson iteration on the full sequence space can be replaced by successively refined Galerkin discretizations as in \cite{AliUrban:20}. For the arising Galerkin subproblems, solvers that are optimized for the particular structure of the combined sparse and low-rank approximations can be considered. Moreover, it will be of interest to adapt the method to convection-diffusion problems.

Another direction of future work concerns the approximability of solutions in the particular sense that is exploited by our adaptive scheme, especially in the case of large $d$. On the one hand, the performance of the method depends on best $n$-term approximations of the lower-dimensional contractions $\piti(\bu)$ for $i=1,\ldots,d$; this can be interpreted as the approximability by adaptive tensor product wavelets of densities of solutions with respect to the temporal and a single spatial variable.  On the other hand, the performance of the method depends on the ranks of spatial hierarchical low-rank approximations of the coefficient tensors $\bu_{\nu_\rt}$ for each fixed temporal basis index $\nu_\rt \in \vee_\rt$.

Since our method does not make explicit use of any assumptions on approximability of solutions, but is guaranteed to automatically produce near-best approximations, it can be regarded as a numerical test for such approximability.
Our numerical results show a (poly)logarithmic growth of hierarchical ranks with respect to the total solution error. This leads us to the conjecture that for parabolic problems of this type with data (such as initial values and source terms) having polylogarithmic growth of best approximation ranks, this polylogarithmic rank growth transfers to solutions. Such results have been obtained for elliptic problems in \cite{DahmenDeVoreGrasedyckSuli}, but we are not aware of a comparable result for parabolic problems. Note that, as such exponential-type convergence of low-rank approximations is generally a structural feature of solutions, this is not covered by singular value decay estimates for generic elements of smoothness classes. For example, the estimates for functions with dominating mixed smoothness in \cite{GriebelHarbrechtSchneider,BachmayrNouySchneider} yield only algebraic convergence with respect to ranks, which in view of the numerical results obtained here, and those for the elliptic case in \cite{AliUrban:20,bachmayr2014adaptive}, is generally far from sharp for solutions of the considered PDE problems.

\begin{appendix}

\section{Exponential Sum Approximations}\label{app:expsum}
In this section we give the proof of Theorem \ref{thm:approx-scaling}. The starting point for obtaining exponential sum approximations are integral representations based on the inverse Laplace transform. These are approximated in suitably transformed form by the trapezodial rule.
We start with an auxiliary statement on the values of the derivative of the Dawson function on specific lines in the complex plane. 
\begin{lemma}\label{lem:dawson-deriv-complex-bound}
	One has
	\begin{align*}
			\bigabs{1 - 2c(\sqrt{3} \pm i) F\bigl(c(\sqrt{3} \pm i)\bigr)} \leq 1+ \sqrt{\frac{2\pi}{e}} + \frac{2}{\sqrt{3}} x_{\min} F(x_{\min}) \eqqcolon c_{\mathrm{daw}}
	\end{align*}
	for each $c \geq 0$, where $x_{\min} $ is the global minimizer of $\frac{dF}{dx}$ \cite{oeisDawInfl}.
\end{lemma}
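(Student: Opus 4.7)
Since $F$ extends to an entire function whose derivative satisfies $F'(z) = 1 - 2zF(z)$ by the defining ODE of the Dawson function, the required inequality is exactly a uniform bound on $|F'(z)|$ along the rays $z = a(\sqrt{3} \pm i)$, $a \geq 0$. By Schwarz reflection $F(\bar w) = \overline{F(w)}$, the two rays yield the same modulus of $F'$, so it suffices to work with $z_+ = a(\sqrt{3} + i) = 2a\,e^{i\pi/6}$.

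My starting point will be the representation obtained from the substitution $w = tz$ in \eqref{eq:dawsondef},
\[
  F(z) = z\int_0^1 e^{z^2(t^2-1)}\, dt, \qquad F'(z) = 1 - 2z^2 \int_0^1 e^{z^2(t^2-1)}\, dt.
\]
On the ray one has $z_+^2 = 4 a^2 e^{i\pi/3}$ and $\operatorname{Re}(z_+^2(t^2-1)) = 2a^2(t^2-1) \leq 0$ for $t\in[0,1]$. The change of variables $\eta = 2 z_+^2 (1 - t^2)$ carries the integrand onto the ray through the origin with argument $\pi/3$ in the right half-plane. This produces a form in which the integrand decays like $|e^{-\eta}|$, so that the representation is uniformly controllable in $a$ and splits naturally into three contributions matching the three summands of $c_{\mathrm{daw}}$.

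For these three contributions I plan to deform the contour of integration in $\eta$ to the positive real axis plus a connecting arc. The leading $1$ of $F'(z_+) = 1 - (\cdots)$ accounts for the first summand via the triangle inequality. The connecting-arc contribution is a Gaussian-tail estimate of the form $2ae^{-a^2}\sqrt{\pi}$; its supremum over $a\geq 0$ is attained at $a = 1/\sqrt{2}$ and equals $\sqrt{2\pi/e}$, giving the second summand. The real-axis contribution, after the substitution $x = \sqrt{3}\,a$ (which exploits the factorization $a(\sqrt{3}+i) = \sqrt{3}\,a(1 + i/\sqrt{3})$ together with $|1 + i/\sqrt{3}| = 2/\sqrt{3}$), reduces to $\tfrac{2}{\sqrt{3}}\,xF(x)$; its supremum over $x \geq 0$ is $\tfrac{2}{\sqrt{3}}\,x_{\min}F(x_{\min})$, since $(xF)'(x) = F(x) + xF'(x) = -\tfrac{1}{2}F''(x)$, so that the maximizer of $xF$ coincides with the zero of $F''$ singled out in the statement.

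The principal obstacle is to choose the decomposition so that each of the three pieces is \emph{individually} uniformly bounded in $a$. A naive split of the original contour $0 \to \sqrt{3}\,a \to \sqrt{3}\,a + ia$ for $\int_0^{z_+} e^{w^2}\, dw$ produces two pieces each growing like $e^{a^2}$ and requires a subtle cancellation against the leading $1$ to yield a finite bound; the deformation in the $\eta$-variable avoids this because $|e^{-\eta}|$ decays along the deformed contour. Once this bounded decomposition is in place, the final step is routine: combine $\sup_a ae^{-a^2} = (2e)^{-1/2}$, $\sup_x xF(x) = x_{\min}F(x_{\min})$, and the geometric factor $2/\sqrt{3}$ into the claimed constant $c_{\mathrm{daw}} = 1 + \sqrt{2\pi/e} + \tfrac{2}{\sqrt{3}}\,x_{\min}F(x_{\min})$.
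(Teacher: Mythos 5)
Your reduction to bounding $\abs{F'(z)} = \abs{1-2zF(z)}$ along the ray, the reflection symmetry, and the representation $F(z) = z\int_0^1 e^{z^2(t^2-1)}\,dt$ are all correct, but the heart of the argument is missing. The three contributions are asserted rather than derived: you never specify the connecting arc, and after your substitution the $\eta$-integrand is $\tfrac12 e^{-\eta/2}\bigl(1-\eta/(2z_+^2)\bigr)^{-1/2}$, which has a branch-point singularity exactly at the endpoint $\eta = 2z_+^2$ lying on that arc; a sup-times-length estimate fails there, and you say nothing about how the singular factor or the position of the branch cut is handled. Moreover, the exact forms you claim look fitted to the statement rather than computed: the quantity being bounded equals $2z e^{-z^2}\int_0^z e^{w^2}\,dw$ with $\abs{2z}=4a$, and a direct evaluation of the "horizontal" part produces $4aF(\sqrt{3}a) = \tfrac{4}{\sqrt{3}}xF(x)$ with $x=\sqrt{3}a$, so your claim that the real-axis piece "reduces to" $\tfrac{2}{\sqrt{3}}xF(x)$ needs an account of where the extra factor $\tfrac12$ comes from; likewise it is unclear how a term $2\sqrt{\pi}\,a e^{-a^2}$ can arise on the arc, where $\abs{e^{-\eta/2}}\leq e^{-2a^2}$ (the natural Gaussian term is $2\sqrt{\pi}\,a e^{-2a^2}\erf(a)$, whose supremum is $\sqrt{\pi/e}$). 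As written, the proposal is a plan whose decisive estimates are unverified.

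For comparison, the paper's proof needs no deformation at all: it writes $F(z) = e^{-z^2}\int_0^z e^{\xi^2}\,d\xi$ and integrates over the axis-parallel path going vertically from $0$ to $ia$ and then horizontally to $z$ -- i.e.\ the opposite ordering from the "naive split" you rightly discard. Since $\abs{e^{-z^2}} = e^{-2a^2}$, this gives $\abs{F(z)} \leq e^{-2a^2}\tfrac{\sqrt{\pi}}{2}\erf(a) + F(\sqrt{3}a)$ with both pieces uniformly bounded in $a$ and no cancellation against the leading $1$ required; the lemma then follows from $\abs{1-2zF(z)} \leq 1 + 2\abs{z}\,\abs{F(z)}$ and elementary maximizations of $a e^{-2a^2}$ and $xF(x)$ (your identity $(xF)' = -\tfrac12 F''$ is exactly what justifies the appearance of $x_{\min}$). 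If you want to salvage your route, the simplest fix is to drop the $\eta$-deformation and redo the contour split directly in the $w$-plane in this vertical-first order, then carry out the two suprema explicitly and state whatever constant actually results.
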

\begin{proof}
	We will give the proof for the ``+'' case. The other case can be shown in the same way. Let $c \geq 0$ be arbitrary and set $z = c (\sqrt{3} + i)$. First, we start by estimating $\abs{F(z)}$. Choosing an axis-parallel contour, we have
	\begin{align*}
		\int\limits_0^z e^{\xi^2} d\xi &= \int\limits_0^{\Imag z} e^{(iy)^2} dy + \int\limits_0^{\Real z} e^{(x + i \Imag(z))^2} dx \\
		&= \frac{\sqrt{\pi}}{2} \erf(c) + e^{-c^2} \int\limits_0^{\sqrt{3} c} e^{x^2} e^{ 2 i c x} dx .
	\end{align*}
	Using the definition of the Dawson function as well as the triangle inequality, we obtain
	\begin{equation*}
		\abs{F(z)} = \bigabs{e^{-z^2}} \biggabs{\int\limits_0^z e^{\xi^2} d\xi} 
		\leq e^{-2c^2} \frac{\sqrt{\pi}}{2} \erf(c) + F(\sqrt{3} c) .
	\end{equation*}
	We next take into account that the derivative of the Dawson function $\frac{dF}{dx} = 1 - 2xF$  has its global minimum at $x_{\min}$. As a consequence,
	\begin{align*}
		\abs{1-2zF(z)} &\leq 1 + 2\abs{z}\abs{F(z)} \\
		&\leq 1 + 2 c e^{-2c^2} \sqrt{\pi} \erf(c) + 2 c F(\sqrt{3} c) \\
		&\leq 1+ \sqrt{\frac{2\pi}{e}} + \frac{2}{\sqrt{3}} x_{\min} F(x_{\min}) = c_{\mathrm{daw}} . \qedhere
	\end{align*}
\end{proof}

We use the following definition and approximation error bound from \cite{Stenger:1607843}.
\begin{definition}
	For $\zeta > 0$, let $D_\zeta = \{z \in \C : \abs{\mathrm{Im} \ z} < \zeta \}$, and for $0 < \varepsilon < 1$, let
	\begin{align*}
		D_\zeta(\varepsilon) = \{z \in \C : \abs{\mathrm{Re} \ z} < \varepsilon^{-1}, \abs{\mathrm{Im} \ z } < \zeta(1-\varepsilon)\} .
	\end{align*}
For $v$ analytic in $D_\zeta$ let
\begin{align*}
	N_1(v,D_\zeta) = \lim\limits_{\varepsilon \to 0} \int\limits_{\partial D_{\zeta}(\varepsilon)} \abs{v(z)} \abs{\dd z} .
\end{align*}
\begin{theorem}[{see \cite[Theorem 3.2.1]{Stenger:1607843}}]\label{thm:trapzedioal-rule}
	Let $g$ be analytic in $D_\zeta$ with $N_1(g,D_\zeta) < \infty$, then
	\begin{align*}
		\lrabs{\int_{\R} g(x) \sdd x - h \sum\limits_{k \in \Z} g(kh)} \leq \frac{e^{-\pi \zeta / h}}{2 \sinh(\pi \zeta/h)} N_1(g,D_\zeta) .
	\end{align*}
\end{theorem}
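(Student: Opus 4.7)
The plan is to prove the bound by a classical contour-integration argument based on a pair of lattice kernels. Introduce the meromorphic functions
\[
   \phi_+(z) = \frac{1}{1-e^{-2\pi i z/h}}, \qquad \phi_-(z) = \frac{1}{1-e^{2\pi i z/h}},
\]
each of which has simple poles precisely at the lattice points $z=kh$, $k\in\Z$, with residue $\mp h/(2\pi i)$. Two properties are crucial and easy to verify: first, the algebraic identity $\phi_+(z) - \phi_-(z) = 1$ (away from the poles), so that on the real axis the combined kernel reproduces $g(x)$; second, on horizontal lines one has the uniform bound
\[
   \abs{\phi_\pm(z)} \le \frac{1}{1 - e^{-2\pi \abs{\Im z}/h}} \quad \text{and} \quad \abs{\phi_\pm(z)} \le \frac{e^{-2\pi\abs{\Im z}/h}}{1 - e^{-2\pi \abs{\Im z}/h}}
\]
for $\pm\Im z > 0$, giving exponential decay away from the real axis.

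First, I would apply the residue theorem to $g\,\phi_+$ on the rectangular contour with vertices $\pm \varepsilon^{-1}$ and $\pm \varepsilon^{-1} + i \zeta(1-\varepsilon)$, and symmetrically to $g\,\phi_-$ on its reflection in the lower half of $D_\zeta(\varepsilon)$. Summing the two identities, the residues at the lattice points $kh \in [-\varepsilon^{-1},\varepsilon^{-1}]$ on the real axis contribute exactly $h\sum_k g(kh)$; meanwhile the two integrals along the real axis combine, via $\phi_+ - \phi_- \equiv 1$, into $\int g(x)\,dx$ over the truncated interval. Passing to $\varepsilon \to 0$, the vertical edges vanish because the hypothesis $N_1(g,D_\zeta) < \infty$ forces $\abs{g(z)} \to 0$ along $\abs{\Re z} \to \infty$ in an $L^1$-sense on the boundary of $D_\zeta(\varepsilon)$, and the kernels $\phi_\pm$ are uniformly bounded on compact subsets of $D_\zeta\setminus h\Z$. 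The outcome is the representation
\[
   \int_{\R} g(x)\,dx - h \sum_{k\in\Z} g(kh) = \int_{\Im z = \zeta^{-}} g(z)\phi_+(z)\,dz + \int_{\Im z = -\zeta^{+}} g(z)\phi_-(z)\,dz,
\]
interpreted as a limit of horizontal-edge integrals at $\Im z = \pm\zeta(1-\varepsilon)$ as $\varepsilon \to 0$.

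Second, I would bound each boundary term. On the line $\Im z = \pm\zeta(1-\varepsilon)$ the second estimate above gives
\[
   \abs{\phi_\pm(z)} \le \frac{e^{-2\pi\zeta(1-\varepsilon)/h}}{1 - e^{-2\pi\zeta(1-\varepsilon)/h}},
\]
so that the modulus of each horizontal integral is at most this quantity times $\int_{\Im z = \pm\zeta(1-\varepsilon)} \abs{g(z)}\,\abs{dz}$. Summing both contributions and invoking the definition of $N_1(g,D_\zeta)$ (as the $\varepsilon\to 0$ limit of boundary $L^1$-norms over $\partial D_\zeta(\varepsilon)$) bounds the right-hand side by
\[
   \frac{e^{-2\pi\zeta/h}}{1-e^{-2\pi\zeta/h}}\, N_1(g,D_\zeta) = \frac{e^{-\pi\zeta/h}}{2\sinh(\pi\zeta/h)}\, N_1(g,D_\zeta),
\]
using the elementary identity $1 - e^{-2x} = 2e^{-x}\sinh(x)$. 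This is the asserted bound.

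The main obstacle is the $\varepsilon \to 0$ limit, specifically justifying both that the vertical edges of $\partial D_\zeta(\varepsilon)$ contribute nothing in the limit and that the horizontal-edge integrals converge to the stated bound without losing an $\varepsilon$-dependent factor. Both must be extracted from the single hypothesis $N_1(g,D_\zeta)<\infty$, which controls the boundary integral over $\partial D_\zeta(\varepsilon)$ as a whole rather than over horizontal and vertical parts separately. The standard remedy is a Fatou-type argument: choose a subsequence $\varepsilon_n \to 0$ along which the vertical-edge contributions tend to zero (such a subsequence exists, as otherwise the total boundary $L^1$-mass of $g$ would diverge, contradicting $N_1(g,D_\zeta) < \infty$), and pass to the limit in the horizontal edges by monotone/dominated convergence combined with the uniform kernel bound above. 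A minor additional point is the careful handling of the residues on the real axis, which is rendered trivial by the identity $\phi_+ - \phi_- \equiv 1$ so that no principal-value interpretation of the real-axis integrals is needed.
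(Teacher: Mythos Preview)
The paper does not give its own proof of this statement: it is cited from Stenger's book \cite[Theorem 3.2.1]{Stenger:1607843} and invoked as a black box in the proof of Theorem~\ref{thm:approx-scaling}. Your proposal reproduces the standard contour-integration argument that Stenger uses, so in spirit it matches the source the paper defers to.

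One small slip worth flagging: with your definitions $\phi_\pm(z) = 1/(1-e^{\mp 2\pi i z/h})$, the identity is $\phi_+ + \phi_- \equiv 1$, not $\phi_+ - \phi_- \equiv 1$ (set $w=e^{2\pi i z/h}$ and compute $w/(w-1) + 1/(1-w) = 1$). Correspondingly, the residues at $z=kh$ are $+h/(2\pi i)$ for $\phi_+$ and $-h/(2\pi i)$ for $\phi_-$, and when you integrate $g\phi_+$ over the upper rectangle and $g\phi_-$ over the lower rectangle with their natural orientations, the real-axis segments are traversed in opposite directions, so the combination that yields $\int_\R g(x)\,dx$ comes from $\phi_+ + \phi_-$ rather than a difference. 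This does not affect the structure or the final bound, but the signs should be fixed if you write the argument out in full.
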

\end{definition}

\begin{proof}[Proof of Theorem \ref{thm:approx-scaling}]
Our starting point is the representation
\begin{align*}
	\frac{\sqrt{s}}{s + \tilde{a}} = \int\limits_0^\infty\frac{1}{\sqrt{\pi}\sqrt{y}} \Bigl(1 - 2 \sqrt{ya}  F\bigl(\sqrt{ya}\bigr) \Bigr) e^{-ys} \sdd y,
\end{align*}
where we make use of the property $\frac{\dd F}{\dd x}(x) + 2xF(x) = 1$ of the Dawson function $F$ given by \eqref{eq:dawsondef}. Substituting $ya = e^x$ yields 
\begin{align*}
	\frac{\sqrt{s}}{s + a} &= \frac{1}{\sqrt{a}\sqrt{\pi}} \int\limits_{\R}  \left( 1 - 2 e^{\frac{x}{2}} F\left(e^{\frac{x}{2}}\right)\right) e^{\frac{x}{2} - \frac{1}{a} e^x s} \sdd x \\
	&= 2 \frac{1}{\sqrt{a} \sqrt{\pi}}  \int\limits_{\R}  \tfrac{dF}{dx} (e^{x/2}) e^{-\frac{1}{a}e^x s} \sdd x .
\end{align*}
The integrand is holomorphic in the strip $\{x+iy : x \in \R, \abs{y} < \pi/3\}$. In order to apply Theorem \ref{thm:trapzedioal-rule}, we need to estimate the quantity
\begin{align*}
	N_1(g,D_\zeta) = \int_{\R} \abs{g(x+i\zeta)} \sdd x + \int_{\R} \abs{g(x-i\zeta)} \sdd x
\end{align*}
where $g(x) = \frac{\sqrt{a}}{\sqrt{\pi}} \left( 1 - 2 e^{x/2} F(e^{x/2})\right) e^{\frac{x}{2} - \frac{1}{a} e^x s}$. By Lemma \ref{lem:dawson-deriv-complex-bound}, we have
\begin{align*}
	 \lrabs{1 - 2 e^{\frac{x \pm y i}{2}} F\left(e^{\frac{x \pm yi}{2}}\right)} \leq c_{\mathrm{daw}}
\end{align*}
for $y = \pi/3$ and all $x \in \R$. Moreover,
\begin{align*}
	\Bigabs{e^{\frac{x+yi}{2} - \frac{1}{a} e^{x+yi} t}} &= e^{\mathrm{Re}(\frac{x+yi}{2} - \frac{1}{a} e^{x+yi} t)} \\
	&= e^{\frac{x}{2}- \frac{1}{a}e^x \cos(y) t} .
\end{align*}
The same transformation can be done for the negative imaginary part using $\cos(y) = \cos(-y)$. Putting this together, for $\zeta \in (0, \frac{\pi}3)$, which ensures $\cos(\zeta) > 0$, we obtain
\begin{equation*}
	N_1(g,D_\zeta) \leq 2 \frac{c_{\mathrm{daw}}}{\sqrt{a}\sqrt{\pi}} \int_{\R} e^{\frac{x}{2}- \frac{1}{a}e^x \cos(\zeta) s} \sdd x 
	= \frac{2 c_{\mathrm{daw}}}{\sqrt{s \ \cos(\zeta)}} ,
\end{equation*}
where we have used that $
	{\sqrt{\pi} \, \erf(\sqrt{c} \ e^{x/2})}/ {\sqrt{c}} $
is the antiderivative of $e^{\frac{x}{2} - c e^x}$ for $c > 0$.
Theorem \ref{thm:trapzedioal-rule} now yields
\begin{align*}
	\lrabs{\frac{\sqrt{s}}{s+a} -  \sum\limits_{k \in \Z} h_a w_a(kh_a) e^{-\alpha_a(kh_a)s}} &\leq \frac{e^{-\frac{\pi^2}{3h_a}}}{2 \sinh(\frac{\pi^2}{3h_a})} 2 c_{\mathrm{daw}} \sqrt{2} \frac{1}{\sqrt{s}} \\ 
	&\leq \frac{e^{-\frac{\pi^2}{3h_a}}}{2 \sinh(\frac{\pi^2}{3h_a})} 10 (1+a) \frac{\sqrt{s}}{s+a} 
	\leq \delta \frac{\sqrt{s}}{s+a}
\end{align*}
with the choice of $h_a$ in \eqref{eq:h-epxonential-sum}, where we have used $s \geq 1$ as well as $2 c_{\mathrm{daw}} \sqrt{2}  \leq 10$ in the second line.

In the next step, we truncate the doubly infinite sum from above at the index $n^+_a$ at cost of an maximum error of $\delta$. We use that
\begin{align*}
	\frac{2}{c+1} e^{-c e^x} F(e^{x/2}) + \sqrt{\pi} \frac{\sqrt{c}}{c+1} \erf\bigl(\sqrt{c}e^{x/2}\bigr)
\end{align*}
is the antiderivative of $
	( 1 - 2 e^{\frac{x}{2}} F(e^{\frac{x}{2}})) e^{\frac{x}{2} - c e^x} $.
Setting $u_a(x) = w_a(x) e^{-\alpha_a(x)s}$ and $c = s/a$, we obtain
\begin{align*}
	\sum\limits_{k > n^+_a} h_a u_a(k h_a) &\leq \int\limits_{n^+_a}^\infty h_a w_a(x h_a)\, e^{-\alpha_a(x h_a)s} \sdd x 
	= \frac{1}{\sqrt{\pi}\sqrt{a}}\int\limits_{n^+_a h_a}^\infty\Bigl( 1 - 2 e^{\frac{x}{2}} F\bigl(e^{\frac{x}{2}}\bigr)\Bigr) e^{\frac{x}{2} - c e^x} \sdd x \\
	&= \frac{1}{\sqrt{\pi}\sqrt{a}} \left(\sqrt{\pi} \frac{\sqrt{c}}{c+1} \operatorname{erfc}\Bigl(\sqrt{c} e^{\frac{n^+_a h_a}{2}}\Bigr) - \frac{2}{c+1} e^{-ce^{n^+_a h_a}} F\Bigl(e^{\frac{n^+_a h_a}{2}}\Bigr) \right) \\
	&\leq \frac{\sqrt{s}}{s+a} \operatorname{erfc}\biggl(\frac{1}{\sqrt{a}} e^{\frac{n^+_a h_a}{2}} \biggr) < \delta \frac{\sqrt{s}}{s+a}
\end{align*}
with $n^+_a$ chosen according to \eqref{eq:nplus-eponential-sum}. In the last line we use that the complementary error function $\operatorname{erfc} = 1-\erf$ is monotonically decreasing and $s \geq 1$.

Finally, we truncate the sum in the negative range of indices. To this end, we limit the interval of possible values of $s$ to $[1,K]$ for a fixed $K > 1$. We make use of the fact that the antiderivative of
$	(1 - 2 e^{-\frac{x}{2}} F(e^{-\frac{x}{2}})) e^{-\frac{x}{2}-c e^{-x}}  $
for $c > 0$ is given by
\begin{align*}
	- \frac{2}{c+1} e^{-ce^{-x}} F\bigl(e^{-\frac{x}{2}}\bigr) - \sqrt{\pi} \frac{\sqrt{c}}{c+1} \erf\bigl(\sqrt{c} e^{-\frac{x}{2}}\bigr) .
\end{align*}
Setting $c = s/a$ as before, we arrive at
\begin{align*}
	\sum\limits_{k < -n} h_a u_a(k h_a) &= \sum\limits_{k > n} h_a u_a(-k h_a) \\
	&\leq \frac{1}{\sqrt{\pi}\sqrt{a}}\int\limits_{n h_a}^\infty \left(1- 2e^{-\frac{x}{2}F\left(e^{-\frac{x}{2}}\right)}\right) e^{-\frac{x}{2}-c e^{-x}} \sdd x \\
	&=  \frac{1}{\sqrt{\pi}\sqrt{a}} \left(\frac{2}{c+1} e^{-ce^{-nh_a}} F\bigl(e^{-\frac{nh_a}{2}}\bigr) + \sqrt{\pi} \frac{\sqrt{c}}{c+1} \erf\bigl(\sqrt{c} e^{-\frac{nh_a}{2}}\bigr)\right) \\
	&= \frac{\sqrt{s}}{s+a} \left( \erf\left(\frac{\sqrt{s}}{\sqrt{a}} e^{-\frac{nh_a}{2}}\right) + \frac{2\sqrt{a}}{\sqrt{s}\sqrt{\pi}} e^{-\frac{s}{a} e^{-nh_a}} F\left(e^{-\frac{nh_a}{2}}\right)\right) \\
	&= \frac{\sqrt{s}}{s+a} f_a(s,nh_a) .
\end{align*}
If we assume that $n$ is chosen sufficiently large to ensure $F(e^{-(nh_a)/2}) e^{-(nh_a)/2} < \frac12$, one can easily check that the function $f_a(s,x)$ is monotonically decreasing in some interval $s \in [0,z]$ and monotonically increasing in the remaining interval $s \in (z,\infty)$. We thus conclude that the maximum value of the function $f_a(s,x)$ for a given $x$ and the time interval $[1,K]$ has to be located at one of the boundary values.
\end{proof}

\section{Auxiliary proofs}\label{sec:aux}

\begin{proof}[Proof of Lemma \ref{lem:recompress}]
	For $\bu \in \ell_2(\vee)$ and $\sfr \in \mathcal{R}^{\veet}$, let $\tilde{\mathrm{P}}_{\bu,\sfr} \colon \ell_2(\vee) \to \mathcal{F}(\sfr)$ be defined by
	\[
		\norm{\bu - \tilde{\mathrm{P}}_{\bu,\sfr} \bu} = \min_{\bw \in \mathcal{F}(\sfr)} \norm{\bu - \bw} .
	\]
	The operator has the representation
	\[
	\tilde{\mathrm{P}}_{\bu,\sfr} = \sum\limits_{\nut \in \vee_\rt} \mathbf{E}_{\nut} \otimes \tilde{\mathrm{P}}_{\rx,\bu_{\nut},\sfr_{\nut}}
	\]
	with $\mathbf{E}_{\nut} = \mathbf{e}_{\nut}^\intercal \mathbf{e}_{\nut}$ and where $\tilde{\mathrm{P}}_{\rx,\bu_{\nut},\sfr_{\nut}} \colon \ell_2(\vee_{\rx}) \to \mathcal{H}(r_{\nut})$ is the linear projection such that $\tilde{\mathrm{P}}_{\rx,\bu_{\nut},\sfr_{\nut}} \bu_{\nut}$ is a best approximation in $\mathcal{H}(\sfr_{\nut})$ as in \cite[Lemma 1]{BachmayrNearOptimal}.
	A direct consequence of this representation is that the operator $\tilde{\mathrm{P}}_{\bu,\sfr}$ is also a linear projection. According to Proposition \ref{prop:recompress-upper-bound}, we obtain
	\begin{equation*}
		\begin{aligned}
			\norm{\bv - \mathrm{P}_{\bar{\sfr}(\bu,\alpha \eta)} \bv} &\leq \kappa_P \inf_{\bw \in \mathcal{F}(\bar{\sfr}(\bu,\alpha \eta))} \norm{\bv - \bw} \\
			&\leq \kappa_P \norm{\bv - \tilde{\mathrm{P}}_{\bu,\bar{\sfr}(\bu,\alpha \eta)} \bv} \\
			&\leq \kappa_P  \big(\norm{(\bI - \tilde{\mathrm{P}}_{\bu,\bar{\sfr}(\bu,\alpha \eta)}) (\bv - \bu)}  + \norm{\bu - \tilde{\mathrm{P}}_{\bu,\bar{\sfr}(\bu,\alpha \eta)} \bu}\big) \\
			&\leq \kappa_P (1+\alpha) \eta . 
		\end{aligned}
	\end{equation*}
	By definition $	\rank_\infty\bigl(\hat{\Pro}_{\kappa_{\Pro}(1+\alpha)\eta} (\bv)\bigr)$ is chosen minimally to archive the bound $\kappa(1+\alpha)\eta$, which yields \eqref{eq:recompress-rank-upper-bound}. Furthermore, \eqref{eq:recompress-norm} is an immediate consequence of the triangle inequality.
\end{proof}

\begin{proof}[Proof of Theorem \ref{thm:combinedcoarsen}]
	Combining \eqref{eq:recompress-norm} in Lemma \ref{lem:recompress} and the definition of the coarsening operator in \eqref{eq:coarsen-threshold} together with the triangle inequality yield relation \eqref{eq:recompress-coarsen-norm}.
	
	The statements in \eqref{eq:recompress-coarsen-rank} follow directly from Theorem \ref{thm:recompress}. The additional coarsening operator can at most reduce the ranks and does not affect these estimates.
	
	The main part is the proof of the estimates \eqref{eq:recompress-coarsen-supp}. We set $\hat{\bw} = \hat{\Pro}_{\kappa_{\Pro}(1+\alpha)\eta} (\bv)$. Let $N \in \N$ be minimal such that $\norm{\bu - \Res_{\bar{\Lambda}(\bu,N)} \bu} \leq \alpha \eta$. Then
	\begin{align*}
		\norm{\hat{\bw} -\Res_{\bar{\Lambda}(\bu,N)} \hat{\bw}} &\leq \norm{(\bI - \Res_{\bar{\Lambda}(\bu,N)}) (\bu - \hat{\bw})} + \norm{\bu - \Res_{\bar{\Lambda}(\bu,N)} \bu} \\
		&\leq \norm{\bu - \hat{\bw}} +  \norm{\bu - \Res_{\bar{\Lambda}(\bu,N)} \bu}  \leq (1+\kappa_{\Pro}(1+\alpha) + \alpha) \eta ,
	\end{align*}
	where we used Lemma \ref{lem:recompress} to bound the first summand on the right-hand side. Hence, by \eqref{eq:coarsen-best-ineq}, we have
	\begin{equation}\label{eq:coarsen-fixed-N-error}
	\begin{aligned}
		\norm{\hat{\bw} -\Res_{\Lambda(\hat{\bw},N)} \hat{\bw}} &\leq \kappa_{\Cor} \norm{\hat{\bw} - \Res_{\bar{\Lambda}(\hat{\bw},N)} \hat{\bw}} \\ 
		&\leq \kappa_{\Cor} \norm{\hat{\bw} - \Res_{\bar{\Lambda}(\bu,N)}\hat{\bw}} \leq \kappa_{\Cor} (1+\alpha)(\kappa_{\Pro}+1) \eta .
	\end{aligned}
\end{equation}
	Without loss of generality we may assume that $N \geq d$. Keeping the optimality \eqref{eq:coarsen-best-optimality} of the best coarsen operator in mind, property \eqref{eq:bound-restriction-contraction-mod} yields
	\begin{align*}
		\alpha \eta < \norm{\bu - \Res_{\Lambda(\bu,N-1)} \bu} &\leq \inf\limits_{\sum_i \# \Lambda^{(t,i)} \leq N-1} \left( \sum\limits_{i=1}^d \lrnorm{\piti(\bu) - \Res_{\Lambda^{(t,i)}} \piti(\bu)  }^2 \right)^{\frac{1}{2}} \\
		&\leq \sum\limits_{i=1}^d \inf\limits_{\# \Lambda^{(t,i)} \leq (N-1)/d} \lrnorm{\piti(\bu) - \Res_{\Lambda^{(t,i)}} \piti(\bu)} \\
		&\leq \left(\frac{N-1}{d}\right)^{-s} \sum\limits_{i=1}^d \norm{\piti(\bu)}_{\cAs}  \\
		&\leq 2^s \left(\frac{N}{d}\right)^{-s} \sum\limits_{i=1}^d \norm{\piti(\bu)}_{\cAs} \,.
	\end{align*}
	We note that by \eqref{eq:coarsen-fixed-N-error}, the coarsening operator $\hat{\Cor}_{\kappa_{\Cor}(\kappa_{\Pro}+1)(1+\alpha)\eta}$ retains at most $N$ terms. We combine this fact with the previous estimate, which results in
	\begin{align}\label{eq:recompress-coarsen-supp-ineq}
		\sum\limits_{i=1}^d \# \supp\left(\piti(\bw_\eta)\right) \leq N \leq 2 d \alpha^{-\frac{1}{s}} \eta^{-\frac{1}{s}} \left( \sum\limits_{i=1}^d \norm{\piti(\bu)}_{\cAs} \right)^{\frac{1}{s}} .
	\end{align}
	Hence the first statement in \eqref{eq:recompress-coarsen-supp} is shown. Now let $\hat{N}_i = \# \supp( \piti(\bu))$ for $i=1,\dots,d$ and $\hat{N} = \sum_i \hat{N}_i$ . We may assume without loss of generality $\hat{N}_i > 0$ for $i=1,\dots,d$. Resolve \eqref{eq:recompress-coarsen-supp-ineq} for $\eta$ and inserting into \eqref{eq:recompress-coarsen-norm}, we observe
	\begin{align}\label{eq:recompress-coarsen-ell2norm-contraction}
		\norm{\bu - \bw_{\eta}} \leq \hat{N}^{-s} C(\alpha) d^s \sum\limits_{i=1}^d \norm{\piti(\bu)}_{\cAs}
	\end{align}
	where $C(\alpha) = 2^s \alpha^{-1} (1+\kappa_{\Pro}(1+\alpha) + \kappa_{\Cor}(\kappa_{\Pro}+1)(1+\alpha))$.
	
	Let $\hat{\bu}_i$ be the best $\hat{N}_i$-term approximation to $\piti(\bu)$, then by using the properties of Proposition \ref{prop:cAs-properties} we observe 
	\begin{align*}
		\norm{\piti(\bw_{\eta})}_{\cAs} &\leq 2^s \left( \norm{\hat{\bu}_i}_{\cAs} + \norm{\hat{\bu}_i - \piti(\bw_{\eta})}_{\cAs}  \right) \\
		&\leq 2^s \left( \norm{\piti(\bu)}_{\cAs} + (2\hat{N}_i)^s \norm{\hat{\bu}_i - \piti(\bw_{\eta})} \right) \\
		&\leq 2^s \left( \norm{\piti(\bu)}_{\cAs} + (2\hat{N}_i)^s \left(\norm{\hat{\bu}_i - \piti(\bu)} + \norm{\piti(\bu) - \piti(\bw_{\eta})}\right) \right) \\
		&\leq 2^s \left( (1+2^s) \norm{\piti(\bu)} + (2\hat{N}_i)^s \norm{\piti(\bu) - \piti(\bw_{\eta})} \right),
	\end{align*}
	where we used that $\# \supp(\hat{\bu}_i -\piti(\bw_{\eta})) \leq 2\hat{N}_i$ in the second line. As a consequence of the Cauchy-Schwarz inequality, we have the componentwise estimate
	\begin{align*}
		\abs{\piti_{\nu_\rt,\nu_\rx}(\bu) - \piti_{\nu_\rt,\nu_\rx}(\bw_{\eta})} \leq \piti_{\nu_\rt,\nu_\rx}(\bu - \bw_{\eta}),
	\end{align*}
	which yields
	\begin{align*}
		\norm{\piti(\bu) - \piti(\bw_{\eta})} \leq \norm{\piti(\bu-\bw_{\eta})} = \norm{\bu - \bw_{\eta}} .
	\end{align*}
	Combining this fact with \eqref{eq:recompress-coarsen-ell2norm-contraction}, we obtain
	\begin{align*}
		\norm{\piti(\bw_\eta)}_{\cAs} \leq 2^s (1+2^s) \norm{\piti(\bu)}_{\cAs} + 2^s C(\alpha) d^s \hat{N}^{-s} (2\hat{N}_i)^s \biggl(\sum\limits_{k=1}^d \norm{\pi^{(\rt,k)}(\bu)}_{\cAs} \biggr).
	\end{align*}
	Summing over $i=1,\dots,d$ and noting that $
		\hat{N}^{-s} \sum\limits_{i=1}^d (2\hat{N}_i)^s  \leq 2^s d^{\max(0,1-s)} $,
 	we arrive at the second statement in \eqref{eq:recompress-coarsen-supp}.
\end{proof}

\end{appendix}

\bibliographystyle{amsplain}
\bibliography{BFalrparabolic}

\end{document}